\newcommand{\field}[1]{\mathbb{#1}}
\newcommand{\N}{\field{N}}
\newcommand{\Z}{\field{Z}}
\numberwithin{equation}{section}
\newtheorem{theorem}{Theorem}[section]
\newtheorem{lemma}[theorem]{Lemma}
\newtheorem{corollary}[theorem]{Corollary}
\newtheorem{proposition}[theorem]{Proposition}
\theoremstyle{definition}
\newtheorem{property}[theorem]{Property}
\newtheorem{defprop}[theorem]{Definition/Proposition}
\newtheorem{definition}[theorem]{Definition}
\newtheorem{example}[theorem]{Example}
\theoremstyle{remark}
\newtheorem*{remark}{Remark}
\renewenvironment{proof}[1][Proof]{\begin{trivlist}
\item[\hskip \labelsep {\bfseries #1:}]}{\qed\end{trivlist}}
\newcommand{\B}{\mathcal{B}}
\title[Generalisations of Capparelli's and Primc's identities: I]{Generalisations of Capparelli's and Primc's identities, I:
\\coloured Frobenius partitions and combinatorial proofs}
\author{Jehanne Dousse}
\address{Univ Lyon, CNRS, Université Claude Bernard Lyon 1, UMR5208, Institut Camille Jordan, F-69622 Villeurbanne, France}
\email{dousse@math.cnrs.fr}
\author{Isaac Konan}
\address{IRIF, Université de Paris, Bâtiment Sophie Germain, Case courrier 7014, 8 Place Aurélie Nemours, 75205 Paris Cedex 13, France}
\email{konan@irif.fr}
\begin{document}

\begin{abstract}
The partition identities of Capparelli and Primc were originally discovered via representation theoretic techniques, and have since then been studied and refined combinatorially, but the question of giving a very broad generalisation remained open.
In these two companion papers, we give infinite families of partition identities which generalise Primc's and Capparelli's identities, and study their consequences on the theory of crystal bases of the affine Lie algebra $A_{n-1}^{(1)}.$ 

\smallskip
In this first paper, we focus on combinatorial aspects. We give a  $n^2$-coloured generalisation of Primc's identity by constructing a $n^2 \times n^2$ matrix of difference conditions, Primc's original identities corresponding to $n=2$ and $n=3$. While most coloured partition identities in the literature connect partitions with difference conditions to partitions with congruence conditions, in our case, the natural way to generalise these identities is to relate partitions with difference conditions to coloured Frobenius partitions. This gives a very simple expression for the generating function. With a particular specialisation of the colour variables, our generalisation also yields a partition identity with congruence conditions.

Then, using a bijection from our new generalisation of Primc's identity, we deduce a large family of identities on $(n^2-1)$-coloured partitions which generalise Capparelli's identity, also in terms of coloured Frobenius partitions.
The particular case $n=2$ is Capparelli's identity and one of the cases where $n=3$ recovers an identity of Meurman and Primc.

\smallskip
In the second paper, we will focus on crystal theoretic aspects. We will show that the difference conditions we defined in our $n^2$-coloured generalisation of Primc's identity are actually energy functions for certain $A_{n-1}^{(1)}$ crystals. We will then use this result to retrieve the Kac-Peterson character formula and derive a new character formula as a sum of infinite products for all the irreducible highest weight $A_{n-1}^{(1)}$-modules of level~$1.$ 
\end{abstract}

\maketitle

\section{Introduction and statement of results}

\subsection{Partition identities from representation theory}

\subsubsection{The Rogers-Ramanujan identities}
A \emph{partition} $\lambda$ of a positive integer $n$ is a non-increasing sequence of natural numbers $(\lambda_1,\dots,\lambda_s)$ whose sum is $n$. We write it as the sum $\lambda_1+\dots+\lambda_s$. The numbers $\lambda_1,\dots,\lambda_s$ are called the \emph{parts} of $\lambda$, the number $\ell(\lambda)=s$ is the \emph{length } of $\lambda$, and $|\lambda|=n$ is the \emph{weight} of $\lambda$. For example, the partitions of $4$ are $4, 3+1, 2+2, 2+1+1,$ and $ 1+1+1+1.$

The most famous partition identities are probably the Rogers--Ramanujan identities \cite{RogersRamanujan}. Using the standard $q$-series notation for $n \in \N \cup \{\infty\},$
$$(a;q)_n := (1-a)(1-aq)\cdots(1-aq^{n-1}),$$ they can be stated as follows.
\begin{theorem}[Rogers 1894, Ramanujan 1913]
\label{th:RR}
Let $i=0$ or $1$. Then
\begin{equation} \label{eq:rr}
\sum_{n \geq 0} \frac{q^{n^2+ (1-i)n}}{(q;q)_n} = \frac{1}{(q^{2-i};q^5)_{\infty}(q^{3+i};q^5)_{\infty}}.
\end{equation} 
\end{theorem}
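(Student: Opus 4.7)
The plan is to prove the Rogers--Ramanujan identities by first interpreting both sides combinatorially and then establishing equality through $q$-series manipulations. For the left-hand side, $q^{n^2+(1-i)n}/(q;q)_n$ is the generating function for partitions with exactly $n$ parts, successive differences at least $2$, and smallest part at least $2-i$: the exponent $n^2+(1-i)n$ is the weight of the minimal such partition with parts $(2-i),(4-i),\ldots,(2n-i)$, and dividing by $(q;q)_n$ allows us to add an arbitrary partition with at most $n$ parts, raising the entries while preserving the gap conditions. For the right-hand side, expanding each factor as a geometric series gives the generating function for partitions whose parts lie in the two residue classes $\pm(2-i) \pmod 5$.

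To show that these two generating functions agree, my plan is to follow Rogers' classical approach. Introduce the auxiliary series
\[
H_i(a) = \sum_{n \geq 0} \frac{a^n q^{n^2+(1-i)n}}{(q;q)_n}
\]
and derive a two-term functional equation by comparing $H_i(a)$ with $H_i(aq)$ term-by-term and shifting the summation index, producing a recurrence relating $H_i(a)$, $H_i(aq)$, and $H_i(aq^2)$. Iterating this recurrence and carefully tracking cancellations yields a closed form as a bilateral theta series
\[
\sum_{n \geq 0} \frac{q^{n^2+(1-i)n}}{(q;q)_n} \;=\; \frac{1}{(q;q)_\infty} \sum_{n \in \Z} (-1)^n q^{n(5n+1+2i)/2}.
\]

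The final step is to recognise this bilateral sum as an instance of the Jacobi triple product identity
\[
\sum_{n \in \Z} (-1)^n z^n q^{n(n-1)/2} \;=\; (q;q)_\infty\,(z;q)_\infty\,(q/z;q)_\infty,
\]
applied with base $q^5$ and $z = q^{2-i}$; this factorises the bilateral series as $(q^5;q^5)_\infty (q^{2-i};q^5)_\infty (q^{3+i};q^5)_\infty$. Combining with the five-fold dissection $(q;q)_\infty = (q^5;q^5)_\infty \prod_{j=1}^{4}(q^j;q^5)_\infty$ and cancelling common factors yields the infinite product on the right-hand side of \eqref{eq:rr}.

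The main obstacle will be the middle step: extracting the bilateral theta series from the functional equation for $H_i$. This is a genuinely delicate piece of $q$-series analysis; in modern treatments it is most cleanly obtained as a specialisation of Watson's $q$-analog of Whipple's transformation, or equivalently from Bailey's lemma. Once the bilateral form is established, the application of Jacobi's triple product and the subsequent cancellation against $(q;q)_\infty$ are routine.
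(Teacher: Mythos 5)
The paper does not prove Theorem \ref{th:RR}: it is quoted as a classical result of Rogers and Ramanujan, with pointers to the literature (\cite{Abook,Bressoud83,GarsiaMilne,Watson29}) for proofs, so there is no in-paper argument to compare yours against. Judged on its own, your outline follows the standard classical route (combinatorial reading of both sides, a functional equation for an auxiliary series, reduction to a bilateral theta series, Jacobi triple product), and the combinatorial interpretations of the two sides are correct. But the heart of the proof is missing: the three-term recurrence $H_i(a)=H_i(aq)+aq^{2-i}H_i(aq^2)$ by itself does not ``yield'' the bilateral theta form by iteration and cancellation --- extracting that closed form is exactly where all of the difficulty of the Rogers--Ramanujan identities lives, and you defer it to Watson's transformation or Bailey's lemma without carrying it out. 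As written, the proposal is a plan with the hard step left open, not a proof.

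There is also a concrete bookkeeping error that would derail the final step even granting the middle one. Your claimed bilateral form $\frac{1}{(q;q)_\infty}\sum_{n\in\Z}(-1)^nq^{n(5n+1+2i)/2}$ has the two cases swapped: for $i=1$ the correct exponent is $n(5n+1)/2$ and for $i=0$ it is $n(5n+3)/2$, i.e.\ the exponent should be $n(5n+3-2i)/2$. Correspondingly, the triple product must be applied with $z=q^{1+i}$, producing $(q^5;q^5)_\infty(q^{1+i};q^5)_\infty(q^{4-i};q^5)_\infty$; dividing by $(q;q)_\infty$ then leaves $1/\bigl((q^{2-i};q^5)_\infty(q^{3+i};q^5)_\infty\bigr)$ as required. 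With your choice $z=q^{2-i}$, the factors $(q^{2-i};q^5)_\infty(q^{3+i};q^5)_\infty$ cancel against the five-fold dissection of $(q;q)_\infty$ and you are left with $1/\bigl((q^{1+i};q^5)_\infty(q^{4-i};q^5)_\infty\bigr)$ --- that is, you would ``prove'' each identity with the other's product side, which is false. The fix is mechanical, but the computation as stated does not reach the right-hand side of \eqref{eq:rr}.
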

By interpreting both sides of \eqref{eq:rr} as generating functions for partitions, MacMahon \cite{MacMahon} gave the following combinatorial version of the identities.
\begin{theorem}[Rogers--Ramanujan identities, partition version]
\label{th:RRcomb}
Let $i=0$ or $1$. For every natural number $n$, the number of partitions of $n$ such that the difference between two consecutive parts is at least $2$ and the part $1$ appears at most $i$ times is equal to the number of partitions of $n$ into parts congruent to $\pm 2-i \mod 5.$
\end{theorem}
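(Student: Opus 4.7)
The plan is to derive Theorem~\ref{th:RRcomb} directly from the analytic identity \eqref{eq:rr} by interpreting each side as a generating function for one of the two families of partitions, and then equating coefficients of $q^n$ using Theorem~\ref{th:RR}.

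First I would handle the right-hand side of \eqref{eq:rr}. Expanding geometrically,
$$\frac{1}{(q^a;q^5)_\infty} = \prod_{k \geq 0}\frac{1}{1-q^{a+5k}} = \sum_{\lambda} q^{|\lambda|},$$
where $\lambda$ runs over partitions into parts of the form $a+5k$, $k\geq 0$. Applying this with $a = 2-i$ and $a = 3+i$ shows that
$$\frac{1}{(q^{2-i};q^5)_\infty (q^{3+i};q^5)_\infty}$$
is the generating function for partitions whose parts are congruent to $2-i$ or $3+i$ modulo $5$, i.e.\ to $\pm(2-i) \bmod 5$. This is exactly the congruence-side count of Theorem~\ref{th:RRcomb}.

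Next I would interpret the left-hand side. Fix $s\geq 0$ and consider partitions $\lambda = (\lambda_1,\ldots,\lambda_s)$ with $\lambda_k - \lambda_{k+1}\geq 2$ for $1\leq k\leq s-1$ and with the part $1$ appearing at most $i$ times (equivalently, $\lambda_s\geq 2-i$). The componentwise smallest such partition is $(2s-i,\,2s-2-i,\,\ldots,\,4-i,\,2-i)$ when $i=0$ and $(2s-1,\,2s-3,\,\ldots,\,3,\,1)$ when $i=1$; in either case its weight equals $s^2 + (1-i)s$. Subtracting this minimum termwise from $\lambda$ defines a bijection between such $\lambda$ and ordinary partitions with at most $s$ parts. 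Since the latter have generating function $1/(q;q)_s$, the generating function for $\lambda$ with exactly $s$ parts is $q^{s^2+(1-i)s}/(q;q)_s$, and summing over $s\geq 0$ yields the left-hand side of \eqref{eq:rr}, which is thus the generating function for the difference-side count.

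Once both generating functions are identified, Theorem~\ref{th:RR} gives the equality of the two power series, and comparing the coefficients of $q^n$ yields Theorem~\ref{th:RRcomb}. The only delicate step is the bijection between difference-conditioned partitions with exactly $s$ parts and partitions with at most $s$ parts; but this is a routine staircase subtraction and presents no genuine obstacle, so I do not expect any real difficulty in the argument.
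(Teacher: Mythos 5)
Your proposal is correct and is precisely the classical MacMahon argument that the paper itself invokes (it cites \cite{MacMahon} rather than reproving the result): the product side is read off as the generating function for parts $\equiv \pm(2-i) \pmod 5$, and the sum side comes from the staircase subtraction $\lambda \mapsto \lambda - (2s-i, 2s-2-i, \ldots, 2-i)$, whose minimal weight $s^2+(1-i)s$ matches the exponent in \eqref{eq:rr}. No gaps; the identification of the two generating functions plus Theorem~\ref{th:RR} gives the coefficient-wise equality.
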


More generally, a partition identity of the Rogers-Ramanujan type is a theorem stating that for all $n$, the number of partitions of $n$ satisfying some difference conditions equals the number of partitions of $n$ satisfying some congruence conditions.
Dozens of proofs of these identities have been given, using different techniques, see for example~\cite{Abook,Bressoud83,GarsiaMilne,Watson29}. But the starting point of our discussion is a representation theoretic proof due to Lepowsky and Wilson \cite{Lepowsky,Lepowsky2}. 

First, Lepowsky and Milne \cite{Le-Mi1,Le-Mi2} noticed that the product side of the Rogers-Ramanujan identities \eqref{eq:rr} multiplied by the ``fudge factor'' $1/(q;q^2)_{\infty}$ is equal to the principal specialisation of the Weyl-Kac character formula for level $3$ standard modules of the affine Lie algebra $A_1^{(1)}.$

Then, Lepowsky and Wilson \cite{Lepowsky,Lepowsky2} gave an interpretation of the sum side by constructing a basis of these standard modules using vertex operators. 
Very roughly, they proceed as follows. They start with a spanning set of such a module, indexed by monomials of the form $Z_1^{f_1} \dots Z_s^{f_s}$ for $s, f_1, \dots , f_s \in \N$. Then by the theory of vertex operators, there are some relations between these monomials. This allows them to reduce the spanning set by removing the monomials containing $Z_{j}^2$ and $Z_{j} Z_{j+1}$. The last step is then to prove that this reduced family of monomials is actually free, and therefore a basis of the representation. The connection to Theorem \ref{th:RR} is then done by noting that monomials $Z_1^{f_1} \dots Z_s^{f_s}$ which do not contain $Z_{j}^2$ or $Z_{j} Z_{j+1}$ for any $j$ are in bijection with partitions which do not contain twice the part $j$ or both the part $j$ and $j+1$ for any $j$, i.e. partitions with difference at least $2$ between consecutive parts.

The theory of vertex operator algebras developed by Lepowsky and Wilson turned out to be very influential: for example, it was used by Frenkel, Lepowsky, and Meurman to construct a natural representation of the Monster finite simple group \cite{FLM}, and was key in the work of Borcherds on vertex algebras and his resolution of the Conway-Norton monstrous moonshine conjecture \cite{Borcherds}.

\subsubsection{Capparelli's identity}
Following Lepowsky and Wilson's discovery, several other representation theorists studied other Lie algebras or representations at other levels, and discovered new interesting and intricate partition identities, that were previously unknown to combinatorialists, see for example \cite{Capparelli,Meurman,Meurman2,Meurman3,Nandi,Primc1,PrimcSikic,Siladic}, 

After Lepowsky and Wilson's work,  Capparelli \cite{Capparelli} was the first to conjecture a new identity, by studying the level $3$ standard modules of the twisted affine Lie algebra $A_2^{(2)}$. It was first proved combinatorially by Andrews in \cite{Andrewscap}, then refined by Alladi, Andrews and Gordon in \cite{AllAndGor} using the method of weighted words, and finally proved by Capparelli \cite{Capparelli2} and Tamba and Xie \cite{Xie} via representation theoretic techniques. Later, Meurman and Primc  \cite{Meurman2} showed that Capparelli's identity can also be obtained by studying the $(1,2)$-specialisation of the character formula for the level $1$ modules of $A_1^{(1)}$.
Capparelli's original identity can be stated as follows.

\begin{theorem}[Capparelli's identity (Andrews 1992)]
\label{th:capa}
Let $C(n)$ denote the number of partitions of $n$ into parts $>1$ such that parts differ by at least $2$, and at least $4$ unless consecutive parts add up to a multiple of $3$.  Let $D(n)$ denote the number of partitions of $n$ into distinct parts not congruent to $\pm 1 \pmod{6}$. Then for every positive integer $n$, $C(n) = D(n)$.   
\end{theorem}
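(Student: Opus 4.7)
The plan is to follow the method of weighted words of Alladi--Andrews--Gordon~\cite{AllAndGor} and prove Capparelli's identity via a three-coloured refinement. I would colour each part of a $C$-partition according to its residue modulo~$3$: parts $\equiv 1 \pmod 3$ get colour $a$, parts $\equiv 2 \pmod 3$ get colour $b$, and parts $\equiv 0 \pmod 3$ get colour $c$. A direct check on residues shows that two consecutive parts $\lambda_i > \lambda_{i+1}$ with small gap $\lambda_i - \lambda_{i+1} \in \{2,3\}$ have sum divisible by~$3$ exactly when the top-to-bottom colour pattern is $(a,b)$ with gap~$2$ or $(c,c)$ with gap~$3$. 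Thus Capparelli's difference condition becomes a purely combinatorial constraint on the sequence of colours: all gaps are $\geq 2$, and any gap strictly less than~$4$ must be one of these two exceptional patterns.

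With this refinement in hand, I would introduce the generating function
\[
F(x; a,b,c; q) = \sum_{\lambda} x^{\ell(\lambda)} q^{|\lambda|} a^{\#_a(\lambda)} b^{\#_b(\lambda)} c^{\#_c(\lambda)},
\]
summed over refined $C$-partitions, and derive a linear $q$-difference equation for $F$ in the variable~$x$ by stripping off the largest part and case-splitting on whether it initiates a close pair. The main obstacle of the proof is to solve this $q$-difference equation and show that $F$ has an explicit product form in $a,b,c,q$. I would approach this by iterating the relation and matching the resulting series against a classical $q$-series identity such as Jacobi's triple product, as is standard in the weighted-words literature.

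Once the product form of $F$ is established, the remaining step is to specialise $a$, $b$, $c$ to appropriate monomials in $q$ and set $x=1$. This collapses the colour structure so that the sum side reduces to $\sum_n C(n) q^n$, while the product side simplifies to $\prod_{k \geq 1,\, k \not\equiv \pm 1 \pmod 6}(1+q^k) = \sum_n D(n) q^n$. Comparing coefficients of $q^n$ then yields $C(n) = D(n)$ for every~$n$.

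As a fallback — and in the spirit of this paper — one could instead proceed bijectively: the $n=2$ case of the authors' coloured Primc-type generalisation together with its Frobenius-partition representation should yield a direct bijection between $C$-partitions and certain coloured Frobenius partitions, whose generating function factorises transparently and specialises to the product for $D$ after matching the colour variables. This route trades the analytic difficulty of solving a $q$-difference equation for the combinatorial task of constructing the bijection explicitly.
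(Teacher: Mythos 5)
You have set up the coloured refinement correctly: your check that the only admissible close pairs are (residue $1$, residue $2$) with gap $2$ and (residue $0$, residue $0$) with gap $3$ is right, and it is exactly the observation that turns Theorem \ref{th:capa} into the weighted-words statement of Alladi--Andrews--Gordon (Theorem \ref{th:capa-dou}, with matrix \eqref{Capmatrix} after undoing the dilation $k_a\to 3k-1$, $k_c\to 3k$, $k_d\to 3k+1$). The final specialisation step is also correct: the product $(-q)_{\infty}(-aq;q^2)_{\infty}(-dq;q^2)_{\infty}$ does collapse to $\prod_{k\not\equiv\pm1\ (\mathrm{mod}\ 6)}(1+q^k)=\sum_n D(n)q^n$ under $q\to q^3$, $a\to q^{-1}$, $d\to q$. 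The gap is in the middle: everything difficult about Capparelli's identity lives in the step you defer with ``iterate the relation and match against Jacobi's triple product.'' Stripping off the largest part of a partition satisfying the difference conditions \eqref{Capmatrix} does not give a single linear $q$-difference equation but a coupled system of functional equations, one for each colour that the extremal part may carry, and resolving that system into the closed product form is precisely the technical content of Andrews' proof \cite{Andrewscap} and of the Alladi--Andrews--Gordon argument \cite{AllAndGor}. As written, your proposal assumes the product form rather than establishing it, so it is an outline of a known proof strategy rather than a proof.

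Note also that the paper itself does not prove Theorem \ref{th:capa}; it is quoted as background (Andrews 1992, refined by Alladi--Andrews--Gordon 1995). The paper's genuinely new route to it is the one you mention only as a fallback: the bijection $\Phi$ of Theorem \ref{th:bij} between $\mathcal{P}_2$ and $\mathcal{C}_2\times\mathcal{P}^0$, combined with the Frobenius-partition evaluation of the $\mathcal{P}_2$ generating function (Theorems \ref{th:Primcgene} and \ref{th:Capgene} at $n=2$), which replaces the functional-equation analysis by an explicit colour-modification bijection plus a constant-term/Jacobi-triple-product computation. If you want a self-contained proof in the spirit of this paper, that fallback is the route to develop in full; as it stands, neither of your two routes is carried far enough to close the argument.
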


In this paper, we will mostly be interested in the weighted words version of Theorem \ref{th:capa}, which was obtained by Alladi, Andrews and Gordon in \cite{AllAndGor}.
The principle of the method of weighted words, introduced by Alladi and Gordon to refine Schur's identity\cite{Alladi}, is to give an identity on coloured partitions, which under certain transformations on the coloured parts, becomes the original identity. We now describe Alladi, Andrews, and Gordon's refinement of Capparelli's identity (slightly reformulated by the first author in \cite{DousseCapaPrimc}).

Consider partitions into natural numbers in three colours, $a$, $c$, and $d$ (the absence of the colour $b$ will be made clear shortly, when we will mention the connection with Primc's identity),
with the order
\begin{equation}
\label{order}
1_a < 1_c < 1_d < 2_a < 2_c < 2_d < \cdots ,
\end{equation}  
satisfying the difference conditions in the matrix
\begin{equation} \label{Capmatrix}
C_2=\bordermatrix{\text{} & a & c & d \cr a & 2 & 2 & 2 \cr c & 1 & 1 & 2 \cr d & 0 & 1 & 2},
\end{equation} 
where the entry $(x,y)$ gives the minimal difference between consecutive parts of colours $x$ and $y$. 

The non-dilated version of Capparelli's identity can be stated as follows.
\begin{theorem}[Alladi--Andrews--Gordon 1995]
Let $C_2(n;i,j)$ denote the number of partitions of $n$ into coloured integers satisfying the difference conditions in matrix $C_2$, having $i$ parts coloured $a$ and $j$ parts coloured $d$.
\label{th:capa-dou}
We have
\begin{equation*}
\sum_{n,i,j \geq 0} C(n;i,j)a^i d^j q^n =  (-q)_{\infty}(-aq;q^2)_{\infty}(-dq;q^2)_{\infty}.
\end{equation*}
\end{theorem}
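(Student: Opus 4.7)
The plan is to prove this identity by the method of weighted words of Alladi--Andrews--Gordon, i.e.\ by showing that both sides satisfy the same $q$-difference system with the same initial value. First I would note that the product side has a transparent combinatorial meaning: $(-q)_{\infty}(-aq;q^{2})_{\infty}(-dq;q^{2})_{\infty}$ counts triples $(\mu,\alpha,\delta)$ where $\mu$ is a partition into distinct positive parts, and $\alpha,\delta$ are partitions into distinct \emph{odd} positive parts tagged with $a$ and $d$ respectively, with weight $q^{|\mu|+|\alpha|+|\delta|}a^{\ell(\alpha)}d^{\ell(\delta)}$. The task is thus to match the sum-side count of $C_{2}$-valid coloured partitions with this count of triples.

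To set up the recursion, I would split the sum $F(a,d;q)=\sum C_{2}(n;i,j)\,a^{i}d^{j}q^{n}$ into three pieces $F^{(a)},F^{(c)},F^{(d)}$ according to the colour of the smallest part, and then peel off that smallest part. The matrix $C_{2}$ dictates which colour and which integer the new smallest part may have, and conversion to the variables $(aq,dq,q)$ yields a coupled linear system relating $F^{(x)}(a,d;q)$ to the $F^{(y)}(aq,dq;q)$. The crucial structural feature to exploit is the entry $C_{2}(d,a)=0$: a part $k_{d}$ may sit immediately above a part $k_{a}$ at the same integer (a ``same-integer pair''), while all other pairs of consecutive parts force a strict jump in the integer value. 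Once the system is in place, I would verify that the product side satisfies the same equations by repeatedly applying the elementary relations
\[
(-xq;q^{2})_{\infty}=(1+xq)(-xq^{3};q^{2})_{\infty},\qquad (-q)_{\infty}=(1+q)(-q^{2};q)_{\infty},
\]
and matching the value at $q=0$ (both sides equal $1$) to conclude.

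The main obstacle is the bookkeeping created by the same-integer pairs: on the sum side they couple the statistics tracking the $a$- and $d$-parts in a non-trivial way, whereas the product side factorises cleanly into three independent contributions $\mu,\alpha,\delta$. A more combinatorial alternative, which I would pursue if the system becomes unwieldy, is a direct bijection: decompose each $C_{2}$-partition into its same-integer pairs $(k_{d},k_{a})$ and its isolated $a$-, $c$- and $d$-parts, then apply a parity-adjusting rewriting that replaces even $a$- and $d$-parts by odd ones while inserting compensating $c$-parts into $\mu$, and finally verify that the resulting triple $(\mu,\alpha,\delta)$ has the required distinctness. The hard part in either route is showing that the asymmetric entry $C_{2}(d,a)=0$ is precisely what is needed to restore the symmetric factorisation on the product side.
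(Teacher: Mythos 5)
First, a point of comparison: the paper does not actually prove Theorem~\ref{th:capa-dou} — it is quoted from Alladi--Andrews--Gordon \cite{AllAndGor} — so there is no internal proof to measure your argument against. The closest thing in the paper is the $n=2$ case of Theorem~\ref{th:Capgene}, which re-derives this identity by an entirely different route: the bijection of Theorem~\ref{th:bij} between $\mathcal{P}_2$ and $\mathcal{C}_2\times\mathcal{P}^0$, followed by the coloured-Frobenius computation and Jacobi's triple product. Your plan — a coupled system of $q$-difference equations in the spirit of the method of weighted words — is therefore a genuinely different approach from anything in this paper, and it is essentially the historically original one, so the strategy itself is sound.

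As written, however, the proposal has a concrete gap at its central step. You propose to peel off the smallest part and obtain a system relating $F^{(x)}(a,d;q)$ to the $F^{(y)}(aq,dq;q)$. The substitution $a\mapsto aq$, $d\mapsto dq$ is meant to implement ``subtract $1$ from every remaining part'', but it only accounts for the weight lost by the $a$- and $d$-coloured parts: a remaining part coloured $c$ also loses $1$ from its size, and nothing in a two-variable $F(a,d;q)$ records how many such parts there are. So the system does not close in the variables $a,d$; one must either introduce a third colour variable $c$ (transforming as $c\mapsto cq$) and set $c=1$ only at the end — which is what Alladi, Andrews and Gordon in fact do, and which changes the target to a three-variable refinement whose product side must be re-identified — or else track the total number of parts. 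Beyond this, the proposal defers all of the actual content: the equations are never written down, the verification that the product satisfies them is asserted rather than performed, and no argument is given that the system together with the value at $q=0$ determines the power series uniquely (without which the comparison proves nothing). The difficulty you correctly single out — the same-integer pairs $k_d+k_a$ permitted by the entry $0$ in row $d$, column $a$ of \eqref{Capmatrix} — is real, but identifying it is not the same as resolving it, and the sketched bijective alternative is too vague to assess.
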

Performing the dilations 
$$q \rightarrow q^3, \quad a \rightarrow aq^{-1}, \quad d \rightarrow dq,$$
which correspond to the following transformations on the parts of the partitions:
$$k_a \rightarrow (3k-1)_a,  \quad k_c \rightarrow 3k, \quad k_d \rightarrow (3k+1)_d,$$
we obtain a refinement of Capparelli's original identity. Other dilations can lead to infinitely many other (but related) partition identities. Moreover, finding such refinements and non-dilated versions of partition identities can be helpful to find bijective proofs of them. For example, Siladi\'c's identity \cite{Siladic} was also discovered by using representation theory. Then, based on a non-dilated version of the theorem due to the first author \cite{Doussesil2}, the second author \cite{Konan1} was recently able to give a bijective proof and a broad generalisation of the identity.
For more on combinatorial refinements of partition identities, see for example \cite{Alladi,Alladi1,AllAndGor,AAB,Corteel,DousseFPSAC,DoussePrimc,Dousseunif,DousseCapaPrimc,Konan2} .

\subsubsection{Primc's identities}
Another way to obtain Rogers-Ramanujan type partition identities using representation theory is the theory of perfect crystals of affine Lie algebras. Much more detail on crystals is given in the second paper \cite{DK19-2} of this series, but the rough idea is the following. The generating function for partitions with congruence conditions, which is always an infinite product, is still obtained via a specialisation of the Weyl-Kac character formula. The equality with the generating function for partitions with difference conditions is established through the crystal base character formula of Kang, Kashiwara, Misra, Miwa, Nakashima, and Nakayashiki \cite{KMN2}. This formula expresses, under certain specialisations, the character as the generating function for partitions satisfying difference conditions given by energy matrices of perfect crystals.

The second identity which we study in this paper, due to Primc \cite{Primc}, was obtained in that way by studying crystal bases of $A_1^{(1)}.$
The energy matrix of the perfect crystal coming from the tensor product of the vector representation and its dual is given by
\begin{equation} \label{Primcmatrix4}
P_2=\bordermatrix{\text{} & a & b & c & d \cr a & 2&1&2&2 \cr b &1&0&1&1 \cr c &0&1&0&2 \cr d&0&1&0&2}.
\end{equation}
Let $P(n;i,j,k,\ell)$ denote the number of partitions of $n$ into four colours $a,b,c,d,$ with $i$ (resp. $j,k,\ell$) parts coloured $a$ (resp. $b,c,d$), satisfying the difference conditions of the matrix $P_2$. Then the crystal base character formula and the Weyl-Kac character formula imply that under the dilations
\begin{equation} 
\label{eq:Primcdilation}
k_{a} \rightarrow 2k-1, \quad k_{b} \rightarrow 2k, \quad k_{c} \rightarrow 2k, \quad k_{d} \rightarrow 2k+1,
\end{equation}
the generating function for these coloured partitions becomes $1/(q;q)_{\infty}.$
\begin{theorem}[Primc 1999]
\label{th:primc}
We have
$$
\sum_{n,i,j,k,\ell}P(n;i,j,k,\ell) q^{2n-i+\ell}= \frac{1}{(q;q)_{\infty}}, 
$$
\end{theorem}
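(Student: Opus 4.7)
The plan is to prove Primc's identity by constructing an explicit weight-preserving bijection between the coloured partitions enumerated by $P(n;i,j,k,\ell)$ (weighted by $q^{2n-i+\ell}$) and $2$-coloured Frobenius-type symbols, whose generating function is well-known to be $1/(q;q)_{\infty}$. A $2$-coloured Frobenius partition of weight $N$ is a pair of strictly decreasing sequences of non-negative integers of common length $s$:
$$\alpha_1 > \alpha_2 > \dots > \alpha_s \geq 0 \qquad \text{and} \qquad \beta_1 > \beta_2 > \dots > \beta_s \geq 0,$$
with total weight $N = s + \sum_i \alpha_i + \sum_i \beta_i$. A direct calculation (grouping by $s$) gives
$$\sum_{s \geq 0} q^s \left(\frac{q^{\binom{s}{2}}}{(q;q)_s}\right)^{\!2} = \sum_{s \geq 0}\frac{q^{s^2}}{(q;q)_s^2} = \frac{1}{(q;q)_\infty}$$
by the classical Durfee-square decomposition, which is precisely the right-hand side of the theorem.

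To build the bijection, I would process the parts of a Primc coloured partition $\mu$ in decreasing coloured order and attach one new column to the growing pair $((\alpha_i),(\beta_i))$ at each step. The key observation is that the four colours $a,b,c,d$ naturally correspond to the four elements of the $2\times 2$ grid $\{0,1\}^{2}$, encoding the four possible joint moves on the pair of sequences. The weight contribution $2k-1$ for an $a$-part and $2k+1$ for a $d$-part (compared with the symmetric $2k$ for $b$ or $c$) captures the $-i + \ell$ correction in the statistic $2n - i + \ell$ precisely by recording which of the two rows grows at that step and how the two increments are distributed.

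The main obstacle will be designing the column-insertion rule so that the $P_2$ difference conditions on consecutive coloured parts translate exactly into the strict-decrease conditions $\alpha_i > \alpha_{i+1}$ and $\beta_i > \beta_{i+1}$ on the two Frobenius rows. The delicate cases are the zero entries of $P_2$, namely $P_2(b,b)=P_2(c,c)=P_2(c,a)=P_2(d,a)=P_2(d,c)=0$: here consecutive coloured parts of equal value must correspond to a new Frobenius column in which one row has strictly increased while the other has \emph{stalled}, and the insertion rule must still preserve the strict decrease of the stalled row. Once the insertion rule is in place, invertibility follows by peeling one column at a time from the Frobenius symbol (the column entries uniquely recovering both the colour and the integer value of a coloured part), and the weight identity $2n - i + \ell = s + \sum_i \alpha_i + \sum_i \beta_i$ is a direct colour-by-colour check.
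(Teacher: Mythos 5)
Your high-level strategy --- route the Primc partitions through Frobenius symbols and use the Durfee-square/Jacobi triple product fact that Frobenius partitions are generated by $1/(q;q)_\infty$ --- is exactly the paper's strategy (there the $\mathcal{P}_2$ side is matched with $4$-coloured Frobenius partitions, which under the principal specialisation reduce to ordinary ones). However, the bijection you sketch cannot exist in the form you describe, and this is not a repairable detail but the central difficulty. You propose to attach \emph{one Frobenius column per coloured part}, so that the number of columns $s$ equals the number of parts of the Primc partition and $2n-i+\ell = s+\sum\alpha_i+\sum\beta_i$. Already at weight $3$ this fails: the three Primc partitions of dilated weight $3$ are $1_d$, $2_a$, and $1_c+1_a$ (the last is legal since $P_2(c,a)=0$, and contributes $2+1=3$), having $1$, $1$, and $2$ parts respectively, whereas all three Frobenius partitions of weight $3$ have exactly one column ($s=2$ forces weight at least $2+\alpha_1+\beta_1\geq 4$). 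So the joint distribution of (weight, number of parts/columns) differs on the two sides, and for $1_c+1_a$ your weight identity would require $\sum\alpha+\sum\beta=1$ with two strictly decreasing rows of non-negative integers, which is impossible. No choice of column-insertion rule can fix this.

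The paper is explicit about this obstruction: it notes that coloured partitions and coloured Frobenius partitions with the \emph{same colour sequence} are not equinumerous (already a single free-colour part gives $q/(1-q)$ versus $q/(1-q)^2$ before specialisation). Its resolution is to give a part-to-column bijection only for \emph{minimal} partitions with a fixed colour sequence (Proposition \ref{prop:bijDelta'}), and then to prove equality of the full generating functions kernel by kernel (Theorem \ref{th:Primcgenekernel}), via the $q$-binomial identity \eqref{eq:goal}; the resulting correspondence is not length-preserving. If you want a genuinely bijective proof you would need a construction that changes the number of columns relative to the number of parts (e.g.\ merging patterns such as $1_c+1_a$ into a single column), and your proposal contains no mechanism for that.
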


By doing the same approach in the affine Lie algebra $A_2^{(1)},$ Primc also gave the following energy matrix (where we already use the colour names from our generalisation):
\begin{equation} \label{Primcmatrix9}
P_3=\bordermatrix{\text{} & a_2b_0 & a_2b_1 & a_1b_0 & a_0b_0 & a_2b_2 & a_1b_1 & a_0b_1 & a_1b_2 & a_0b_2 \cr a_2b_0 & 2&2&2&1&2&2&2&2&2 \cr a_2b_1 &1&2&1&1&2&1&2&2&2 \cr a_1b_0 &1&1&2&1&1&2&2&2&2 \cr a_0b_0 & 1&1&1&0&1&1&1&1&1 \cr a_2b_2 &0&0&1&1&0&1&1&2&2 \cr a_1b_1 &0&1&0&1&1&0&2&1&2 \cr a_0b_1 &0&1&0&1&1&0&2&1&2 \cr a_1b_2 &0&0&1&1&0&1&1&2&2 \cr a_0b_2 &0&0&0&1&0&0&1&1&2}.
\end{equation}

\begin{theorem}[Primc 1999]
\label{th:Primc9}
Under the dilations
\begin{equation} 
\label{eq:Primcdilation9}
\begin{array}{lll}
k_{a_2b_0} \rightarrow 3k-2, &\quad k_{a_2b_1} \rightarrow 3k-1, &\quad \quad k_{a_1b_0} \rightarrow 3k-1,\\
k_{a_0b_0} \rightarrow 3k, &\quad k_{a_1b_1} \rightarrow 3k, &\quad \quad k_{a_2b_2} \rightarrow 3k,\\
k_{a_0b_1} \rightarrow 3k+1, &\quad k_{a_1b_2} \rightarrow 3k+1, &\quad \quad k_{a_0b_2} \rightarrow 3k+2,\\
\end{array}
\end{equation}
the generating function for $9$-coloured partitions satisfying the difference conditions of \eqref{Primcmatrix9} becomes  $1/(q;q)_{\infty}.$
\end{theorem}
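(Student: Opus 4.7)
The plan is to follow Primc's original representation-theoretic argument, since Theorem~\ref{th:Primc9} is itself a particular case of the broader combinatorial results established later in this paper. The first step is to identify the nine colours $a_ib_j$, for $i,j \in \{0,1,2\}$, with the elements of the tensor product $B \otimes B^*$, where $B$ is the crystal of the vector representation of $A_2^{(1)}$ and $B^*$ is its dual. A direct inspection (to be performed by hand on the nine-element set) shows that the matrix $P_3$ records the values of the energy function $H$ on consecutive pairs of elements, normalised so that the minimal energy equals $0$.

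The second step is to apply the crystal base character formula of \cite{KMN2} to the basic level-one $A_2^{(1)}$-module $V(\Lambda_0)$. This formula expresses the character of $V(\Lambda_0)$ as a sum over eventually-ground-state paths in the crystal, weighted by the cumulative energy along the path. Such paths are in obvious bijection with the $9$-coloured partitions satisfying the difference conditions of $P_3$: the $t$th colour in the path becomes a part of that colour, and the energy of the pair of consecutive elements dictates precisely the gap between two consecutive parts. The specific dilation $k_{a_ib_j} \mapsto 3k + (j-i)$ is then recognised as the principal specialisation of the crystal weight into the grading variable $q$.

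Third, one applies the Weyl--Kac character formula to the same module $V(\Lambda_0)$. Under the same principal specialisation, and after clearing the standard ``fudge factor'' arising from the denominator of the character, the right-hand side collapses to $1/(q;q)_{\infty}$, which matches the generating function produced by the sum side in the previous step.

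The chief technical obstacle is the explicit verification that the energy function on $B \otimes B^*$ agrees entry by entry with $P_3$: this requires following the action of the combinatorial $R$-matrix across the nine-element tensor product and checking each of the $81$ entries, together with aligning the shifts $(j-i)$ with the principal grading on crystal elements. An alternative strategy, more in keeping with the spirit of the present paper, is to deduce Theorem~\ref{th:Primc9} purely combinatorially as the specialisation at $n=3$ of the $n^2$-coloured generalisation of Primc's identity that we will prove in the next sections, thereby bypassing the crystal machinery entirely.
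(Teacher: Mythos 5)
The paper never proves Theorem \ref{th:Primc9} by the route you propose: the theorem is quoted from Primc and then re-obtained as the case $n=3$ of Corollary \ref{cor:primcspec}, which is the specialisation $q\to q^{3}$, $a_i\to q^{-i}$ of Theorem \ref{th:Primcgene}. That proof is purely combinatorial: partitions of $\mathcal{P}_3$ are grouped by their kernel (Section \ref{sec:redcolseq}), the generating function for each kernel is shown to match that of $9$-coloured Frobenius partitions (Theorem \ref{th:Primcgenekernel}), the constant term $[x^0]\prod_{i=0}^{2}(-xa_iq;q)_{\infty}(-x^{-1}a_i^{-1};q)_{\infty}$ is read off, and one application of Jacobi's triple product \eqref{eq:JTP} yields $1/(q;q)_{\infty}$. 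Your main plan --- identify $P_3$ with the energy function on $\B\otimes\B^{\vee}$, then invoke the crystal base character formula of \cite{KMN2} and the Weyl--Kac formula --- is Primc's original argument; it is a legitimate proof, but it is exactly the machinery this paper is written to bypass (even the verification that $\Delta$ is an energy matrix is deferred to the companion paper \cite{DK19-2}). The alternative you mention in your final sentence, deducing the theorem as the $n=3$ instance of the combinatorial generalisation, is the paper's actual route.

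One step of your sketch needs more care than you give it. The principally specialised character of a single level-one highest-weight $A_2^{(1)}$-module is $(q^3;q^3)_{\infty}/(q;q)_{\infty}$ (compare Corollary \ref{cor:capaspec}), not $1/(q;q)_{\infty}$; the missing factor $1/(q^3;q^3)_{\infty}$ is precisely the $\mathcal{P}^0$ component in the decomposition of Theorem \ref{th:bij}. In the path picture this reflects the fact that the specialisation \eqref{eq:Primcdilation9} does not kill the classical weight lattice sum in the Kac--Peterson theta quotient, and that a finite coloured partition does not single out one ground-state tail. So the assertion that the Weyl--Kac formula ``collapses to $1/(q;q)_{\infty}$ after clearing the standard fudge factor'' is the one place where the argument cannot be waved through: making that factor precise is the substantive content of the character-theoretic proof, and it is also where the combinatorial proof via Frobenius partitions and \eqref{eq:JTP} is considerably more direct.
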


When seeing these two theorems of Primc, one might find it surprising that the generating function for partitions with such intricate difference conditions simply becomes $1/(q;q)_{\infty},$ the generating function for unrestricted partitions.
However recently, the first author and Lovejoy \cite{DoussePrimc} gave a weighted words version of Theorem \ref{th:primc}.
\begin{theorem}[Dousse-Lovejoy 2018, non-dilated version of Primc's identity]
\label{th:primcnondil}
Let $P(n;i,j,k,\ell)$ be defined as above. We have
$$\sum_{n,i,j,k,\ell} P(n;i,j,k,\ell) q^n a^i c^{k} d^{\ell} = \frac{(-aq;q^2)_{\infty}(-dq;q^2)_{\infty}}{(q;q)_{\infty}(cq;q^2)_{\infty}}.$$
\end{theorem}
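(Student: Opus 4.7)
The plan is to attempt a bijective proof, since the product-side factorisation strongly suggests a decomposition of $P_2$-valid partitions into four independent components corresponding to the four factors. Specifically, I would seek a bijection from $P_2$-valid partitions with $i$ parts coloured $a$, $j$ coloured $b$, $k$ coloured $c$, and $\ell$ coloured $d$, to quadruples $(\alpha,\delta,\gamma,\mu)$, where $\alpha$ is a partition into $i$ distinct odd parts (accounting for the factor $(-aq;q^2)_\infty$), $\delta$ is a partition into $\ell$ distinct odd parts (accounting for $(-dq;q^2)_\infty$), $\gamma$ is a partition into $k$ odd parts with repetition allowed (accounting for $1/(cq;q^2)_\infty$), and $\mu$ is an unrestricted partition (accounting for $1/(q;q)_\infty$), with total weight preserved.

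The construction would proceed by processing the parts of the partition from smallest to largest. The guiding heuristic, suggested by the dilation \eqref{eq:Primcdilation}, is that $b$-parts act as ``buffers'': a local block consisting of, for instance, an $a$-part at level $k$ together with a $b$-part at level $k-1$ naturally encodes an odd integer $2k-1$ in the image. I would identify such canonical blocks, peeling them off one at a time while checking that the residual partition still satisfies the $P_2$ conditions. The $b$-parts that are not consumed in any such block would end up in the unrestricted partition $\mu$, while analogous peelings starting with $d$-parts or $c$-parts would contribute to $\delta$ and $\gamma$ respectively.

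Alternatively, an analytic proof can be obtained by setting up a system of $q$-difference equations for auxiliary generating functions $F_x(a,c,d;q)$ indexed by the colour $x$ of the smallest (or largest) part of the partition. Each entry of $P_2$ dictates how shifting a partition by removing one part of colour $x$ interacts with the allowed colours and sizes of the remaining smallest part, producing a coupled linear system in $F_a,F_b,F_c,F_d$. One would then solve this system in closed form and compare with the standard $q$-series expansion of the right-hand side, perhaps via Heine-type transformations or direct verification that the RHS satisfies the same recurrences with the same initial values.

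The main obstacle, under either approach, is the tight interdependence of the four colours. The zero entries of $P_2$ for pairs such as $(c,a),(c,c),(d,a),(d,c)$ allow parts of distinct colours to share the same integer value, producing rich local configurations that must be decomposed coherently. Handling the $b$-parts, which repeat freely (diagonal entry $0$) yet interact with every other colour at distance $1$, is the subtlest point: they must simultaneously account for the unrestricted factor $1/(q;q)_\infty$ and serve as the pairing agents that, together with the $a$-, $c$-, or $d$-parts they abut, produce the odd summands appearing in $\alpha$, $\gamma$, and $\delta$.
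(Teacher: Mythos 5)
This theorem is not proved in the paper at all: it is quoted as background with a citation to Dousse--Lovejoy \cite{DoussePrimc}, so there is no internal proof to compare against. Judged on its own terms, your proposal is not yet a proof but a pair of proof \emph{plans}, and the essential content of either plan is missing. For the bijective route, you correctly observe that weight bookkeeping forces an $a$-part at level $k$ to be paired with something contributing $k-1$ in order to produce the odd part $2k-1$, and you nominate adjacent $b$-parts as the pairing agents; but you never specify what happens when no such $b$-part is available (e.g.\ the partition $1_a$, or $3_a+1_a$, which contain no $b$-parts at all yet must map to distinct odd parts $1$ and $3+\cdots$), nor how the "peeling" interacts with the zero entries of $P_2$ that let $c$- and $d$-parts share values with $a$-parts. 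You flag exactly these points as "the main obstacle" and "the subtlest point" without resolving them, so the construction is not actually defined. For the analytic route, you state that one "would" set up a coupled system of $q$-difference equations in $F_a,F_b,F_c,F_d$ and solve it, but you write down neither the system nor its solution, and the verification that the right-hand side satisfies the same recurrences is left entirely open. A referee could not reconstruct a complete argument from what is written.

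For what it is worth, the second strategy is the one closest in spirit to how this identity is actually established in \cite{DoussePrimc} (a weighted-words argument in the tradition of Alladi--Gordon, driven by recurrences on the generating functions refined by the colour and size of the extremal part), and also closest to how the present paper handles the general $n$ case, where the authors avoid a direct bijection with the product side altogether: they instead group partitions by their \emph{kernel} (reduced colour sequence), compute the generating function for each kernel class via minimal partitions and insertion of free colours (Proposition \ref{prop:main}), and match it against the corresponding class of coloured Frobenius partitions (Theorem \ref{th:Primcgenekernel}). That the full four-variable generating function for $\mathcal{P}_2$ is \emph{not} an infinite product (as noted after Theorem \ref{th:primcnondil}) is a warning sign for your first strategy: any bijection onto four independent components must discard the $b$-statistic in a nontrivial way, which is precisely the part you have not constructed.
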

Performing the dilations of \eqref{eq:Primcdilation} indeed transforms the infinite product above into $1/(q;q)_{\infty}.$ But this theorem shows that keeping track of all colours except $b$ leads to a more intricate infinite product as well, and that the extremely simple expression $1/(q;q)_{\infty}$ appears only because of the particular dilation that Primc considered. Later, the first author \cite{DousseCapaPrimc} even gave an expression for the generating function for $P(n;i,j,k,\ell)$ keeping track of all the colours, but it can be written as an infinite product only when one does not keep track of the colour $b$.

Thus it is interesting from a combinatorial point of view to see whether a similar phenomenon happens with Theorem \ref{th:Primc9} as well. To do so, we would like to compute the generating function for coloured partitions satisfying the difference conditions \eqref{Primcmatrix9}, at the non-dilated level, and keep track of as many colours as possible. In this paper, not only do we succeed to do this, but we embed both of Primc's theorems into an infinite family of identities about partitions satisfying difference conditions given by $n^2 \times n^2$ matrices.

\medskip

Apart from the fact that they can be obtained from the same Lie algebra $A_1^{(1)}$, Capparelli's and Primc's identities didn't seem related from the representation theoretic point of view, as they were obtained in completely different ways, and Capparelli's identity did not seem related to perfect crystals. However, recently, the first author \cite{DousseCapaPrimc} gave a bijection between coloured partitions satisfying the difference conditions \eqref{Primcmatrix4} and pairs of partitions $(\lambda,\mu)$, where $\lambda$ is a coloured partition satisfying the difference conditions \eqref{Capmatrix}, and $\mu$ is a partition coloured $b$. This bijection preserves the total weight, the number of parts, the sizes of the parts, and the number of parts coloured $a$ and $d$. Therefore, combinatorially, these two identities are very closely related.
In this paper, we generalise this bijection to our generalisation of Primc's identity and obtain a multi-parameter family of partition identities which generalise Capparelli's identity.

\newpage

\subsection{Statement of results}
\subsubsection{The difference conditions generalising Primc's identity}
In this paper, we give a family of partition identities with $n^2$ colours which generalises the two identities of Primc, and a multi-parameter family of partition identities with $n^2-1$ colours which generalise Capparelli's identity.

In a previous paper \cite{Konan1}, the second author gave a family of identities generalising Siladi\'c's identity using $n$ primary colours and $n^2$ secondary colours (products of two primary colours), giving $n^2+n$ colours in total. In \cite{Corteel}, Corteel and Lovejoy, gave a family of identities generalising Schur's theorem, later generalised by the first author to overpartitions \cite{Dousseunif}. These generalisations use $n$ primary colours, and products of at most $n$ different colours, giving $2^n-1$ colours in total.

Here, our generalisation uses only secondary colours, so we have $n^2$ colours in total.
Let us first define these colours and the corresponding difference conditions.
We start with two sequences of symbols $(a_{n})_{n\in\N}$ and $(b_{n})_{n\in\N}$, and use them to define two types of colours. 

\begin{definition}
The \emph{free colours} are the elements of the set $\{a_i b_i : i \in \N \}$, and the \emph{bound colours} are the elements of the set $\{a_i b_k : i \neq k, i,k \in \N \}$.
\end{definition} 

\begin{remark}
We choose these names because, to obtain our main theorems, we will set $b_i= a_i^{-1}$ for all $i$. In that case, the free colours will vanish, while the bound colours will have relations between them.
\end{remark}

In this paper, we consider partitions whose parts are coloured in free and bound colours, satisfying some difference conditions.
We now define these difference conditions, which generalise those of matrices \eqref{Primcmatrix4} and \eqref{Primcmatrix9} in the two identities of Primc.

\begin{definition}
For all $i,k,i',k' \in \N$, we define the minimal difference $\Delta$ between a part coloured $a_i b_k$ and a part coloured $a_{i'} b_{k'}$ in the following way:
\begin{equation}
\label{eq:Delta}
\Delta(a_i b_k, a_{i'} b_{k'}) = \chi(i \geq i') - \chi(i=k=i')+\chi(k \leq k') - \chi(k=i'=k'),
\end{equation}
where $\chi(prop)$ equals $1$ if the proposition $prop$ is true and $0$ otherwise.
\end{definition}

We start by observing some basic properties of $\Delta$ (the proofs, which are straightforward applications of the definition, are left to the reader).
\begin{property}
For all $i,k,i',k' \in \N$, $\Delta(a_i b_k, a_{i'} b_{k'})$ belongs to $\{0,1,2\}$.
\end{property}

\begin{property}
\label{property:aibiaibi}
For all $i\in \N$, we have $\Delta(a_i b_i, a_i b_i)=0$. In other words, free colours can repeat arbitrarily many times.
\end{property}

\begin{property}
\label{property:aibiakbk}
For all $i,k \in \N$ such that $i \neq k$, we have $\Delta(a_i b_i, a_k b_k)=1$.
\end{property}

\begin{property}[Triangle inequality]
\label{property:triangle}
Let $i,k,i',k'\in \N$. For all $i'',k'' \in \N$, we have
$$\Delta(a_i b_k, a_{i'} b_{k'}) \leq \Delta(a_i b_k, a_{i''} b_{k''}) + \Delta(a_{i''} b_{k''}, a_{i'} b_{k'}).$$
In other words, it is equivalent to say that $\Delta(a_i b_k, a_{i'} b_{k'})$ is the minimal difference between parts coloured $a_i b_k$ and $a_{i'} b_{k'}$, and that it is the minimal difference between \emph{consecutive} parts coloured $a_i b_k$ and $a_{i'} b_{k'}$.
\end{property}
By Properties \ref{property:aibiakbk} and \ref{property:triangle}, a part of a given size cannot appear in two different free colours.

\medskip

For every positive integer $n$, we define $\mathcal{P}_n$ to be the set of partitions $\lambda_1 + \cdots + \lambda_s$, where each part has a colour chosen from $\{a_i b_k : 0 \leq i,k \leq n-1\}$, satisfying the difference conditions for all $j \in \{1,\dots,s-1\}$:
$$\lambda_j - \lambda_{j+1} \geq \Delta(c(\lambda_j),c(\lambda_{j+1})),$$
where for all $j$, $c(\lambda_j)$ is the colour of part $\lambda_j$.
Such partitions are called \textit{generalised Primc partitions}.

To simplify some calculations throughout the paper, we adopt the following convention. If $c_1, \dots, c_s$ is the colour sequence of the partition $\lambda_1 + \cdots + \lambda_s$, we add free colours $c_0=c_{s+1}= a_{\infty}b_{\infty}$ to both ends of the colour sequence. The difference conditions are, for all $i,k \in \N$,
$$\Delta(a_{\infty}b_{\infty},a_i b_k)=\Delta(a_i b_k,a_{\infty}b_{\infty})=1,$$
which is coherent with the definition \eqref{eq:Delta} of $\Delta$. We also assume that $\lambda_{s+1}=0$.

The difference conditions defining $\mathcal{P}_n$ generalise Primc's difference conditions matrices $P_2$ and $P_3$ in $\eqref{Primcmatrix4}$ and $\eqref{Primcmatrix9}$, as we shall see in the next two examples.

\begin{example}
If we set $a = a_1 b_0, b= a_0 b_0, c= a_1 b_1, d= a_0 b_1$, then $\mathcal{P}_2$ is exactly the set of partitions with difference conditions \eqref{Primcmatrix4} of Primc's $4$-coloured identity.
%\begin{equation}
%\label{tab:abcd}
%\begin{array}{|c|cc|}
%\hline
%_{b_i}\setminus^{a_i}&0&1\\
%\hline
%0&b&a\\
%1&d&c\\
%\hline
%\end{array}
%\end{equation}
For example,
\begin{align*}
\Delta(a,b) &= \Delta(a_1 b_0, a_0 b_0)\\
&= \chi(1 \geq 0) - \chi(1=0=0) +\chi(0 \leq 0) - \chi(0=0=0)\\
&= 1-0+1-1\\
&=1.
\end{align*}
This  is exactly the entry in row $a$ and column $b$ in \eqref{Primcmatrix4}.
\end{example}

\begin{example}
The set $\mathcal{P}_3$ is exactly the set of partitions with difference conditions \eqref{Primcmatrix9} of Primc's $9$-coloured theorem.
For example,
\begin{align*}
\Delta(a_2 b_0, a_2 b_1)&= \chi(2 \geq 2) - \chi(2=0=2) +\chi(0 \leq 1) - \chi(0=2=1)\\
&= 1-0+1-0\\
&=2.
\end{align*}
This  is exactly the entry in row $a_2 b_0$ and column $a_2 b_1$ in \eqref{Primcmatrix9}.
\end{example}

It turns out that the matrix $(\Delta(a_kb_{\ell};a_{k'}b_{\ell'}))_{(k,\ell),(k',\ell') \in \{0, \dots, n-1\}^2}$ is an energy matrix for the crystal of the tensor product of the vector representation $\B$ of $A_{n-1}^{(1)}$ and its dual $\B^\vee$.
This is proved in our second paper \cite{DK19-2}. Using the formulas for the generating functions proved in this paper, it allows us to retrieve, for all $\ell\in\{0,\ldots,n-1\}$, the Kac-Peterson character formula, which expresses the characters $\mathrm{ch}(L(\Lambda_{\ell}))$ of the irreducible highest weight modules $L(\Lambda_{\ell})$ as a series in $\Z[[e^{-\delta},e^{\pm\alpha_1},\cdots,e^{\pm\alpha_{n-1}}]]$ with obviously positive coefficients, where the $\alpha_i$'s are the simple roots. Moreover, using Theorem \ref{th:main2}, we give the first expression for $\mathrm{ch}(L(\Lambda_{\ell}))$ as a sum of infinite products, also with obviously positive coefficients in $\Z[[e^{-\delta},e^{\pm\alpha_1},\cdots,e^{\pm\alpha_{n-1}}]]$.

%\begin{theorem}
%\label{thm:perfcrys}
% The crystal $\Bb=\B\ot\B^\vee$ is a perfect crystal of level 1 such that,
% for all $i\in \{0, \dots, n-1\}$, we have $b_{\Lambda_i}=b^{\Lambda_i} = v_i\ot \vv_i$, where $\Lambda_0, \dots \Lambda_{n-1}$ are the fundamental weights 
% Furthermore, the energy function
% on $\Bb\ot\Bb$ such that $H((v_0\ot\vv_0)\ot(v_0\ot\vv_0))=0$
% satisfies
% \begin{equation}\label{eq:equality}
%  H((v_{l'}\ot\vv_{k'})\ot(v_{l}\ot\vv_{k})) = \Delta(a_kb_l;a_{k'}b_{l'})\,,
% \end{equation}
%where $\Delta$ is the minimal difference \eqref{eq:primcdifference} defined for Primc generalized partitions. 
%\end{theorem}

\subsubsection{The difference conditions and forbidden patterns generalising Capparelli's identity}
In the previous section, we gave difference conditions which generalise those of Primc's identities \eqref{Primcmatrix4} and \eqref{Primcmatrix9}. In this section, we define a multi-parameter family of identities which generalises Capparelli's identity. These generalisations are expressed in terms of generalised Primc partitions avoiding some forbidden patterns.

\begin{definition}
Let $\pi=\pi_1+ \dots +\pi_r$ be a partition. We say that another partition $\lambda=\lambda_1+\cdots+\lambda_s$ \textit{contains the pattern} $\pi$ if there is some index $i$ such that
$$\lambda_i=\pi_1,\quad  \lambda_{i+1} = \pi_2, \quad \dots , \quad \lambda_{i+r-1}=\pi_r.$$
If $\lambda$ does not contain the pattern $\pi$, we say that $\lambda$ \textit{avoids} $\pi$.
\end{definition}

Let us start by defining some functions which will be parameters in our generalisations.

\begin{definition}
\label{def:cond1}
A function $\delta$ is said to satisfy Condition 1 if it is defined on the set of bound colours $\{a_k b_\ell : k \neq \ell\}$, has integer values, and for all $k,\ell$,
$$\min\{k,\ell\}<\delta(a_kb_\ell)\leq \max\{k,\ell\}.$$
\end{definition}

\begin{definition}
\label{def:cond2}
A function $\gamma$ is said to satisfy Condition 2 if it is defined on the set of pairs of bound colours $\{(a_{k_1} b_{\ell_1}, a_{k_2} b_{\ell_2}): k_1 \neq \ell_1, k_2 \neq \ell_2 \}$, has integer values, and if for all $k_1,k_2,\ell_1,\ell_2$, it satisfies the following:
\begin{itemize}
\item If $\max\{k_1,\ell_2\}<\min\{k_2,\ell_1\}$, we have 
$$\max\{k_1,\ell_2\} <\gamma(a_{k_1} b_{\ell_1}, a_{k_2} b_{\ell_2})\leq  \min\{k_2,\ell_1\}.$$
\item If $k_1>\ell_1$, $k_2>\ell_2$, and $\{\ell_2+1,\ldots,k_2\}\setminus \{\ell_1+1,\ldots,k_1\}\neq \emptyset$, we have
$$\gamma(a_{k_1} b_{\ell_1}, a_{k_2} b_{\ell_2})\in \{\ell_2+1,\ldots,k_2\}\setminus \{\ell_1+1,\ldots,k_1\}.$$
\item If $k_1<\ell_1$, $k_2<\ell_2$, and $\{k_1+1,\ldots,\ell_1\}\setminus\{k_2+1,\ldots,\ell_2\}\neq \emptyset$, we have 
$$\gamma(a_{k_1} b_{\ell_1}, a_{k_2} b_{\ell_2})\in \{k_1+1,\ldots,\ell_1\}\setminus\{k_2+1,\ldots,\ell_2\}.$$
\end{itemize}
\end{definition}

We now use these functions to define forbidden patterns and generalised Capparelli partitions.

\begin{definition}
\label{def:Capp_conditions}
Let $n$ be a positive integer, and let $\delta$ and $\gamma$ be functions satisfying Conditions 1 and 2, respectively. We define $\mathcal{C}_n(\delta,\gamma)$, the set of generalised Capparelli partitions related to $\delta$ and $\gamma$, to be the set of partitions $\lambda$ such that
\begin{itemize}
\item $\lambda \in \mathcal P_n$,
\item $\lambda$ has no part coloured $a_0b_0$,
\item $\lambda$ does not contain any of the following patterns, where $p$ is any positive integer:
\begin{itemize}
\item for any $i \in \{1, \dots , n-1\},$
$$p_{a_ib_i}+p_{a_ib_i},$$
(i.e. free colours cannot repeat)
\item for any $k_1,k_2,\ell_1,\ell_2$ such that $\max\{k_1,\ell_2\}<\min\{k_2,\ell_1\}$ and $i=\gamma(a_{k_1}b_{\ell_1},a_{k_2}b_{\ell_2})$,
$$p_{a_{k_1}b_{\ell_1}}+p_{a_ib_i}+p_{a_{k_2}b_{\ell_2}},$$
\item for any $k_2>\ell_2$,
\begin{itemize}
\item for any $2\leq u\leq \infty$, any $k_1, \ell_1$, and $i=\delta(a_{k_2}b_{\ell_2})$,
$$(p+u)_{a_{k_1}b_{\ell_1}}+p_{a_ib_i}+p_{a_{k_2}b_{\ell_2}},$$
(here we take the convention that $u=\infty$ if the pattern $p_{a_ib_i}+p_{a_{k_2}b_{\ell_2}}$ is at the beginning of the partition)
\item for any $k_1\leq \ell_1$, and $i=\delta(a_{k_2}b_{\ell_2})$,
$$(p+1)_{a_{k_1}b_{\ell_1}}+p_{a_ib_i}+p_{a_{k_2}b_{\ell_2}},$$
\item for any $k_1>\ell_1$ such that $\{\ell_2+1,\ldots,k_2\}\setminus \{\ell_1+1,\ldots,k_1\}\neq \emptyset$, and for $i=\gamma(a_{k_1}b_{\ell_1},a_{k_2}b_{\ell_2})$,
$$(p+1)_{a_{k_1}b_{\ell_1}}+p_{a_ib_i}+p_{a_{k_2}b_{\ell_2}},$$
\end{itemize} 
\item for any $k_1<\ell_1$,
\begin{itemize}
\item for any $2\leq u\leq \infty$, any $k_2, \ell_2$, and $i=\delta(a_{k_1}b_{\ell_1})$,
$$p_{a_{k_1}b_{\ell_1}}+p_{a_ib_i}+(p-u)_{a_{k_2}b_{\ell_2}},$$
(here we take the convention that $u=\infty$ if the pattern $p_{a_{k_1}b_{\ell_1}}+p_{a_ib_i}$ is at the end of the partition)
\item for any $k_2\geq \ell_2$, and $i=\delta(a_{k_1}b_{\ell_1})$,
$$(p+1)_{a_{k_1}b_{\ell_1}}+(p+1)_{a_ib_i}+p_{a_{k_2}b_{\ell_2}},$$
\item for any $k_2<l_2$ such that $\{k_1+1,\ldots,\ell_1\}\setminus\{k_2+1,\ldots,\ell_2\}\neq \emptyset$, and for $i=\gamma(a_{k_1}b_{\ell_1},a_{k_2}b_{\ell_2})$,
$$(p+1)_{a_{k_1}b_{\ell_1}}+(p+1)_{a_ib_i}+p_{a_{k_2}b_{\ell_2}}.$$
\end{itemize} 
\end{itemize}
\end{itemize}
\end{definition}

When $n=2$, there is only one possible choice for the functions $\delta$ and $\gamma$, and $\mathcal{C}_2:=\mathcal{C}_2(\delta,\gamma)$ is exactly the set of partitions with difference conditions from Capparelli's identity.

For general $n$, Definition \ref{def:Capp_conditions} is quite broad, but with some particular choices of functions $\delta$ and $\gamma$, the partitions in $\mathcal{C}_n(\delta,\gamma)$ can be described easily. We now describe two of these particular choices, one of which leads to a generalisation of an identity of Meurman--Primc \cite{Meurman3}.

\medskip

Let us start with our first example. For all $k\neq \ell$, we set
$$\delta_1(a_kb_\ell) = 1+\min\{k,\ell\}.$$
The function $\gamma_1$ is defined as follows:
\begin{itemize}
\item for $\max\{k_1,\ell_2\}<\min\{k_2,\ell_1\}$, we set 
$$
\gamma_1(a_{k_1} b_{\ell_1}, a_{k_2} b_{\ell_2})=1+\max\{k_1,\ell_2\},
$$
\item for $k_1>\ell_1$, $k_2>\ell_2$, such that $\{\ell_2+1,\ldots,k_2\}\setminus \{\ell_1+1,\ldots,k_1\}\neq \emptyset$, we set   
$$\gamma_1(a_{k_1} b_{\ell_1}, a_{k_2} b_{\ell_2})=
\begin{cases}
\ell_2+1 &\text{ if } \ell_2+1  \in \{\ell_2+1,\ldots,k_2\}\setminus \{\ell_1+1,\ldots,k_1\},\\
k_2 &\text{ otherwise,}
\end{cases}$$
\item for $k_1<\ell_1$, $k_2<\ell_2$, such that $\{k_1+1,\ldots,\ell_1\}\setminus\{k_2+1,\ldots,\ell_2\}\neq \emptyset$, we set   
$$\gamma_1(a_{k_1} b_{\ell_1}, a_{k_2} b_{\ell_2})=
\begin{cases}
k_1+1 &\text{ if } k_1+1  \in \{k_1+1,\ldots,\ell_1\}\setminus\{k_2+1,\ldots,\ell_2\},\\
\ell_1 &\text{ otherwise.}
\end{cases}$$
\end{itemize} 
The corresponding generalised Capparelli partitions are given by the following proposition (its verification is a simple application of the definition and is left to the interested reader).
\begin{proposition}
\label{prop:genMP}
The set $\mathcal{C}_n(\delta_1,\gamma_1)$ 
is the set of partitions $\lambda_1 + \cdots + \lambda_s$, where each part has a colour chosen from $\{a_i b_k : 0 \leq i,k \leq n-1, \ (i,k) \neq (0,0)\}$, satisfying for all $j \in \{1,\dots,s-1\}$ the difference conditions
$$\lambda_j - \lambda_{j+1} \geq \Delta_1(c(\lambda_j),c(\lambda_{j+1})),$$
where
\begin{equation}
\begin{cases}
\Delta_1(a_ib_i,a_ib_i)=1 &\text{ for all } i>0\\
\Delta_1(a_\ell b_\ell,a_k b_{\ell-1})=1 &\text{ for all } k\geq \ell>0\\
\Delta_1(a_{k-1}b_{\ell},a_kb_k)=1 &\text{ for all } \ell\geq k>0\\
\Delta_1(c_1,c_2)= \Delta(c_1,c_2) &\text{ otherwise,}
\end{cases}
\end{equation}
and which, for every positive integer $p$, avoid the forbidden patterns
$
(p+1)_{a_{k_1}b_{\ell_1}}+p_{a_{k_2}b_{k_2}}+p_{a_{k_2}b_{\ell_2}}
$
for all $k_2>k_1>\ell_2\geq \ell_1$,
and 
$
(p+1)_{a_{k_1}b_{\ell_1}}+(p+1)_{a_{\ell_1}b_{\ell_1}}+p_{a_{k_2}b_{\ell_2}}$
for all $\ell_1>\ell_2>k_1\geq k_2$.
\end{proposition}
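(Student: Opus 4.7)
Plan. The proof is a routine but intricate verification obtained by substituting the specific formulas for $\delta_1$ and $\gamma_1$ into Definition~\ref{def:Capp_conditions} and showing, family by family, that the resulting forbidden patterns collapse into the difference conditions $\Delta_1$ together with the two three-part forbidden patterns listed. I first verify that $\delta_1$ satisfies Condition~1 and $\gamma_1$ satisfies Condition~2: for $\delta_1(a_kb_\ell)=1+\min\{k,\ell\}$ the hypothesis $k\neq\ell$ gives $\min\{k,\ell\}<1+\min\{k,\ell\}\leq\max\{k,\ell\}$, and for $\gamma_1$ each of the three sub-cases sits inside the required set or interval by direct inspection. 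The forbidden pattern $p_{a_ib_i}+p_{a_ib_i}$ then immediately yields the upgrade $\Delta_1(a_ib_i,a_ib_i)=1$ for $i\geq 1$, the case $i=0$ being vacuous since $a_0b_0$ is excluded.

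For the three-part $\gamma$-patterns satisfying $\max\{k_1,\ell_2\}<\min\{k_2,\ell_1\}$, a direct computation with~\eqref{eq:Delta} yields $\Delta(c_1,a_ib_i)=\Delta(a_ib_i,c_2)=0$, so the pattern is admissible under~$\Delta$. With $i=1+\max\{k_1,\ell_2\}$ however, either $k_1>\ell_2$ and $i=k_1+1$, producing the upgrade $\Delta_1(a_{k_1}b_{\ell_1},a_{k_1+1}b_{k_1+1})=1$ via the rule $\Delta_1(a_{k-1}b_\ell,a_kb_k)=1$ (valid since $\ell_1\geq k_1+1$), or $k_1\leq\ell_2$ and $i=\ell_2+1$, producing the upgrade $\Delta_1(a_{\ell_2+1}b_{\ell_2+1},a_{k_2}b_{\ell_2})=1$ via the rule $\Delta_1(a_\ell b_\ell,a_kb_{\ell-1})=1$. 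In both sub-cases one of the two adjacent pairs has $\Delta_1\geq 1$, so the three-part pattern is already excluded by the difference conditions.

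The bulk of the work is the families indexed by $k_2>\ell_2$, the case $k_1<\ell_1$ being entirely symmetric. Here $i=\delta_1(a_{k_2}b_{\ell_2})=1+\ell_2$, and one computes $\Delta(a_ib_i,c_2)=0$. The patterns with $u\geq 2$ together with the $u=\infty$ convention, combined with the $u=1$ case for $k_1\leq\ell_1$ and with the sub-case $k_1>\ell_1$ where $\ell_2+1\notin\{\ell_1+1,\ldots,k_1\}$ (so $\gamma_1(c_1,c_2)=\ell_2+1$), together express exactly that the two-part sub-pattern $p_{a_ib_i}+p_{c_2}$ is never allowed in the partition, which is the upgrade $\Delta_1(a_{\ell_2+1}b_{\ell_2+1},a_{k_2}b_{\ell_2})=1$. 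The only sub-case that does not collapse into a difference condition is $k_1>\ell_1$ with $\ell_1\leq\ell_2<k_1$, which forces $\gamma_1(c_1,c_2)=k_2$ and, by the nonemptiness hypothesis, $k_1<k_2$; here the middle colour is $a_{k_2}b_{k_2}$, no $\Delta_1$-upgrade applies to the pair $(c_1,a_{k_2}b_{k_2})$, and the forbidden pattern $(p+1)_{a_{k_1}b_{\ell_1}}+p_{a_{k_2}b_{k_2}}+p_{a_{k_2}b_{\ell_2}}$ survives precisely under the constraint $k_2>k_1>\ell_2\geq\ell_1$ listed in the proposition. The symmetric analysis for $k_1<\ell_1$ produces the second remaining forbidden pattern.

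The main obstacle is bookkeeping: keeping track of which combinations of $(k_1,\ell_1,k_2,\ell_2)$ land in which regime of $\gamma_1$, and deciding for each regime whether the pattern collapses into a $\Delta_1$-upgrade or contributes a genuine three-part constraint. No new idea is required beyond patient case analysis, which is consistent with the proposition being noted in the excerpt as a simple verification.
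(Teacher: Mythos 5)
Your verification is correct and is exactly the routine case analysis the paper intends --- the authors explicitly leave this proposition to the reader as ``a simple application of the definition,'' so there is no printed proof to diverge from. The only point worth spelling out more carefully is the sub-case $k_1>\ell_1$ with $\ell_2+1\in\{\ell_1+1,\ldots,k_1\}$, where none of the listed forbidden patterns applies but $\Delta(a_{k_1}b_{\ell_1},a_{\ell_2+1}b_{\ell_2+1})=2$ already excludes the configuration; this is what justifies your claim that the remaining patterns express \emph{exactly} the upgrade $\Delta_1(a_{\ell_2+1}b_{\ell_2+1},a_{k_2}b_{\ell_2})=1$.
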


As said above, when $n=2$, this reduces to the difference conditions of Capparelli's identity \eqref{Capmatrix}. But these conditions also generalise those of another partition identity mentioned in Primc's paper \cite{Primc}.

\begin{example}
Set $n=3$ in Proposition \ref{prop:genMP}.
The difference conditions defining $\mathcal{C}_3(\delta_1,\gamma_1)$ are shown in the following matrix, which appeared in Primc's paper \cite{Primc}.
\begin{equation} \label{Primcmatrix8}
C^1_3=\bordermatrix{\text{} & a_2b_0 & a_2b_1 & a_1b_0 & a_2b_2 & a_1b_1 & a_0b_1 & a_1b_2 & a_0b_2 \cr a_2b_0 & 2&2&2&2&2&2&2&2 \cr a_2b_1 &1&2&1&2&1&2&2&2 \cr a_1b_0 &1&1&2&1&2&2&2&2 \cr  a_2b_2  &0&1&1&1&1&1&2&2 \cr  a_1b_1 &1&1&1&1&1&2&1&2 \cr a_0b_1 &0&1&0&1&1&2&1&2 \cr a_1b_2 &0&0&1&1&1&1&2&2 \cr a_0b_2 &0&0&0&0&1&1&1&2}.
\end{equation}
Moreover, we have the forbidden patterns
$(p+1)_{a_1b_{0}}+p_{a_{2}b_{2}}+p_{a_2b_{0}}$ and $(p+1)_{a_0b_{2}}+(p+1)_{a_{2}b_{2}}+p_{a_0b_{1}}$ for all positive integers $p$, which again are exactly the forbidden patterns mentioned in \cite{Primc}.

It was proved by Meurman and Primc in \cite{Meurman3}, using basic $A_2^{(1)}$ modules, that after performing the dilations \eqref{eq:Primcdilation9}, the generating function for these partitions becomes $(q;q^3)_\infty^{-1}(q^2;q^3)_\infty^{-1}.$
\end{example}

\medskip
Let us now turn to the second choice of $\delta$ and $\gamma$ which gives rise to simple difference conditions and forbidden patterns.

For all $k\neq \ell$, we set
$$\delta_2(a_kb_\ell) = \max\{k,\ell\}.$$
The function $\gamma_2$ is defined as follows:
\begin{itemize}
\item for $\max\{k_1,\ell_2\}<\min\{k_2,\ell_1\}$, we set 
$$
\gamma_2(a_{k_1} b_{\ell_1}, a_{k_2} b_{\ell_2})=\min\{k_2,\ell_1\},
$$
\item for $k_1>\ell_1$, $k_2>\ell_2$, such that $\{\ell_2+1,\ldots,k_2\}\setminus \{\ell_1+1,\ldots,k_1\}\neq \emptyset$, we set   
$$\gamma_2(a_{k_1} b_{\ell_1}, a_{k_2} b_{\ell_2})=
\begin{cases}
k_2 &\text{ if } k_2 \in \{\ell_2+1,\ldots,k_2\}\setminus \{\ell_1+1,\ldots,k_1\},\\
\ell_2+1 &\text{ otherwise,}
\end{cases}$$
\item for $k_1<\ell_1$, $k_2<\ell_2$, such that $\{k_1+1,\ldots,\ell_1\}\setminus\{k_2+1,\ldots,\ell_2\}\neq \emptyset$, we set   
$$\gamma_2(a_{k_1} b_{\ell_1}, a_{k_2} b_{\ell_2})=
\begin{cases}
\ell_1 &\text{ if } \ell_1  \in \{k_1+1,\ldots,\ell_1\}\setminus\{k_2+1,\ldots,\ell_2\},\\
k_1+1 &\text{ otherwise.}
\end{cases}$$
\end{itemize} 
The corresponding generalised Capparelli partitions can be described as follows.
\begin{proposition}
\label{prop:Cn2}
The set $\mathcal{C}_n(\delta_2,\gamma_2)$ 
is the set of partitions $\lambda_1 + \cdots + \lambda_s$, where each part has a colour chosen from $\{a_i b_k : 0 \leq i,k \leq n-1, \ (i,k) \neq (0,0)\}$, satisfying for all $j \in \{1,\dots,s-1\}$ the difference conditions
$$\lambda_j - \lambda_{j+1} \geq \Delta_2(c(\lambda_j),c(\lambda_{j+1})),$$
where
\begin{equation}
\begin{cases}
\Delta_2(a_ib_i,a_ib_i)=1  &\text{ for all } i>0\\
\Delta_2(a_kb_k,a_kb_\ell)=1  &\text{ for all } k>\ell\geq 0\\
\Delta_2(a_kb_\ell,a_\ell b_\ell)=1  &\text{ for all } \ell>k\geq 0\\
\Delta_2(c_1,c_2)= \Delta(c_1,c_2)  &\text{ otherwise},
\end{cases}
\end{equation}
and which, for every positive integer $p$, avoid the forbidden patterns
$
(p+1)_{a_{k_1}b_{\ell_1}}+p_{a_{\ell_2+1}b_{\ell_2+1}}+p_{a_{k_2}b_{\ell_2}}
$
for all $k_1\geq k_2>\ell_1>\ell_2$,
and 
$
(p+1)_{a_{k_1}b_{\ell_1}}+(p+1)_{a_{k_1+1}b_{k_1+1}}+p_{a_{k_2}b_{\ell_2}}$
for all $\ell_2\geq \ell_1>k_2>k_1$.
\end{proposition}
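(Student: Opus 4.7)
The plan is to unpack Definition \ref{def:Capp_conditions} with the specific choices $\delta=\delta_2$ and $\gamma=\gamma_2$ and verify term by term that it collapses to the simpler description in the proposition. I would organize the argument as two inclusions, but the core work is a case analysis on the colours of consecutive parts.

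First I would check that $\delta_2$ and $\gamma_2$ genuinely satisfy Conditions 1 and 2: $\max\{k,\ell\}\in(\min\{k,\ell\},\max\{k,\ell\}]$ whenever $k\neq \ell$, and the three sub-cases of $\gamma_2$ land in $(\max\{k_1,\ell_2\},\min\{k_2,\ell_1\}]$, $\{\ell_2+1,\ldots,k_2\}\setminus\{\ell_1+1,\ldots,k_1\}$ and $\{k_1+1,\ldots,\ell_1\}\setminus\{k_2+1,\ldots,\ell_2\}$ respectively, which follows straight from the definition.

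Next I would show $\mathcal{C}_n(\delta_2,\gamma_2)\subseteq$ (the set described by the proposition). The only cases where $\Delta_2>\Delta$ are (i) $\Delta_2(a_ib_i,a_ib_i)=1$, (ii) $\Delta_2(a_kb_k,a_kb_\ell)=1$ for $k>\ell$, and (iii) $\Delta_2(a_kb_\ell,a_\ell b_\ell)=1$ for $\ell>k$. Case (i) is exactly the ``free colours cannot repeat'' forbidden pattern. For (ii) I would assume for contradiction that $p_{a_kb_k}+p_{a_kb_\ell}$ occurs; with $\delta_2(a_kb_\ell)=k$ and $i=k$, the $u=\infty$ rule rules out this pair at the start, the $u\geq 2$ rules rule out any preceding part of size $\ge p+2$, so the only possibility is a preceding part $(p+1)_{a_{k_1}b_{\ell_1}}$. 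Then either $k_1\leq\ell_1$ (covered by the $k_1\leq\ell_1$ sub-rule) or $k_1>\ell_1$. In the latter case a direct computation of $\Delta(a_{k_1}b_{\ell_1},a_kb_k)$ from \eqref{eq:Delta} shows $\Delta=2$ whenever $\ell_1\leq \ell<k\leq k_1$ (the situation where the set-difference condition of $\gamma_2$ fails), contradicting the hypothesis that the difference is $1$; in the remaining situation the set-difference is nonempty and the matching $\gamma_2$-rule forbids the three-part pattern. Case (iii) is entirely symmetric using the ``start'' family of rules. For the two explicit three-part families in the proposition, I would verify that under the constraints $k_1\geq k_2>\ell_1>\ell_2$ one indeed has $\gamma_2(a_{k_1}b_{\ell_1},a_{k_2}b_{\ell_2})=\ell_2+1$ (because $k_2>\ell_1$ forces $k_2\notin\{\ell_2+1,\ldots,\ell_1\}$), and similarly $\gamma_2=k_1+1$ under $\ell_2\geq\ell_1>k_2>k_1$; hence these patterns appear verbatim among the forbidden ones.

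For the reverse inclusion I would take $\lambda$ satisfying $\Delta_2$ and the two simple forbidden families, and check each bullet in Definition \ref{def:Capp_conditions} one by one. The ``free colours repeat'' bullet is immediate from $\Delta_2(a_ib_i,a_ib_i)=1$. For the five three-part bullets, using $\delta_2(a_{k_2}b_{\ell_2})=\max\{k_2,\ell_2\}=k_2$ (resp.\ $\ell_1$), the middle colour is $a_{k_2}b_{k_2}$ (resp.\ $a_{\ell_1}b_{\ell_1}$); in each bullet I distinguish whether the middle-bottom difference already violates $\Delta_2$ (which handles $k_1\leq\ell_1$, $u\geq 2$, or $u=\infty$ via case (ii)/(iii) above) or whether one is left with $(p+1)_{a_{k_1}b_{\ell_1}}+p_{a_{k_2}b_{k_2}}+p_{a_{k_2}b_{\ell_2}}$ with $k_1>\ell_1$. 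A short check from $\gamma_2$'s explicit formula shows that this residual case is exactly $k_1\geq k_2>\ell_1>\ell_2$, i.e. the first simple forbidden family; the mirror argument handles the second family.

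The main obstacle is the bookkeeping in the last paragraph: four sub-cases per direction multiplied by the three-cases structure of $\gamma_2$ makes the case analysis lengthy, and one has to be careful that the ``covered by $\Delta_2$'' and ``covered by the explicit patterns'' reductions exactly partition the domain of the original forbidden patterns. Once the computation of $\Delta(a_{k_1}b_{\ell_1},a_{k_2}b_{k_2})$ in the boundary case is pinned down (it equals $2$ precisely when $\{\ell_2+1,\ldots,k_2\}\subseteq\{\ell_1+1,\ldots,k_1\}$), the rest falls into place mechanically.
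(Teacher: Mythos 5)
The paper itself gives no proof of Proposition \ref{prop:Cn2}: as with Proposition \ref{prop:genMP}, the verification is left to the reader as ``a simple application of the definition''. Your plan --- unpack Definition \ref{def:Capp_conditions} with $\delta_2,\gamma_2$ and show that each forbidden pattern is either subsumed by the strengthened differences $\Delta_2$ or collapses to one of the two explicit three-part families --- is exactly the intended argument, and your identification of the residual cases ($k_1\geq k_2>\ell_1>\ell_2$ with middle colour $a_{\ell_2+1}b_{\ell_2+1}$, and the mirror case) is correct.

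There is, however, one concrete slip in your case split, in the sub-case where $p_{a_kb_k}+p_{a_kb_\ell}$ ($k>\ell$) is preceded by $(p+1)_{a_{k_1}b_{\ell_1}}$ with $k_1>\ell_1$. You draw the dichotomy at ``set-difference empty vs.\ nonempty'' and assert that $\Delta(a_{k_1}b_{\ell_1},a_kb_k)=2$ precisely when $\{\ell+1,\ldots,k\}\subseteq\{\ell_1+1,\ldots,k_1\}$. By \eqref{eq:lowf}, $\Delta(a_{k_1}b_{\ell_1},a_kb_k)=1+\chi(k_1\geq k>\ell_1)$, so it equals $2$ precisely when $k\in\{\ell_1+1,\ldots,k_1\}$ --- a strictly weaker condition than the set-difference being empty (which additionally requires $\ell_1\leq\ell$). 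In the overlap region $\ell<\ell_1<k\leq k_1$ the set-difference is nonempty but $\gamma_2(a_{k_1}b_{\ell_1},a_kb_\ell)=\ell+1\neq k$, so the $\gamma_2$-rule of Definition \ref{def:Capp_conditions} forbids the pattern with middle colour $a_{\ell+1}b_{\ell+1}$, not the one with middle colour $a_kb_k$ that you are trying to exclude; your stated route fails there. The repair is immediate --- in that region one falls back on $\Delta(a_{k_1}b_{\ell_1},a_kb_k)=2$, which already rules out consecutive parts $(p+1)$ and $p$ in those colours --- so the correct dichotomy is ``$k\in\{\ell_1+1,\ldots,k_1\}$ (handled by $\Delta=2$) versus $k\notin\{\ell_1+1,\ldots,k_1\}$ (then $\gamma_2=k$ and the three-part pattern is forbidden)''. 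The same correction applies to the mirror case $k<\ell$. With that adjustment the two reductions genuinely partition all cases and the proof goes through.
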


Again, when setting $n=2$ in Proposition \ref{prop:Cn2}, we recover the difference conditions of Capparelli's identity. When setting $n=3$, we get new difference conditions similar to those of Meurman--Primc \cite{Meurman3}.

\begin{example}
The set $\mathcal{C}_3(\delta_2,\gamma_2)$  is the set of partitions satisfying the difference conditions given in the following matrix
\begin{equation} \label{Capgen8matrix}
C^2_3=\bordermatrix{\text{} & a_2b_0 & a_2b_1 & a_1b_0 & a_2b_2 & a_1b_1 & a_0b_1 & a_1b_2 & a_0b_2  \cr a_2b_0 & 2&2&2&2&2&2&2&2 \cr a_2b_1 &1&2&1&2&1&2&2&2 \cr a_1b_0  &1&1&2&1&2&2&2&2 \cr a_2b_2 &1&1&1&1&1&1&2&2 \cr a_1b_1 &0&1&1&1&1&2&1&2 \cr a_0b_1 &0&1&0&1&1&2&1&2 \cr a_1b_2 &0&0&1&1&1&1&2&2 \cr a_0b_2 &0&0&0&1&0&1&1&2},
\end{equation}
and avoiding the forbidden patterns
$(p+1)_{a_2b_{1}}+p_{a_{1}b_{1}}+p_{a_2b_{0}}$ and $(p+1)_{a_0b_{2}}+(p+1)_{a_{1}b_{1}}+p_{a_1b_{2}}$ for all positive integers $p$.
\end{example}

\medskip

Recently in \cite{DousseCapaPrimc}, the first author gave a bijection between Primc's partitions $\mathcal{P}_2$ and pairs $(\lambda, \mu)$ where $\lambda \in \mathcal{C}_2$ is a Capparelli partition and $\mu$ is a classical partition. This bijection only modifies some free colours, so it preserves the weight, the number of parts, the sizes of the parts, and the number of appearances of colours $a$ and $d$. In this way, she showed that Capparelli's identity is closely related to Primc's identity and can be deduced from it, even though until then, these two identities seemed unrelated from the representation theoretic point of view.

Here, we generalise this idea and show the following.

\begin{theorem}
\label{th:bij}
For all positive integers $n$ and all functions $\delta$ and $\gamma$  satisfying Conditions 1 and 2, respectively, there is a bijection $\Phi$ between the set $\mathcal{P}_n$ of generalised Primc partitions and the product set $\mathcal{C}_n(\delta,\gamma)\times \mathcal{P}^0$, where $\mathcal{C}_n(\delta,\gamma)$ is the set of generalised Capparelli partitions related to $\delta$ and $\gamma$, and $\mathcal{P}^0$ is the set of the classical partitions where all parts are coloured $a_0b_0$.

This bijection preserves the weight, the number of parts, the sizes of the parts, and the number of appearances of each bound colour.
\end{theorem}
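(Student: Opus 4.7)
The plan is to define $\Phi$ by an iterative local algorithm which, at each step, extracts a single $a_0b_0$-coloured part from a partition $\lambda\in\mathcal P_n$ and places it in an auxiliary partition $\mu\in\mathcal P^0$. The crucial local move is a reversible recolouring: whenever $\lambda$ already contains an $a_0b_0$ part, we simply remove it; whenever $\lambda$ contains a forbidden pattern from Definition \ref{def:Capp_conditions} whose middle part carries a free colour $a_ib_i$ with $i>0$, we first recolour that middle part as $a_0b_0$ and then remove it. The first verification is therefore a $\Delta$-case analysis showing that this recolouring preserves membership in $\mathcal P_n$ and reproduces the gap pattern of the forbidden configuration; concretely, that $\Delta(c_L,a_0b_0)$ and $\Delta(a_0b_0,c_R)$ agree with the prescribed differences in each clause of Definition \ref{def:Capp_conditions}. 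Conditions 1 and 2 on $\delta$ and $\gamma$ are designed for exactly this purpose: the inequalities $\min\{k,\ell\}<\delta(a_kb_\ell)\le\max\{k,\ell\}$ and the admissible sets for $\gamma$ single out precisely those indices $i$ for which the triple $(c_L,a_ib_i,c_R)$ realises the required gap pattern in $\mathcal P_n$.

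With this local move in hand, I would define $\Phi$ by scanning $\lambda$ from left to right and applying the move at each occurrence of an $a_0b_0$ part or of a forbidden pattern, until the residue lies in $\mathcal C_n(\delta,\gamma)$; the inverse $\Phi^{-1}$ takes $(\nu,\mu)$ and reinserts the parts of $\mu$ into $\nu$ one at a time. For a part of size $p$ in $\mu$, the reinsertion step finds the unique slot of $\nu$ where $p_{a_0b_0}$ satisfies $\Delta$ on both sides, and then, if the two resulting neighbours $(c_L,c_R)$ match one of the clauses of Definition \ref{def:Capp_conditions}, recolours the inserted part to the free colour $a_ib_i$ prescribed by $\delta(c_L)$, $\delta(c_R)$, or $\gamma(c_L,c_R)$. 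The main obstacle is proving that this reinsertion is well-defined: one must show that each part of $\mu$ has a unique admissible slot (which should follow from the strict order \eqref{order} together with the fact that $a_0b_0$ requires $\Delta$-difference only $1$ from any non-$a_0b_0$ neighbour), and that the resulting colour is unique (which is exactly what Conditions 1 and 2 guarantee). Invertibility then reduces to checking that a single local move is its own inverse and that moves at disjoint locations commute, so that the order of extractions in $\Phi$ and the order of reinsertions in $\Phi^{-1}$ do not affect the outcome.

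The invariance properties are immediate from the description of the local move: extracting an $a_0b_0$ part changes only the length and weight (by $1$ and by $p$ respectively), while a recolouring $a_ib_i\mapsto a_0b_0$ swaps one free colour for another without altering the size of the part or touching any bound colour. Consequently, each application of the move preserves the multiset of part sizes, the total weight, the number of parts, and the number of occurrences of every bound colour, and these invariants are therefore transferred from $\lambda$ to $(\nu,\mu)$ by $\Phi$.
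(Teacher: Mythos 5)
Your forward map is essentially the paper's (extract $a_0b_0$ parts, then recolour and extract the middle parts of forbidden patterns), but your inverse map has a genuine gap that makes the proposal fail as written. You propose to reinsert a part of size $p$ from $\mu$ by first finding ``the unique slot of $\nu$ where $p_{a_0b_0}$ satisfies $\Delta$ on both sides'' and only then recolouring. But $\Delta(c,a_0b_0)=\Delta(a_0b_0,c)=1$ for every colour $c\neq a_0b_0$, so a part $p_{a_0b_0}$ forces every neighbouring part to differ from $p$ by at least $1$; hence \emph{no} admissible slot for $p_{a_0b_0}$ exists whenever $\nu$ already contains a part of size $p$ (and $\nu\in\mathcal{C}_n(\delta,\gamma)$ never contains $p_{a_0b_0}$ itself). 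This is exactly the situation in which the forward map performed a recolouring, so your inverse cannot recover those parts. The fact you cite in your own parenthetical --- that $a_0b_0$ needs difference $1$ from any other colour --- proves the opposite of what you need. The colour and the position of the reinserted part must be determined \emph{simultaneously} from the configuration of the existing parts of size $p$ and of their neighbours of size $p\pm1$: the paper does this via a case analysis (whether the bound colours $a_kb_\ell$ at size $p$ all have $k>\ell$, all have $k<\ell$, or mix; whether the adjacent part differs by $1$ or by $u\geq 2$; etc.), supported by structural results on zero-difference colour sequences and on which free colours can be legally inserted between given bound colours. That analysis, which also shows the insertion creates exactly one forbidden pattern (so the composite is the identity), is the substance of the proof and is absent from your proposal.

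Two smaller points. First, repeated free colours: $\mathcal{P}_n$ allows $p_{a_ib_i}+p_{a_ib_i}$ while $\mathcal{C}_n(\delta,\gamma)$ forbids it, and this two-part pattern has no ``middle part'', so your single local move does not cleanly cover it; the paper treats it as a separate step (extract all but one copy), and the inverse must handle it before the $a_0b_0$ reinsertion and after the forbidden-pattern reinsertion --- the three stages do not commute freely, contrary to your claim that the order of moves is irrelevant. Second, your verification that Conditions 1 and 2 ``single out precisely those indices $i$'' realising the required gaps is asserted rather than proved; for instance, when the left neighbour is $(p+1)_{a_{k_1}b_{\ell_1}}$ with $k_1>\ell_1$ one needs $i\in\{\ell_2+1,\dots,k_2\}\setminus\{\ell_1+1,\dots,k_1\}$ (not merely $i$ in the range prescribed by $\delta$), which is why Condition 2 has its three separate clauses. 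The invariance claims in your last paragraph are fine.
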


Both Capparelli's identity and Meurman and Primc's identity with difference conditions \eqref{Primcmatrix8} did not have any apparent connection with the theory of perfect crystals. The bijection between $\mathcal{P}_2$ and $\mathcal{C}_2 \times \mathcal{P}^0$ in \cite{DousseCapaPrimc} gave an unexpected connection with Primc's identity and the theory of perfect crystals. The present theorem shows that Meurman and Primc's identity with difference conditions \eqref{Primcmatrix8} can actually be deduced from Primc's $9$-coloured Theorem \ref{th:Primc9}. More generally, through the bijection with the $\mathcal{P}_n$'s, we relate our family of generalisations of Capparelli's identity to the theory of perfect crystals.

The detailed bijections are given in Section \ref{sec:bij}.

\subsubsection{Coloured Frobenius partitions}
Since its discovery, Capparelli's identity has been one of the most studied partition identities in the literature, see for example \cite{Br-Ma1,Be-Un1,Be-Un2,DousseCapa,Fu-Ze1,Kanaderussellstaircase,Kursungoz,Si1} for articles from the combinatorial point of view.

While the other most important partition identities, such as the Rogers-Ramanujan identities \cite{RogersRamanujan} and Schur's theorem \cite{Schur}, have been successfully embedded in large families of identities, such as the Andrews-Gordon identities for Rogers-Ramanujan \cite{AndrewsGordon,Gordon65} and Andrews' theorems for Schur's theorem \cite{Generalisation1,Generalisation2}, finding such a broad generalisation of Capparelli's identity was still an open problem.

Here, we solve this problem by giving a multi-parameter family of identities which generalise Capparelli and a family of identities generalising Primc. Unlike most classical Rogers-Ramanujan type identities, we relate the partitions with difference conditions defined in the previous section to coloured Frobenius partitions. This allows us to find simple and elegant formulations for the generating functions.

Following Andrews \cite{AndrewsFrob}, a Frobenius partition is a two-rowed array
$$\begin{pmatrix}
\lambda_1 & \lambda_2 & \cdots & \lambda_s \\
\mu_1 & \mu_2 & \cdots & \mu_s
\end{pmatrix},$$
where $s$ is a non-negative integer and $\lambda :=\lambda_1 + \lambda_2 + \cdots + \lambda_s$ and $\mu:=\mu_1+\mu_2+\cdots+\mu_s$ are two partitions into $s$ distinct non-negative parts.
Frobenius partitions of length $s$ and weight $m=s+\sum_{i=1}^s \lambda_i+ \sum_{i=1}^s \mu_i$ are in bijection with partitions of $m$ whose Durfee square (the largest square fitting in the top-left corner of the Ferrers board of the partition) is of side $s$. An example can be seen on Figure \ref{fig:frob} in the case $s=4$ (where $\lambda_4=\mu_4=0$).
\begin{figure}[H]
\includegraphics[width=0.4\textwidth]{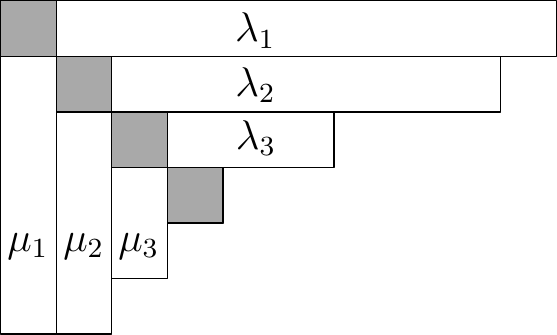}
\caption{A Frobenius partition of length $4$.}
\label{fig:frob}
\end{figure}
The generating function for the number $F(m)$ of Frobenius partitions of $m$ is given by
$$\sum_{m \geq 0} F(m) q^m = [x^0] (-xq;q)_{\infty}(-x^{-1};q)_{\infty} .$$
Indeed, the product $(-xq;q)_{\infty}$ generates the partition $\lambda$ together with the boxes on the diagonal where the power of $x$ counts the number of parts, $(-x^{-1};q)_{\infty}$ generates the partition $\mu$ where the power of $x^{-1}$ counts the number of parts, and taking the coefficient of $x^0$ in the above ensures that $\lambda$ and $\mu$ have the same number of parts. Using Jacobi's triple product identity (see, e.g., \cite{Abook}),
\begin{equation}
\label{eq:JTP}
(-xq;q)_{\infty}(-x^{-1};q)_{\infty} (q;q)_{\infty} = \sum_{k \in \Z} x^k q^{\frac{k(k+1)}{2}},
\end{equation}
we see that the generating function for Frobenius partitions equals $1/(q;q)_{\infty}$, the generating function for partitions.

\medskip
Let us now define the coloured Frobenius partitions which will be related to our coloured partitions with difference conditions.

In \cite[(4.8)]{AndrewsFrob}, Andrews defined a generalisation of Frobenius partitions where $\lambda$ and $\mu$ are partitions into distinct parts chosen from $\{k_j : k \in \N, 1 \leq j \leq n\},$ where $k_j = k'_{j'}$ if and only if $k=k'$ and $j=j'$. Their generating function $\mathrm{C}\Phi_k(q)$ has been widely studied from the point of view of modular forms and congruences, see for example \cite{ChanWangYang,Lov00,SellersFrob}.

Here we define a refinement of Andrews' partitions. Consider the same families of symbols $(a_{i})_{i\in\N}$ and $(b_{i})_{i\in\N}$ as in the previous section. We define a \textit{$n^2$-coloured Frobenius partition} to be a Frobenius partition
$$\begin{pmatrix}
\lambda_1 & \lambda_2 & \cdots & \lambda_s \\
\mu_1 & \mu_2 & \cdots & \mu_s 
\end{pmatrix},$$
where $\lambda =\lambda_1 + \lambda_2 + \cdots + \lambda_s$  is a partition into $s$ distinct non-negative parts, each coloured with some $a_i$, $i \in \{0, \dots, n-1\},$ with the following order
\begin{equation}
\label{eq:orderFroba}
0_{a_{n-1}} < 0_{a_{n-2}} < \cdots < 0_{a_0} < 1_{a_{n-1}} < 1_{a_{n-2}} < \cdots < 1_{a_0} < \cdots,
\end{equation}
and $\mu=\mu_1+\mu_2+\cdots+\mu_s$ is a partition into $s$ distinct non-negative parts, each coloured with some $b_i$, $i \in \{0, \dots, n-1\},$ with the order
\begin{equation}
\label{eq:orderFrobb}
0_{b_{0}} < 0_{b_1} < \cdots < 0_{b_{n-1}} <1_{b_{0}} < 1_{b_1} < \cdots < 1_{b_{n-1}} < \cdots .
\end{equation}
Let $\mathcal{F}_n$ denote the set of $n^2$-coloured Frobenius partitions.
Note that in $\lambda$ and $\mu$, a part of a given size can appear in different colours.
We define the \textit{colour sequence} of such a $n^2$-coloured Frobenius partition to be $(c(\lambda_1)c(\mu_1), \dots, c(\lambda_s)c(\mu_s))$.

\begin{example}
This array gives an example of $9$-coloured Frobenius partition with colour sequence

\noindent
$(a_1b_2,a_0 b_0, a_1 b_0, a_2b_1)$ and weight $18$:
$$\begin{pmatrix}
3_{a_1} & 2_{a_0} & 0_{a_1} & 0_{a_2} \\
4_{b_2} & 4_{b_0}& 1_{b_0} & 0_{b_1}
\end{pmatrix}.$$
\end{example}

Following the same reasoning as for classical Frobenius partitions, the generating function for the number $F_{n}(m;u_0,\dots,u_{n-1};v_0,\dots,v_{n-1})$ of $n^2$-coloured Frobenius partitions of $m$ where for $i \in \{0, \dots, n-1\},$ the symbol $a_i$ (resp. $b_i$) appears $u_i$ (resp. $v_i$) times, is
\begin{equation}
\label{eq:sgFrob}
\begin{aligned}
\sum_{m,u_0, \dots, u_{n-1}, v_0, \dots, v_{n-1} \geq 0} &F_{n}(m;u_0,\dots,u_{n-1};v_0,\dots,v_{n-1})q^m a_0^{u_0} \cdots a_{n-1}^{u_{n-1}} b_1^{v_1} \cdots b_{n-1}^{v_{n-1}} 
\\&= [x^0] \prod_{i=0}^{n-1}(-x a_i q;q)_{\infty}(-x^{-1} b_i;q)_{\infty}.
\end{aligned}
\end{equation}
This refines the following expression due to Andrews \cite[(5.14)]{AndrewsFrob}:
$$\mathrm{C}\Phi_k(q)= [x^0] (-xq;q)^n_{\infty}(-x^{-1};q)^n_{\infty},$$
where the colours were not taken into account in the generating function.

Note that the generating function \eqref{eq:sgFrob} does not depend on our orders \eqref{eq:orderFroba} and \eqref{eq:orderFrobb}, but only on the condition ``all parts are distinct'' in $\lambda$ and $\mu$. These particular orders will however be helpful to make the connection with the Primc generalised partitions $\mathcal{P}_n$ in the remainder of this paper.

\subsubsection{Generalisations of Capparelli and Primc's identities}
The $n^2$-coloured Frobenius partitions are very natural objects to consider when studying our generalisations of Primc and Capparelli's identities. 

\begin{theorem}[Connection between $\mathcal{P}_n$ and $\mathcal{F}_n$]
\label{th:PnFnbound}
Let $n$ be a positive integer.

\noindent
Let $\tilde{P}_{n}(m;u_0,\dots,u_{n-1};v_0,\dots,v_{n-1})$ be the number of $n^2$-coloured partitions of $m$ in colours $\{a_i b_k : 0 \leq i,k \leq n-1\}$, satisfying the difference conditions $\Delta$ (see \eqref{eq:Delta}), where for $i \in \{0, \dots, n-1\},$ the symbol $a_i$ (resp. $b_i$) appears $u_i$ (resp. $v_i$) times \textbf{in their bound colours}.

Let $\tilde{F}_{n}(m;u_0,\dots,u_{m-1};v_0,\dots,v_{m-1})$ be the number of $n^2$-coloured Frobenius partitions of $m$ where for $i \in \{0, \dots, n-1\},$ the symbol $a_i$ (resp. $b_i$) appears $u_i$ (resp. $v_i$) times \textbf{in their bound colours}.

Then
$$\tilde{P}_{n}(m;u_0,\dots,u_{n-1};v_0,\dots,v_{n-1})=\tilde{F}_{n}(m;u_0,\dots,u_{n-1};v_0,\dots,v_{n-1}).$$
\end{theorem}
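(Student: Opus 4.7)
I plan to establish the theorem by constructing a weight- and bound-colour-preserving bijection $\Psi:\mathcal{P}_n\to\mathcal{F}_n$. The crucial structural observation that makes the bijection tractable is that the Primc difference function decomposes additively as
$$
\Delta(a_ib_k,a_{i'}b_{k'}) = \bigl[\chi(i\geq i')-\chi(i=k=i')\bigr] + \bigl[\chi(k\leq k')-\chi(k=i'=k')\bigr] =: \Delta_a+\Delta_b,
$$
and the two summands are precisely the minimal size drops forced by the coloured orderings \eqref{eq:orderFroba} and \eqref{eq:orderFrobb} on the top and bottom rows of a Frobenius symbol respectively. This decoupling of the Primc constraint into an ``$a$-side'' and a ``$b$-side'' is what allows a row-by-row construction.

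Given $\pi=\lambda_1+\cdots+\lambda_s\in\mathcal{P}_n$ with colour sequence $(c_j)=(a_{i_j}b_{k_j})$, I plan to produce a Frobenius partition with one column for each \emph{bound} part of $\pi$: the column associated to a bound part receives top colour $a_{i_j}$ and bottom colour $b_{k_j}$, which instantly matches the bound $a_i$- and $b_i$-statistics, and contributes the mandatory diagonal~$1$ to the Frobenius weight. The remaining ``excess'' of each bound part's size, together with the full weights of the neighbouring \emph{free} parts, is then distributed to the top- and bottom-row sizes by a canonical right-to-left sweep: process $\pi$ from $j=s$ downwards and at each step allocate the part's excess to either the top part (colour $a_{i_j}$) or the bottom part (colour $b_{k_j}$) according to whether placing it on top preserves the strict decrease under \eqref{eq:orderFroba}; if not, place it on the bottom. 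Weight preservation reduces to a telescoping sum, and strict decrease in each row follows from the $\Delta_a$- and $\Delta_b$-bookkeeping combined with the $\mathcal{P}_n$ inequality $\lambda_j-\lambda_{j+1}\geq\Delta(c_j,c_{j+1})$.

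The inverse map reads off the column colours of a Frobenius symbol to reconstruct the sequence $(c_j)$ associated to the bound columns, and recovers the Primc sizes by re-collecting the diagonal and the top- and bottom-row contributions into parts of the Primc partition; the $\Delta=\Delta_a+\Delta_b$ identity then ensures that the resulting sequence satisfies the Primc difference conditions. The main obstacle will be the treatment of free parts: since $\Delta(a_ib_i,a_ib_i)=0$ (Property \ref{property:aibiaibi}), free parts of a fixed colour can repeat arbitrarily in $\mathcal{P}_n$, whereas both rows of a Frobenius symbol must be strictly distinct in their coloured orders. A repeated run of free parts of colour $a_ib_i$ must therefore be ``spread'' across the neighbouring columns as unit increments on top-$a_i$ or bottom-$b_i$ parts, and choosing the canonical placement so that this spreading is well-defined, respects the Frobenius distinctness, and is invertible across every interleaving of free and bound parts and every run-length is where the bulk of the technical work lies.
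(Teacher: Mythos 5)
Your plan rests on two claims that do not survive contact with the actual combinatorics. First, the asserted decoupling is not correct: the coloured orderings \eqref{eq:orderFroba} and \eqref{eq:orderFrobb} force the top and bottom rows of a Frobenius symbol to drop by $\chi(i\geq i')$ and $\chi(k\leq k')$ respectively, so a column-by-column reading of a Frobenius symbol produces the difference conditions $\Delta'(a_ib_k,a_{i'}b_{k'})=\chi(i\geq i')+\chi(k\leq k')$ of \eqref{eq:Delta'}, \emph{not} $\Delta$. The correction terms $-\chi(i=k=i')-\chi(k=i'=k')$ each mix the $a$-index and the $b$-index of a single colour, so they do not split into an ``$a$-side'' and a ``$b$-side'' constraint; and even for $\Delta'$ the correspondence fails beyond minimal configurations, because a Frobenius column has two independent size parameters while a Primc part has one (the remark after Proposition \ref{prop:bijDelta'}: a single column of colour $a_1b_1$ is generated by $q/(1-q)^2$ on the Frobenius side but $q/(1-q)$ on the partition side). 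Second, the ``one column per bound part'' architecture cannot be weight-preserving or surjective: a Primc partition consisting only of free parts, e.g.\ $3_{a_1b_1}+3_{a_1b_1}+1_{a_0b_0}\in\mathcal{P}_n$, has no bound part and would be sent to the empty Frobenius symbol, losing its weight; conversely Frobenius partitions containing free-coloured columns (which certainly occur and carry weight, though not bound-colour statistics) are never produced by your map. More decisively, the counts refined by kernel \emph{and} length genuinely disagree: comparing Propositions \ref{prop:main} and \ref{prop:prop2}, the two generating functions for a fixed kernel and a fixed number of inserted free colours are different, and agree only after summing over that number via the nontrivial $q$-binomial identity \eqref{eq:goal}. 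This rules out any bijection in which the length of the Frobenius symbol is determined locally from the colour sequence of the Primc partition, as yours is.

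The paper's proof is not bijective. It refines both sides by the \emph{kernel} (reduced colour sequence), computes the generating function for Primc partitions with a given kernel by analysing insertions of free colours into minimal partitions (Proposition \ref{prop:main}), transfers this analysis to the Frobenius side by passing through $\Delta'$ and the complementary conditions $\Delta''=2-\Delta'$, which behave like $\Delta$ with the types of secondary insertions reversed (Propositions \ref{prop:bijDelta'} and \ref{prop:Delta''}, leading to Proposition \ref{prop:prop2}), and then proves the two resulting $q$-series are equal (Theorem \ref{th:Primcgenekernel}). If you want to salvage a combinatorial approach, you would at minimum need a mechanism that converts runs of free Primc parts into free Frobenius columns and redistributes sizes non-locally, which is essentially what the identity \eqref{eq:goal} is encoding analytically.
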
 

\begin{remark}
We actually prove a refinement of Theorem \ref{th:PnFnbound} according to the notion of \emph{reduced colour sequence} defined in Section \ref{sec:redcolseq}. This is given in Theorem \ref{th:Primcgenekernel}.
We do not state it in this introduction to avoid technicalities.
\end{remark}

Moreover, when we set for all $i$, $b_i = a_i^{-1}$, then all free colours vanish and we have the following elegant expression for our generating functions as the constant term in an infinite product.

\begin{theorem}[Generalisation of Primc's identity]
\label{th:Primcgene}
Let $n$ be a positive integer.

Let $P_{n}(m;u_0,\dots,u_{n-1};v_0,\dots,v_{n-1})$ be the number of partitions of $m$ in $\mathcal{P}_n$, such that for $i \in \{0, \dots, n-1\},$ the symbol $a_i$ (resp. $b_i$) appears $u_i$ (resp. $v_i$) times.

Let $F_{n}(m;u_0,\dots,u_{m-1};v_0,\dots,v_{m-1})$ be the number of $n^2$-coloured Frobenius partitions of $m$ where for $i \in \{0, \dots, n-1\},$ the symbol $a_i$ (resp. $b_i$) appears $u_i$ (resp. $v_i$) times.
We have
\begin{align*}
&\sum_{m,u_0, \dots, u_{n-1}, v_0, \dots, v_{n-1} \geq 0} P_{n}(m;u_0,\dots,u_{n-1};v_0,\dots,v_{n-1})q^m a_0^{u_0-v_0} \cdots a_{n-1}^{u_{n-1}-v_{n-1}} 
\\&=\sum_{m,u_0, \dots, u_{n-1}, v_0, \dots, v_{n-1} \geq 0} F_{n}(m;u_0,\dots,u_{n-1};v_0,\dots,v_{n-1})q^m a_0^{u_0-v_0} \cdots a_{n-1}^{u_{n-1}-v_{n-1}}
\\&= [x^0] \prod_{i=0}^{n-1}(-x a_i q;q)_{\infty}(-x^{-1} a_i^{-1};q)_{\infty}.
\end{align*}
\end{theorem}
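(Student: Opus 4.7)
The plan is to deduce Theorem~\ref{th:Primcgene} as a direct consequence of Theorem~\ref{th:PnFnbound} together with the constant-term formula \eqref{eq:sgFrob} for $n^2$-coloured Frobenius partitions. The key observation driving the argument is that the series appearing in the statement only retain the variables $a_0, \dots, a_{n-1}$, each raised to the \emph{signed} exponent $u_i - v_i$; this is precisely what one obtains from the full generating function in variables $a_0, \dots, a_{n-1}, b_0, \dots, b_{n-1}$ by the specialisation $b_i \mapsto a_i^{-1}$. Under this substitution, each free colour $a_ib_i$ contributes $a_i \cdot a_i^{-1} = 1$, so the free colour statistics are entirely invisible in the generating functions of the theorem and only the bound statistics survive.

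Concretely, for every $\lambda \in \mathcal{P}_n$ I would split the total colour counts as $u_i(\lambda) = \tilde u_i(\lambda) + f_i(\lambda)$ and $v_i(\lambda) = \tilde v_i(\lambda) + f_i(\lambda)$, where $f_i(\lambda)$ is the number of parts of $\lambda$ with free colour $a_ib_i$, and $\tilde u_i, \tilde v_i$ count the occurrences of $a_i, b_i$ in bound colours only. Since $u_i - v_i = \tilde u_i - \tilde v_i$, regrouping by bound statistics gives
\[\sum_{m, \vec u, \vec v} P_n(m;\vec u;\vec v)\, q^m \prod_{i=0}^{n-1} a_i^{u_i-v_i} = \sum_{m, \vec{\tilde u}, \vec{\tilde v}} \tilde P_n(m;\vec{\tilde u};\vec{\tilde v})\, q^m \prod_{i=0}^{n-1} a_i^{\tilde u_i - \tilde v_i},\]
and exactly the same identity holds with $P_n, \tilde P_n$ replaced by $F_n, \tilde F_n$. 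Theorem~\ref{th:PnFnbound} then yields the equality of the two generating functions of the theorem term by term.

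For the constant-term expression, I would specialise $b_i \mapsto a_i^{-1}$ in the identity \eqref{eq:sgFrob}. The left-hand side becomes $\sum F_n(m;\vec u;\vec v)\, q^m \prod a_i^{u_i-v_i}$, while the right-hand side becomes $[x^0]\prod_{i=0}^{n-1}(-x a_i q;q)_\infty (-x^{-1}a_i^{-1};q)_\infty$. This step is legitimate because each factor $(-x a_i q;q)_\infty$ involves only non-negative powers of $x$ and each $(-x^{-1} b_i;q)_\infty$ only non-positive powers, so the coefficient of $x^0$ is a well-defined formal power series in $q$ and the $a_i,b_i$, and the substitution $b_i \mapsto a_i^{-1}$ commutes with the extraction of this coefficient.

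I do not anticipate any serious obstacle, since all the combinatorial content is already packaged into Theorem~\ref{th:PnFnbound}. The only things to be careful about are the free-colour bookkeeping described above and the formal-power-series justification of the specialisation inside $[x^0]$; both are essentially immediate once stated.
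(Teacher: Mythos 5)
Your proposal is correct and follows essentially the same route as the paper: the authors also obtain Theorem \ref{th:Primcgene} by specialising $b_i = a_i^{-1}$ in Theorem \ref{th:PnFnbound} (itself proved via the kernel refinement, Theorem \ref{th:Primcgenekernel}) and in the constant-term formula \eqref{eq:sgFrob}, noting exactly as you do that the free colours contribute trivially so the bound-colour and total-colour generating functions coincide under this substitution. The free-colour bookkeeping $u_i - v_i = \tilde u_i - \tilde v_i$ and the formal justification of commuting the specialisation with $[x^0]$ are the same observations the paper makes (mostly implicitly) in the paragraph surrounding the theorem statement.
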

Note that because the free colours vanish, the generating functions  for $P_{n}(m;u_0,\dots,u_{n-1};v_0,\dots,v_{n-1})$ and $\tilde{P}_{n}(m;u_0,\dots,u_{n-1};v_0,\dots,v_{n-1})$ (resp. $F_{n}(m;u_0,\dots,u_{n-1};v_0,\dots,v_{n-1})$ and $\tilde{F}_{n}(m;u_0,\dots,u_{n-1};v_0,\dots,v_{n-1})$) are the same.

From this theorem, it is easy to deduce a corollary, corresponding to the principal specialisation, which generalises both of Primc's original identities.
By performing the dilations $q \rightarrow q^n$, and for all $i \in \{0, \dots,n-1\}$, $a_i \rightarrow q^{-i},$ the generating function above becomes $[x^0] (-xq;q)_{\infty}(-x^{-1};q)_{\infty}$, which is also equal to $1/(q;q)_{\infty}.$

\begin{corollary}[Principal specialisation]
\label{cor:primcspec}
Let $n$ be a positive integer.
We have
\begin{align*}
&\sum_{m,u_0, \dots, u_{n-1}, v_0, \dots, v_{n-1} \geq 0} P_{n}(m;u_0,\dots,u_{n-1};v_0,\dots,v_{n-1})q^{nm -\sum_{i=0}^{n-1} i (u_i-v_i)}= \frac{1}{(q;q)_{\infty}}.
\end{align*}
\end{corollary}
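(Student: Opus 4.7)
The plan is to deduce the corollary directly from Theorem~\ref{th:Primcgene} by performing the announced substitution inside the constant-term expression and then simplifying via Jacobi's triple product identity~\eqref{eq:JTP}.

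First, I would apply the substitution $q \mapsto q^n$, $a_i \mapsto q^{-i}$ on both sides of Theorem~\ref{th:Primcgene}. On the left-hand side, the monomial $q^m a_0^{u_0-v_0}\cdots a_{n-1}^{u_{n-1}-v_{n-1}}$ immediately becomes $q^{nm - \sum_{i=0}^{n-1} i(u_i-v_i)}$, so the left-hand side is exactly the series appearing in the corollary. The main computation is therefore the simplification of the right-hand side
\[
[x^0]\prod_{i=0}^{n-1}(-xa_iq;q)_{\infty}(-x^{-1}a_i^{-1};q)_{\infty}
\]
under the same substitution, which becomes
\[
[x^0]\prod_{i=0}^{n-1}(-xq^{n-i};q^n)_{\infty}(-x^{-1}q^i;q^n)_{\infty}.
\]

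Next, I would reassemble the two products along arithmetic progressions modulo $n$. Letting $j = n-i$, the first product equals $\prod_{j=1}^{n}(-xq^j;q^n)_{\infty} = (-xq;q)_{\infty}$, while the second product equals $\prod_{i=0}^{n-1}(-x^{-1}q^i;q^n)_{\infty} = (-x^{-1};q)_{\infty}$. After this regrouping, the expression collapses to
\[
[x^0](-xq;q)_{\infty}(-x^{-1};q)_{\infty}.
\]

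Finally, I would apply Jacobi's triple product identity~\eqref{eq:JTP} to rewrite
\[
(-xq;q)_{\infty}(-x^{-1};q)_{\infty} = \frac{1}{(q;q)_{\infty}}\sum_{k\in\Z}x^k q^{k(k+1)/2},
\]
and extract the coefficient of $x^0$, which is contributed solely by $k=0$, yielding $1/(q;q)_{\infty}$ as desired. No step here is really an obstacle: the derivation is a routine specialisation, and the content of the corollary lies entirely in Theorem~\ref{th:Primcgene}; the only thing to be careful about is that the regrouping of the two infinite products correctly partitions the positive (resp.\ nonnegative) integers into residue classes modulo $n$, which is precisely what the choice of shifts $n-i$ and $i$ ensures.
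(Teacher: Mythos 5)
Your proposal is correct and follows essentially the same route as the paper: the authors also deduce the corollary from Theorem~\ref{th:Primcgene} by performing the dilations $q \rightarrow q^n$, $a_i \rightarrow q^{-i}$, observing that the product collapses to $[x^0](-xq;q)_{\infty}(-x^{-1};q)_{\infty}$, and evaluating this as $1/(q;q)_{\infty}$ via Jacobi's triple product. Your explicit regrouping of the two products along residue classes modulo $n$ simply spells out the step the paper leaves implicit.
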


Moreover, by using Jacobi's triple product repeatedly, we are able to give an expression of the generating function for coloured Frobenius partitions as a sum of infinite products, which gives yet another expression for the generating function for $\mathcal{P}_n$, or equivalently $\mathcal{F}_n$.
\begin{theorem}
\label{th:main2}
%Let $P_{n}(m;u_0,\dots,u_{n-1};v_0,\dots,v_{n-1})$ be the number of $n^2$-coloured partitions of $m$ in colours $\{a_i b_k : 0 \leq i,k \leq n-1\}$, satisfying the difference conditions $\Delta$ (see \eqref{eq:Delta}), where for $i \in \{0, \dots, n-1\},$ the symbol $a_i$ (resp. $b_i$) appears $u_i$ (resp. $v_i$) times in the colour sequence.
Let $n$ be a positive integer.
Then
\begin{align}
&\sum_{m,u_0, \dots, u_{n-1}, v_0, \dots, v_{n-1} \geq 0} P_{n}(m;u_0,\dots,u_{n-1};v_0,\dots,v_{n-1})q^m a_0^{u_0-v_0} \cdots a_{n-1}^{u_{n-1}-v_{n-1}} \nonumber
\\&=\sum_{m,u_0, \dots, u_{n-1}, v_0, \dots, v_{n-1} \geq 0} F_{n}(m;u_0,\dots,u_{n-1};v_0,\dots,v_{n-1})q^m a_0^{u_0-v_0} \cdots a_{n-1}^{u_{n-1}-v_{n-1}} \nonumber
\\&=\frac{1}{(q;q)_{\infty}^{n}}\sum_{\substack{s_1, \dots, s_{n-1}\in \Z\\s_n=0}} a_0^{-s_1}\prod_{i=1}^{n-1} a_i^{s_i-s_{i+1}}q^{s_i(s_i-s_{i+1})} \label{eq:jacob}
\\&=\frac{1}{(q;q)_{\infty}}\left(\prod_{i=1}^{n-1}\frac{\left(q^{i(i+1)};q^{i(i+1)}\right)_{\infty}}{(q;q)_{\infty}}\right) \sum_{\substack{r_1, \dots, r_{n-1}:\\ 0 \leq r_j \leq j-1\\r_n=0}} \prod_{i=1}^{n-1} a_i^{r_i-r_{i+1}}q^{r_i(r_i-r_{i+1})} \nonumber
\\& \qquad \qquad \qquad \times \left(- \left(\prod_{\ell=0}^{i-1} a_i a_{\ell}^{-1} \right) q^{\frac{i(i+1)}{2}+(i+1)r_i-ir_{i+1}};q^{i(i+1)}\right)_{\infty} \label{eq:formulefinale}
\\& \qquad \qquad \qquad \times \left(- \left(\prod_{\ell=0}^{i-1} a_{\ell} a_{i}^{-1} \right) q^{\frac{i(i+1)}{2}-(i+1)r_i+ir_{i+1}};q^{i(i+1)}\right)_{\infty}. \nonumber
\end{align}
\end{theorem}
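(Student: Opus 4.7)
The first equality is exactly the content of Theorem \ref{th:Primcgene}. For the second equality yielding \eqref{eq:jacob}, my plan is to start from the constant-term expression $[x^0]\prod_{i=0}^{n-1}(-xa_iq;q)_\infty(-x^{-1}a_i^{-1};q)_\infty$ provided by Theorem \ref{th:Primcgene} and apply Jacobi's triple product identity \eqref{eq:JTP} (with $x$ replaced by $xa_i$) to each of the $n$ paired factors, obtaining
$$\frac{1}{(q;q)_\infty^n}[x^0]\sum_{\mathbf{k}\in\Z^n} x^{k_0+\cdots+k_{n-1}}\prod_{i=0}^{n-1} a_i^{k_i}q^{k_i(k_i+1)/2}.$$
Extracting the coefficient of $x^0$ restricts the sum to $\sum k_i=0$; I would parametrise this hyperplane by the partial sums $s_i:=-(k_0+\cdots+k_{i-1})$ for $1\leq i\leq n$, so that $s_n=0$ is automatic, $k_0=-s_1$, and $k_i=s_i-s_{i+1}$ for $i\geq 1$. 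A direct algebraic check, using $s_n=0$ and $\sum k_i = 0$, shows that $\sum_{i=0}^{n-1}k_i(k_i+1)/2 = \frac{1}{2}\sum k_i^2$ equals $\sum_{i=1}^{n-1}s_i(s_i-s_{i+1})$ and that the monomial rewrites as $a_0^{-s_1}\prod_{i=1}^{n-1}a_i^{s_i-s_{i+1}}$, giving \eqref{eq:jacob}.

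For the third equality \eqref{eq:formulefinale}, the strategy is to evaluate the multisum in \eqref{eq:jacob} by induction on the level $i$, peeling off one variable $s_i$ at a time while accumulating residues $r_i\in\{0,\ldots,i-1\}$ into an outer finite sum. At the base step $i=1$ (where $r_1$ is forced to $0$), sum over $s_1\in\Z$ using Jacobi's triple product in reverse:
$$\sum_{s_1\in\Z}(a_1/a_0)^{s_1}q^{s_1(s_1-s_2)}=\bigl(-(a_1/a_0)q^{1-s_2};q^2\bigr)_\infty\bigl(-(a_0/a_1)q^{1+s_2};q^2\bigr)_\infty(q^2;q^2)_\infty,$$
which already contributes the $i=1$ factor of \eqref{eq:formulefinale}. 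At step $i\geq 2$, split $s_i=i\,u_i+r_i$ with $r_i\in\{0,\ldots,i-1\}$ and $u_i\in\Z$, rewrite the base-$q^{(i-1)i}$ Pochhammers inherited from step $i-1$ so that their $s_i$-dependence separates into a $u_i$-linear part (absorbable into a theta sum) and an $r_i$-part (absorbable into the outer residue sum), and then apply JTP once more to the inner $u_i$-sum to produce Pochhammers of base $q^{i(i+1)}$ with a residual shift in $s_{i+1}$ to be fed into step $i+1$. The fact that $s_n=0$ closes the induction at $i=n-1$, and the $n-1$ accumulated residues become the outer finite sum of \eqref{eq:formulefinale}.

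The main obstacle is the algebraic bookkeeping of this induction: one must verify that at each step (i) the quadratic form $\sum_{j\geq i}s_j(s_j-s_{j+1})$, under the substitution $s_i=iu_i+r_i$ and after combining with the linear $u_i$-shift inherited from the previous Pochhammers, produces an inner $u_i$-sum that is exactly a JTP-sum of modulus $i(i+1)$; (ii) the residual $r_i$-dependent part collects across the steps to give the outer factor $\prod_{i=1}^{n-1}q^{r_i(r_i-r_{i+1})}$ of \eqref{eq:formulefinale}; and (iii) the monomial coefficients in the $a_i$'s track correctly to yield $\prod_{\ell=0}^{i-1}a_ia_\ell^{-1}$ and its reciprocal as the Pochhammer prefactors. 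These compatibility conditions are what force the residue ranges $r_i\in\{0,\ldots,i-1\}$, the bases $q^{i(i+1)}$, and the shifts $\tfrac{i(i+1)}{2}+(i+1)r_i-ir_{i+1}$ inside the Pochhammers to align with the right-hand side of \eqref{eq:formulefinale}; once they are verified, the induction terminates and the identity is established.
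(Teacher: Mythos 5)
Your treatment of the first two equalities is correct and essentially identical to the paper's: the same application of Jacobi's triple product to each paired factor of the constant-term expression, extraction of $[x^0]$ to restrict to the hyperplane $\sum k_i=0$, and reparametrisation by partial sums. Your $s_i=-(k_0+\cdots+k_{i-1})$ is exactly the paper's $s_i=\sum_{j\geq i}m_j$ from Lemma \ref{lem:23'}, and your verification that $\tfrac{1}{2}\sum k_i^2=\sum_{i=1}^{n-1}s_i(s_i-s_{i+1})$ is the content of that lemma.

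For \eqref{eq:formulefinale}, however, your route genuinely diverges from the paper's, and, more importantly, you have not carried it out. The paper does \emph{not} peel off one variable at a time: it performs a single global change of variables, writing $s_i=i\,d_i+r_i$ with $r_i\in\{0,\dots,i-1\}$ for every $i$ simultaneously and then setting $p_i=d_i-d_{i+1}$, after which the quadratic form becomes $\sum_i r_i(r_i-r_{i+1})+\sum_i\bigl(\tfrac{i(i+1)}{2}p_i^2+p_i((i+1)r_i-ir_{i+1})\bigr)$; the $p_i$'s are then completely decoupled, and each univariate sum over $p_i\in\Z$ is evaluated by one clean application of JTP with base $q^{i(i+1)}$ --- no induction, no quasi-periodicity of theta functions, no rewriting of Pochhammers inherited from a previous step. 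Your iterated scheme is plausible (at step $i+1$ the quasi-periodicity factor $z^{u}q^{-i(i+1)u(u-1)/2}$ of the modulus-$i(i+1)$ theta from the previous step combines with the $(i+1)^2u^2$ coming from $s_{i+1}(s_{i+1}-s_{i+2})$ to give leading coefficient $(i+1)(i+2)/2$, the correct modulus for the next theta), but your write-up explicitly defers exactly this bookkeeping: items (i)--(iii) of your final paragraph \emph{are} the proof of \eqref{eq:formulefinale}, and ``once they are verified, the identity is established'' is not a verification. As written, the third equality is a proof plan rather than a proof; the paper's global substitution is the efficient way to close that gap, since it replaces your inductive theta manipulations with a one-line decoupling of the quadratic form.
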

The formula \eqref{eq:jacob} will allow us to retrieve the Kac-Peterson character formula \cite{KacPeterson} for level $1$ irreducible highest weight modules of $A_{n-1}^{(1)}$ in our second paper \cite{DK19-2}. On the other hand, \eqref{eq:formulefinale} will give a new expression for the character as a sum of infinite products.

Moreover, the formula \eqref{eq:formulefinale} gives an expression of Andrews' function $\mathrm{C}\Phi_k(q)$ as a sum of infinite products, which makes it very easy to express as a sum of modular forms. An expression for $\mathrm{C}\Phi_k(q)$ as a sum of infinite products was already given by Andrews \cite{AndrewsFrob} (without the colours) in the cases $k=1,2,3.$ This is the first times that the case of general $k$ is treated and that a refinement with colour variables is introduced.

\medskip

Finally, through our bijection from Theorem \ref{th:bij}, Theorem \ref{th:Primcgene} also gives us a very broad generalisation of Capparelli's identity in terms of coloured Frobenius partitions. %Due to the fact that the number of parts in each free colour is not constant in the bijection of Theorem \ref{th:bij}, we need to set $b_i = a_i^{-1}$ for all $i$, so that we do not keep track of the free colours in the generating function.

\begin{theorem}[Generalisations of Capparelli's identity]
\label{th:Capgene}
Let $n$ be a positive integer, and let $\delta$ and $\gamma$ be functions satisfying Conditions 1 and 2, respectively.

Let $C_{n}(\delta,\gamma;m;u_0,\dots,u_{n-1};v_0,\dots,v_{n-1})$ be the number of partitions of $m$ in $\mathcal{C}_n(\delta,\gamma)$ (see Definition \ref{def:Capp_conditions}), where for $i \in \{0, \dots, n-1\},$ the symbol $a_i$ (resp. $a_i^{-1}$) appears $u_i$ (resp. $v_i$) times in the colours.

Let $F_{n}(m;u_0,\dots,u_{n-1};v_0,\dots,v_{n-1})$ be the number of $n^2$-coloured Frobenius partitions of $m$ where for $i \in \{0, \dots, n-1\},$ the symbol $a_i$ (resp. $a_i^{-1}$) appears $u_i$ (resp. $v_i$) times in the colours.

Then
\begin{align*}
&\sum_{m,u_0, \dots, u_{n-1}, v_0, \dots, v_{n-1} \geq 0} C_{n}(\delta,\gamma;m;u_0,\dots,u_{n-1};v_0,\dots,v_{n-1})q^m a_0^{u_0-v_0} \cdots a_{n-1}^{u_{n-1}-v_{n-1}}
\\&=(q;q)_{\infty} \times \sum_{m,u_0, \dots, u_{n-1}, v_0, \dots, v_{n-1} \geq 0} F_{n}(m;u_0,\dots,u_{n-1};v_0,\dots,v_{n-1})q^m a_0^{u_0-v_0} \cdots a_{n-1}^{u_{n-1}-v_{n-1}}
\\&= (q;q)_{\infty} [x^0] \prod_{i=0}^{n-1}(-x a_i q;q)_{\infty}(-x^{-1} a_i^{-1};q)_{\infty}.
\end{align*}
\end{theorem}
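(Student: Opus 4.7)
\medskip

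\noindent\textbf{Proof proposal.}
The plan is to derive Theorem~\ref{th:Capgene} by combining the bijection $\Phi$ of Theorem~\ref{th:bij} with the generating function identity of Theorem~\ref{th:Primcgene}. The key observation driving the argument is that under the specialisation $b_i = a_i^{-1}$, every free colour $a_i b_i$ collapses to $1$ and therefore contributes only to the weight $q^m$ and not to any of the $a_j$ exponents. In particular, a classical partition in $\mathcal{P}^0$, whose parts are all coloured $a_0 b_0$, has generating function in $q$ alone equal to $1/(q;q)_\infty$.

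First, I would invoke Theorem~\ref{th:bij} to decompose any generalised Primc partition $\lambda \in \mathcal{P}_n$ uniquely as a pair $(\mu,\nu) \in \mathcal{C}_n(\delta,\gamma) \times \mathcal{P}^0$ of the same total weight. Since $\Phi$ preserves the number of occurrences of each bound colour $a_i b_k$ (with $i \neq k$), and since under the substitution $b_i = a_i^{-1}$ only the bound colours contribute non-trivially to the exponents $u_i - v_i$ of the $a_i$'s (every free colour $a_j b_j$ contributing equally to $u_j$ and $v_j$), the generating function of $\mathcal{P}_n$ factorises as the generating function of $\mathcal{C}_n(\delta,\gamma)$ times $1/(q;q)_\infty$. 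Rearranging, the generating function of $\mathcal{C}_n(\delta,\gamma)$ equals $(q;q)_\infty$ times that of $\mathcal{P}_n$.

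Theorem~\ref{th:Primcgene} would then identify the generating function of $\mathcal{P}_n$ with that of the $n^2$-coloured Frobenius partitions $\mathcal{F}_n$, itself equal to the constant term $[x^0]\prod_{i=0}^{n-1}(-x a_i q;q)_\infty(-x^{-1} a_i^{-1};q)_\infty$. Substituting into the relation of the previous paragraph yields the three equal expressions claimed in Theorem~\ref{th:Capgene}.

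The main obstacle in this plan is not the assembly above, which is essentially bookkeeping, but rather verifying carefully that the statistics used in the statement of Theorem~\ref{th:Capgene} line up exactly with those preserved by $\Phi$. Concretely, I would have to check that $u_i$ and $v_i$ in the statements of Theorem~\ref{th:bij}, Theorem~\ref{th:Primcgene}, and Theorem~\ref{th:Capgene} refer to the \emph{same} counting: namely, occurrences of $a_i$ and $b_i = a_i^{-1}$ summed across all bound colours, with the free-colour contributions cancelling under the substitution. Granting that identification and the preservation statement of Theorem~\ref{th:bij}, Theorem~\ref{th:Capgene} follows as a painless corollary; all the difficult combinatorial content in fact resides in Theorems~\ref{th:bij} and~\ref{th:Primcgene} and not in their combination.
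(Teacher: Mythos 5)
Your proposal is correct and follows essentially the same route as the paper: the theorem is obtained by combining the bijection $\Phi$ of Theorem \ref{th:bij} (which preserves weight and all bound-colour multiplicities, so that under $b_i=a_i^{-1}$ the exponents $u_i-v_i$ are unchanged and the $\mathcal{P}^0$ factor contributes exactly $1/(q;q)_\infty$) with the generating function identity of Theorem \ref{th:Primcgene}. The point you flag about matching the statistics across the three theorems is the right one to check, and it goes through exactly as you describe.
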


\begin{remark}
Combining this with Propositions \ref{prop:genMP} and \ref{prop:Cn2} gives two simple generalisations of Capparelli's identity.
\end{remark}

Again, performing the dilations $q \rightarrow q^n$, and for all $i \in \{0, \dots,n-1\}$, $a_i \rightarrow q^{-i},$ gives us a very simple corollary corresponding to the principal specialisation.

\begin{corollary}[Principal specialisation]
\label{cor:capaspec}
Let $n$ be a positive integer, and let $\delta$ and $\gamma$ be functions satisfying Conditions 1 and 2, respectively.
We have
\begin{align*}
&\sum_{m,u_0, \dots, u_{n-1}, v_0, \dots, v_{n-1} \geq 0} C_{n}(\delta,\gamma;m;u_0,\dots,u_{n-1};v_0,\dots,v_{n-1})q^{nm -\sum_{i=0}^{n-1} i (u_i-v_i)}
\\&= \frac{(q^n;q^n)_{\infty}}{(q;q)_{\infty}}.
\end{align*}
\end{corollary}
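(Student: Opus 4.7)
The plan is to derive this corollary as a direct specialisation of Theorem \ref{th:Capgene}. Starting from the generating function identity
$$\sum C_n(\delta,\gamma;m;u_0,\ldots;v_0,\ldots) q^m a_0^{u_0-v_0}\cdots a_{n-1}^{u_{n-1}-v_{n-1}} = (q;q)_\infty\,[x^0]\prod_{i=0}^{n-1}(-xa_iq;q)_\infty(-x^{-1}a_i^{-1};q)_\infty,$$
I would perform the substitutions $q \mapsto q^n$ and $a_i \mapsto q^{-i}$ for $i=0,\ldots,n-1$ on both sides. On the left, each monomial $q^m a_0^{u_0-v_0}\cdots a_{n-1}^{u_{n-1}-v_{n-1}}$ becomes $q^{nm-\sum_i i(u_i-v_i)}$, giving precisely the left-hand side of the corollary.

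On the right, the factor $(q;q)_\infty$ becomes $(q^n;q^n)_\infty$, and the product turns into
$$[x^0]\prod_{i=0}^{n-1}(-xq^{n-i};q^n)_\infty(-x^{-1}q^i;q^n)_\infty.$$
The key simplification is to regroup each family of $q$-Pochhammer symbols by residue classes modulo $n$: since every nonnegative integer can be written uniquely as $i+kn$ with $0\leq i\leq n-1$ and $k\geq 0$, one obtains
$$\prod_{i=0}^{n-1}(-xq^{n-i};q^n)_\infty=(-xq;q)_\infty\quad\text{and}\quad\prod_{i=0}^{n-1}(-x^{-1}q^i;q^n)_\infty=(-x^{-1};q)_\infty.$$

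To extract $[x^0]$ from $(-xq;q)_\infty(-x^{-1};q)_\infty$, I would apply Jacobi's triple product identity \eqref{eq:JTP}:
$$(-xq;q)_\infty(-x^{-1};q)_\infty=\frac{1}{(q;q)_\infty}\sum_{k\in\Z}x^k q^{k(k+1)/2}.$$
The coefficient of $x^0$ is obtained from the $k=0$ term, giving $1/(q;q)_\infty$. Multiplying by the prefactor $(q^n;q^n)_\infty$ then yields the desired right-hand side $(q^n;q^n)_\infty/(q;q)_\infty$.

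There is no real obstacle here: the corollary is essentially bookkeeping once Theorem \ref{th:Capgene} is in hand. The only delicate point is the residue-class regrouping of the Pochhammer products, and this is a standard manipulation. As an aside, exactly the same argument applied to Theorem \ref{th:Primcgene} recovers Corollary \ref{cor:primcspec}, and the ratio between the two specialisations is precisely the factor $(q^n;q^n)_\infty$ coming from the additional $(q;q)_\infty$ in Theorem \ref{th:Capgene}, which reflects the classical partition factor $\mathcal{P}^0$ appearing in the bijection of Theorem \ref{th:bij}.
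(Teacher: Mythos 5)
Your proposal is correct and follows exactly the route the paper intends: specialise Theorem \ref{th:Capgene} via the dilations $q\to q^n$, $a_i\to q^{-i}$, regroup the Pochhammer symbols by residue classes modulo $n$ to get $(-xq;q)_\infty(-x^{-1};q)_\infty$, and extract $[x^0]=1/(q;q)_\infty$ via Jacobi's triple product, with the prefactor $(q;q)_\infty$ becoming $(q^n;q^n)_\infty$. The paper leaves these steps implicit; your write-up just fills in the same bookkeeping.
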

In other words, after performing the principal specialisation, our generalised Capparelli partitions become equinumerous with $n$-regular partitions, i.e. partitions having no part divisible by $n$. In the representation theory of the symmetric group $S_m$, irreducible $n$-modular representations are labelled by $n$-regular partitions of $m$ when $n$ is prime \cite{JamesKerber}.
There is ample literature on $n$-regular partitions: they have been studied for their multiplicative properties \cite{Beckwith}, in connection with modular forms and congruences \cite{Carlson,GordonOno,Penniston}, and related to $K3$-surfaces \cite{LovejoyPenniston}.

\bigskip

The remainder of this paper is organised as follows.
In Section \ref{sec:redcolseq}, we define the notion of kernel and reduced colour sequence, which will be key in our proof of Theorem \ref{th:PnFnbound}, and compute the weight of the minimal partition with a given kernel. In Section \ref{sec:frob}, we study the combinatorics of coloured Frobenius partitions, and prove Theorems \ref{th:Primcgene} and \ref{th:main2}. In Section \ref{sec:bij}, we give the bijection between $\mathcal{P}_n$ and $\mathcal{C}_n(\delta,\gamma)\times \mathcal{P}^0$. Finally, in Section \ref{sec:prop1}, we give the proof of a key Proposition from Section \ref{sec:redcolseq}, which we postponed to the end as it is quite technical and not necessary to the understanding of the rest of this paper.

\section{Reduced colour sequences and minimal partitions}
\label{sec:redcolseq}

\subsection{Definition}
\label{subseq:definitions}
The original method of weighted words of Alladi and Gordon \cite{Alladi,AllAndGor} relies on the idea that any partition with $m$ parts and satisfying difference conditions can be obtained from the minimal partition satisfying difference conditions and adding a partition with at most $m$ parts to it. For example, all Rogers-Ramanujan partitions into $m$ parts, satisfying difference at least $2$ between consecutive parts, can be obtained by starting with the minimal partition $(2m-1)+(2m-3)+ \cdots + 3 +1$, and adding some partition into at most $m$ parts to it.

Here, to compute the generating function for coloured partitions with difference conditions of $\mathcal{P}_n$, we also use minimal partitions. But while Alladi, Andrews, and Gordon computed minimal partitions with a given number of parts, here we compute minimal partitions with a given \emph{kernel}. Let us start by defining this terminology.

\begin{definition}
Let $c_1, \dots , c_s$ be a sequence of colours taken from $\{a_i b_k : i,k \in \N \}$. The \emph{minimal partition} associated to $c_1, \dots , c_s$ according to the difference conditions $\Delta$ is the coloured partition $\lambda_1 + \cdots + \lambda_s$ with minimal weight such that for all $i \in \{1, \dots , s\}$, $c(\lambda_i)= c_i$. We denote this partition by $\mathrm{min}_{\Delta}(c_1,\dots,c_s).$
\end{definition}

\begin{proposition}
The weight of $\min_{\Delta}(c_1,\dots,c_s)$ is equal to
$$\left| \mathrm{min}_{\Delta}(c_1,\dots,c_s) \right| = \sum_{k=1}^{s} k \Delta(c_k,c_{k+1}).$$
Here, we used again the convention that $c_{s+1}= a_{\infty} b_{\infty}$ and $\Delta(c,a_{\infty} b_{\infty})=1$ for every colour $c$.
\end{proposition}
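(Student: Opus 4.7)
The plan is to construct $\min_{\Delta}(c_1,\dots,c_s)$ explicitly by saturating all difference inequalities, to verify its validity and minimality, and then to evaluate its weight by swapping the order of summation.

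First, by the triangle inequality (Property~\ref{property:triangle}), a coloured partition $\lambda_1+\cdots+\lambda_s$ with colour sequence $c_1,\dots,c_s$ lies in $\mathcal P$ (with the convention $c_{s+1}=a_\infty b_\infty$ and $\lambda_{s+1}=0$) if and only if $\lambda_k-\lambda_{k+1}\geq \Delta(c_k,c_{k+1})$ for every $k\in\{1,\dots,s\}$. This is the only constraint on $\lambda$.

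Next, define
$$\lambda_k^{\min} := \sum_{j=k}^{s}\Delta(c_j,c_{j+1}) \qquad (k=1,\dots,s).$$
Since $\Delta$ takes values in $\{0,1,2\}$, the $\lambda_k^{\min}$ are nonnegative integers; since $\Delta(c_s,c_{s+1})=\Delta(c_s,a_\infty b_\infty)=1$, we have $\lambda_s^{\min}\geq 1$, so every $\lambda_k^{\min}\geq 1$. The equalities $\lambda_k^{\min}-\lambda_{k+1}^{\min}=\Delta(c_k,c_{k+1})\geq 0$ show simultaneously that the sequence is non-increasing and that all difference constraints are satisfied with equality. Hence $\lambda^{\min}:=\lambda_1^{\min}+\cdots+\lambda_s^{\min}$ is a valid coloured partition with colour sequence $c_1,\dots,c_s$.

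To see that $\lambda^{\min}$ is minimal, let $\lambda_1+\cdots+\lambda_s$ be any coloured partition with this colour sequence. We show by downward induction on $k$ that $\lambda_k\geq \lambda_k^{\min}$. For $k=s$, $\lambda_s\geq 1 = \lambda_s^{\min}$ since $\lambda_{s+1}=0$ and $\Delta(c_s,c_{s+1})=1$. For the inductive step, $\lambda_k\geq \lambda_{k+1}+\Delta(c_k,c_{k+1})\geq \lambda_{k+1}^{\min}+\Delta(c_k,c_{k+1})=\lambda_k^{\min}$. Summing gives $|\lambda|\geq |\lambda^{\min}|$, so $\min_\Delta(c_1,\dots,c_s)=\lambda^{\min}$.

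Finally, the weight is computed by interchanging summations:
$$|\lambda^{\min}| = \sum_{k=1}^{s}\sum_{j=k}^{s}\Delta(c_j,c_{j+1}) = \sum_{j=1}^{s}\Delta(c_j,c_{j+1})\cdot |\{k : 1\leq k\leq j\}| = \sum_{k=1}^{s} k\,\Delta(c_k,c_{k+1}),$$
which is the claimed formula. There is no real obstacle here: once the triangle inequality is invoked to reduce to consecutive-difference constraints, everything reduces to saturating each inequality and performing the Fubini swap above.
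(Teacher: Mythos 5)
Your proof is correct and follows essentially the same route as the paper: saturate every consecutive-difference inequality to get $\lambda_k^{\min}=\sum_{j=k}^{s}\Delta(c_j,c_{j+1})$, then swap the order of summation. You simply make explicit (via the downward induction and the appeal to the triangle inequality) the minimality claim that the paper's proof takes for granted.
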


\begin{proof}
Let $c_1, \dots , c_s$ be a sequence of colours and let $\min_{\Delta}(c_1,\dots,c_s)=\lambda_1 + \cdots + \lambda_s$ be the corresponding minimal partition.
By definition, the smallest part $\lambda_s$ of the minimal partition is equal to $1$, which is also equal to $\Delta(c_s,c_{s+1})$. For all $i \in \{1, \dots, s-1\}$ we have $\lambda_i = \lambda_{i+1}+\Delta(c_i,c_{i+1})$. Thus by induction,
$$\lambda_i = \sum_{k=i}^{s} \Delta(c_k, c_{k+1}).$$
Summing over all $i \in \{1, \dots , s\},$ we get
\begin{align*}
\left|\mathrm{min}_{\Delta}(c_1,\dots,c_s) \right| &=\sum_{i=1}^{s} \sum_{k=i}^{s} \Delta(c_k, c_{k+1})
\\ &= \sum_{k=1}^{s} \sum_{i=1}^{k} \Delta(c_k, c_{k+1})
\\&= \sum_{k=1}^{s} k \Delta(c_k,c_{k+1}).
\end{align*}
\end{proof}

\begin{example}
Considering the difference conditions $\Delta$ from matrix $P_3$ in \eqref{Primcmatrix9}, the minimal partition with colour sequence $a_1b_0,a_0b_0,a_2b_2,a_1b_1,a_1b_1,a_0b_1,a_1b_2,a_0b_2$ is
$$ \mathrm{min}_{\Delta}(a_1b_0,a_0b_0,a_2b_2,a_1b_1,a_1b_1,a_0b_1,a_1b_2,a_0b_2)=
9_{a_1b_0}+8_{a_0b_0}+7_{a_2b_2}+6_{a_1b_1}+6_{a_1b_1}
+4_{a_0b_1}+3_{a_1b_2}+1_{a_0b_2}.$$
It has weight $44$.
\end{example}

Given a sequence $c_1, \dots , c_s$ of colours taken from $\{a_i b_k : i,k \in \N \}$, we define the following operations:
\begin{itemize}
\item if there is some $i$ such that $c_i = a_k b_{\ell}$ and $c_{i+1} = a_{\ell} b_{\ell}$, then remove $c_{i+1}$ from the colour sequence,
\item if there is some $i$ such that $c_i = a_k b_{k}$ and $c_{i+1} = a_{k} b_{\ell}$, then remove $c_{i}$ from the colour sequence.
\end{itemize}
Apply the operations above as long as it is possible.
The sequence obtained in the end is called the \emph{reduction} of $c_1, \dots , c_s$, denoted by $\mathrm{red}(c_1, \dots , c_s)$.
A colour sequence that is equal to its reduction is called a \emph{reduced colour sequence}.

\begin{remark}
The reduction operation only removes free colours.
\end{remark}

\begin{remark}
The order in which removals are done does not have any influence on the final result.
\end{remark}

%\begin{remark}
%For each pair of free colours $(a_kb_{k},a_{\ell} b_{\ell})$ with $k \neq \ell$, there is exactly one bound colour $a_{k} b_{\ell}$ such that $a_kb_{k}$ can be removed to its left and $a_{\ell} b_{\ell}$ can be removed to its right.
%\end{remark}

\begin{remark}
For each bound colour $a_{k} b_{\ell}$ ($k \neq \ell$), there is exactly one free colour $a_kb_{k}$ that can be removed to its left, and exactly one free colour $a_{\ell} b_{\ell}$ that can be removed to its right.
\end{remark}

\begin{example}
The reduction of 
$$a_1 b_1 , a_1 b_2 , a_2 b_2 ,a_3 b_3 , a_3 b_1  ,a_1 b_3 ,a_3 b_3  ,a_3 b_3  ,a_3 b_2, a_1b_1$$ 
is 
$$a_1 b_2, a_3 b_1  ,a_1 b_3 ,a_3 b_2,a_1b_1.$$
\end{example}

\begin{definition}
Let $\lambda = \lambda_1 + \cdots + \lambda_s$ be a partition such that $c(\lambda_1)=c_1, \dots , c(\lambda_s)=c_s$. The \emph{kernel} of $\lambda$, denoted by $\mathrm{ker}(\lambda)$, is the reduced colour sequence $\mathrm{red}(c_1, \dots , c_s)$. 
\end{definition}

\subsection{Combinatorial description of reduced colour sequences} 
We want to study the partitions of $\mathcal{P}_n$ having a given kernel. To do so, we need to understand combinatorially the set of colour sequences having a certain reduction.

\begin{proposition}
\label{prop:insert}
Let $S$ be a reduced colour sequence. Any colour sequence $C$ such that $\mathrm{red}(C)=S$ can be obtained by performing a certain number of insertions of the following types in $S$:
\begin{enumerate}
\item if there is a free colour $a_k b_k$ in $S$, insert the same colour $a_k b_k$ arbitrarily many times to its right,
\item if there is a bound colour $a_k b_\ell$ in $S$, insert the free colour $a_k b_k$ arbitrarily many times to its left,
\item if there is a bound colour $a_k b_\ell$ in $S$, insert the free colour $a_{\ell} b_{\ell}$ arbitrarily many times to its right.
\end{enumerate}
\end{proposition}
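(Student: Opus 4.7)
The plan is to proceed by induction on $|C|-|S|$. When $|C|=|S|$, since reduction only ever removes letters, the hypothesis $\mathrm{red}(C)=S$ forces $C=S$ and no insertions are required. For the inductive step $|C|>|S|$, at least one removal rule is applicable to $C$; pick one and let $C'$ be the resulting sequence, so $|C'|=|C|-1$. By the remark that the order of removals is immaterial, $\mathrm{red}(C')=\mathrm{red}(C)=S$, and the induction hypothesis produces a sequence of insertions of types (1)--(3) that builds $C'$ from $S$. What remains is to show that $C$ itself is obtained from $C'$ by one additional insertion of one of these three types.

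This is a direct case analysis on the removal rule that was used. Suppose first that the removal was of the form $c_i=a_kb_\ell$, $c_{i+1}=a_\ell b_\ell$ with $c_{i+1}$ removed. In $C'$ the colour at position $i$ is still $a_kb_\ell$; if $k\neq\ell$ this is a bound colour and re-inserting $a_\ell b_\ell$ to its right is a type~(3) insertion, while if $k=\ell$ the anchor is the free colour $a_\ell b_\ell$ and inserting another $a_\ell b_\ell$ to its right is a type~(1) insertion. The other removal rule, $c_i=a_kb_k$, $c_{i+1}=a_kb_\ell$ with $c_i$ removed, is handled symmetrically: in $C'$ the colour $a_kb_\ell$ has moved to position $i$, and re-inserting $a_kb_k$ to its left is a type~(2) insertion when $k\neq\ell$ (bound anchor), and a type~(1) insertion when $k=\ell$ (here inserting $a_kb_k$ immediately to the left of a free $a_kb_k$ produces the same sequence as inserting it to the right). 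In every case the extra step has one of the three allowed forms, and appending it to the inductive sequence of insertions completes the argument.

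The only real subtlety, and therefore the point to be careful about, is that the case analysis must correctly recognise when the anchor colour surviving in $C'$ is free versus bound, since this determines which insertion type applies. Once one observes that the two situations with a free anchor collapse cleanly into type~(1), the proof is a clean reversal of the reduction process and no further technicalities arise; in particular, each single-colour removal reverses to a single-colour insertion, and ``inserting arbitrarily many times'' in the statement corresponds to repeating the same elementary insertion consecutively.
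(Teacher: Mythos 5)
Your proof is correct, and it is essentially the intended argument: the paper offers no proof at all here (it states that the proposition ``follows immediately from the definition of reduced colour sequences''), and your induction on $|C|-|S|$, reversing one removal at a time, is precisely the formalization of that claim. The one subtlety — that an elementary re-insertion is anchored at a colour of the intermediate sequence $C'$ rather than of $S$ — is harmless because insertions only add free colours, so every bound anchor already lies in $S$ and a block of identical free copies collapses onto a single anchor of $S$ via the ``arbitrarily many times'' clause, which your closing paragraph correctly notes.
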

The proof follows immediately from the definition of reduced colour sequences in the previous section.

\begin{example}
\label{ex:CandS}
Let 
$$S=a_1b_2, a_3b_1, a_2 b_2, a_4 b_3, a_3 b_2.$$
The sequence
$$C=a_1 b_1, a_1 b_1, a_1b_2, a_2 b_2, a_3 b_3, a_3 b_3, a_3 b_3, a_3b_1,  a_2 b_2, a_2 b_2, a_4 b_3, a_3 b_2$$
is obtained from $S$ by inserting $a_1 b_1$ twice to the left of $a_1b_2$ (insertion $(2)$), $a_2 b_2$ once to the right of $a_1b_2$ (insertion $(3)$), $a_3 b_3$ three times to the left of $a_3 b_1$ (insertion $(2)$),  and $ a_2 b_2$ once to the right of $ a_2 b_2$ (insertion $(1)$).
\end{example}

\begin{remark}
The way one obtains $C$ from $S$ via the insertions above is not unique (even up to the order in which we perform the insertions). Indeed, it could be that in $S= c_1, \dots, c_s$, the colour that can be inserted to the right of some $c_j$ is the same as the one that can be inserted to the left of $c_{j+1}$.

For example $a_1 b_2, a_2 b_2, a_2 b_3$ can be obtained from $a_1 b_2, a_2 b_3$ either by inserting $a_2 b_2$ to the right of $a_1 b_2$ (insertion $(3)$) or to the left of $a_2 b_3$ (insertion $(2)$).
\end{remark}

To understand reduced colour sequences and insertions combinatorially, and make sure that we count our partitions in an unique way, we need some definitions. 
\begin{defprop}
A \emph{primary pair} is a pair $(c,c')$ of bound colours such that in the insertion rules of Proposition \ref{prop:insert}, the free colour that can be inserted to the right of $c$ is the same as the one that can be inserted to the left of $c'$.

These pairs are exactly those of the form $(a_i b_k, a_k b_{\ell})$, where $i\neq k$ and $k \neq \ell$.
\end{defprop}

We will be interested in maximal sequences of primary pairs in $S$.
\begin{definition}
Let $S = c_1, \dots, c_s$ be a reduced colour sequence. The \emph{maximal primary subsequences} of $S$ are subsequences $c_i, c_{i+1}, \dots, c_j$ of $S$ such that
\begin{itemize}
\item for all $k \in \{i, \dots, j-1\}$, $(c_k,c_{k+1})$ is a primary pair,
\item $(c_{i-1},c_{i})$ and $(c_{j},c_{j+1})$ are not primary pairs.
\end{itemize}
We denote by $t(S)$ the number of maximal primary subsequences of $S$, and by $S_1, \dots , S_{t(S)}$ these maximal primary subsequences.
\end{definition}

\begin{example}
Let
$$S= a_1 b_2, a_2 b_3, a_2 b_2, a_1 b_4, a_3 b_2, a_2 b_1, a_3 b_3, a_2 b_2.$$
Here $t(S)=3$ and the maximal primary subsequences of $S$ are, from left to right,
\begin{align*}
S_1 &:= a_1 b_2, a_2 b_3,\\
S_2 &:= a_1 b_4,\\
S_3 &:= a_3 b_2, a_2 b_1.
\end{align*}
\end{example}

Let us now define secondary pairs of colours, inside which two different colours can be inserted.
\begin{defprop}
\label{def:secondarypair}
A \emph{secondary pair} is a pair $(c,c')$ of colours satisfying one of the following assertions:
\begin{enumerate}
\item The colours $c$ and $c'$ are both bound, and the free colour that can be inserted to the right of $c$ is different from the one that can be inserted to the left of $c'$. These pairs are those of the form $(a_i b_j, a_k b_{\ell})$, where $i\neq j$, $j \neq k$,  and $k \neq \ell$.

\item The colour $c$ is free, $c'$ is bound, and the colour that can be inserted to the left of $c'$ is different from $c$.  These pairs are those of the form $(a_i b_i, a_k b_{\ell})$, where $i\neq k$,  and $k \neq \ell$.

\item The colour $c$ is bound,  $c'$ is free, and the colour which can be inserted to the right of $c$ is different from $c'$.  These pairs are those of the form $(a_i b_k, a_{\ell} b_{\ell})$, where $i\neq k$,  and $k \neq \ell$.
\end{enumerate}
\end{defprop}

\begin{remark} In the above, the colours $c$ or $c'$ can be equal to $a_{\infty}b_{\infty}$ (when they are free). This allows us to avoid treating the case of insertions at the ends of the colour sequence $C=c_1, \dots , c_s$ separately. Indeed, by our convention, inserting $a_ib_i$ to the left of $c_1= a_i b_k$ is the same as inserting $a_ib_i$ inside the pair $(c_0,c_1)=(a_{\infty}b_{\infty}, a_i b_k)$. This is included in Case (2).
Similarly, inserting $a_kb_k$ to the right of $c_s= a_i b_k$ is the same as inserting $a_kb_k$ inside the pair $(c_s,c_{s+1})=( a_i b_k,a_{\infty}b_{\infty})$, which is included in Case (3).
\end{remark}

\medskip
With the definitions and propositions above, we can now uniquely determine the places where insertions can occur in a reduced colour sequence.

Let $S= c_1, \dots , c_s$ be a reduced colour sequence of length $s$. Then $S$ can be written uniquely in the form
$$S=T_1S_1T_2S_2 \dots T_tS_tT_{t+1},$$
where $S_1, \dots, S_t$ are the maximal primary subsequences of $S$, and $T_1, \dots, T_{t+1}$ are (possibly empty) sequences of consecutively distinct free colours.

For all $u \in \{1, \dots , t\}$, let $i_{2u-1}$ (resp. $i_{2u}$) be the index of the first (resp. last) colour of $S_u$, i.e.
$$S_u= c_{i_{2u-1}}, \dots , c_{i_{2u}}.$$
We have $i_{2u-1} \leq i_{2u}$, with equality when $S_u$ is a singleton.
By the definition of maximal primary subsequences, for all $u$, the pairs $(c_{i_{2u-1}-1},c_{i_{2u-1}})$ and $(c_{i_{2u}},c_{i_{2u}+1})$ are secondary pairs.

We can now state the following.
\begin{proposition}
Using the notation above, the insertions of free colours in $S$ can occur exactly in the following $s+t$ places (possibly multiple times in the same place):
\begin{itemize}
\item to the right of $c_i$, for all $i \in \{1,\dots ,s\}$,
\item to the left of $c_{i_{2u-1}}$, for all  $u \in \{1, \dots , t\}$.
\end{itemize}
\end{proposition}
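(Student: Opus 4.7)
The plan is to count the raw insertion candidates produced by Proposition~\ref{prop:insert}, identify the coincidences, and match the outcome with the claimed $s+t$ positions.

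First, I would unpack the insertion rules into a list of raw candidate slots, each specified by a (location, inserted colour) pair. For every $i \in \{1, \dots, s\}$ there is a ``right-of-$c_i$'' candidate: its inserted colour is $a_k b_k$ if $c_i = a_k b_k$ is free (rule~(1)), or $a_\ell b_\ell$ if $c_i = a_k b_\ell$ is bound (rule~(3)). For each bound $c_i = a_k b_\ell$ there is additionally a ``left-of-$c_i$'' candidate with inserted colour $a_k b_k$ (rule~(2)). Letting $B$ denote the number of bound colours of $S$, this gives $s + B$ raw candidates in total.

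Second, I would identify the duplicates. Two raw candidates describe the same slot only if they share a location, and the only candidates sharing a location are ``right-of-$c_i$'' and ``left-of-$c_{i+1}$'', which both lie in the gap between $c_i$ and $c_{i+1}$ (using the endpoint conventions $c_0 = c_{s+1} = a_\infty b_\infty$). I would compare their inserted colours by cases on whether $c_i$ and $c_{i+1}$ are free or bound, using that $S$ is reduced to rule out the patterns forbidden by the reduction rules. The outcome is that these two candidates coincide if and only if $c_i$ and $c_{i+1}$ are both bound and $(c_i, c_{i+1})$ is a primary pair.

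Third, I would count primary pairs. Primary pairs occur only between two bound colours, and by the maximality built into the definition of the $S_u$, they are exactly the pairs of consecutive entries lying within some $S_u$. Writing $m_u := |S_u|$, this gives $p = \sum_{u=1}^{t}(m_u - 1) = B - t$. Hence the number of distinct insertion slots is $s + B - p = s + t$, matching the claim. To locate them, observe that all $s$ ``right-of-$c_i$'' slots survive, while a ``left-of-$c_j$'' slot survives if and only if $(c_{j-1}, c_j)$ is not a primary pair, i.e.\ if and only if $c_j$ starts some $S_u$; these are precisely the $c_{i_{2u-1}}$ for $u \in \{1, \dots, t\}$.

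The main technical obstacle is the case analysis in the second step: one must systematically check each combination of free/bound for the two entries of a gap, and invoke reducedness of $S$ to eliminate spurious coincidences that would otherwise inflate the count; once this is done, the remainder of the argument is purely arithmetic.
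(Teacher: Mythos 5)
Your argument is correct and is exactly the formalization of what the paper leaves implicit: the paper states this proposition without proof, relying on the decomposition $S=T_1S_1T_2S_2\dots T_tS_tT_{t+1}$ and the fact that the pairs $(c_{i_{2u-1}-1},c_{i_{2u-1}})$ and $(c_{i_{2u}},c_{i_{2u}+1})$ are secondary. Your deduplication count (left-of and right-of candidates in a gap coincide precisely for primary pairs, of which there are $B-t$, with reducedness excluding the free/bound coincidences) fills in that omitted reasoning correctly.
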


Let $f_1, \dots , f_{s+t}$ be the $s+t$ free colours that can be inserted in $S$ (in order).

Let $n_1, \dots , n_{s+t}$ be non-negative integers. We denote by $S(n_1,\dots , n_{s+t})$ the colour sequence obtained from $S$ by inserting $n_i$ times the colour $f_i$ in $S$, for all $i$.

Using this notation, we finally have unicity of the insertions.
\begin{proposition}
For each colour sequence $C$ such that $\mathrm{red}(C)=S$, there exist a unique $(s+t)$-tuple of non-negative integers $(n_1,\dots , n_{s+t})$ such that $C=S(n_1,\dots , n_{s+t})$.
\end{proposition}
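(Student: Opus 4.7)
The existence of the tuple $(n_1,\dots,n_{s+t})$ is immediate from Proposition \ref{prop:insert} together with the slot analysis preceding the statement: they identify the $s+t$ admissible insertion positions and show that every $C$ with $\mathrm{red}(C)=S$ arises as $S(n_1,\dots,n_{s+t})$ for some choice of $n_i \in \N$.

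For uniqueness, the plan is to induct on the length of $C$. The base case $C = \emptyset$ forces $S = \emptyset$ (hence $s = t = 0$), and the empty tuple is trivially unique. For the inductive step, the key claim I would establish is that the leftmost letter $d_1$ of $C$ is unambiguously identified as either the leftmost letter $c_1$ of $S$, or as a single insertion into the unique slot lying strictly to the left of $c_1$ in $S$ (when such a slot exists).

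To verify this claim, I would distinguish two cases on the nature of $c_1$. If $c_1$ is a free colour, then $c_1$ sits inside the leading block $T_1$ and is not the first element of any $S_u$; hence no ``left of'' slot of $S$ precedes the position of $c_1$, and the insertions in the first ``left of'' slot (which sits just before the first bound colour of $S$) land after the entirety of $T_1$, not at position $1$. Therefore $d_1 = c_1$. If $c_1 = a_k b_\ell$ is bound, then necessarily $T_1 = \emptyset$ and $c_1 = c_{i_1}$ starts $S_1$; the unique slot ``left of $c_1$'' has insertion colour $a_k b_k$, which differs from $c_1$ since $k \neq \ell$. We can therefore read off from the colour of $d_1$ whether $d_1 = c_1$ or $d_1$ is an insertion in that slot.

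Once $d_1$ has been classified, I would strip it from $C$ (and strip $c_1$ from $S$ precisely when $d_1 = c_1$), and apply the induction hypothesis to the shorter pair of sequences, lifting the resulting unique tuple back to $C$ by incrementing the appropriate coordinate by one in the insertion case, or by fixing the ``left of $c_1$'' count to zero in the other case. The main obstacle I anticipate is verifying that the stripping operation preserves the slot structure: one must check that removing $c_1$ from $S$ transforms the decomposition $S = T_1 S_1 T_2 \dots$ in a predictable way, so that the slots of $S$ other than ``left of $c_1$'' correspond bijectively with the slots of $S'$. This amounts to a short case analysis (according to whether $c_1$ is a singleton $S_1$, the first of a longer $S_1$, or whether a primary pair $(c_1,c_2)$ in $S$ becomes a secondary boundary in $S'$), and introduces no genuine difficulty but must be handled carefully for the induction to close.
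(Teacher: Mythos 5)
Your identification of the first letter of $C$ is correct and the existence half is fine, but the inductive step has a genuine gap: when $d_1=c_1$, the pair $(C\setminus d_1,\,S\setminus c_1)$ need not satisfy $\mathrm{red}(C\setminus d_1)=S\setminus c_1$, so the induction hypothesis cannot be invoked. The problem is that free colours inserted to the \emph{right} of $c_1$ become orphaned once $c_1$ is deleted: they were removable in $C$ only because $c_1$ preceded them. Concretely, take $S=a_1b_2,\,a_3b_4$ (reduced, with $s=t=2$) and $C=a_1b_2,\,a_2b_2,\,a_3b_4$, i.e.\ one insertion of $a_2b_2$ to the right of $c_1$. Here $d_1=c_1$, but $C'=a_2b_2,\,a_3b_4$ is already reduced, so $\mathrm{red}(C')=C'\neq a_3b_4=S'$; indeed $C'$ is not of the form $S'(\cdots)$ for any tuple, since the only free colours insertable into $S'$ are $a_3b_3$ and $a_4b_4$. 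Relatedly, your lifting step only accounts for the ``left of $c_1$'' coordinate; the ``right of $c_1$'' slot of $S$ has no counterpart among the slots of $S'$, and its multiplicity is never recovered. This is not the obstacle you flagged (the effect of deleting $c_1$ on the decomposition of $S$); it is a failure of the recursion itself.

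The repair is to peel off in one step $c_1$ together with the entire maximal run of letters immediately following it that are equal to the free colour insertable to its right; this run cannot be confused with $c_2$ (else $S$ would not be reduced) nor with the colour insertable to the left of $c_2$, since for a primary pair the two slots are merged by definition and for a secondary pair the two insertable colours are distinct by Definition/Proposition \ref{def:secondarypair}. This is essentially the argument the paper leaves implicit, and it is most cleanly stated without induction: the word of free colours sitting in $C$ strictly between two consecutive surviving letters $c_i,c_{i+1}$ of $S$ (or at either end) must have the form $f^{a}g^{b}$, where $f$ is the colour insertable to the right of $c_i$ and $g$ the colour insertable to the left of $c_{i+1}$; when $(c_i,c_{i+1})$ is secondary one has $f\neq g$, so $a$ and $b$ are determined, and when it is primary, $f=g$ and the two counts are by construction pooled into the single slot ``right of $c_i$''. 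Your first-letter analysis is exactly this observation at the leftmost gap; it needs to be applied at every gap rather than fed into a one-letter-at-a-time recursion.
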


\begin{example}
In Example \ref{ex:CandS}, we have $s=5$, $t=3$,
$$S_1=a_1b_2, \quad S_2= a_3b_1, \quad S_3=a_4 b_3, a_3 b_2$$
$$T_1= \emptyset, \quad  T_2= \emptyset, \quad T_3= a_2 b_2, \quad  T_4= \emptyset,$$
 and
$$C=S(2,1,3,0,1,0,0,0).$$
\end{example}

\subsection{Influence of the insertions on the minimal partition}
We now study how insertions inside a colour sequence affect the minimal differences between the parts of the corresponding minimal partition.
Let us start with a general lemma about the minimal differences $\Delta$.
\begin{lemma}
\label{lem:Delta}
For all $k, \ell \in \N$ with $k \neq \ell$, we have
\begin{align}
\Delta(a_kb_k, a_k b_{\ell})&= \chi(k < \ell), \label{eq1}\\
\Delta(a_kb_{\ell}, a_{\ell} b_{\ell})&= \chi(k > \ell), \label{eq2}\\
\Delta(a_kb_k, a_k b_{\ell}) + \Delta(a_kb_{\ell}, a_{\ell} b_{\ell})&= 1. \label{eq3}
\end{align}
\end{lemma}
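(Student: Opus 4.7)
The plan is to verify all three identities by direct substitution into the definition \eqref{eq:Delta} of $\Delta$, exploiting the repeated indices in the colours $a_k b_k$ and $a_\ell b_\ell$ to cause cancellations, and using the hypothesis $k \neq \ell$ to collapse the remaining indicator functions.

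For \eqref{eq1}, I would substitute $(i,k,i',k') \mapsto (k,k,k,\ell)$ into \eqref{eq:Delta}. The first two terms give $\chi(k\geq k) - \chi(k=k=k) = 1-1 = 0$, which cancel, while the last two give $\chi(k \leq \ell) - \chi(k=k=\ell) = \chi(k\leq \ell) - \chi(k=\ell)$. Since we have assumed $k \neq \ell$, this reduces to $\chi(k<\ell)$, as desired. The argument for \eqref{eq2} is symmetric: substituting $(i,k,i',k') \mapsto (k,\ell,\ell,\ell)$, the last two terms cancel to $0$ and the first two give $\chi(k\geq \ell) - \chi(k=\ell) = \chi(k>\ell)$, again using $k \neq \ell$.

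For \eqref{eq3}, I would simply add \eqref{eq1} and \eqref{eq2}, observing that since $k \neq \ell$, the trichotomy on $\mathbb{Z}$ gives that exactly one of $\chi(k<\ell)$ and $\chi(k>\ell)$ equals $1$ (and the other equals $0$), so their sum is $1$.

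There is no real obstacle here: the whole statement is an unpacking of \eqref{eq:Delta}. The only subtlety worth highlighting explicitly is that the cancellations in the first two identities crucially rely on two of the three indices coinciding in each colour, and that the final simplification in each case uses the standing hypothesis $k \neq \ell$ coming from the fact that the partial statements concern a mixture of a free colour and a bound colour.
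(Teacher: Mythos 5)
Your proposal is correct and matches the paper's proof: both verify \eqref{eq1} and \eqref{eq2} by direct substitution into the definition \eqref{eq:Delta}, using the repeated indices to cancel two terms and the hypothesis $k \neq \ell$ to kill the remaining correction term, and both obtain \eqref{eq3} by adding the first two identities. No gaps.
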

\begin{proof}
We only give the details for \eqref{eq1}.
Remembering that $k \neq \ell$, we have
\begin{align*}
\Delta(a_kb_k, a_k b_{\ell})&= \chi(k \geq k) - \chi(k =k= k) + \chi(k \leq \ell) - \chi(k =k= \ell) \\
&= 1-1+\chi(k \leq \ell)-0.
\end{align*}
Equation \eqref{eq2} is proved in the same way, and \eqref{eq3} is obtained by adding \eqref{eq1} and \eqref{eq2} together.
\end{proof}

If $S$ is a reduced colour sequence, we want to see how the insertion of some free colour in $S$ affects the minimal partition, or equivalently the minimal differences between successive parts.

Let us start with an observation. Because for all $k$, $\Delta(a_k b_k,a_k b_k)=0$, inserting a free colour $a_k b_k$ once or multiple times inside a given pair has exactly the same effect on the rest of the minimal partition. Therefore we only need to study the case where we insert a single free colour inside a primary or secondary pair.

First, let us see what happens to the minimal differences if we insert a free colour inside a primary pair.
\begin{proposition}
\label{prop:insertprimary}
Let $(a_i b_k, a_k b_{\ell})$, with $i\neq k$ and $k \neq \ell$, be a primary pair. We have
$$\Delta(a_i b_k, a_k b_{k})+\Delta(a_k b_k, a_k b_{\ell})=\Delta(a_i b_k, a_k b_{\ell}).$$
\end{proposition}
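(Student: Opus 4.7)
The plan is to unfold the definition of $\Delta$ in \eqref{eq:Delta} for each of the three quantities and observe that the primary-pair hypotheses $i\neq k$ and $k\neq \ell$ kill exactly the ``correction'' $\chi$-terms, so that everything matches on the nose.

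First, I would dispose of the second summand on the left-hand side by quoting Lemma~\ref{lem:Delta}, equation \eqref{eq1}: since $k\neq \ell$, we have $\Delta(a_kb_k,a_kb_\ell)=\chi(k<\ell)$. Next, for the first summand $\Delta(a_ib_k,a_kb_k)$, the definition gives
\[
\chi(i\geq k)-\chi(i=k=k)+\chi(k\leq k)-\chi(k=k=k).
\]
The third and fourth terms cancel, and the middle term vanishes because $i\neq k$. So this summand reduces to $\chi(i\geq k)$, which equals $\chi(i>k)$ again because $i\neq k$.

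For the right-hand side, I would similarly unfold
\[
\Delta(a_ib_k,a_kb_\ell)=\chi(i\geq k)-\chi(i=k=k)+\chi(k\leq \ell)-\chi(k=k=\ell).
\]
Here the two ``diagonal'' subtraction terms vanish: the first because $i\neq k$, the second because $k\neq \ell$. What remains is $\chi(i\geq k)+\chi(k\leq \ell)=\chi(i>k)+\chi(k<\ell)$, which coincides exactly with the sum computed on the left.

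There is essentially no obstacle; the statement is a clean bookkeeping check, and the role of the primary-pair hypothesis is precisely to ensure that the ``$\chi(i=k=i')$'' and ``$\chi(k=i'=k')$'' corrections in the three instances of $\Delta$ all disappear, leaving only the two inequality indicators $\chi(i>k)$ and $\chi(k<\ell)$, which split additively between the two summands on the left.
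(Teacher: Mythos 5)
Your proof is correct and follows essentially the same route as the paper's: reduce the left-hand side to $\chi(i>k)+\chi(k<\ell)$ via Lemma \ref{lem:Delta} and/or direct unfolding, then expand $\Delta(a_ib_k,a_kb_\ell)$ from \eqref{eq:Delta} and observe that the hypotheses $i\neq k$, $k\neq\ell$ kill the two correction terms, leaving the same expression. No gaps.
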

\begin{proof}
By \eqref{eq1} and \eqref{eq2}, we have
$$\Delta(a_i b_k, a_k b_{k})+\Delta(a_k b_k, a_k b_{\ell}) = \chi(i > k) + \chi(k < \ell).$$
On the other hand, by the definition of $\Delta$, and using that $i\neq k$ and $k \neq \ell$, we have
\begin{align*}
\Delta(a_i b_k, a_k b_{\ell}) &= \chi(i \geq k) - \chi(i=k=k) + \chi(k \leq \ell)-\chi(k=k=\ell)
\\&=\chi(i > k) -0 + \chi(k < \ell) -0.
\end{align*}
This is the same expression as before.
\end{proof}

The above proposition shows that inserting a free colour inside a primary pair leaves the other parts of the minimal partition unchanged.
\begin{corollary}
\label{cor:insertprimary}
Let $C=c_1, \dots , c_s$ be a colour sequence, and let $\min_{\Delta}(C)= \lambda_1 + \cdots + \lambda_s$ be the corresponding minimal partition. Inserting a free colour $c'$ inside a primary pair $(c_i,c_{i+1})$ doesn't disrupt the minimal differences. The minimal partition after insertion will be $\lambda_1 + \cdots + \lambda_i + \lambda' + \lambda_{i+1}+ \cdots + \lambda_s$, with $\lambda' = \lambda_{i+1} + \Delta(c',c_{i+1})$.
\end{corollary}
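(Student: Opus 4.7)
The plan is to directly apply the explicit formula $\lambda_i = \sum_{k=i}^{s} \Delta(c_k, c_{k+1})$ from Proposition 2.2 to the colour sequence $C' = c_1, \dots, c_i, c', c_{i+1}, \dots, c_s$ obtained after insertion, and check that the resulting minimal partition matches the claimed form.

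First, I would index the new sequence so that $c'$ sits at position $i+1$ and the old $c_{i+1}, \dots, c_s$ shift to positions $i+2, \dots, s+1$. For positions $j \geq i+2$ in $C'$, the formula of Proposition 2.2 yields a sum of $\Delta$'s along a tail of the original sequence that is identical to the tail appearing in $\lambda_{j-1}$, so the corresponding parts are unchanged. For the new position $i+1$, the formula gives
\[
\Delta(c', c_{i+1}) + \sum_{k=i+1}^{s}\Delta(c_k, c_{k+1}) \;=\; \Delta(c', c_{i+1}) + \lambda_{i+1},
\]
which is exactly the $\lambda'$ specified in the statement.

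The key step is handling the positions $j \leq i$. For such $j$, the minimal-partition formula applied to $C'$ picks up
\[
\sum_{k=j}^{i-1}\Delta(c_k, c_{k+1}) \;+\; \Delta(c_i, c') \;+\; \Delta(c', c_{i+1}) \;+\; \sum_{k=i+1}^{s}\Delta(c_k, c_{k+1}).
\]
Here I would invoke Proposition \ref{prop:insertprimary}, which asserts $\Delta(c_i, c') + \Delta(c', c_{i+1}) = \Delta(c_i, c_{i+1})$ precisely because $(c_i, c_{i+1})$ is a primary pair and $c'$ is the unique free colour insertable between them. Substituting collapses the middle two terms to $\Delta(c_i, c_{i+1})$, giving $\sum_{k=j}^{s}\Delta(c_k, c_{k+1}) = \lambda_j$. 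Hence all parts $\lambda_j$ with $j \leq i$ remain unchanged.

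There is no real obstacle here: the corollary is essentially a direct corollary of Proposition \ref{prop:insertprimary} combined with Proposition 2.2. The one thing to be careful about is the bookkeeping of indices after insertion and the boundary convention $c_{s+1} = a_\infty b_\infty$, which already ensures the formula is valid at the right end regardless of where the insertion happens. Once the telescoping identity from Proposition \ref{prop:insertprimary} is applied, every part of $\min_\Delta(C')$ matches the claimed partition, proving both that the minimal differences are preserved and that the new minimal partition has the stated form.
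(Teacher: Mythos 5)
Your proof is correct and follows essentially the same route as the paper: the paper treats this corollary as an immediate consequence of Proposition \ref{prop:insertprimary}, and your argument simply makes explicit the telescoping computation (via the formula $\lambda_j = \sum_{k=j}^{s}\Delta(c_k,c_{k+1})$) that justifies why the parts to the left of the insertion are unchanged.
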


We now turn to insertions inside secondary pairs. In certain cases, it will disrupt the minimal differences. We first study the case where we insert a free colour to the left of $c'$ in a secondary pair $(c,c')$.
\begin{proposition}[Left insertion]
\label{prop:leftinsert}
Let $(a_i b_j, a_k b_{\ell})$, with $j\neq k$ and $k \neq \ell$, be a secondary pair where $a_k b_{\ell}$ is a bound colour (Cases (1) and (2) in Definition \ref{def:secondarypair}). We have
$$\Delta(a_i b_j, a_k b_{k})+\Delta(a_k b_k, a_k b_{\ell})-\Delta(a_i b_j, a_k b_{\ell})= 0 \text{ or } 1.$$
\end{proposition}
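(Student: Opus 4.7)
The plan is to reduce the statement to a short indicator-function identity and then verify it by a small case analysis. Using the defining formula \eqref{eq:Delta} together with the hypotheses $j\neq k$ and $k\neq\ell$, the triple-equality terms $\chi(i=j=k)$ and $\chi(j=k=\ell)$ in the definition of $\Delta(a_ib_j,a_kb_\ell)$ vanish, and likewise for $\Delta(a_ib_j,a_kb_k)$. One obtains
$$\Delta(a_ib_j,a_kb_k)=\chi(i\geq k)+\chi(j\leq k), \qquad \Delta(a_ib_j,a_kb_\ell)=\chi(i\geq k)+\chi(j\leq\ell),$$
while Lemma~\ref{lem:Delta} gives $\Delta(a_kb_k,a_kb_\ell)=\chi(k<\ell)$. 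The $\chi(i\geq k)$ contributions cancel, so the target expression collapses to
$$E(j,k,\ell):=\chi(j\leq k)+\chi(k<\ell)-\chi(j\leq\ell).$$

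It then remains to show $E(j,k,\ell)\in\{0,1\}$ under the assumption $k\neq\ell$. I would split into two cases according to the sign of $k-\ell$. If $k<\ell$, then $\chi(k<\ell)=1$ and $\chi(j\leq k)\leq\chi(j\leq\ell)$, so $E\in\{0,1\}$ depending on whether $j\leq k$. If $k>\ell$, then $\chi(k<\ell)=0$ and $\chi(j\leq\ell)\leq\chi(j\leq k)$, so $E=\chi(j\leq k)-\chi(j\leq\ell)\in\{0,1\}$. A brief sub-division of each case by the location of $j$ relative to $\min\{k,\ell\}$ and $\max\{k,\ell\}$ suffices.

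Finally, one must cover the boundary case in which the insertion is at the leftmost end of the colour sequence, corresponding to $a_ib_j=a_\infty b_\infty$ in Case (2) of Definition~\ref{def:secondarypair}. By the convention stated after the definition of $\mathcal P_n$, $\Delta(a_\infty b_\infty,\cdot)=1$ for every colour, so the expression in the statement reduces to $1+\chi(k<\ell)-1=\chi(k<\ell)\in\{0,1\}$.

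There is no real obstacle here: the identity is purely formal, and the only tedium is the bookkeeping of the small number of sub-cases in the comparison of $j,k,\ell$. The cleanest presentation will first do the generic algebraic cancellation, then display the two-case table for $E(j,k,\ell)$, and finally dispose of the $a_\infty b_\infty$ convention in one line.
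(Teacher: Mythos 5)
Your proof is correct and follows essentially the same route as the paper: reduce via the definition (and Lemma \ref{lem:Delta}) to the indicator identity $\chi(j<k)+\chi(k<\ell)-\chi(j\leq\ell)\in\{0,1\}$ and verify it by a short case analysis (the paper argues by inspecting which terms can equal $1$ rather than splitting on the sign of $k-\ell$, but this is a cosmetic difference). Your explicit treatment of the $a_\infty b_\infty$ boundary case is harmless and slightly more careful than the paper, which leaves it to the stated coherence of the convention with \eqref{eq:Delta}.
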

\begin{proof}
Let $D$ denote the difference above.
By definition of $\Delta$ and the fact that $j\neq k$ and $k \neq \ell$, we have
$$\Delta(a_i b_j, a_k b_{\ell})= \chi(i \geq k)+\chi(j \leq \ell).$$
On the other hand, we have
$$\Delta(a_i b_j, a_k b_{k})= \chi(i \geq k)+\chi(j < k),$$
and
$$\Delta(a_k b_k, a_k b_{\ell})=\chi(k < \ell).$$
Thus the difference is equal to
$$D=  \chi(j < k) + \chi(k < \ell) - \chi(j \leq \ell).$$
This is always equal to $0$ or $1$. Indeed, when the first two terms are $1$, then we have $j < k < \ell$ and the third term is $1$ too. When the last term is $1$, then at least one of the first two is $1$ too. If it wasn't the case, we would have $j \geq k \geq \ell$ and $j \leq \ell$, i.e. $j=k=\ell$, which is impossible because $j \neq k.$
\end{proof}

\begin{definition}
When the difference in the Proposition \ref{prop:leftinsert} is $0$ (resp. $1$), we call $(a_i b_j, a_k b_{\ell})$  a \textit{type $0$ (resp. type $1$) left pair}, and the corresponding insertion a \textit{type $0$ (resp. type $1$) left insertion}.
\end{definition}

\begin{remark}
The type of the left pair $(a_i b_j, a_k b_{\ell})$ in Proposition \ref{prop:leftinsert} doesn't depend on $i$. In particular $(a_i b_j, a_k b_{\ell})$ with $i \neq j$ and $(a_j b_j, a_k b_{\ell})$ have the same type.
\end{remark}

Similarly, we study the case where we insert a free colour to the right of $c$ in a secondary pair $(c,c')$. This essentially works in the same way as left insertions.
\begin{proposition}[Right insertion]
\label{prop:rightinsert}
Let $(a_i b_j, a_k b_{\ell})$, with $i\neq j$ and $j \neq k$, be a secondary pair where $a_i b_j$ is a bound colour (Cases (1) and (3) in Definition \ref{def:secondarypair}). We have
$$\Delta(a_i b_j, a_j b_j)+\Delta(a_j b_j, a_k b_{\ell})-\Delta(a_i b_j, a_k b_{\ell})= 0 \text{ or } 1.$$
\end{proposition}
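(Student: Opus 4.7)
The plan is to mirror the proof of Proposition \ref{prop:leftinsert} step by step, treating this as its dual: instead of inserting $a_kb_k$ to the left of $a_kb_\ell$ in the pair $(a_ib_j, a_kb_\ell)$, we are inserting $a_jb_j$ to the right of $a_ib_j$ in the pair $(a_ib_j,a_kb_\ell)$. The hypotheses $i\neq j$ and $j\neq k$ are the analogues of $j\neq k$ and $k\neq \ell$ in Proposition \ref{prop:leftinsert}, and they will be used to kill the ``diagonal penalty'' terms in the definition \eqref{eq:Delta} of $\Delta$.

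Concretely, I would first use the definition of $\Delta$ together with $i\neq j$ and $j\neq k$ to simplify
$$\Delta(a_ib_j,a_kb_\ell) = \chi(i\geq k) + \chi(j\leq \ell),$$
since both $\chi(i=j=k)$ and $\chi(j=k=\ell)$ vanish. Next, either by direct expansion or by applying \eqref{eq1} and \eqref{eq2} from Lemma \ref{lem:Delta} to the primary pairs $(a_ib_j,a_jb_j)$ and $(a_jb_j,a_kb_\ell)$ (keeping in mind $i\neq j$ and $j\neq k$), I would compute
$$\Delta(a_ib_j,a_jb_j) = \chi(i>j), \qquad \Delta(a_jb_j,a_kb_\ell) = \chi(j>k) + \chi(j\leq \ell).$$
Substituting into the quantity $D$ to be estimated, the $\chi(j\leq \ell)$ contributions cancel and we are left with
$$D = \chi(i>j) + \chi(j>k) - \chi(i\geq k).$$

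The main (mild) obstacle is then a short case analysis showing $D\in\{0,1\}$. If the two positive indicators $\chi(i>j)$ and $\chi(j>k)$ both equal $1$, then transitivity gives $i>j>k$, hence $\chi(i\geq k)=1$ and $D=1$. If both equal $0$, then using $i\neq j$ and $j\neq k$ we get $i<j<k$, so $\chi(i\geq k)=0$ and $D=0$. In the two mixed cases exactly one positive indicator equals $1$, so $D=1-\chi(i\geq k)\in\{0,1\}$. The same transitivity argument also rules out $D=2$ (which would need $i>k$ violated while $i>j>k$) and $D=-1$ (which would need $i\geq k$ while $i<j<k$), completing the proof. No step is difficult; the only place requiring care is systematically exploiting $i\neq j$, $j\neq k$ to convert each weak inequality coming from $\chi$ into the right strict inequality.
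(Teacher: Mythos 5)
Your proposal is correct and follows essentially the same route as the paper, which itself just says the right-insertion case is handled ``following the same reasoning'' as Proposition \ref{prop:leftinsert} and reduces the quantity to $\chi(i>j)+\chi(j>k)-\chi(i\geq k)\in\{0,1\}$ — exactly your computation and case analysis. One cosmetic remark: \eqref{eq1} of Lemma \ref{lem:Delta} does not literally apply to $\Delta(a_jb_j,a_kb_\ell)$ when $j\neq k$ (it concerns $\Delta(a_kb_k,a_kb_\ell)$), but your alternative of direct expansion from \eqref{eq:Delta} gives the stated formula, so the argument is sound.
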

\begin{proof}
Following the same reasoning as in the proof of Proposition \ref{prop:leftinsert}, we show that the difference above is equal to
$$\chi(i > j) + \chi(j > k) - \chi(i \geq k),$$
which again is always equal to $0$ or $1$.
\end{proof}

As before, we define type $0$ and type $1$.
\begin{definition}
When the difference in Proposition \ref{prop:rightinsert} is $0$ (resp. $1$), we call $(a_i b_j, a_k b_{\ell})$  a \textit{type $0$ (resp. type $1$) right pair}, and the corresponding insertion a \textit{type $0$ (resp. type $1$) right insertion}.
\end{definition}

\begin{remark}
The type of the right pair $(a_i b_j, a_k b_{\ell})$ in Proposition \ref{prop:rightinsert} doesn't depend on $\ell$. In particular $(a_i b_j, a_k b_{\ell})$ with $k \neq \ell$ and $(a_i b_j, a_k b_{k})$ have the same type.
\end{remark}

From Propositions \ref{prop:leftinsert} and \ref{prop:rightinsert}, we now understand the effect that an insertion inside a secondary pair has on the minimal partition, depending on the type of this insertion.
\begin{corollary}[Type $0$ insertion]
\label{cor:type0}
Let $C=c_1, \dots , c_s$ be a colour sequence, and let $\min_{\Delta}(C)= \lambda_1 + \cdots + \lambda_s$ be the corresponding minimal partition. For any $i \in \{0, \dots, s\}$, the type $0$ insertion of a free colour $c'$ inside a secondary pair $(c_i,c_{i+1})$ doesn't disrupt the minimal differences. The minimal partition after insertion will be $\lambda_1 + \cdots + \lambda_i + \lambda' + \lambda_{i+1}+ \cdots + \lambda_s$, with $\lambda' = \lambda_{i+1} + \Delta(c',c_{i+1})$.
\end{corollary}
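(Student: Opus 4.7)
The plan is to reduce the corollary to a direct application of the additivity identity hidden inside Propositions~\ref{prop:leftinsert} and~\ref{prop:rightinsert}. Specifically, in both propositions a type~$0$ insertion is defined by setting the difference
$$\Delta(c_i,c')+\Delta(c',c_{i+1})-\Delta(c_i,c_{i+1})$$
equal to zero (whether $c'$ is the free colour inserted to the left of the bound colour $c_{i+1}$, or inserted to the right of the bound colour $c_i$, or in Case~(1) of Definition~\ref{def:secondarypair} either choice). The content of the type~$0$ hypothesis is therefore exactly the additivity identity $\Delta(c_i,c')+\Delta(c',c_{i+1})=\Delta(c_i,c_{i+1})$.

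With that identity in hand, I will verify that the proposed sequence $\lambda_1+\dots+\lambda_i+\lambda'+\lambda_{i+1}+\dots+\lambda_s$, with $\lambda'=\lambda_{i+1}+\Delta(c',c_{i+1})$, is the minimal partition for the extended colour sequence $c_1,\dots,c_i,c',c_{i+1},\dots,c_s$. Since $\mathrm{min}_\Delta(c_1,\dots,c_s)$ already attains equality $\lambda_j-\lambda_{j+1}=\Delta(c_j,c_{j+1})$ at every consecutive pair and $\lambda_s=1=\Delta(c_s,a_\infty b_\infty)$, all the unchanged gaps remain exactly minimal. Only the two newly created gaps must be checked. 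By construction, $\lambda'-\lambda_{i+1}=\Delta(c',c_{i+1})$. For the other gap,
$$\lambda_i-\lambda' \;=\; (\lambda_i-\lambda_{i+1})-\Delta(c',c_{i+1}) \;=\; \Delta(c_i,c_{i+1})-\Delta(c',c_{i+1}) \;=\; \Delta(c_i,c'),$$
where the last equality is precisely the type~$0$ identity. Hence every consecutive difference in the new partition attains its minimal value, which characterises the minimal partition for the new colour sequence.

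There is no real obstacle beyond extracting the additivity identity from the two propositions and performing this one-line computation. The only bookkeeping to watch is the boundary case $i\in\{0,s\}$, where the insertion occurs at the very beginning or end of the colour sequence; but the convention $c_0=c_{s+1}=a_\infty b_\infty$ together with $\Delta(c,a_\infty b_\infty)=\Delta(a_\infty b_\infty,c)=1$ makes these cases fit into the same framework automatically, so no separate verification is needed.
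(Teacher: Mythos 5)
Your proposal is correct and is essentially the argument the paper intends: the paper states the corollary as an immediate consequence of Propositions \ref{prop:leftinsert} and \ref{prop:rightinsert}, and your explicit computation $\lambda_i-\lambda'=\Delta(c_i,c_{i+1})-\Delta(c',c_{i+1})=\Delta(c_i,c')$, using the vanishing of the type~$0$ defect, is exactly the additivity being invoked. Your handling of the boundary cases via the convention $c_0=c_{s+1}=a_\infty b_\infty$ also matches the paper's setup.
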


\begin{example}
\label{ex:min}
The minimal partition with colour sequence
$$C=a_2 b_2,a_1b_0,a_0b_2,a_1b_0,a_2b_1$$ is
$$\mathrm{min}_{\Delta}(C)=5_{a_2 b_2}+4_{a_1b_0}+2_{a_0b_2}+2_{a_1b_0} +1_{a_2b_1}.$$
We insert $a_1b_1$ inside $(a_0b_2,a_1b_0)$. The minimal partition with colour sequence
$$C'=a_2 b_2,a_1b_0,a_0b_2,a_1 b_1,a_1b_0,a_2b_1$$ is
$$\mathrm{min}_{\Delta}(C')=5_{a_2 b_2}+4_{a_1b_0}+2_{a_0b_2}+2_{a_1b_1}+2_{a_1b_0} +1_{a_2b_1}.$$
The part $2_{a_1b_1}$ was inserted, but all the other parts stay the same.
\end{example}

\begin{corollary}[Type $1$ insertion]
\label{cor:type1}
Let $C=c_1, \dots , c_s$ be a colour sequence, and let $\min_{\Delta}(C)= \lambda_1 + \cdots + \lambda_s$ be the corresponding minimal partition. For any $i \in \{0, \dots, s\}$, the type $1$ insertion of a free colour $c'$ inside a secondary pair $(c_i,c_{i+1})$ adds $1$ to the minimal difference between $c_i$ and $c_{i+1}$. This forces us to add $1$ to each part to the left of the newly inserted part in the minimal partition, which become $(\lambda_1+1) + \cdots + (\lambda_i+1) + \lambda' + \lambda_{i+1}+ \cdots + \lambda_s$, with $\lambda' = \lambda_{i+1} + \Delta(c',c_{i+1})$.
\end{corollary}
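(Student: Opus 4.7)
The plan is to apply directly the recursive structure of minimal partitions established earlier in this section. Recall that for any colour sequence $C = c_1, \dots, c_s$, the minimal partition $\min_{\Delta}(C) = \lambda_1 + \cdots + \lambda_s$ is uniquely determined by $\lambda_s = 1$ (using the convention $c_{s+1} = a_\infty b_\infty$ with $\Delta(c_s, a_\infty b_\infty) = 1$) and the recursion $\lambda_k = \lambda_{k+1} + \Delta(c_k, c_{k+1})$ for $k < s$. This is precisely the recursion used earlier to compute $|\min_{\Delta}(C)|$, and it will do all the work here.

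First, I would set $C' = c_1, \dots, c_i, c', c_{i+1}, \dots, c_s$ and denote the new minimal partition by $\lambda'_1 + \cdots + \lambda'_i + \lambda' + \lambda'_{i+1} + \cdots + \lambda'_s$. Since the tail $c_{i+1}, \dots, c_s$ of $C'$ is identical to that of $C$, building the minimal partition from the right-hand end via the recursion gives $\lambda'_j = \lambda_j$ for every $j \in \{i+1, \dots, s\}$. Applying the recursion once more at the freshly inserted colour yields $\lambda' = \lambda'_{i+1} + \Delta(c', c_{i+1}) = \lambda_{i+1} + \Delta(c', c_{i+1})$, which matches the expression claimed in the statement.

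Next, I would compare $\lambda'_i$ with $\lambda_i$. The recursion applied at position $i$ of $C'$ gives $\lambda'_i = \lambda' + \Delta(c_i, c')$, while in $C$ we had $\lambda_i = \lambda_{i+1} + \Delta(c_i, c_{i+1})$. Subtracting these two equalities produces
\[
\lambda'_i - \lambda_i \;=\; \Delta(c_i, c') + \Delta(c', c_{i+1}) - \Delta(c_i, c_{i+1}).
\]
The right-hand side is exactly the quantity evaluated in Proposition~\ref{prop:leftinsert} (when $c'$ is a left insertion, Cases~(1) and~(2) of Definition~\ref{def:secondarypair}) or in Proposition~\ref{prop:rightinsert} (when $c'$ is a right insertion, Cases~(1) and~(3)). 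By the very definition of a type~$1$ insertion, this quantity equals~$1$, so $\lambda'_i = \lambda_i + 1$.

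Finally, a straightforward downward induction on $k$ from $i-1$ down to $1$ propagates the shift to every part further to the left: since the colours $c_1, \dots, c_i$ are untouched by the insertion and $\lambda'_{k+1} = \lambda_{k+1} + 1$ by the induction hypothesis, the recursion gives $\lambda'_k = \lambda'_{k+1} + \Delta(c_k, c_{k+1}) = \lambda_k + 1$. This establishes the formula for the new minimal partition. I do not anticipate any real obstacle: the entire argument is bookkeeping around the defining recursion, and the only non-trivial input, namely that the local excess at the insertion site is exactly $1$, has already been isolated in Propositions~\ref{prop:leftinsert} and~\ref{prop:rightinsert}.
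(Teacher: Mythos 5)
Your proof is correct and follows the same route the paper intends: the corollary is stated as an immediate consequence of the right-to-left recursion $\lambda_k=\lambda_{k+1}+\Delta(c_k,c_{k+1})$ defining the minimal partition together with Propositions \ref{prop:leftinsert} and \ref{prop:rightinsert}, which identify the local excess $\Delta(c_i,c')+\Delta(c',c_{i+1})-\Delta(c_i,c_{i+1})$ as the type of the insertion. Spelling out the tail-invariance and the downward induction propagating the $+1$ is exactly the bookkeeping the paper leaves implicit.
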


\begin{example}
In the colour sequence $C$ of Example \ref{ex:min}, we insert $a_2b_2$ inside $(a_0b_2,a_1b_0)$.
The minimal partition with colour sequence
$$C''=a_2 b_2,a_1b_0,a_0b_2,a_2 b_2,a_1b_0,a_2b_1$$ is
$$\mathrm{min}_{\Delta}(C'')=6_{a_2 b_2}+5_{a_1b_0}+3_{a_0b_2}+3_{a_2b_2}+2_{a_1b_0} +1_{a_2b_1}.$$
All the parts to the left of the newly inserted part are increased by one compared to $\mathrm{min}_{\Delta}(C)$.
\end{example}

So far we have only studied the case of a single insertion (either left or right) inside a secondary pair. We still need to understand what happens to the minimal differences if, inside a secondary pair $(a_i b_j, a_k b_{\ell})$, we insert both $a_j b_j$ to the right of $a_i b_j$ and $a_k b_k$ to the left of $a_k b_{\ell}$.
\begin{proposition}[Left and right insertion]
\label{prop:doubleinsert}
Let $(a_i b_j, a_k b_{\ell})$, with $j \neq k$, be a secondary pair. We have
\begin{align*}
\Delta(a_i &b_j, a_j b_j)+\Delta(a_j b_j, a_k b_k)+ \Delta(a_k b_k, a_k b_{\ell})-\Delta(a_i b_j, a_k b_{\ell})\\
&=
\begin{cases}
0 \text{ if both the right and left insertions inside $(a_i b_j, a_k b_{\ell})$ are of type $0$,}\\
1 \text{ if exactly one of the insertions inside $(a_i b_j, a_k b_{\ell})$ is of type $1$,}\\
2 \text{ if both the right and left insertions inside $(a_i b_j, a_k b_{\ell})$ are of type $1$.}\\
\end{cases}
\end{align*}
\end{proposition}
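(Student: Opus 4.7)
The plan is to decompose the four-term alternating expression on the left-hand side as the sum of the right insertion type and the left insertion type associated with the pair $(a_i b_j, a_k b_\ell)$, and then quote Propositions \ref{prop:leftinsert} and \ref{prop:rightinsert} to conclude that each summand is $0$ or $1$.

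First, I would add and subtract the term $\Delta(a_j b_j, a_k b_\ell)$ to obtain the decomposition
\begin{align*}
&\Delta(a_i b_j, a_j b_j)+\Delta(a_j b_j, a_k b_k)+\Delta(a_k b_k, a_k b_\ell)-\Delta(a_i b_j, a_k b_\ell)\\
&= \bigl[\Delta(a_i b_j, a_j b_j)+\Delta(a_j b_j, a_k b_\ell)-\Delta(a_i b_j, a_k b_\ell)\bigr] \\
&\quad + \bigl[\Delta(a_j b_j, a_k b_k)+\Delta(a_k b_k, a_k b_\ell)-\Delta(a_j b_j, a_k b_\ell)\bigr].
\end{align*}
The first bracket is exactly the right-insertion discrepancy for the secondary pair $(a_i b_j, a_k b_\ell)$ appearing in Proposition \ref{prop:rightinsert}, hence it equals $\chi(i>j)+\chi(j>k)-\chi(i\geq k)\in\{0,1\}$, taking value $1$ precisely when the right insertion is of type $1$.

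Next, the second bracket is the left-insertion discrepancy of Proposition \ref{prop:leftinsert} applied to the pair $(a_j b_j, a_k b_\ell)$; by the remark after Proposition \ref{prop:leftinsert}, this value depends only on the second subscript of the left colour (here $j$) and on $(a_k b_\ell)$, so it coincides with the left-insertion type of the original secondary pair $(a_i b_j, a_k b_\ell)$. Hence it equals $\chi(j<k)+\chi(k<\ell)-\chi(j\leq\ell)\in\{0,1\}$, taking value $1$ precisely when the left insertion is of type $1$.

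Adding the two contributions, the total is the sum of two independent indicators, so it lies in $\{0,1,2\}$ with the exact case analysis stated in the proposition. The only mildly delicate point is to verify that the boundary situations where $a_i b_j$ is free (Case~(2) of Definition \ref{def:secondarypair}, forcing $i=j$) or $a_k b_\ell$ is free (Case~(3), forcing $k=\ell$) do not cause trouble: in those cases the corresponding ``insertion'' is a repetition of a free colour, the relevant $\Delta$ vanishes, and a direct substitution into the formulas above shows that the corresponding type is automatically $0$, consistent with the statement. There is no real obstacle here; the argument is purely a bookkeeping reduction to the two preceding propositions.
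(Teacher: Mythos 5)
Your proof is correct and follows essentially the same route as the paper's: both add and subtract a single cross term to split the four-term expression into a right-insertion discrepancy plus a left-insertion discrepancy, and then invoke the remarks that the types do not depend on the irrelevant index. The only (immaterial) difference is that the paper inserts $\Delta(a_i b_j, a_k b_k)$ and uses the remark after Proposition \ref{prop:rightinsert}, whereas you insert $\Delta(a_j b_j, a_k b_\ell)$ and use the remark after Proposition \ref{prop:leftinsert}.
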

\begin{proof}
Let $D$ be the difference above.
We have
\begin{align*}
D &= \Delta(a_i b_j, a_j b_j)+\Delta(a_j b_j, a_k b_k) - \Delta(a_i b_j, a_k b_k)\\
&+ \Delta(a_i b_j, a_k b_k) + \Delta(a_k b_k, a_k b_{\ell})-\Delta(a_i b_j, a_k b_{\ell}).
\end{align*}
The first line is equal to the right type of $(a_i b_j, a_k b_k)$, which by the remark after Proposition \ref{prop:rightinsert}, is the same as the right type of $(a_i b_j, a_k b_{\ell})$. The second line is simply the left type of $(a_i b_j, a_k b_{\ell})$.
\end{proof}
Thus performing both a left and right insertion inside a secondary pair is the same as performing the two insertions separately.

We conclude this section by summarising the influence of all the possible insertions on the minimal partition.

\begin{proposition}[Summary of the different types of insertion]
\label{prop:summaryinsert}
Let $C=c_1, \dots , c_s$ be a colour sequence, and let $\min_{\Delta}(C)= \lambda_1 + \cdots + \lambda_s$ be the corresponding minimal partition. When we insert a free colour $c'$ inside a pair $(c_i,c_{i+1})$, the minimal partition transforms as follows:
\begin{itemize}
\item if $c_i$ is a free colour and $c'=c_i$, the minimal partition becomes $\lambda_1 + \cdots + \lambda_i + \lambda_i + \lambda_{i+1}+ \cdots + \lambda_s$ (i.e. the part $\lambda_i$ repeats, and the rest of the partition remains unchanged);
\item if $(c_i,c_{i+1})$ is a primary pair, the minimal partition becomes $\lambda_1 + \cdots + \lambda_i + \lambda' + \lambda_{i+1}+ \cdots + \lambda_s$, with $\lambda' = \lambda_{i+1} + \Delta(c',c_{i+1})$;
\item if $(c_i,c_{i+1})$ is a secondary pair and the insertion of $c'$ is of type $0$, the minimal partition becomes $\lambda_1 + \cdots + \lambda_i + \lambda' + \lambda_{i+1}+ \cdots + \lambda_s$, with $\lambda' = \lambda_{i+1} + \Delta(c',c_{i+1})$;
\item if $(c_i,c_{i+1})$ is a secondary pair and the insertion of $c'$ is of type $1$, the minimal partition becomes $(\lambda_1+1) + \cdots + (\lambda_i+1) + \lambda' + \lambda_{i+1}+ \cdots + \lambda_s$, with $\lambda' = \lambda_{i+1} + \Delta(c',c_{i+1})$ (i.e. we add $1$ to all the parts to the left of the newly inserted part $\lambda'$).
\end{itemize}
\end{proposition}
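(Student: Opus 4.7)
The plan is to observe that Proposition \ref{prop:summaryinsert} is essentially a bookkeeping summary: the four bullet points correspond one-to-one with cases already handled in Property \ref{property:aibiaibi}, Corollary \ref{cor:insertprimary}, Corollary \ref{cor:type0}, and Corollary \ref{cor:type1}. So the proof will amount to a careful case analysis that matches the inserted colour $c'$ and pair $(c_i, c_{i+1})$ to the correct preceding result, and then reads off the new minimal partition.

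First I would treat the trivial case. If $c_i$ is free and $c'=c_i$, then $c'$ is inserted in a place where it equals the colour immediately to its left. By Property \ref{property:aibiaibi}, $\Delta(c_i,c_i)=0$, and by the triangle inequality Property \ref{property:triangle}, inserting $c'$ between $c_i$ and $c_{i+1}$ does not increase the minimal difference between $c_i$ and $c_{i+1}$ (since $\Delta(c_i,c_{i+1}) = \Delta(c_i,c')+\Delta(c',c_{i+1})$ when $\Delta(c_i,c')=0$). Hence the minimal partition is unchanged to the left of the insertion point, the inserted part must equal $\lambda_i$, and the parts to the right are untouched.

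Next, for a primary pair $(c_i,c_{i+1})$, Proposition \ref{prop:insertprimary} gives $\Delta(c_i,c')+\Delta(c',c_{i+1})=\Delta(c_i,c_{i+1})$, so the insertion creates no slack: the left segment keeps its values and the new part has size $\lambda_{i+1}+\Delta(c',c_{i+1})$, which is exactly Corollary \ref{cor:insertprimary}. For a secondary pair with a type $0$ insertion, Proposition \ref{prop:leftinsert} or \ref{prop:rightinsert} again gives equality of the sum $\Delta(c_i,c')+\Delta(c',c_{i+1})$ with $\Delta(c_i,c_{i+1})$, so Corollary \ref{cor:type0} applies verbatim. For a secondary pair with a type $1$ insertion, the same sum exceeds $\Delta(c_i,c_{i+1})$ by $1$, forcing every part indexed $1,\ldots,i$ to shift up by $1$ in order to accommodate the new forced gap, as in Corollary \ref{cor:type1}.

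The only subtlety — and the step I would verify most carefully — is consistency between the two possible insertions inside the same secondary pair (the free colour on the right of $c_i$ and the free colour on the left of $c_{i+1}$ can differ). Here Proposition \ref{prop:doubleinsert} guarantees that these effects simply add: if both insertions are of type $1$, each one independently adds $1$ to all earlier parts. Therefore one may insert the chosen $c'$ into its prescribed slot and invoke the relevant single-insertion corollary, regardless of whether another insertion of a different free colour has already been performed inside the same pair. Collecting these four cases yields the statement.
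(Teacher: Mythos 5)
Your proposal is correct and matches the paper's approach: the paper presents this proposition as a pure summary of Property \ref{property:aibiaibi}, Corollary \ref{cor:insertprimary}, Corollaries \ref{cor:type0} and \ref{cor:type1}, with Proposition \ref{prop:doubleinsert} handling the independence of two insertions in the same secondary pair, and gives no further argument. Your case-by-case matching (including noting that in the first case $\lambda'=\lambda_{i+1}+\Delta(c_i,c_{i+1})=\lambda_i$, so the part simply repeats) is exactly the intended justification.
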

We call the first two types of insertions above \emph{neutral insertions}.

\subsection{Generating function for partitions with a given kernel}
\label{subsec:gfkernelDelta}
Our goal is to count partitions of $\mathcal{P}_n$ with a given kernel. The results from the previous section will help us do so.

Let $S= c_1, \dots , c_s$ be a reduced colour sequence of length $s$, having $t$ maximal primary subsequences.
Let $f_1 , \dots , f_{s+t}$ be the free colours that can be inserted in $S$. In the following, we denote by $\mathcal{N}$ (resp. $\mathcal{T}_0$, $\mathcal{T}_1$) the set of indices $i$ such that the insertion of $f_i$ is neutral (resp. of type $0$, of type $1$). We have $\mathcal{N} \sqcup \mathcal{T}_0 \sqcup \mathcal{T}_1 = \{1, \dots, s+t\}.$

Moreover, the secondary pairs in $S$ are exactly $(c_{i_{2u-1}-1},c_{i_{2u-1}})$ and $(c_{i_{2u}},c_{i_{2u}+1})$, for $u \in \{1, \dots , t\}$, where $S_u = c_{i_{2u-1}}, \dots , c_{i_{2u}}$. So we can write
$$\mathcal{T}_0 = \bigsqcup\limits_{u=1}^{t} \mathcal{T}_0^u, \qquad \mathcal{T}_1 = \bigsqcup\limits_{u=1}^{t} \mathcal{T}_1^u,$$
where $\mathcal{T}_0^u$ (resp. $\mathcal{T}_1^u$) is the set of indices $j$ such that $f_j$ can be inserted inside $(c_{i_{2u-1}-1},c_{i_{2u-1}})$ or $(c_{i_{2u}},c_{i_{2u}+1})$ and is of type $0$ (resp. $1$).
For all $u \in \{1, \dots , t\}$, we have $|\mathcal{T}^u_0|=2-|\mathcal{T}^u_1|.$
More precisely, $\mathcal{T}_0^u \bigsqcup \mathcal{T}_1^u = \{i_{2u-1}+u-1, i_{2u}+u\}.$

We want to study the minimal partition of the colour sequence $S(n_1, \dots, n_{s+t})$.
Denote by $\mathcal{S}^u_1$ (resp. $\mathcal{S}_1)$  the indices $j$ of $\mathcal{T}^u_1$ (resp. $\mathcal{T}_1$) such that $n_j >0$, i.e. such that the colour $f_j$ is actually inserted in the sequence.
We start with the following lemma.

\begin{lemma}
\label{lem:sizepart}
For all $j \in \{1, \dots , s+t\}$, if $n_j > 0$, i.e. the colour $f_j$ is actually inserted, then the corresponding part $\lambda(f_j)$ in the minimal partition of $S(n_1, \dots, n_{s+t})$ is equal to 
\begin{equation}
\label{eq:sizepart}
\lambda(f_j)= \#\left(\{j, \dots , s+t\} \cap (\mathcal{N}\sqcup\mathcal{T}_0 \sqcup \mathcal{S}_1) \right).
\end{equation}
\end{lemma}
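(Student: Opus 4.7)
The plan is to compute $\lambda(f_j)$ directly as a telescoping sum of $\Delta$ values and match it with $R_j$ via a careful regional decomposition. Since $\Delta(f_j, f_j) = 0$ by Property \ref{property:aibiaibi}, every copy of $f_j$ in $C = S(n_1, \dots, n_{s+t})$ takes the same part value, and
\[
\lambda(f_j) = \sum_{k \geq P_j} \Delta(c'_k, c'_{k+1}),
\]
where $P_j$ is the position of any copy of $f_j$ in $C = c'_1 \cdots c'_m$ and $c'_{m+1} = a_\infty b_\infty$. I would then partition this sum by the ``regions'' of $S$: for each $p \in \{0, \ldots, s\}$, region $p$ is the gap between $c_p$ and $c_{p+1}$ (with $c_0 = c_{s+1} = a_\infty b_\infty$), and each slot $i$ has a well-defined region $\mathrm{pos}_S(i)$.

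The central ingredient is the regional sum formula: the total $\Delta$-contribution of region $p$ to the minimal partition of $C$ equals $\Delta(c_p, c_{p+1}) + |\{i \in \mathcal{S}_1 : \mathrm{pos}_S(i) = p\}|$. This is a direct consequence of Propositions \ref{prop:insertprimary}, \ref{prop:leftinsert}, \ref{prop:rightinsert}, and \ref{prop:doubleinsert}: every insertion preserves the regional $\Delta$ sum except each actually-inserted type-1 slot, which increases it by exactly $1$. Summing these over all regions $p > p_j$, where $p_j = \mathrm{pos}_S(f_j)$, and adding a partial contribution $\delta_j$ from region $p_j$ to the right of $f_j$, I would obtain
\[
\lambda(f_j) \;=\; \delta_j + \lambda^{(S)}_{p_j+1} + \bigl|\{i \in \mathcal{S}_1 : \mathrm{pos}_S(i) > p_j\}\bigr|,
\]
where $\lambda^{(S)}_{p_j+1}$ is the $(p_j+1)$-th part of $\min_\Delta(S)$ alone.

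Splitting $R_j$ as $N_j + |\{i \in \mathcal{S}_1 : i \geq j\}|$ with $N_j := |\{i \geq j : i \in \mathcal{N} \cup \mathcal{T}_0\}|$, and then further splitting the $\mathcal{S}_1$ count by the slot's region, the target identity $\lambda(f_j) = R_j$ reduces to the base equality
\[
\delta_j + \lambda^{(S)}_{p_j+1} \;=\; N_j + \bigl|\mathcal{S}_1 \cap \{i \geq j : \mathrm{pos}_S(i) = p_j\}\bigr|.
\]
Notice that both sides depend only on $S$ and on the insertion status of slots in region $p_j$ itself, so this is a finite, local identity that can be checked case by case.

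The main technical obstacle will be verifying this base equality across all slot configurations. I would argue by cases on the structure of region $p_j$ (one-slot versus two-slot), the type of $f_j$ (in $\mathcal{N}$, $\mathcal{T}_0$, or $\mathcal{T}_1$), and the presence of an adjacent inserted $f_{j+1}$. Using Lemma \ref{lem:Delta} together with the explicit formula for $\Delta$, I would show that each slot $i \geq j$ of type $\mathcal{N}$ or $\mathcal{T}_0$ contributes exactly $1$ to $\delta_j + \lambda^{(S)}_{p_j+1}$, while any $\mathcal{S}_1$ insertion in region $p_j$ with $i \geq j$ is exactly what boosts $\delta_j$ by $1$ (via Propositions \ref{prop:leftinsert}, \ref{prop:rightinsert}). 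The hardest point will be the two-slot secondary-pair case, where Proposition \ref{prop:doubleinsert} is needed to apportion the $+1$ boosts correctly between $\delta_j$ and the later regional sums, along with the boundary analysis at $(c_s, a_\infty b_\infty)$, whose automatic contribution $\Delta(c_s, a_\infty b_\infty) = 1$ must be reconciled with the count of slots in the rightmost region.
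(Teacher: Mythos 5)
Your reduction is set up correctly: the telescoping formula for $\lambda(f_j)$, the regional sum formula (the $\Delta$-contribution of region $p$ equals $\Delta(c_p,c_{p+1})$ plus the number of actually-inserted type-$1$ slots in that region, which does follow from Propositions \ref{prop:insertprimary}, \ref{prop:leftinsert}, \ref{prop:rightinsert} and \ref{prop:doubleinsert}), and the algebra reducing \eqref{eq:sizepart} to the base equality
\[
\delta_j+\lambda^{(S)}_{p_j+1}=N_j+\bigl|\mathcal{S}_1\cap\{i\ge j:\mathrm{pos}_S(i)=p_j\}\bigr|
\]
are all sound. The gap is in the final step: this base equality is \emph{not} a finite local identity. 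Both $\lambda^{(S)}_{p_j+1}=\sum_{p>p_j}\Delta(c_p,c_{p+1})$ and $N_j$ range over the entire tail of $S$, and the slot-by-slot correspondence you propose (``each slot $i\ge j$ of type $\mathcal{N}$ or $\mathcal{T}_0$ contributes exactly $1$ to $\delta_j+\lambda^{(S)}_{p_j+1}$'') fails region by region. Concretely, take $S=a_0b_1,a_1b_0$: the primary pair $(a_0b_1,a_1b_0)$ has $\Delta(a_0b_1,a_1b_0)=0$ although its region contains one neutral slot (the insertable colour $a_1b_1$), while the terminal region $(a_1b_0,a_{\infty}b_{\infty})$ has $\Delta=1$ although its only slot, $a_0b_0$, is of type $1$. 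The two sides of the base equality agree only after telescoping across regions; there is no assignment of the units of $\Delta(c_p,c_{p+1})$ to slots belonging to the same region, so a case analysis on the structure of region $p_j$ alone cannot close the argument.

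What is missing is an induction along the tail of $S$ --- for instance a backward induction on $j$ or on the region index, deducing the base equality at $j$ from its validity at the first slot of the next region, using Lemma \ref{lem:Delta} at each step. That is essentially what the paper does: it proves \eqref{eq:sizepart} directly by backward induction on $j$, showing $\lambda(f_j)=1+\lambda(f_{j+1})$ when $f_{j+1}$ is inserted and correcting via Proposition \ref{prop:summaryinsert} when it is not. Your regional framework is a reasonable repackaging of the same information, but the base equality you isolate has the same global character as the lemma itself, so declaring it ``checkable case by case'' leaves the essential step unproved.
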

\begin{proof}
We proceed via backward induction on $j$.
\begin{itemize}
\item If $j=s+t$, $\lambda(f_{s+t})$ is the last part of the minimal partition and therefore has size $1$. Equation \eqref{eq:sizepart} is correct, as $s+t \in \mathcal{N}\sqcup\mathcal{T}_0 \sqcup \mathcal{S}_1$.
\item Now assume that \eqref{eq:sizepart} holds for $f_{j+1}$, and prove it for $f_j$. Let $k$ and $\ell$ be such that $f_j=a_k b_k$ and $f_{j+1}=a_{\ell} b_{\ell}$. We always have $k \neq \ell.$
\begin{enumerate}
\item For now, let us assume that $n_{j+1}>0$, i.e. that $f_{j+1}$ was actually inserted in the colour sequence.
\begin{itemize}
\item If $j \in \mathcal{N}$ or $j$ is a left secondary insertion, then the subsequence between $f_j$ and $f_{j+1}$ in $S(n_1, \dots, n_{s+t})$ is $f_j, a_k b_{\ell}, f_{j+1}$ or $f_j, a_{\ell} b_{\ell}, f_{j+1}$. In the first case, we have
\begin{align*}
\lambda(f_j) &= \Delta(a_k b_k, a_k b_{\ell})+\Delta(a_k b_{\ell},a_{\ell} b_{\ell})+\lambda(f_{j+1})\\
&=1+ \lambda(f_{j+1}),
\end{align*}
where the second equality follows from Lemma \ref{lem:Delta}.

In the second case, we also have
\begin{align*}
\lambda(f_j) &= \Delta(a_k b_k, a_{\ell} b_{\ell})+\Delta(a_{\ell} b_{\ell},a_{\ell} b_{\ell})+\lambda(f_{j+1})\\
&=1+ \lambda(f_{j+1}),
\end{align*}

By the induction hypothesis, we have
\begin{align*}
\lambda(f_j) &= 1+  \#\left(\{j+1, \dots , s+t\} \cap (\mathcal{N}\sqcup\mathcal{T}_0 \sqcup \mathcal{S}_1) \right)\\
&=  \#\left(\{j, \dots , s+t\} \cap (\mathcal{N}\sqcup\mathcal{T}_0 \sqcup \mathcal{S}_1) \right),
\end{align*}
because $j \in \mathcal{N}\sqcup\mathcal{T}_0 \sqcup \mathcal{S}_1$.

\item If $j$ is a right secondary insertion, then $f_j$ appears directly before $f_{j+1}$ in $S(n_1, \dots, n_{s+t})$. Thus we have
\begin{align*}
\lambda(f_j) &= \Delta(f_j,f_{j+1})+\lambda(f_{j+1})\\
&=1+ \lambda(f_{j+1}),
\end{align*}
and we can deduce \eqref{eq:sizepart} in the exact same way as before.
\end{itemize}

\item Now we treat the case where $f_{j+1}$ was not inserted in the colour sequence. By Proposition \ref{prop:summaryinsert}, if $j+1 \in \mathcal{N}\sqcup\mathcal{T}_0$, it does not change anything to the other parts in the minimal partition , so $\lambda(f_j)$ stays the same as in case (1).

If $j+1 \in \mathcal{T}_1$ and $f_{j+1}$ was not inserted, then by Proposition \ref{prop:summaryinsert}, the part $\lambda(f_{j})$ decreases by one compared to the previous case. But in this case, $\#\left(\{j, \dots , s+t\} \cap (\mathcal{N}\sqcup\mathcal{T}_0 \sqcup \mathcal{S}_1) \right)$ also decreases by one compared to case (1), so Equation \eqref{eq:sizepart} is still correct.
\end{enumerate}
\end{itemize}
\end{proof}

We can now give a formula for the weight of the minimal partition with colour sequence $S(n_1, \dots, n_{s+t})$.
\begin{proposition}
\label{prop:minpartition}
With the notation above, the weight of the minimal partition with colour sequence $S(n_1, \dots, n_{s+t})$ is
\begin{equation}
\label{eq:minpartition}
\begin{aligned}
\left| \mathrm{min}_{\Delta}(S(n_1, \dots, n_{s+t})) \right| &= |\mathrm{min}_{\Delta}(S)|
\\& + \sum_{j \in \mathcal{S}_1} \left( P(j)+ n_j \times \#\left(\{j, \dots , s+t\} \cap (\mathcal{N}\sqcup\mathcal{T}_0 \sqcup \mathcal{S}_1) \right) \right)
\\& + \sum_{j \in \mathcal{N}\cup\mathcal{T}_0}  n_j \times \#\left(\{j, \dots , s+t\} \cap (\mathcal{N}\sqcup\mathcal{T}_0 \sqcup \mathcal{S}_1) \right),
\end{aligned}
\end{equation}
where $P(j)$ is the number of colours of $S$ that are to the left of $f_j$.
\end{proposition}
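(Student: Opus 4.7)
The plan is to compute the weight of $\min_{\Delta}(S(n_1,\dots,n_{s+t}))$ by splitting it into two contributions: the total weight of the $n_j$ newly inserted copies of each free colour $f_j$, and the (possibly modified) weight of the $s$ parts originally coming from $S$. This matches the structure of the target formula, where the sums over $\mathcal{N}\cup\mathcal{T}_0$ and over $\mathcal{S}_1$ account for the inserted parts, while the term $\sum_{j\in\mathcal{S}_1} P(j)$ accounts for the perturbation of the original parts caused by type 1 insertions.

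For the inserted parts I would argue as follows. Since $\Delta(a_ib_i,a_ib_i)=0$ (Property \ref{property:aibiaibi}), the $n_j$ copies of $f_j$ inserted at the $j$-th insertion point all share the same size in the minimal partition. Applying Lemma \ref{lem:sizepart} to any one of them gives that this common size equals $\#(\{j,\dots,s+t\}\cap(\mathcal{N}\sqcup\mathcal{T}_0\sqcup\mathcal{S}_1))$, so the inserted parts together contribute
\[
\sum_{j\,:\,n_j>0} n_j\cdot \#\bigl(\{j,\dots,s+t\}\cap(\mathcal{N}\sqcup\mathcal{T}_0\sqcup\mathcal{S}_1)\bigr)
\]
to the weight. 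Because this quantity vanishes whenever $n_j=0$, I may freely extend the sum from $\{j:n_j>0\}$ to $\mathcal{N}\cup\mathcal{T}_0\cup\mathcal{S}_1$, producing exactly the two pieces $n_j \cdot \#(\cdots)$ that appear in the statement.

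For the original parts, I would use Proposition \ref{prop:summaryinsert} incrementally. Performing the insertions one at a time, any neutral or type 0 insertion leaves the sizes of pre-existing parts unchanged; only a type 1 insertion of a free colour inside a secondary pair raises every part strictly to its left by $1$. Repeating the same free colour is neutral (first bullet of Proposition \ref{prop:summaryinsert}), so the type 1 increment is triggered once per $j\in\mathcal{T}_1$ with $n_j>0$, that is once per $j\in\mathcal{S}_1$, regardless of how large $n_j$ is. Thus each original part $\lambda_i$ of $\min_{\Delta}(S)$ receives a total increment equal to $\#\{j\in\mathcal{S}_1:\ f_j \text{ lies to the right of } c_i\}$. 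Summing over $i\in\{1,\dots,s\}$ and swapping the order of summation gives a total contribution from the original parts of
\[
|\min_{\Delta}(S)|+\sum_{j\in\mathcal{S}_1}\#\{i:\ c_i\text{ lies to the left of }f_j\}=|\min_{\Delta}(S)|+\sum_{j\in\mathcal{S}_1} P(j),
\]
by the very definition of $P(j)$.

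Adding the inserted-part and original-part contributions yields \eqref{eq:minpartition}. The only delicate point in carrying out this plan is verifying that no double-counting occurs when a secondary pair receives both a left and a right insertion, which is precisely what Proposition \ref{prop:doubleinsert} handles: the combined effect decomposes additively into the two individual types, so the incremental analysis above remains valid. I expect this careful bookkeeping of the type 1 contributions in the presence of simultaneous left/right insertions, together with checking that the $P(j)$ term does not inadvertently absorb any contribution already counted through $\lambda(f_j)$ via Lemma \ref{lem:sizepart}, to be the main (though ultimately routine) obstacle.
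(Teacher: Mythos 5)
Your proposal is correct and follows essentially the same route as the paper's proof: start from $|\min_{\Delta}(S)|$, use Lemma \ref{lem:sizepart} to read off the size of each inserted part (contributing the $n_j\cdot\#(\cdots)$ terms), and use Proposition \ref{prop:summaryinsert} to see that each type $1$ insertion with $n_j>0$ adds exactly $P(j)$ to the original parts. The bookkeeping points you flag (repeated free colours being neutral, and the increments to previously inserted parts being absorbed into Lemma \ref{lem:sizepart} rather than into $P(j)$) are exactly the ones the paper's argument relies on.
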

\begin{proof}
We start with the minimal partition $\mathrm{min}_{\Delta}(S)$ with colour sequence $S$. It has weight $|\mathrm{min}_{\Delta}(S)|$.

Then we insert the parts corresponding to colours of type $1$. Let $j \in \mathcal{S}_1$. By Proposition \ref{prop:summaryinsert}, inserting $f_j$ adds $1$ to all the parts of $\mathrm{min}_{\Delta}(S)$ which are to the left of $\lambda(f_j)$. So this adds $P(j)$ to the total weight. Moreover, by Lemma \ref{lem:sizepart}, the part $\lambda(f_j)$ is of size $\#\left(\{j, \dots , s+t\} \cap (\mathcal{N}\cup\mathcal{T}_0 \cup \mathcal{S}_1) \right)$, and we insert it $n_j$ times. Summing over all $j \in \mathcal{S}_1$ gives the first sum.

Finally, the insertion of parts corresponding to colours $f_j$ with $j \in \mathcal{N}\cup\mathcal{T}_0$ yields the last sum.
\end{proof}

\medskip
Starting from Proposition \ref{prop:minpartition}, we will show a key proposition, which will be very useful to establish the connection with coloured Frobenius partitions.

Recall that the \emph{$q$-binomial coefficient} is defined as follows:
$${n \brack k}_q:=\frac{(q;q)_n}{(q;q)_k(q;q)_{n-k}},$$
and we assume that ${n \brack k}_q=0$ if $k<0$ or $k>n$.

\begin{proposition}
\label{prop:main'}
Let $n$ be a positive integer and $m$ be a non-negative integer. Let $S= c_1, \dots , c_s$ be a reduced colour sequence of length $s$, having $t$ maximal primary subsequences. The generating function for minimal partitions in $\mathcal{P}_n$ with kernel $S$, having $s+m$ parts, is the following:
\begin{equation}
\label{eq:prop1'}
\sum_{\substack{C \text{colour sequence of length } s+m\\ \text{such that } \mathrm{red}(C)=S}} q^{|\min_{\Delta}(C)|} = q^{|\min_{\Delta}(S)|+m} \sum_{u=0}^{t} q^{u(s-t)}g_{u,t}(q;|\mathcal{T}_0^1|,\ldots,|\mathcal{T}_0^{t}|) {s+m-1 \brack m-u}_q,
\end{equation}
where $g_{0,0}=1$, and for $u \leq v$,
$$g_{u,v}(q;x_1, \dots, x_v) = \sum_{\substack{\epsilon_1,\dots , \epsilon_v \in \{0,1\}:\\ \epsilon_1 + \cdots + \epsilon_v = u}} q^{uv+ {u \choose 2}} \prod_{k=1}^v q^{(x_k -1) \sum_{i=1}^{k-1} \epsilon_i}.$$
\end{proposition}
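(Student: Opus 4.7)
The plan is to evaluate the sum explicitly using the parametrization of Section \ref{subseq:definitions} and Proposition \ref{prop:minpartition}. First, I would rewrite the sum over sequences $C$ of length $s+m$ with $\mathrm{red}(C)=S$ as a sum over tuples $(n_1,\ldots,n_{s+t})\in\N^{s+t}$ with $\sum_j n_j = m$, each corresponding uniquely to $C = S(n_1,\ldots,n_{s+t})$. Then by Proposition \ref{prop:minpartition},
\[
q^{|\min_{\Delta}(C)|} \;=\; q^{|\min_\Delta(S)| \,+\, \sum_{j \in \mathcal{S}_1} P(j) \,+\, \sum_{j \in \mathcal{N} \cup \mathcal{T}_0 \cup \mathcal{S}_1} n_j A_j},
\]
where $\mathcal{S}_1 = \mathcal{S}_1(n_1,\ldots,n_{s+t}) = \{j \in \mathcal{T}_1 : n_j > 0\}$ and $A_j = \#(\{j,\ldots,s+t\} \cap (\mathcal{N} \cup \mathcal{T}_0 \cup \mathcal{S}_1))$ depends on the tuple only through $\mathcal{S}_1$.

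Next, I would partition the sum over tuples according to the subset $\mathcal{S}_1 \subseteq \mathcal{T}_1$. For a fixed $\mathcal{S}_1$ with $u = |\mathcal{S}_1|$, enumerating $\{j_1 < \cdots < j_N\} = \mathcal{N} \cup \mathcal{T}_0 \cup \mathcal{S}_1$ yields $N = (s-t) + |\mathcal{T}_0| + u$ and $A_{j_k} = N-k+1$. After the substitution $n_j \mapsto n_j + \chi(j \in \mathcal{S}_1)$, all variables are non-negative with total $m-u$, and the inner sum evaluates by the $q$-binomial theorem to
\[
q^{\sum_{j \in \mathcal{S}_1}(P(j) + A_j) \,+\, m - u}\,{(s-t)+|\mathcal{T}_0|+m-1 \brack m-u}_q.
\]

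Finally, I would reorganize the outer sum $\sum_{\mathcal{S}_1 \subseteq \mathcal{T}_1}$ by grouping according to the pattern $(\epsilon_1,\ldots,\epsilon_t) \in \{0,1\}^t$ with $\epsilon_k = 1$ iff $\mathcal{S}_1 \cap \mathcal{T}_1^k \neq \emptyset$, so that $u = \sum_k \epsilon_k \in \{0,\ldots,t\}$ matches the summation index in the statement. For each $\epsilon$-pattern, one collects the $q$-contributions $\sum_{j \in \mathcal{S}_1}(P(j) + A_j)$ from the compatible $\mathcal{S}_1$'s and uses Pascal-type $q$-identities to trade ${(s-t)+|\mathcal{T}_0|+m-1 \brack m-u}_q$ for ${s+m-1 \brack m-u}_q$, absorbing the discrepancy $|\mathcal{T}_1|-|\mathcal{T}_0|\cdot\chi(\cdot)$ into the exponents that define $g_{u,t}$. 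I expect the main obstacle to be precisely this reorganization: the $P(j) + A_j$ contributions depend delicately on which of the (at most two) secondary positions of each $S_k$ lie in $\mathcal{S}_1$, and the bookkeeping required to arrive at the compact $g_{u,t}$ form will likely proceed by induction on $t$, splitting off the rightmost primary subsequence $S_t$ and matching the cases $\epsilon_t = 0$ and $\epsilon_t = 1$ with a two-term recursion for $g_{u,t}$ that can be verified directly from its definition.
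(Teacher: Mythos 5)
Your first two steps are sound and coincide with the paper's own route: the parametrisation of colour sequences $C$ with $\mathrm{red}(C)=S$ by tuples $(n_1,\dots,n_{s+t})$, the application of Proposition \ref{prop:minpartition}, and the evaluation of the inner sum for a fixed $\mathcal{S}_1$ as $q^{\sum_{j\in\mathcal{S}_1}(P(j)+A_j)+m-|\mathcal{S}_1|}{m-1+|\mathcal{N}|+|\mathcal{T}_0| \brack m-|\mathcal{S}_1|}_q$ are exactly the content of the paper's Lemmas \ref{lem:6S1} and \ref{lem:6HS1}. The gap is in the final reorganisation, which is where essentially all the work of Section \ref{sec:prop1} lies. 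First, you conflate two different quantities called $u$: in the inner evaluation $u=|\mathcal{S}_1|$, which ranges up to $|\mathcal{T}_1|\leq 2t$, while in the target formula $u$ ranges only up to $t$ and you identify it with $\#\{k:\mathcal{S}_1\cap\mathcal{T}_1^k\neq\emptyset\}$. These disagree whenever some $|\mathcal{S}_1^k|=2$, so the bottom index of your intermediate $q$-binomial cannot simply be relabelled $m-u$.

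Second, and more seriously, the proposed grouping by the indicator pattern $\epsilon_k=\chi(\mathcal{S}_1\cap\mathcal{T}_1^k\neq\emptyset)$ does not reproduce the right-hand side of \eqref{eq:prop1'} term by term. Take $t=1$ with $|\mathcal{T}_1^1|=2$, so $|\mathcal{T}_0^1|=0$ and $|\mathcal{N}|=s-1$. The $\mathcal{S}_1=\emptyset$ contribution is $q^{|\min_\Delta(S)|+m}{s+m-2\brack m}_q$, whereas the $u=0$ term of \eqref{eq:prop1'} is $q^{|\min_\Delta(S)|+m}{s+m-1\brack m}_q={s+m-2\brack m}_q+q^{s-1}{s+m-2\brack m-1}_q$ (up to the prefactor); the final formula's $u=0$ term absorbs part of the $|\mathcal{S}_1|=1$ contribution. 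So the passage from ${m-1+|\mathcal{N}|+|\mathcal{T}_0|\brack m-|\mathcal{S}_1|}_q$ to ${s+m-1\brack m-u}_q$ genuinely mixes contributions across different sizes of $\mathcal{S}_1$, and the $\epsilon_k$ in the definition of $g_{u,t}$ do not record which $\mathcal{S}_1^k$ are nonempty. The paper handles this by first re-expanding the $q$-binomial through a chain decomposition with auxiliary variables $0=m_0\leq m_1\leq\cdots\leq m_t\leq m$ (Lemmas \ref{lem:6lemmaY} and \ref{lem:6grosqbin}) and only then running an induction on $t$ (Lemma \ref{lem:6LemmaX}) in which the two-term recursion for $g_{u,v}$ arises from the $q$-Pascal rule, not from a case split on $\mathcal{S}_1^t$. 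Your instinct that the proof should close with an induction on $t$ verified against a two-term recursion for $g_{u,t}$ is correct in spirit, but as stated the reorganisation you describe would not go through, and this is precisely the hard part of the argument.
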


By observing that all partitions of $\mathcal{P}_n$ with a given colour sequence $C$ of length $s+m$ can be obtained in a unique way by adding a partition with at most $s+m$ parts to the minimal partition $\min_{\Delta}(C)$, Proposition \ref{prop:main'} is actually equivalent to the following generating function for all partitions of $\mathcal{P}_n$ with a given kernel.
\begin{proposition}
\label{prop:main}
Let $n$ be a positive integer and $m$ be a non-negative integer. Let $S= c_1, \dots , c_s$ be a reduced colour sequence of length $s$, having $t$ maximal primary subsequences. The generating function for partitions in $\mathcal{P}_n$ with kernel $S$, having $s+m$ parts, is the following:
\begin{equation}
\label{eq:prop1}
\sum_{\substack{\lambda \in \mathcal{P}_n:\\ \ell(\lambda)=s+m\\\mathrm{ker}(\lambda)=S}} q^{|\lambda|} = \frac{q^{|\min_{\Delta}(S)|+m}}{(q;q)_{s+m}} \sum_{u=0}^{t} q^{u(s-t)}g_{u,t}(q;|\mathcal{T}_0^1|,\ldots,|\mathcal{T}_0^{t}|) {s+m-1 \brack m-u}_q.
\end{equation}
\end{proposition}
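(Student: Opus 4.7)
The plan is to first reduce Proposition \ref{prop:main} to Proposition \ref{prop:main'}. Given any colour sequence $C$ of length $s+m$, every partition $\lambda \in \mathcal{P}_n$ with colour sequence $C$ decomposes uniquely as $\lambda = \mathrm{min}_\Delta(C) + \mu$, where $\mu$ is an uncoloured partition with at most $s+m$ parts added componentwise to $\mathrm{min}_\Delta(C)$; this addition preserves all the difference conditions because it uniformly increases consecutive differences by non-negative amounts. Summing over $\mu$ produces the factor $1/(q;q)_{s+m}$, and summing further over $C$ with $\mathrm{red}(C)=S$ and $|C|=s+m$ turns \eqref{eq:prop1'} into \eqref{eq:prop1}.

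To prove Proposition \ref{prop:main'}, I would index colour sequences $C$ with $\mathrm{red}(C)=S$ and length $s+m$ by their insertion data $(n_1,\ldots,n_{s+t})$ with $\sum_j n_j = m$, as in Section \ref{subseq:definitions}. Proposition \ref{prop:minpartition} gives
$$|\mathrm{min}_\Delta(C)| = |\mathrm{min}_\Delta(S)| + \sum_{j\in\mathcal{S}_1} P(j) + \sum_{j \in K} n_j L(j),$$
where $\mathcal{S}_1 = \{j \in \mathcal{T}_1 : n_j > 0\}$ and $K = \mathcal{N}\sqcup\mathcal{T}_0 \sqcup \mathcal{S}_1$. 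I would then group by the activated type-$1$ set $\mathcal{S}_1$: after the substitution $n_j \mapsto n_j - 1$ for $j \in \mathcal{S}_1$, the sum becomes one over weak compositions of $m - |\mathcal{S}_1|$ into $|K|$ parts, weighted by $q^{n_j' L(j)}$. Since the values $L(j)$ for $j \in K$ run through $1,2,\ldots,|K|$, this is the $q$-series $[z^{m-|\mathcal{S}_1|}]\,1/(zq;q)_{|K|}$, which closed-forms as $q^{m-|\mathcal{S}_1|}{m-|\mathcal{S}_1|+|K|-1 \brack m-|\mathcal{S}_1|}_q$. Using $|K| = s + t - |\mathcal{T}_1| + |\mathcal{S}_1|$, the contribution from $\mathcal{S}_1$ becomes
$$q^{|\mathrm{min}_\Delta(S)| + \sum_{j \in \mathcal{S}_1}(P(j)+L(j)) + m - |\mathcal{S}_1|}\,{s + t - |\mathcal{T}_1| + m - 1 \brack m - |\mathcal{S}_1|}_q.$$

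The main obstacle is to assemble these per-$\mathcal{S}_1$ contributions into the compact right-hand side of \eqref{eq:prop1'}, whose $q$-binomials carry the $|\mathcal{T}_1|$-free top parameter $s+m-1$. I would regroup the sum by the vector $(U_1,\ldots,U_t)$ with $U_k = |\mathcal{S}_1 \cap \mathcal{T}_1^k| \in \{0,1,\ldots,|\mathcal{T}_1^k|\}$, then repeatedly apply the $q$-Pascal identity
$${n \brack k}_q = {n-1 \brack k}_q + q^{n-k}{n-1 \brack k-1}_q$$
once per type-$1$ slot in order to raise each top parameter from $s+t-|\mathcal{T}_1|+m-1$ to $s+m-1$, at the cost of a two-term split whose weight I must match against $q^{u(s-t)}\prod_{k=1}^t q^{(x_k-1)\sigma_k}$. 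The delicate step is verifying that the positional exponents $P(j)+L(j)$ for $j \in \mathcal{S}_1$, which encode the interleaving of $S$ with the insertion slots, combine with the $q^{n-k}$ factors produced by $q$-Pascal to yield exactly the weights defining $g_{u,t}$; I expect the cleanest way to handle this bookkeeping is by induction on $t$, peeling off the rightmost secondary pair and matching the induced $q$-Pascal step with the $k=t$ factor in the definition of $g_{u,t}$, with the induction hypothesis providing $g_{u',t-1}$ on the first $t-1$ pairs.
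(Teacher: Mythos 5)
Your proposal follows essentially the same route as the paper: the reduction of Proposition \ref{prop:main} to Proposition \ref{prop:main'} by adding an arbitrary partition with at most $s+m$ parts to the minimal partition, the use of Proposition \ref{prop:minpartition}, the grouping by the activated set $\mathcal{S}_1$ with the shift $n_j\mapsto n_j-1$, the identification of the inner sum as $q^{m-|\mathcal{S}_1|}{m-|\mathcal{S}_1|+|K|-1 \brack m-|\mathcal{S}_1|}_q$, the regrouping by the block cardinalities $(U_1,\ldots,U_t)$, and the concluding induction on $t$ peeling off the rightmost secondary pair all match the paper's Section \ref{sec:prop1} (Lemmas \ref{lem:6S1} through \ref{lem:6LemmaX}). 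The one caveat is that the final assembly, which you rightly flag as the delicate step, requires somewhat more than iterated $q$-Pascal in the paper's execution: one first needs the path-statistic identity of Lemma \ref{lem:6qbinpath} to turn the within-block sum over $\mathcal{S}_1^u$ into ${|\mathcal{T}_1^u| \brack U_u}_q$, and the induction step then runs through a chain-decomposition identity (Lemma \ref{lem:6lemmaY}) and the $q$-Chu--Vandermonde convolution \eqref{eq:6dernierqbin} before the closing $q$-Pascal application --- but your proposed inductive structure (match the peeled pair against the $k=t$ factor of $g_{u,t}$) is exactly what the paper's Lemma \ref{lem:6LemmaX} does.
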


The proof of Proposition \ref{prop:main'} from Proposition \ref{prop:minpartition}, quite technical, is postponed to Section \ref{sec:prop1}. Its reading is not necessary to understand the connection between the generalised Primc partitions $\mathcal{P}_n$ and the $n^2$-coloured Frobenius partitions $\mathcal{F}_n$, which we  study in the next section, nor the bijection with the generalisation of Capparelli's identity, which we give in Section \ref{sec:bij}.

\section{Coloured Frobenius partitions}
\label{sec:frob}
In this section, we compute the generating function for $n^2$-coloured Frobenius partitions with a given kernel and show that it is the same as the generating function \eqref{eq:prop1} for generalised Primc partitions with the same kernel.

\subsection{The difference conditions corresponding to minimal $n^2$-coloured Frobenius partitions}
We start by showing that minimal $n^2$-coloured Frobenius partitions are in bijection with minimal coloured partitions satisfying some new difference conditions $\Delta'.$

Let $\begin{pmatrix}
\lambda_1 & \lambda_2 & \cdots & \lambda_s \\
\mu_1 & \mu_2 & \cdots & \mu_s 
\end{pmatrix}$ be a $n^2$-coloured Frobenius partition. Recall from the introduction that $\lambda =\lambda_1 + \lambda_2 + \cdots + \lambda_s$ is a partition into $s$ distinct non-negative parts, each coloured with some $a_i$, $i \in \{0, \dots, n-1\},$ with the order \eqref{eq:orderFroba}. Similarly, $\mu=\mu_1+\mu_2+\cdots+\mu_s$ is a partition into $s$ distinct non-negative parts, each coloured with some $b_i$, $i \in \{0, \dots, n-1\},$ with the order \eqref{eq:orderFrobb}. The colour sequence of this $n^2$-coloured Frobenius partition is $(c(\lambda_1)c(\mu_1), \dots, c(\lambda_s)c(\mu_s))$, and its kernel can  be defined in the same way as for coloured partitions.

Given a colour sequence $c_1, \dots , c_s$ taken from $\{a_i b_k : i,k \in \{0, \dots, n-1\} \}$, the minimal $n^2$-coloured Frobenius partition associated to $c_1, \dots , c_s$ is the $n^2$-coloured Frobenius partition $\begin{pmatrix}
\lambda_1 & \lambda_2 & \cdots & \lambda_s \\
\mu_1 & \mu_2 & \cdots & \mu_s 
\end{pmatrix}$
with minimal weight such that for all $i \in \{1, \dots , s\}$, $c(\lambda_i)c(\mu_i)= c_i$. We denote it by $\mathrm{min^F}(c_1,\dots,c_s).$

\begin{proposition}
\label{prop:bijDelta'}
Let $c_1, \dots , c_s$ be a colour sequence taken from $\{a_i b_k : i,k \in \{0, \dots, m-1\} \}$. There is a weight-preserving bijection between the minimal $n^2$-coloured Frobenius partition $\mathrm{min^F}(c_1,\dots,c_s)$ and the minimal coloured partition $\mathrm{min}_{\Delta'}(c_1,\dots,c_s),$ where for all $i,k,i',k' \in \N$,
\begin{equation}
\label{eq:Delta'}
\Delta'(a_i b_k, a_{i'} b_{k'}) = \chi(i \geq i')+\chi(k \leq k').
\end{equation}
\end{proposition}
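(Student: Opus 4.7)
The plan is to show that both sides are uniquely determined by the colour sequence $c_1,\ldots,c_s$ (write $c_j = a_{i_j}b_{\ell_j}$), and then to verify that their weights agree via an explicit column-by-part matching.

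First I would decode the constraints that the orderings \eqref{eq:orderFroba} and \eqref{eq:orderFrobb} impose on consecutive Frobenius entries. Since $k_{a_i} < k'_{a_{i'}}$ holds exactly when either $k<k'$, or $k=k'$ and $i>i'$, two consecutive top-row entries $\lambda_j,\lambda_{j+1}$ with $\lambda_j>\lambda_{j+1}$ in the order satisfy, as integers, $\lambda_j-\lambda_{j+1}\geq \chi(i_j\geq i_{j+1})$: the integer values can coincide exactly when the colour index strictly increases. Symmetrically, the bottom order forces $\mu_j-\mu_{j+1}\geq \chi(\ell_j\leq \ell_{j+1})$. Imposing equality everywhere, together with $\lambda_s=\mu_s=0$, characterises $\mathrm{min^F}(c_1,\ldots,c_s)$ uniquely, and backward induction on $j$ yields the closed forms
$$\lambda_j=\sum_{k=j}^{s-1}\chi(i_k\geq i_{k+1}),\qquad \mu_j=\sum_{k=j}^{s-1}\chi(\ell_k\leq \ell_{k+1}).$$

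Next I would compute $\mathrm{min}_{\Delta'}(c_1,\ldots,c_s)$ directly from \eqref{eq:Delta'}. Writing its parts as $\lambda'_1,\ldots,\lambda'_s$, the last part equals $\Delta'(c_s,a_\infty b_\infty)=0+1=1$, consistent with the convention $c_{s+1}=a_\infty b_\infty$. Backward induction on $j$, together with $\lambda'_j-\lambda'_{j+1}=\chi(i_j\geq i_{j+1})+\chi(\ell_j\leq \ell_{j+1})$, yields
$$\lambda'_j=1+\sum_{k=j}^{s-1}\bigl(\chi(i_k\geq i_{k+1})+\chi(\ell_k\leq \ell_{k+1})\bigr)=1+\lambda_j+\mu_j.$$
The bijection is then forced: the $j$-th column of the Frobenius partition contributes $1+\lambda_j+\mu_j$ to the total weight (one box for the diagonal, plus the two row entries), and we send this column with colour $c_j$ to the $j$-th part $\lambda'_j$ of $\mathrm{min}_{\Delta'}(c_1,\ldots,c_s)$, retaining the same colour $c_j$. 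Summing over $j$ gives the equality of total weights.

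There is no serious obstacle here; the argument is a routine verification once the two orderings are translated correctly into integer gap conditions. The only point needing care is the boundary at $j=s$, handled by the convention $c_{s+1}=a_\infty b_\infty$ with $\Delta'(c_s,a_\infty b_\infty)=1$, which is exactly matched by the single diagonal box in the final Frobenius column. Since both minimal objects are uniquely determined by $c_1,\ldots,c_s$, the column-to-part map constructed above is the bijection claimed.
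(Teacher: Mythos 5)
Your proposal is correct and follows essentially the same route as the paper: the bijection is the column-to-part map $\nu_j=\lambda_j+\mu_j+1$ keeping the colour $c_j$, justified by translating the two orderings into the gap conditions $\chi(i_j\geq i_{j+1})$ and $\chi(\ell_j\leq\ell_{j+1})$ whose sum is exactly $\Delta'$. The paper states the map directly and verifies minimality is preserved, whereas you derive closed forms for both minimal objects and match them termwise, but this is only a presentational difference.
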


\begin{proof}
Start with $\mathrm{min^F}(c_1,\dots,c_s)=\begin{pmatrix}
\lambda_1 & \lambda_2 & \cdots & \lambda_s \\
\mu_1 & \mu_2 & \cdots & \mu_s 
\end{pmatrix}$, and transform it into the coloured partition $\nu = \nu_1 + \cdots + \nu_s$, where for all $j \in \{1, \dots, s\}$,
\begin{align*}
\nu_j &= \lambda_j + \mu_j+1,\\
c(\nu_j) &= c(\lambda_j)c(\mu_j).
\end{align*}
Clearly $\mathrm{min^F}(c_1,\dots,c_s)$ and $\nu$ have the same weight and colour sequence.

Moreover, by definition of the order \eqref{eq:orderFroba}, and using the minimality of $\mathrm{min^F}(c_1,\dots,c_s)$, the difference between $\lambda_j$ of colour $a_i$ and $\lambda_{j+1}$ of colour $a_{i'}$ is exactly $\chi(i \geq i')$, for all $j \in \{1, \dots, s \}$.
Similarly, the difference between $\mu_j$ of colour $b_{k}$ and $\mu_{j+1}$ of colour $b_{k'}$ is exactly $\chi(k \leq k').$

Thus for all $j \in \{1, \dots, s \}$, the difference between $\nu_j$ and $\mu_{j+1}$ is exactly $\chi(i \geq i')+\chi(k \leq k')$ and $\nu= \mathrm{min}_{\Delta'}(c_1,\dots,c_s)$.

By unicity of the minimal partition (resp. minimal Frobenius partition), this is indeed a bijection.
\end{proof}

We denote by $\mathcal{P}'_n$ the set of $n^2$-coloured partitions satisfying the minimal difference conditions $\Delta'$.

\begin{remark}
When we don't have the minimality condition, the $n^2$-coloured Frobenius partitions with colour sequence $c_1, \dots , c_s$ are \textbf{not} in bijection with coloured partitions with colour sequence $c_1, \dots , c_s$ and minimal differences $\Delta'$.
For example, take the case of one colour $a_1 b_1$. The $n^2$-coloured Frobenius partitions with colour sequence $a_1 b_1$ are generated by $q/(1-q)^2$, as we can choose any value for both $\lambda_1$ and $\mu_1$. On the other hand, coloured partitions with colour sequence $a_1 b_1$ and difference $\Delta'$ are generated by $q/(1-q)$, as we can only choose the value of one part $\nu_1$.
\end{remark}

However, for our purpose in this paper, we only need the generating function for minimal partitions. Moreover, we will be able to relate $\Delta'$ with the difference conditions $\Delta$ of Primc's identity, which will allow us to reuse a lot of the work done in Section \ref{sec:redcolseq}

Let us start with the following property, which follows from the definition of $\Delta$ \eqref{eq:Delta} and $\Delta'$ \eqref{eq:Delta'}.
\begin{property}
\label{prop:DeltaDelta'}
The minimal differences $\Delta(c,c')$ and $\Delta'(c,c')$ are equal, except in the following cases:
\begin{enumerate}
\item $c=c'=a_ib_i$, in which case $\Delta(a_ib_i,a_ib_i)=0$ and $\Delta'(a_ib_i,a_ib_i)=2$,
\item $c=a_ib_i$ and $c'= a_ib_{\ell}$, in which case $\Delta'(a_ib_i,a_ib_{\ell})=\Delta(a_ib_i,a_ib_{\ell})+1$,
\item $c=a_ib_{\ell}$ and $c'= a_{\ell}b_{\ell}$, in which case $\Delta'(a_ib_{\ell},a_{\ell}b_{\ell})=\Delta(a_ib_{\ell},a_{\ell}b_{\ell})+1.$
\end{enumerate}
These particular cases correspond to the insertions of type $(1)$, $(2)$, and $(3)$, respectively, in Proposition \ref{prop:insert}.
\end{property}

Using the fact that reduced colour sequences do not contain any pair $(c,c')$ of the types mentioned in Property \ref{prop:DeltaDelta'}, we have the following corollary.
\begin{corollary}
\label{cor:minDeltaDelta'}
Let $S$ be a reduced colour sequence. Then
$$\mathrm{min}_{\Delta}(S)=\mathrm{min}_{\Delta'}(S).$$
\end{corollary}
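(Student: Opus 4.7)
The plan is to show that for a reduced colour sequence $S = c_1, \ldots, c_s$, the two minimal partitions $\mathrm{min}_\Delta(S)$ and $\mathrm{min}_{\Delta'}(S)$ are literally the same partition, not merely of equal weight. The key observation is that the discrepancies between $\Delta$ and $\Delta'$ listed in Property~\ref{prop:DeltaDelta'} occur exactly in three configurations: $(a_ib_i, a_ib_i)$, $(a_ib_i, a_ib_\ell)$, and $(a_ib_\ell, a_\ell b_\ell)$. These are precisely the consecutive patterns that would be eliminated by the reduction operations defined in Section~\ref{subseq:definitions} (the second and third are directly the two reduction rules, while a pair $(a_ib_i, a_ib_i)$ is removed by either rule with $k=\ell=i$).

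First I would verify that no consecutive pair $(c_j, c_{j+1})$ in $S$ falls into any of these three cases, since by definition a reduced colour sequence admits no further removal. Therefore, for every $j \in \{1, \ldots, s-1\}$ we have $\Delta(c_j, c_{j+1}) = \Delta'(c_j, c_{j+1})$. Next, using the convention that $c_{s+1} = a_\infty b_\infty$, a direct calculation from \eqref{eq:Delta'} gives $\Delta'(a_i b_k, a_\infty b_\infty) = \chi(i \geq \infty) + \chi(k \leq \infty) = 1$, matching the convention already used for $\Delta$.

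Given these equalities, I would apply the inductive description of the minimal partition from Section~\ref{subseq:definitions}: the last part equals $\Delta(c_s, c_{s+1}) = 1$ for both $\mathrm{min}_\Delta(S)$ and $\mathrm{min}_{\Delta'}(S)$, and each preceding part is obtained by adding $\Delta(c_j, c_{j+1})$ (respectively $\Delta'(c_j, c_{j+1})$) to the next one. Since all these difference values agree, the two partitions have identical parts in identical colours, hence are equal as coloured partitions. This immediately implies the weight equality stated in the corollary.

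I do not expect any genuine obstacle here; the corollary is essentially a bookkeeping consequence of Property~\ref{prop:DeltaDelta'} combined with the definition of reduction. The only mild subtlety is making sure that the boundary convention for $\Delta'$ at $a_\infty b_\infty$ agrees with that for $\Delta$, which is immediate from the formula \eqref{eq:Delta'}.
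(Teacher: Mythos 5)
Your proposal is correct and follows exactly the paper's route: the paper derives the corollary immediately from Property~\ref{prop:DeltaDelta'} together with the observation that a reduced colour sequence contains no consecutive pair of the three discrepancy types, so all relevant values of $\Delta$ and $\Delta'$ coincide and the minimal partitions are identical. Your additional check of the boundary convention at $a_\infty b_\infty$ is a harmless (and correct) piece of bookkeeping that the paper leaves implicit.
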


But when $C$ is a coloured sequence which is not reduced, we do not have in general $\mathrm{min}_{\Delta}(C)=\mathrm{min}_{\Delta'}(C)$. So to compute the generating function for $n^2$-coloured Frobenius partitions with a given kernel, we define one last difference condition 
$$\Delta'':=2-\Delta',$$ which shares many properties with $\Delta$.

\begin{proposition}
\label{prop:Delta''}
The difference conditions $\Delta''$ satisfy the following properties on free colours.
\begin{enumerate}
\item \emph{Difference between two free colours:} For all $i,k$, $\Delta''(a_ib_i,a_kb_k)=\chi(i \neq k)=\Delta(a_ib_i,a_kb_k).$
\item \emph{Insertion inside a primary pair :} Let $(a_i b_k, a_k b_{\ell})$, with $i\neq k$ and $k \neq \ell$, be a primary pair. We have
$$\Delta''(a_i b_k, a_k b_{k})+\Delta''(a_k b_k, a_k b_{\ell})=\Delta''(a_i b_k, a_k b_{\ell}).$$
\item \emph{Left insertion inside a secondary pair :} Let $(a_i b_j, a_k b_{\ell})$, with $j\neq k$ and $k \neq \ell$, be a secondary pair. We have
$$\Delta''(a_i b_j, a_k b_{k})+\Delta''(a_k b_k, a_k b_{\ell})-\Delta''(a_i b_j, a_k b_{\ell})= 0 \text{ or } 1.$$
Moreover such an insertion is of $\Delta''$-type $0$ (resp. $1$) if and only if it is of $\Delta$-type $1$ (resp. $0$).
\item \emph{Right insertion inside a secondary pair :} Let $(a_i b_j, a_k b_{\ell})$, with $i\neq j$ and $j \neq k$, be a secondary pair. We have
$$\Delta''(a_i b_j, a_j b_j)+\Delta''(a_j b_j, a_k b_{\ell})-\Delta''(a_i b_j, a_k b_{\ell})= 0 \text{ or } 1.$$
Moreover such an insertion is of $\Delta''$-type $0$ (resp. $1$) if and only if it is of $\Delta$-type $1$ (resp. $0$).
\end{enumerate}
\end{proposition}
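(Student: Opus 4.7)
The plan is to begin by deriving the explicit closed form
$$\Delta''(a_ib_k, a_{i'}b_{k'}) \;=\; \chi(i<i') + \chi(k>k'),$$
which follows at once from the definition $\Delta'' = 2-\Delta'$ and \eqref{eq:Delta'}. With this formula in hand, items (1) and (2) become direct substitutions. For (1), plugging in gives $\Delta''(a_ib_i, a_kb_k) = \chi(i<k)+\chi(i>k) = \chi(i\neq k)$, which agrees with $\Delta(a_ib_i,a_kb_k)$ by Properties \ref{property:aibiaibi} and \ref{property:aibiakbk}. For (2), with a primary pair $(a_ib_k, a_kb_\ell)$ ($i\neq k$, $k\neq \ell$), the closed form yields $\Delta''(a_ib_k, a_kb_k) = \chi(i<k)$, $\Delta''(a_kb_k, a_kb_\ell) = \chi(k>\ell)$, and $\Delta''(a_ib_k, a_kb_\ell) = \chi(i<k)+\chi(k>\ell)$, so additivity is immediate.

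For items (3) and (4), rather than redoing the case analysis of Propositions \ref{prop:leftinsert} and \ref{prop:rightinsert}, I would deduce the statements from them by combining $\Delta'' = 2-\Delta'$ with Property \ref{prop:DeltaDelta'}. Treating (3) first: for a secondary pair $(a_ib_j, a_kb_\ell)$ with $j\neq k$, $k\neq \ell$, a one-line check shows that among the three pairs $(a_ib_j, a_kb_k)$, $(a_kb_k, a_kb_\ell)$, $(a_ib_j, a_kb_\ell)$ only the middle one lies in the exceptional list of Property \ref{prop:DeltaDelta'}, and it is exactly case~(2); hence $\Delta' = \Delta+1$ there while $\Delta' = \Delta$ on the other two. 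Substituting,
\begin{align*}
\Delta''(a_ib_j,a_kb_k) &+ \Delta''(a_kb_k, a_kb_\ell) - \Delta''(a_ib_j, a_kb_\ell) \\
&= 2 - \bigl[\Delta'(a_ib_j,a_kb_k) + \Delta'(a_kb_k, a_kb_\ell) - \Delta'(a_ib_j, a_kb_\ell)\bigr] \\
&= 2 - 1 - (\Delta\text{-type}) \;=\; 1 - (\Delta\text{-type}).
\end{align*}
Since the $\Delta$-type is $0$ or $1$ by Proposition \ref{prop:leftinsert}, so is the quantity above, and it vanishes precisely when the $\Delta$-type equals $1$, giving the claimed type-swap. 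Case (4) is entirely parallel: one checks that $(a_ib_j, a_jb_j)$ is exactly case~(3) of Property \ref{prop:DeltaDelta'} while the other two pairs are generic, and the same arithmetic produces $\Delta''\text{-type} = 1 - \Delta\text{-type}$ via Proposition \ref{prop:rightinsert}.

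The only non-routine ingredient, and thus the main obstacle, is the bookkeeping that in each secondary-pair configuration exactly one of the three relevant pairs of colours falls into the exceptional list of Property \ref{prop:DeltaDelta'}, producing the crucial shift $+1$ that reverses the type. Once this is observed, everything else reduces to mechanical substitution from the explicit formula for $\Delta''$ and the already-established $\Delta$-type dichotomies.
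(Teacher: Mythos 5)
Your proposal is correct and follows essentially the same route as the paper: for items (3) and (4) you use exactly the paper's argument, namely $\Delta''=2-\Delta'$ together with the observation that precisely the middle pair falls into the exceptional list of Property \ref{prop:DeltaDelta'}, yielding $1-(\Delta\text{-type})$ via Propositions \ref{prop:leftinsert} and \ref{prop:rightinsert}. The only (harmless) deviation is that you handle (1) and (2) by direct substitution into the closed form $\Delta''(a_ib_k,a_{i'}b_{k'})=\chi(i<i')+\chi(k>k')$, where the paper instead chains back through $\Delta$ using Property \ref{prop:DeltaDelta'} and Proposition \ref{prop:insertprimary}.
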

\begin{proof}
Property $(1)$ follows clearly from the definition of $\Delta'$.

Let us now prove $(2)$. We have:
$$\begin{array}{llll}
\Delta''(a_i b_k, a_k b_{k})+\Delta''(a_k b_k, a_k b_{\ell})
&=& 4 - \Delta'(a_i b_k, a_k b_{k})-\Delta'(a_k b_k, a_k b_{\ell}) & \text{by definition of $\Delta''$}
\\&=& 2 - \Delta(a_i b_k, a_k b_{k})-\Delta(a_k b_k, a_k b_{\ell}) & \text{by Property \ref{prop:DeltaDelta'}}
\\&=& 2 - \Delta(a_i b_k,a_k b_{\ell}) & \text{by Proposition \ref{prop:insertprimary}}
\\&=& 2 - \Delta'(a_i b_k,a_k b_{\ell}) &\text{by Property \ref{prop:DeltaDelta'}}
\\&=& \Delta''(a_i b_k,a_k b_{\ell}) &\text{by definition of $\Delta''$}.
\end{array}
$$

Let us finally turn to $(3)$. Property $(4)$ is proved in a similar way. We have
$$\begin{array}{llll}
& &\Delta''(a_i b_j, a_k b_{k})+\Delta''(a_k b_k, a_k b_{\ell})-\Delta''(a_i b_j, a_k b_{\ell})&\\
&=& 2 - \left(\Delta'(a_i b_j, a_k b_{k})+\Delta'(a_k b_k, a_k b_{\ell})-\Delta'(a_i b_j, a_k b_{\ell}) \right) & \text{by definition of $\Delta''$}
\\&=& 2 - \left(\Delta(a_i b_j, a_k b_{k})+\Delta(a_k b_k, a_k b_{\ell})+1 -\Delta(a_i b_j, a_k b_{\ell}) \right) & \text{by Property \ref{prop:DeltaDelta'}}
\\&=& 1 - \left(\Delta(a_i b_j, a_k b_{k})+\Delta(a_k b_k, a_k b_{\ell})-\Delta(a_i b_j, a_k b_{\ell}) \right). &
\end{array}
$$
But by Proposition \ref{prop:leftinsert},
$$\Delta(a_i b_j, a_k b_{k})+\Delta(a_k b_k, a_k b_{\ell})-\Delta(a_i b_j, a_k b_{\ell})= 0 \text{ or } 1,$$
and the value $0$ or $1$ is the $\Delta$-type of the insertion. This completes the proof of $(3)$.
\end{proof}

Proposition \ref{prop:Delta''} shows that $\Delta''$ behaves exactly like $\Delta$ with respect to the insertion of free colours, except that the types of all insertions inside secondary pairs are reversed. In other words, using the notation at the beginning of Section \ref{subsec:gfkernelDelta}, given a reduced colour sequence $S= c_1, \dots , c_s$ and $f_1 , \dots , f_{s+t}$ the free colours that can be inserted in $S$, the set $\mathcal{N}$ (resp. $\mathcal{T}_0$, $\mathcal{T}_1$) is exactly the set of indices $i$ such that the insertion of $f_i$ is neutral (resp. of type $1$, of type $0$) for the difference conditions $\Delta''$.

\subsection{The generating function for $n^2$-coloured Frobenius partitions with a given kernel}
Now that we understand the difference conditions $\Delta'$ and $\Delta''$, we will use them to compute the generating function for $n^2$-coloured Frobenius partitions with a given kernel.

Before doing this, we need a technical lemma about the function $g_{u,v}$ defined in Proposition \ref{prop:main'}, which will appear again in this section.
\begin{lemma}
\label{lem:g_uv}
Let $g_{u,v}$ be the function defined in Proposition \ref{prop:main'}. We have
$$g_{u,v}(q^{-1};2-x_1, \dots , 2-x_v) = q^{-u(2v+u-1)}g_{u,v}(q;x_1,\dots , x_v).$$
\end{lemma}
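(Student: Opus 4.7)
The plan is to verify this identity by direct substitution into the definition of $g_{u,v}$, since the claim is essentially a symmetry of the inner exponent $(x_k-1)$ under the map $(q,x_k)\mapsto(q^{-1},2-x_k)$.

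First I would substitute $q'=q^{-1}$ and $y_k=2-x_k$ into
$$g_{u,v}(q';y_1,\dots,y_v)=\sum_{\substack{\epsilon_1,\dots,\epsilon_v\in\{0,1\}\\ \epsilon_1+\cdots+\epsilon_v=u}}(q')^{uv+\binom{u}{2}}\prod_{k=1}^{v}(q')^{(y_k-1)\sum_{i=1}^{k-1}\epsilon_i}$$
and observe the key cancellation: $(y_k-1)=(1-x_k)=-(x_k-1)$, while the outer base is also inverted, so
$$(q^{-1})^{(y_k-1)\sum_{i<k}\epsilon_i}=q^{-(1-x_k)\sum_{i<k}\epsilon_i}=q^{(x_k-1)\sum_{i<k}\epsilon_i}.$$
The inner product of $q$-powers is thus unchanged, while the prefactor $(q')^{uv+\binom{u}{2}}$ becomes $q^{-uv-\binom{u}{2}}$.

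Next I would pull this prefactor out of the sum and compare with the definition of $g_{u,v}(q;x_1,\dots,x_v)$, which carries a factor $q^{uv+\binom{u}{2}}$. This gives
$$g_{u,v}(q^{-1};2-x_1,\dots,2-x_v)=q^{-uv-\binom{u}{2}}\cdot q^{-uv-\binom{u}{2}}g_{u,v}(q;x_1,\dots,x_v)=q^{-2uv-u(u-1)}g_{u,v}(q;x_1,\dots,x_v).$$
Finally I would simplify the exponent: $-2uv-u(u-1)=-u(2v+u-1)$, matching the right-hand side of the claimed identity.

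There is no real obstacle here — the lemma is a bookkeeping identity, and the only care needed is in checking the sign computation $-(2-x_k-1)=x_k-1$ and the arithmetic $-2uv-u(u-1)=-u(2v+u-1)$. No combinatorial argument or induction is required, and the restriction $\epsilon_1+\cdots+\epsilon_v=u$ on the summation index plays no role since the substitution acts termwise on the summand.
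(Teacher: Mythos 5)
Your proof is correct and is essentially identical to the paper's own argument: a termwise substitution showing the inner product of $q$-powers is invariant under $(q,x_k)\mapsto(q^{-1},2-x_k)$ while the prefactor contributes $q^{-2uv-u(u-1)}=q^{-u(2v+u-1)}$. The only omission is the trivial remark that the case $u=v=0$ (where $g_{0,0}=1$ is defined separately rather than by the sum) holds vacuously.
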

\begin{proof}
When $u=v=0$, this is trivially true.
Otherwise, we have by definition:
\begin{align*}
g_{u,v}(q^{-1};2-x_1, \dots , 2-x_v) &= \sum_{\substack{\epsilon_1,\dots , \epsilon_v \in \{0,1\}:\\ \epsilon_1 + \cdots + \epsilon_v = u}} q^{-(uv+ {u \choose 2})} \prod_{k=1}^v q^{-(2-x_k -1) \sum_{i=1}^{k-1} \epsilon_i}\\
&=q^{-u(2v+u-1)} \sum_{\substack{\epsilon_1,\dots , \epsilon_v \in \{0,1\}:\\ \epsilon_1 + \cdots + \epsilon_v = u}} q^{(uv+ {u \choose 2})} \prod_{k=1}^v q^{(x_k -1) \sum_{i=1}^{k-1} \epsilon_i}\\
&=q^{-u(2v+u-1)}g_{u,v}(q;x_1,\dots , x_v).
\end{align*}
\end{proof}

We now give the generating function for minimal coloured partitions with difference conditions $\Delta'$ and a given kernel.
\begin{proposition}
\label{prop:prop2'}
Let $n$ be a positive integer and $m$ be a non-negative integer. Let $S= c_1, \dots , c_s$ be a reduced colour sequence of length $s$, having $t$ maximal primary subsequences. Using the notation of Section \ref{subsec:gfkernelDelta}, the generating function for minimal partitions in $\mathcal{P}'_n$ with kernel $S$, having $s+m$ parts, is the following:
\begin{equation}
\label{eq:prop2'}
\sum_{\substack{C \text{colour sequence} \\ \text{of length } s+m\\ \text{such that } \mathrm{red}(C)=S}} q^{|\min_{\Delta'}(C)|} = q^{|\min_{\Delta}(S)|+m(s+m+1)} \sum_{u=0}^{t} q^{-u(t+m)}g_{u,t}(q;|\mathcal{T}_0^1|,\ldots,|\mathcal{T}_0^{t}|) {s+m-1 \brack m-u}_{q}.
\end{equation}
\end{proposition}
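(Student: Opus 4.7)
The plan is to exploit the pointwise identity $\Delta'+\Delta''=2$ to convert the sum over $|\min_{\Delta'}(C)|$ into a sum over $|\min_{\Delta''}(C)|$ evaluated at $q^{-1}$, and then invoke a $\Delta''$-analog of Proposition~\ref{prop:main'}.

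Since $\Delta'(c_k,c_{k+1})+\Delta''(c_k,c_{k+1})=2$ for all $k$ (with the convention that $c_{s+m+1}=a_\infty b_\infty$, for which both $\Delta'$ and $\Delta''$ equal $1$), every colour sequence $C$ of length $s+m$ satisfies
$$|\min_{\Delta'}(C)|+|\min_{\Delta''}(C)|=2\sum_{k=1}^{s+m}k=(s+m)(s+m+1).$$
Hence
$$\sum_{\substack{C:\,\mathrm{red}(C)=S\\|C|=s+m}} q^{|\min_{\Delta'}(C)|}=q^{(s+m)(s+m+1)}\sum_{C}(q^{-1})^{|\min_{\Delta''}(C)|}.$$

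Next, by Proposition~\ref{prop:Delta''}, the function $\Delta''$ satisfies every insertion property used to prove Proposition~\ref{prop:main'}, except that $\Delta$-type-$0$ and $\Delta$-type-$1$ secondary insertions are interchanged. Therefore the same argument yields a $\Delta''$-version of Proposition~\ref{prop:main'} in which each $|\mathcal{T}_0^j|$ is replaced by $|\mathcal{T}_1^j|=2-|\mathcal{T}_0^j|$ and $|\min_\Delta(S)|$ by $|\min_{\Delta''}(S)|$. Applying the identity above to the reduced sequence $S$ itself, and using Corollary~\ref{cor:minDeltaDelta'}, gives $|\min_{\Delta''}(S)|=s(s+1)-|\min_\Delta(S)|$.

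It then remains to combine these ingredients. Using Lemma~\ref{lem:g_uv} to rewrite $g_{u,t}(q^{-1};2-|\mathcal{T}_0^1|,\ldots,2-|\mathcal{T}_0^t|)=q^{-u(2t+u-1)}g_{u,t}(q;|\mathcal{T}_0^1|,\ldots,|\mathcal{T}_0^t|)$ and the standard identity ${s+m-1 \brack m-u}_{q^{-1}}=q^{-(m-u)(s-1+u)}{s+m-1 \brack m-u}_{q}$, everything reduces to a bookkeeping of $q$-exponents. The $u$-dependent exponent from inside the sum,
$$-u(s-t)-u(2t+u-1)-(m-u)(s-1+u),$$
collapses to $-u(t+m)-m(s-1)$, while the $u$-independent prefactor $(s+m)(s+m+1)-s(s+1)-m=m(2s+m)$, combined with the extracted $-m(s-1)$, gives $m(s+m+1)$. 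This produces the claimed formula. The main obstacle is purely this exponent arithmetic; the conceptual step is the observation that the duality $\Delta'+\Delta''=2$, together with Proposition~\ref{prop:Delta''}, reduces the whole statement to Proposition~\ref{prop:main'} via a $q\mapsto q^{-1}$ substitution.
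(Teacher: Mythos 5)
Your proposal is correct and follows essentially the same route as the paper: the identity $\Delta'+\Delta''=2$ giving $|\min_{\Delta'}(C)|+|\min_{\Delta''}(C)|=(s+m)(s+m+1)$, the $\Delta''$-analogue of Proposition~\ref{prop:main'} obtained from Proposition~\ref{prop:Delta''} by swapping the type-$0$ and type-$1$ insertions, Corollary~\ref{cor:minDeltaDelta'}, Lemma~\ref{lem:g_uv}, and the $q\mapsto q^{-1}$ reciprocity of the $q$-binomial coefficient. The exponent bookkeeping you outline checks out and matches the paper's computation.
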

\begin{proof}
Let $C=c_1, \dots, c_{s+m}$ be a colour sequence whose reduction is $S$. The weight of the corresponding minimal partition in $\mathcal{P}'_n$ is
\begin{equation}
\label{eq:6}
|\mathrm{min}_{\Delta'}(C)| = \sum_{i=1}^{s+m} i \Delta'(c_i,c_{i+1})= (s+m)(s+m+1) - |\mathrm{min}_{\Delta''}(C)|,
\end{equation}
where the second equality follows from the definition of $\Delta''.$

On the other hand, by Corollary \ref{cor:minDeltaDelta'} and \eqref{eq:6}, we have
\begin{equation}
\label{eq:minDeltaS}
|\mathrm{min}_{\Delta}(S)|=|\mathrm{min}_{\Delta'}(S)|= s(s+1) - |\mathrm{min}_{\Delta''}(S)|.
\end{equation}

Given that, by Proposition \ref{prop:Delta''}, $\Delta$ and $\Delta''$ have exactly the same insertion properties up to exchanging the type $0$ and $1$ insertions, Proposition \ref{prop:main'} immediately yields
\begin{equation*}
\sum_{\substack{C \text{colour sequence of length } s+m\\ \text{such that } \mathrm{red}(C)=S}} q^{|\min_{\Delta''}(C)|} = q^{|\min_{\Delta''}(S)|+m} \sum_{u=0}^{t} q^{u(s-t)}g_{u,t}(q;|\mathcal{T}_1^1|,\ldots,|\mathcal{T}_1^{t}|) {s+m-1 \brack m-u}_q.
\end{equation*}

Combining this with \eqref{eq:6}, we get that the generating function for minimal partitions in $\mathcal{P}'_n$ is
\begin{align*}
G:&=\sum_{\substack{C \text{colour sequence} \\ \text{of length } s+m\\ \text{such that } \mathrm{red}(C)=S}} q^{|\min_{\Delta'}(C)|} 
\\&= q^{(s+m)(s+m+1)-|\min_{\Delta''}(S)|-m} \sum_{u=0}^{t} q^{-u(s-t)}g_{u,t}(q^{-1};|\mathcal{T}_1^1|,\ldots,|\mathcal{T}_1^{t}|) {s+m-1 \brack m-u}_{q^{-1}}.
\end{align*}
By Lemma \ref{lem:g_uv} and the fact that for all $k \in \{1, \dots t\},$ $|\mathcal{T}_1^k|=2-|\mathcal{T}_0^k|$, the above becomes
$$G=q^{(s+m)(s+m+1)-|\min_{\Delta''}(S)|-m} \sum_{u=0}^{t} q^{-u(s+t+u-1)}g_{u,t}(q;|\mathcal{T}_0^1|,\ldots,|\mathcal{T}_0^{t}|) {s+m-1 \brack m-u}_{q^{-1}}.$$
Now using the fact that 
$${s+m-1 \brack m-u}_{q^{-1}}= q^{-(s+u-1)(m-u)}{s+m-1 \brack m-u}_{q},$$
we obtain
\begin{align*}
G&=q^{(s+m)(s+m+1)-|\min_{\Delta''}(S)|-ms} \sum_{u=0}^{t} q^{-u(t+m)}g_{u,t}(q;|\mathcal{T}_0^1|,\ldots,|\mathcal{T}_0^{t}|) {s+m-1 \brack m-u}_{q}\\
&=q^{|\min_{\Delta}(S)|+m(s+m+1)} \sum_{u=0}^{t} q^{-u(t+m)}g_{u,t}(q;|\mathcal{T}_0^1|,\ldots,|\mathcal{T}_0^{t}|) {s+m-1 \brack m-u}_{q},
\end{align*}
where we used \eqref{eq:minDeltaS} in the last equality. This completes the proof.
\end{proof}

By Proposition \ref{prop:bijDelta'}, the generating function in \eqref{eq:prop2'} is also the generating function for minimal $n^2$-coloured Frobenius partitions with kernel $S$. Finally, using the fact that any $n^2$-coloured Frobenius partition with colour sequence $C$ of length $s+m$ can be obtained in a unique way by adding a partition into at most $s+m$ parts to $\lambda$ and another partition into at most $s+m$ parts to $\mu$ in the minimal $n^2$-coloured Frobenius partition $\mathrm{min^F}(C)=\begin{pmatrix}
\lambda_1 & \lambda_2 & \cdots & \lambda_{s+m} \\
\mu_1 & \mu_2 & \cdots & \mu_{s+m} 
\end{pmatrix},$
we obtain the following key expression for the generating function of 
$n^2$-coloured Frobenius partitions with a given kernel $S$.
\begin{proposition}
\label{prop:prop2}
Let $n$ be a positive integer and $m$ be a non-negative integer. Let $S= c_1, \dots , c_s$ be a reduced colour sequence of length $s$, having $t$ maximal primary subsequences. Using the notation of Section \ref{subsec:gfkernelDelta}, the generating function for $n^2$-coloured Frobenius partitions with kernel $S$, having length $s+m$, is the following:
\begin{equation}
\label{eq:prop2}
\sum_{\substack{F \in \mathcal{F}_n:\\ \ell(F)=s+m\\\mathrm{ker}(F)=S}} q^{|F|}= \frac{q^{|\min_{\Delta}(S)|+m(s+m+1)}}{(q;q)_{s+m}^2} \sum_{u=0}^{t} q^{-u(t+m)}g_{u,t}(q;|\mathcal{T}_0^1|,\ldots,|\mathcal{T}_0^{t}|) {s+m-1 \brack m-u}_{q}.
\end{equation}
\end{proposition}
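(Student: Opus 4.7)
The plan is to express the generating function as a sum over colour sequences $C$ of length $s+m$ with $\mathrm{red}(C)=S$, and within each such sum, to decompose an arbitrary Frobenius partition of colour sequence $C$ into its minimal representative plus two ordinary partitions. The key ingredients are Proposition \ref{prop:bijDelta'}, which identifies the weight of $\mathrm{min^F}(C)$ with that of $\min_{\Delta'}(C)$, and Proposition \ref{prop:prop2'}, which evaluates $\sum_{C:\,\mathrm{red}(C)=S} q^{|\min_{\Delta'}(C)|}$ in the desired closed form.

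First, I would fix a colour sequence $C=c_1,\ldots,c_{s+m}$ with $\mathrm{red}(C)=S$, and describe the fibre over $C$. Given any $n^2$-coloured Frobenius partition $F=\begin{pmatrix}\lambda_1 & \cdots & \lambda_{s+m}\\ \mu_1 & \cdots & \mu_{s+m}\end{pmatrix}$ with colour sequence $C$, write $\mathrm{min^F}(C)=\begin{pmatrix}\lambda^0_1 & \cdots & \lambda^0_{s+m}\\ \mu^0_1 & \cdots & \mu^0_{s+m}\end{pmatrix}$ and set $\rho_j=\lambda_j-\lambda^0_j$ and $\sigma_j=\mu_j-\mu^0_j$. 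I need to check that $\rho=(\rho_1,\ldots,\rho_{s+m})$ and $\sigma=(\sigma_1,\ldots,\sigma_{s+m})$ are ordinary partitions with at most $s+m$ parts, and conversely that any such pair gives back a valid Frobenius partition with the same colour sequence. The point is that in the top row the minimal differences $\chi(c(\lambda_j)=a_i,\,c(\lambda_{j+1})=a_{i'})\mapsto \chi(i\ge i')$ guarantee strict decrease with respect to the order \eqref{eq:orderFroba}: when $i\ge i'$ the difference is already at least $1$, and when $i<i'$ equality of integer parts is still compatible with strict order (the smaller $a$-index wins). Adding a weakly decreasing sequence $\rho_1\ge\rho_2\ge\cdots\ge 0$ preserves this property, since differences only grow, and coloured ties on equal integer values are still resolved the same way. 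The analogous statement holds for the bottom row via \eqref{eq:orderFrobb}. This gives a bijection between Frobenius partitions of colour sequence $C$ and pairs $(\rho,\sigma)$ of partitions with at most $s+m$ parts, contributing
\begin{equation*}
\sum_{F:\,c(F)=C} q^{|F|} = \frac{q^{|\mathrm{min^F}(C)|}}{(q;q)_{s+m}^2}.
\end{equation*}

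By Proposition \ref{prop:bijDelta'}, $|\mathrm{min^F}(C)|=|\min_{\Delta'}(C)|$. Summing the above identity over all colour sequences $C$ of length $s+m$ with $\mathrm{red}(C)=S$, and pulling the factor $1/(q;q)_{s+m}^2$ out of the sum, I obtain
\begin{equation*}
\sum_{\substack{F\in\mathcal{F}_n:\\ \ell(F)=s+m\\ \mathrm{ker}(F)=S}} q^{|F|}
= \frac{1}{(q;q)_{s+m}^2}\sum_{\substack{C\text{ of length }s+m\\ \mathrm{red}(C)=S}} q^{|\min_{\Delta'}(C)|}.
\end{equation*}
Invoking Proposition \ref{prop:prop2'} to evaluate the right-hand sum then yields exactly the formula \eqref{eq:prop2}.

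The main obstacle, modest but the only non-formal step, is verifying that the $(\rho,\sigma)$ decomposition is a true bijection: one has to confirm that adding a weakly decreasing sequence to each row of $\mathrm{min^F}(C)$ never violates the distinctness condition encoded by the orders \eqref{eq:orderFroba} and \eqref{eq:orderFrobb}, and that conversely every Frobenius partition with colour sequence $C$ dominates $\mathrm{min^F}(C)$ entrywise. Once this compatibility between the minimal-difference conditions and the partition-order structure is in hand, the rest of the proof is a direct appeal to the results already established.
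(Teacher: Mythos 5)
Your proposal is correct and follows essentially the same route as the paper: the paper obtains Proposition \ref{prop:prop2} by combining Proposition \ref{prop:bijDelta'} with Proposition \ref{prop:prop2'} and the observation that every $n^2$-coloured Frobenius partition with colour sequence $C$ arises uniquely from $\mathrm{min^F}(C)$ by adding a partition into at most $s+m$ parts to each row, contributing the factor $1/(q;q)_{s+m}^2$. Your write-up merely spells out in more detail the verification of this row-wise decomposition, which the paper states without proof.
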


\subsection{Equality of generating functions for $\mathcal{F}_n$ and $\mathcal{P}_n$}
Proposition \ref{prop:main} gives the generating function for coloured partitions of $\mathcal{P}_n$ with kernel $S$, and Proposition \ref{prop:prop2} gives the generating function for coloured Frobenius partitions of $\mathcal{F}_n$ with the same kernel $S$. In this section, we show that these two generating functions are actually equal, which will complete the proof of our generalisation of Primc's identity (Theorem \ref{th:Primcgene}).

But before doing so, we need a lemma about $q$-binomial coefficients.
\begin{lemma}
\label{lem:lemma1qbin}
Let $s$ be a positive integer and $m,u$ be two non-negative integer. Then
$$\frac{1}{(q;q)_{s+m}} = \sum_{m' \geq 0} \frac{q^{(m'-u)(s+m')}}{(q;q)_{s+m'}} {m-u \brack m'-u}_{q}.$$
\end{lemma}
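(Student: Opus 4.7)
The plan is to simplify the claim by a change of variables and then prove the resulting identity by induction. Setting $k = m'-u$, $T = s+u$, and $M = m-u$, the claim becomes
$$\frac{1}{(q;q)_{T+M}} \;=\; \sum_{k=0}^{M} \frac{q^{k(T+k)}}{(q;q)_{T+k}} {M \brack k}_q \;=:\; L_T(M),$$
since the $q$-binomial coefficient forces $0 \leq k \leq M$. I would prove this clean form by induction on $M$, keeping $T$ as a free non-negative integer parameter so that both $L_T$ and $L_{T+1}$ are simultaneously available at the inductive step. The base case $M=0$ is immediate from $L_T(0) = 1/(q;q)_T$.

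For the inductive step, the key idea is to produce a recursion expressing $L_T(M)$ in terms of $L_T(M-1)$ and $L_{T+1}(M-1)$. I would apply the $q$-Pascal identity ${M \brack k}_q = q^{M-k}{M-1 \brack k-1}_q + {M-1 \brack k}_q$ to split $L_T(M)$ into two pieces. The piece involving ${M-1 \brack k}_q$ is visibly $L_T(M-1)$. In the other piece I would reindex $k \mapsto k+1$ and use the elementary exponent identity
$$(k+1)(T+k+1) + (M-k-1) \;=\; k\bigl((T+1)+k\bigr) + (T+M),$$
which pulls out a factor of $q^{T+M}$ and turns the remaining sum into $L_{T+1}(M-1)$. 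This yields the recursion $L_T(M) = L_T(M-1) + q^{T+M} L_{T+1}(M-1)$.

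Combining this recursion with the inductive hypothesis (applied once with parameter $T$ and once with parameter $T+1$), the right-hand side becomes $1/(q;q)_{T+M-1} + q^{T+M}/(q;q)_{T+M}$, which collapses to $1/(q;q)_{T+M}$ via the factorisation $(q;q)_{T+M} = (1-q^{T+M})(q;q)_{T+M-1}$ together with $(1-q^{T+M}) + q^{T+M} = 1$. The only delicate point is the exponent bookkeeping in the reindexed sum; everything else is a routine manipulation of $q$-binomial coefficients, so I do not anticipate any real conceptual obstacle.
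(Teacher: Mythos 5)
Your proof is correct, but it takes a genuinely different route from the paper's. The paper proves this lemma combinatorially: it takes a partition into parts at most $s+m$, generated by $1/(q;q)_{s+m}$, and cuts its Ferrers board by the line $x-y=u+s$, which meets the boundary at $(s+m',m'-u)$ and splits the diagram into a rectangle contributing $q^{(m'-u)(s+m')}$, a partition into parts at most $s+m'$ above it contributing $1/(q;q)_{s+m'}$, and a partition fitting in an $(m'-u)\times(m-m')$ box to its right contributing ${m-u \brack m'-u}_q$; summing over $m'$ gives the identity at once. You instead normalise via $k=m'-u$, $T=s+u$, $M=m-u$ and induct on $M$ using the $q$-Pascal identity; I checked the exponent computation $(k+1)(T+k+1)+(M-k-1)=k\bigl((T+1)+k\bigr)+(T+M)$ and the resulting recursion $L_T(M)=L_T(M-1)+q^{T+M}L_{T+1}(M-1)$, and the final collapse via $(1-q^{T+M})+q^{T+M}=1$ is fine, so the argument is complete. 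Your route is purely algebraic and self-contained, needing only $q$-Pascal and no pictures, whereas the paper's decomposition is a one-step bijection that also explains where each of the three factors comes from and is reused almost verbatim for the later Lemma \ref{lem:6lemmaY}. One small remark that applies equally to both arguments: the identity as stated implicitly requires $u\le m$ (otherwise the right-hand side is an empty sum), which is harmless in the application since the lemma is only ever invoked multiplied by ${s+m-1 \brack m-u}_q$.
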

\begin{proof}
Let us consider a partition into parts at most $s+m$, generated by $\frac{1}{(q;q)_{s+m}}$.
\begin{figure}[H]
\includegraphics[width=0.5\textwidth]{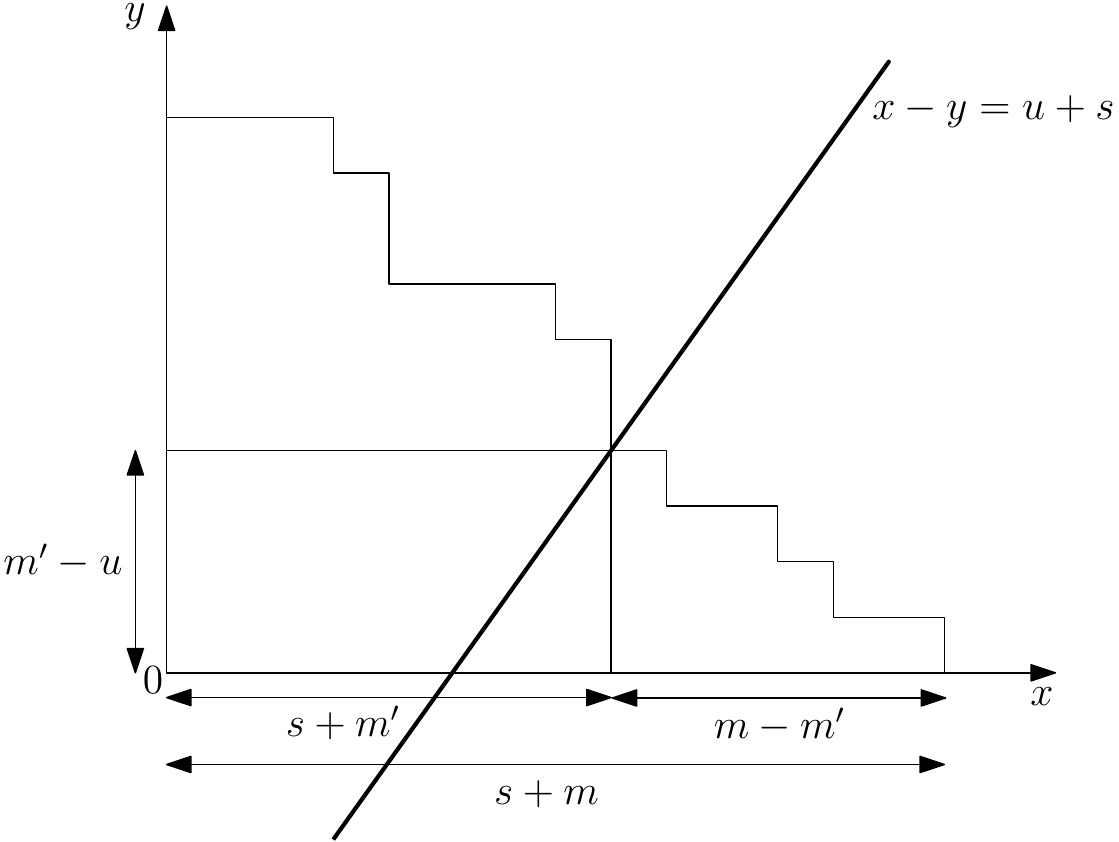}
\caption{Decomposition of the Ferrers board.}
\label{fig:lem1}
\end{figure}
Draw its Ferrers diagram on the plane as shown in Figure \ref{fig:lem1}, and draw the line defined by the equation $x-y=u+s$. This line intersects the boundary of the Ferrers board in a point with coordinates $(s+m',m'-u)$ for some integer $m' \in \{u, \dots , m \}$ (we take the convention that the $x$-axis always belongs to the  boundary of the Ferrers board). It defines three areas in the Ferrers diagram:
\begin{itemize}
\item a rectangle of size $(m'-u)\times(s+m')$ on the bottom-left of the intersection, generated by $q^{(m'-u)(s+m')}$,
\item a partition into parts at most $s+m'$ on top of the rectangle, generated by $\frac{1}{(q;q)_{s+m'}},$
\item a partition with at most $m'-u$ parts, each of size at most $m-m'$, generated by ${m-u \brack m'-u}_{q}$, to the right of the rectangle.
\end{itemize}
Summing over all possible values of $m'$ gives the desired result.
\end{proof}

We are now ready to prove the following theorem, which implies Theorem \ref{th:Primcgene}.
\begin{theorem}
\label{th:Primcgenekernel}
Let $n$ be a positive integer and $m$ be a non-negative integer. Let $S= c_1, \dots , c_s$ be a reduced colour sequence of length $s$, having $t$ maximal primary subsequences. Then
\begin{equation}
\label{eq:thker}
\sum_{\substack{\lambda \in \mathcal{P}_n:\\\mathrm{ker}(\lambda)=S}} q^{|\lambda|}=\sum_{\substack{F \in \mathcal{F}_n:\\\mathrm{ker}(F)=S}} q^{|F|}.
\end{equation}
\end{theorem}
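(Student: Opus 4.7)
The plan is to combine Proposition \ref{prop:main} (which gives the LHS) with Proposition \ref{prop:prop2} (which gives the RHS), and verify the equality term-by-term. Both sides of \eqref{eq:thker} are obtained by summing over $m \geq 0$ the generating functions for coloured partitions (resp.\ $n^2$-coloured Frobenius partitions) of length $s+m$ with kernel $S$. Pulling the factor $q^{|\min_\Delta(S)|}$ out, one sees that each side decomposes as $\sum_{u=0}^{t} g_{u,t}(q;|\mathcal{T}_0^1|,\ldots,|\mathcal{T}_0^t|) \cdot H_u(q)$, where the coefficient $g_{u,t}$ depends only on the fixed kernel. Hence it suffices to prove, for each $u \in \{0, \dots, t\}$, the single variable identity
\begin{equation*}
\sum_{m \geq 0} \frac{q^{m + u(s-t)}}{(q;q)_{s+m}} {s+m-1 \brack m-u}_{q} = \sum_{m \geq 0} \frac{q^{m(s+m+1) - u(t+m)}}{(q;q)_{s+m}^{2}} {s+m-1 \brack m-u}_{q}.
\end{equation*}

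The key step is to transform the LHS into the RHS by applying Lemma \ref{lem:lemma1qbin} to replace $1/(q;q)_{s+m}$ inside the LHS by a sum over $m'\geq 0$, then swap the order of summation on $m$ and $m'$. The two $q$-binomial coefficients that appear satisfy the factorisation
\begin{equation*}
{s+m-1 \brack m-u}_{q} {m-u \brack m'-u}_{q} = {s+m-1 \brack m-m'}_{q} {s+m'-1 \brack m'-u}_{q},
\end{equation*}
which both sides have the same value $\frac{(q;q)_{s+m-1}}{(q;q)_{m'-u}(q;q)_{s+u-1}(q;q)_{m-m'}}$. After this factorisation the variable $m$ appears only in $q^m \, {s+m-1 \brack m-m'}_{q}$, and the $q$-binomial theorem in the form $\sum_{k\geq 0}{N+k-1 \brack k}_{q}q^{k} = 1/(q;q)_N$ collapses the inner sum (with $N = s+m'$ and $k = m-m'$) to $q^{m'}/(q;q)_{s+m'}$.

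What remains is then a single sum over $m' \geq u$ which exactly matches the RHS after an elementary simplification of the exponent of $q$: using $(m'-u)(s+m') + m' + u(s-t) = m'(s+m'+1) - u(t+m')$, the new summand becomes $q^{m'(s+m'+1)-u(t+m')}/(q;q)_{s+m'}^2 \cdot {s+m'-1 \brack m'-u}_q$, which is precisely $R_u$ with $m$ relabelled as $m'$. The main obstacle is purely bookkeeping: one must carefully keep track of the exponent of $q$ through each step (Lemma \ref{lem:lemma1qbin} contributes a $q^{(m'-u)(s+m')}$, the $q$-binomial theorem contributes a $q^{m'}$, and the leading $q^{u(s-t)}$ must be absorbed to produce the $-u(t+m')$ in the final exponent). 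No further combinatorial input beyond the $q$-Chu--Vandermonde type factoring and the $q$-binomial series is needed, so the whole argument is a relatively short manipulation once Propositions \ref{prop:main} and \ref{prop:prop2} are in hand.
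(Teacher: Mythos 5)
Your proposal is correct and follows essentially the same route as the paper: reduce to a single-variable identity for each $u$ via Propositions \ref{prop:main} and \ref{prop:prop2}, expand $1/(q;q)_{s+m}$ with Lemma \ref{lem:lemma1qbin}, apply the subset-of-a-subset factorisation of the two $q$-binomial coefficients, and collapse the inner sum over $m$ with the $q$-binomial series $\sum_{k\geq 0}{N+k-1 \brack k}_q q^k = 1/(q;q)_N$. The only differences are cosmetic (you carry $q^{u(s-t)}$ inside the sums and write ${s+m-1 \brack m-m'}_q$ where the paper writes the equal coefficient ${s+m-1 \brack s+m'-1}_q$), and your exponent bookkeeping checks out.
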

\begin{proof}
By Proposition \ref{prop:main}, we have
\begin{align*}
\sum_{\substack{\lambda \in \mathcal{P}_n:\\\mathrm{ker}(\lambda)=S}} q^{|\lambda|} &= \sum_{m \geq 0} \frac{q^{|\min_{\Delta}(S)|+m}}{(q;q)_{s+m}} \sum_{u=0}^{t} q^{u(s-t)}g_{u,t}(q;|\mathcal{T}_0^1|,\ldots,|\mathcal{T}_0^{t}|) {s+m-1 \brack m-u}_q
\\&= \sum_{u=0}^{t}  q^{|\min_{\Delta}(S)|+u(s-t)}g_{u,t}(q;|\mathcal{T}_0^1|,\ldots,|\mathcal{T}_0^{t}|) \sum_{m \geq 0} \frac{q^{m}}{(q;q)_{s+m}}  {s+m-1 \brack m-u}_q,
\end{align*}
and by Proposition \ref{prop:prop2},
\begin{align*}
\sum_{\substack{F \in \mathcal{F}_n:\\\mathrm{ker}(F)=S}} q^{|F|}&=\sum_{m \geq 0}  \frac{q^{|\min_{\Delta}(S)|+m(s+m+1)}}{(q;q)_{s+m}^2} \sum_{u=0}^{t} q^{-u(t+m)}g_{u,t}(q;|\mathcal{T}_0^1|,\ldots,|\mathcal{T}_0^{t}|) {s+m-1 \brack m-u}_{q}\\
&= \sum_{u=0}^{t} q^{|\min_{\Delta}(S)|+u(s-t)} g_{u,t}(q;|\mathcal{T}_0^1|,\ldots,|\mathcal{T}_0^{t}|) \sum_{m \geq 0} \frac{q^{(m-u)(s+m)+m}}{(q;q)_{s+m}^2} {s+m-1 \brack m-u}_{q}
\end{align*}
Thus, to prove the theorem, it is sufficient to show that for $u \in \{0, \dots, t\},$
\begin{equation}
\label{eq:goal}
 \sum_{m \geq 0} \frac{q^{m}}{(q;q)_{s+m}}  {s+m-1 \brack m-u}_q = \sum_{m \geq 0} \frac{q^{(m-u)(s+m)+m}}{(q;q)_{s+m}^2} {s+m-1 \brack m-u}_{q}.
\end{equation}
Lemma \ref{lem:lemma1qbin} and the definition of $q$-binomial coefficients yields
\begin{align*}
\frac{1}{(q;q)_{s+m}} {s+m-1 \brack m-u}_q &= \sum_{m' \geq 0} \frac{q^{(m'-u)(s+m')}}{(q;q)_{s+m'}} {m-u \brack m'-u}_{q} {s+m-1 \brack m-u}_q\\
&= \sum_{m' \geq 0} \frac{q^{(m'-u)(s+m')}}{(q;q)_{s+m'}} {s+m'-1 \brack m'-u}_{q} {s+m-1 \brack s+m'-1}_q.
\end{align*}
Therefore
\begin{align*}
 \sum_{m \geq 0} \frac{q^{m}}{(q;q)_{s+m}}  {s+m-1 \brack m-u}_q &= \sum_{m \geq 0} \sum_{m' \geq 0} \frac{q^{(m'-u)(s+m')+m}}{(q;q)_{s+m'}} {s+m'-1 \brack m'-u}_{q} {s+m-1 \brack s+m'-1}_q
 \\&= \sum_{m' \geq 0} \frac{q^{(m'-u)(s+m')+m'}}{(q;q)_{s+m'}} {s+m'-1 \brack m'-u}_{q} \sum_{m \geq 0} q^{m-m'} {s+m-1 \brack s+m'-1}_q.
\end{align*}
The last thing to show is that
$$\sum_{m \geq 0} q^{m-m'} {s+m-1 \brack s+m'-1}_q = \frac{1}{(q;q)_{s+m'}},$$
which is true by separating the partitions into at most $s+m'$ parts, generated by $\frac{1}{(q;q)_{s+m'}}$, according to the length $m-m'$ of their largest part.

Thus \eqref{eq:goal} is true and the theorem is proved.
\end{proof}

\subsection{Proof of Theorem \ref{th:main2}}
In the last section, we proved our main theorem (Theorem \ref{th:Primcgene}) relating the generating function for generalised Primc partitions and the one for coloured Frobenius partitions. In this section, we study the particular case where we set $b_i=a_i^{-1}$ for all $i \in \{0, \dots , n-1\}$. All the free colours vanish in the generating function, which can now be written as a sum of infinite products, as stated in Theorem \ref{th:main2}.

Let $n$ be a positive integer.
By Theorem \ref{th:Primcgene} in which we set $b_i=a_i^{-1}$ for all $i$, we have
\begin{align*}
P_n&:=\sum_{m,u_0, \dots, u_{n-1}, v_0, \dots, v_{n-1} \geq 0} P_{n}(m;u_0,\dots,u_{n-1};v_0,\dots,v_{n-1})q^m a_0^{u_0-v_0} \cdots a_{n-1}^{u_{n-1}-v_{n-1}}
\\&= [x^0] \prod_{i=0}^{n-1}(-x a_i q;q)_{\infty}(-x^{-1} a_i^{-1};q)_{\infty}.
\end{align*}
Using the Jacobi triple product \eqref{eq:JTP} in each term of this product, we obtain
\begin{align*}
P_n &= \frac{1}{(q;q)_{\infty}^n} [x^0] \prod_{i=0}^{n-1} \left( \sum_{m_i \in \Z} x^{m_i} a_i^{m_i} q^{\frac{m_i(m_i+1)}{2}}\right)
\\&= \frac{1}{(q;q)_{\infty}^n} \sum_{\substack{m_0, \dots, m_{n-1} \in \Z\\ m_0 + \cdots + m_{n-1} =0}} \left( \prod_{i=0}^{n-1} a_i^{m_i}\right) q^{\sum_{i=0}^{n-1} \frac{m_i(m_i+1)}{2}}.
\end{align*}
Now replacing $m_0$ by $-m_1- \cdots - m_{n-1}$ and using that
$$\frac{m_0(m_0+1)}{2}= \frac{\sum_{i=1}^{n-1} m_i^2 -\sum_{i=1}^{n-1} m_i}{2} + \sum_{1 \leq i < j \leq n-1} m_im_j,$$ we get
\begin{equation}
\label{eq:cor1}
P_n= \frac{1}{(q;q)_{\infty}^n} \sum_{m_1, \dots, m_{n-1} \in \Z} \left( \prod_{i=1}^{n-1} (a_ia_0^{-1})^{m_i}\right) q^{\sum_{i=1}^{n-1} m_i^2 + \sum_{1 \leq i < j \leq n-1} m_im_j}.
\end{equation}

We want to apply the Jacobi triple product again inside the sum, in order to obtain a sum of infinite products. To do so, we perform some changes of variables. We first need the following lemma.

\begin{lemma}
\label{lem:23'}
Let $$M(n):= \sum_{i=1}^{n-1} m_i^2 + \sum_{1 \leq i < j \leq n-1} m_im_j.$$
Let $s_n=0$ and for all $i \in \{1, \dots, n-1 \},$ 
$$s_i:= \sum_{j=i}^{n-1} m_j.$$
Then we have
$$M(n) = \sum_{i=1}^{n-1} s_i(s_i-s_{i+1}) =  \sum_{i=1}^{n-1} \frac{((i+1)s_i-is_{i+1})^2}{2i(i+1)}.$$
\end{lemma}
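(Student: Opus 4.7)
The plan is a direct two-step algebraic verification, and no conceptual obstacle is expected; the only care needed lies in boundary indices and the convention $s_n=0$.

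For the first equality, I would exploit the fact that the $s_i$ are reverse partial sums of the $m_j$, so that $s_i - s_{i+1} = m_i$. Substituting this immediately yields
\begin{equation*}
\sum_{i=1}^{n-1} s_i(s_i - s_{i+1}) \;=\; \sum_{i=1}^{n-1} m_i s_i \;=\; \sum_{i=1}^{n-1} m_i \sum_{j=i}^{n-1} m_j,
\end{equation*}
and splitting the diagonal from the off-diagonal contributions of the double sum gives exactly $\sum_{i} m_i^2 + \sum_{i<j} m_i m_j = M(n)$, as required.

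For the second equality, I would expand the square and divide:
\begin{equation*}
\frac{((i+1)s_i - is_{i+1})^2}{2i(i+1)} \;=\; \frac{i+1}{2i}\,s_i^2 \;-\; s_i s_{i+1} \;+\; \frac{i}{2(i+1)}\,s_{i+1}^2.
\end{equation*}
Summing over $i$ from $1$ to $n-1$, the cross term produces $-\sum_{i=1}^{n-1} s_i s_{i+1}$, which matches the cross term already appearing in $\sum_i s_i(s_i-s_{i+1})$. It remains to show that the pure square terms sum to $\sum_{i=1}^{n-1} s_i^2$. Reindexing the last contribution by $j = i+1$ converts it into $\sum_{j=2}^{n} \frac{j-1}{2j} s_j^2$, which, using $s_n=0$, I would pair with the first contribution. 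For each $j \in \{2,\dots,n-1\}$ the total coefficient of $s_j^2$ is $\frac{j+1}{2j}+\frac{j-1}{2j}=1$, while $j=1$ receives only $\frac{2}{2}=1$ from the first sum, and $j=n$ vanishes. Hence the pure-square terms telescope to $\sum_{i=1}^{n-1} s_i^2$, and combining with the cross terms yields $\sum_{i=1}^{n-1} s_i(s_i - s_{i+1})$, closing the chain of equalities.

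The only thing to be slightly careful about is the treatment of the endpoints $j=1$ and $j=n$ in the re-indexing; once one writes out which index receives contributions from which sum, the identity reduces to an elementary verification.
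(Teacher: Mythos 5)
Your proof is correct and follows essentially the same route as the paper: the first equality via $s_i-s_{i+1}=m_i$ and splitting the double sum, and the second by expanding the square and reindexing the $s_{i+1}^2$ contribution so the coefficients of each $s_j^2$ sum to $1$ (with the endpoints handled by $s_n=0$). No gaps.
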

\begin{proof}
The first equality follows directly from the definition of the $s_i$'s.

Let us now prove the second equality. We have
\begin{align*}
\sum_{i=1}^{n-1} \frac{((i+1)s_i-is_{i+1})^2}{2i(i+1)} &= \sum_{i=1}^{n-1} \left(\frac{i+1}{2i}s_i^2 -s_is_{i+1} + \frac{i}{2(i+1)}s_{i+1}^2 \right)
\\&= -\sum_{i=1}^{n-1} s_is_{i+1} +s_1^2 +  \sum_{i=2}^{n-1} \left(\frac{i+1}{2i}s_i^2 + \frac{i-1}{2i}s_{i}^2 \right)
\\&=\sum_{i=1}^{n-1} s_i(s_i-s_{i+1}),
\end{align*}
where the second equality follows from the change of variable $i\rightarrow i-1$ in the last sum.
\end{proof}
By Lemma \ref{lem:23'} and \eqref{eq:cor1}, we obtain
\begin{align*}
P_n &= \frac{1}{(q;q)_{\infty}^n} \sum_{\substack{s_1, \dots, s_{n-1}\in \Z\\s_n=0}} \left( \prod_{i=1}^{n-1} (a_ia_0^{-1})^{s_i-s_{i+1}}\right) q^{\sum_{i=1}^{n-1} s_i(s_i-s_{i+1})}\\
&=\frac{1}{(q;q)_{\infty}^{n}}\sum_{\substack{s_1, \dots, s_{n-1}\in \Z\\s_n=0}} a_0^{-s_1}\prod_{i=1}^{n-1} a_i^{s_i-s_{i+1}}q^{s_i(s_i-s_{i+1})}.
\end{align*}
This is \eqref{eq:jacob}. Let us perform a few more changes of variables to obtain \eqref{eq:formulefinale}.

\medskip
For all $i \in \{1, \dots, n-1 \},$ let us write $s_i=i \times d_i + r_i$, with $r_i \in \{0, \dots, i-1 \}.$ This is euclidean division by $i$, so this expression is unique, and for $r_1, \dots , r_{n-1}$ fixed, there is a bijection between $\{(s_1, \dots , s_{n-1}) \in \Z^{n-1} : s_i \equiv r_i \mod i\}$ and $\{(d_1, \dots , d_{n-1}) \in \Z^{n-1}\}.$
Moreover our choice $s_n=0$ corresponds to $d_n=r_n=0.$ We obtain
$$M(n)=\sum_{i=1}^{n-1} \left(\frac{i(i+1)}{2} (d_i-d_{i+1})^2 + \frac{((i+1)r_i-ir_{i+1})^2}{2i(i+1)} + (d_i-d_{i+1})((i+1)r_i-ir_{i+1}) \right).$$

By a last change of variable $p_i = d_i-d_{i+1}$, equivalent to $d_i= \sum_{j=i}^{n-1} p_j$, $\{(d_1, \dots , d_{n-1}) \in \Z^{n-1}\}$ is in bijection with $\{(p_1, \dots , p_{n-1}) \in \Z^{n-1}\}$. %Our previous conventions correspond to $d_n=0$.
This yields
\begin{align*}
M(n) &=\sum_{i=1}^{n-1} \left(\frac{i(i+1)}{2} p_i^2 + \frac{((i+1)r_i-ir_{i+1})^2}{2i(i+1)} + p_i((i+1)r_i-ir_{i+1}) \right)
\\&= \sum_{i=1}^{n-1} r_i(r_i-r_{i+1}) + \sum_{i=1}^{n-1} \left(\frac{i(i+1)}{2} p_i^2  + p_i((i+1)r_i-ir_{i+1}) \right)
\end{align*}

Backtracking all these changes of variables, we have for all $i \in \{1, \dots, n-1 \},$
$$
\begin{array}{llll}
m_i &=& s_i-s_{i+1} &(\text{with } s_n=0)\\
&=& id_i+r_i -(i+1)d_{i+1}-r_{i+1} & (\text{with } d_n=r_n=0)\\
&=& i \sum_{j=i}^{n-1} p_j +r_i - (i+1) \sum_{j=i+1}^{n-1} p_j - r_{i+1} & \\
&=& i p_i - \sum_{j=i+1}^{n-1} p_j +r_i - r_{i+1}.
\end{array}
$$
Thus, by the above and Lemma \ref{lem:23'}, the generating function in \eqref{eq:cor1} becomes
\begin{equation}
\label{eq:cor1bis}
\begin{aligned}
P_n = \frac{1}{(q;q)_{\infty}^n} \sum_{\substack{r_1, \dots, r_{n-1}\\0 \leq r_j \leq j-1}} \sum_{p_1, \dots, p_{n-1} \in \Z} & \left( \prod_{i=1}^{n-1} (a_ia_0^{-1})^{i p_i - \sum_{j=i+1}^{n-1} p_j +r_i - r_{i+1}}\right)
\\& \times q^{\sum_{i=1}^{n-1} r_i(r_i-r_{i+1}) + \sum_{i=1}^{n-1} \left(\frac{i(i+1)}{2} p_i^2  + p_i((i+1)r_i-ir_{i+1})\right)}.
\end{aligned}
\end{equation}
It can be shown by induction on $n$ that
$$\prod_{i=1}^{n-1} (a_ia_0^{-1})^{i p_i - \sum_{j=i+1}^{n-1} p_j} = \prod_{i=1}^{n-1} \left(\prod_{\ell=0}^{i-1} a_i a_{\ell}^{-1} \right)^{p_i}.
$$
Therefore reorganising \eqref{eq:cor1bis} leads to
\begin{align*}
P_n &=  \frac{1}{(q;q)_{\infty}^n} \sum_{\substack{r_1, \dots, r_{n-1}\\0 \leq r_j \leq j-1}} \left( \prod_{i=1}^{n-1} (a_ia_0^{-1})^{r_i - r_{i+1}} q^{r_i(r_i-r_{i+1})} \right) 
\\ &\qquad \qquad \quad \times \sum_{p_1, \dots, p_{n-1} \in \Z} \prod_{i=1}^{n-1} \left( \left(\prod_{\ell=0}^{i-1} a_i a_{\ell}^{-1} \right) q^{(i+1)r_i-ir_{i+1}}\right)^{p_i} q^{\frac{i(i+1)}{2} p_i^2}\\
&=\frac{1}{(q;q)_{\infty}^n} \sum_{\substack{r_1, \dots, r_{n-1}\\0 \leq r_j \leq j-1}} \left( \prod_{i=1}^{n-1} a_i^{r_i - r_{i+1}} q^{r_i(r_i-r_{i+1})} \right) 
\\ &\qquad \qquad \quad  \times \prod_{i=1}^{n-1} \sum_{p_1, \dots, p_{n-1} \in \Z} \left( \left(\prod_{\ell=0}^{i-1} a_i a_{\ell}^{-1} \right) q^{-\frac{i(i+1)}{2}+(i+1)r_i-ir_{i+1}}\right)^{p_i} q^{i(i+1)\frac{p_i(p_i+1)}{2}}
\\&=\frac{1}{(q;q)_{\infty}^n} \sum_{\substack{r_1, \dots, r_{n-1}\\ 0 \leq r_j \leq j-1}} \prod_{i=1}^{n-1} a_i^{r_i-r_{i+1}}q^{r_i(r_i-r_{i+1})}
\\& \qquad \qquad \quad \times  \left(q^{i(i+1)};q^{i(i+1)}\right)_{\infty}
\left(- \left(\prod_{\ell=0}^{i-1} a_i a_{\ell}^{-1} \right) q^{\frac{i(i+1)}{2}+(i+1)r_i-ir_{i+1}};q^{i(i+1)}\right)_{\infty}
\\& \qquad \qquad \quad \times \left(- \left(\prod_{\ell=0}^{i-1} a_{\ell} a_{i}^{-1}\right) q^{\frac{i(i+1)}{2}-(i+1)r_i+ir_{i+1}};q^{i(i+1)}\right)_{\infty},
\end{align*}
where in the last equality, we used Jacobi's triple product identity in each of the sums over $p_i$ for $i \in \{1,\dots,n-1\}$. Theorem \ref{th:main2} is proved.

\begin{remark}
Andrews \cite{AndrewsFrob} gave the particular cases $n=1,2,3$ of this formula, but without keeping  track of the colours. Our result is more general, as it is both valid for all $n$ and keeps track of the colours.
\end{remark}

\section{Bijection between $\mathcal{P}_n$ and $\mathcal{C}_n(\delta,\gamma)\times \mathcal{P}^0$}
\label{sec:bij}
Now that we have established the connection between the generalised Primc partitions $\mathcal{P}_n$ and the $n^2$-coloured Frobenius partitions $\mathcal{F}_n$, this section is dedicated to the proof of Theorem \ref{th:bij}, which connects generalised Primc partitions with generalised Capparelli partitions. This connection is key in proving our generalisation of Capparelli's identity (Theorem \ref{th:Capgene}).

The bijection in this section generalises the first author's bijection between $\mathcal{P}_2$ and $\mathcal{C}_2 \times \mathcal{P}^0$ in \cite{DousseCapaPrimc}. However, the partitions in $\mathcal{P}_n$ and $\mathcal{C}_n(\delta,\gamma)$ have much more intricate combinatorial descriptions for general $n$, so that it is better to reformulate and simplify the bijection between $\mathcal{P}_2$ and $\mathcal{C}_2 \times \mathcal{P}^0$ before generalising it.

\subsection{Reformulation of Dousse's bijection between $\mathcal{P}_2$ and $\mathcal{C}_2 \times \mathcal{P}^0$}
\label{subsec:bijP2CC2}
We first give a variant of the bijection of \cite{DousseCapaPrimc}.
The one-to-one correspondence is the same, but the intermediate steps are different.

Let $(\lambda,\mu) \in \mathcal{C}_2 \times \mathcal{P}^0$ be a partition pair of total weight $m$, where $\lambda \in \mathcal{C}_2$ and $\mu$ is an unrestricted partition coloured $b$. The idea from \cite{DousseCapaPrimc} is to insert the parts of $\mu$ inside $\lambda$ and to modify the colour of certain parts in order to obtain a partition in $\mathcal{P}_2$, all in a bijective way. Here we keep the same idea but perform the insertions in a different order, making the resulting partitions easier to describe at each step.

To make the comparison with \cite{DousseCapaPrimc} clear, we illustrate our variant of the bijection on the same example
\begin{align*}
\lambda &= 8_d+8_a+6_c+5_c+3_d+1_a,
\\ \mu &= 8_b +8_b + 7_b+ 5_b+3_b+2_b+2_b+1_b+1_b.
\end{align*}

\medskip

First of all, recall that $\lambda\in \mathcal{C}_2$ satisfies the difference conditions from 
$$C_2=\bordermatrix{\text{} & a & c & d \cr a & 2 & 2 & 2 \cr c & 1 & 1 & 2 \cr d & 0 & 1 & 2}.$$
Note also that the column and row $b$ in matrix $P_2$ from \eqref{Primcmatrix4} mean that if there is a part $k_b$ in the partition, then it can repeat but the number $k$ cannot appear in any other colour.

\textbf{Step 1:} For all $j$, if there are some parts of size $j$ in $\mu$ but none in $\lambda$, then move these parts from $\mu$ to $\lambda$. Call $\lambda_1$ and  $\mu_1$ the resulting partitions.

In our example, we obtain
\begin{align*}
\lambda_1 &= 8_d+8_a+ 7_b+6_c+5_c+3_d+2_b+2_b+1_a,
\\ \mu_1 &= 8_b +8_b+5_b+3_b+1_b+1_b.
\end{align*}

The pair $(\lambda_1,\mu_1)$ is such that $\lambda_1$ satisfies the difference conditions in the matrix
\begin{equation} \label{N1matrix}
C_2^1=\bordermatrix{\text{} & a & b & c & d \cr a & 2&1&2&2 \cr b &1&0&1&1 \cr c &1&1&1&2 \cr d&0&1&1&2},
\end{equation}
and $\mu_1$ is a partition coloured $b$ containing only parts of sizes that also appear in $\lambda_1$ but in a colour different from $b$. Indeed, in $\lambda_1$, there can now be some parts coloured $b$  which can repeat and are distinct from all the other parts, and the minimal differences between parts coloured $a,c,d$ are the same as before.

This process is reversible, as one can simply move the $b$-parts of $\lambda_1$ back to $\mu_1$.

\medskip

\textbf{Step 2:}
For all $j$, if there are some parts $j_b$ in $\mu_1$, and $j_c$ appears in $\lambda_1$ (by \eqref{N1matrix}, it cannot repeat nor appear in any other colour), then transform those $j_b$'s into $j_c$'s and move them from $\mu_1$ to $\lambda_1$. Call $\lambda_2$ and  $\mu_2$ the resulting partitions.

In our example, we obtain
\begin{align*}
\lambda_2 &= 8_d+8_a+ 7_b+6_c+5_c+5_c+3_d+2_b+2_b+1_a,
\\ \mu_2 &= 8_b +8_b+3_b+1_b+1_b.
\end{align*}

Now the parts coloured $c$ can repeat, and the rest of the partition was not affected at all.
Thus the pair $(\lambda_2,\mu_2)$ is such that $\lambda_2$ satisfies the difference conditions in the matrix
\begin{equation} \label{N2matrix}
C_2^2=\bordermatrix{\text{} & a & b & c & d \cr a & 2&1&2&2 \cr b &1&0&1&1 \cr c &1&1&0&2 \cr d&0&1&1&2},
\end{equation}
and $\mu_2$ is a partition coloured $b$ containing only parts of sizes that also appear in $\lambda_2$ with colour $a$ or $d$.

This process is also reversible. If in $\lambda_2$, there is a $c$-coloured part $j_c$ that repeats, then transform all but one of these $j_c$'s into $j_b$'s and move them to $\mu_2$.

\medskip

\textbf{Step 3:}
For all $j$, if there are some parts $j_b$ in $\mu_2$, then $j$ appears in $\lambda_2$ in colour $a$ or $d$, but not $c$. Transform those $j_b$'s into $j_c$'s and insert them inside $\lambda_2$, with the colour order $a < c < d.$ Call $\lambda_3$ the resulting partition.

In our example, we obtain
$$\lambda_3 = 8_d+8_c +8_c+8_a+ 7_b+6_c+5_c+5_c+3_d+3_c+2_b+2_b+1_c+1_c+1_a.$$

Now the minimal difference between parts of colour $c$ and $a$ (resp. $d$ and $c$) is $0$, and the other difference conditions were not affected at all.
Thus the partition $\lambda_3$ satisfies exactly the difference conditions of Primc's matrix $P_2$ from \eqref{Primcmatrix4}.

This final step is also reversible. If in $\lambda_3$, there are some parts $j_c$ such that $j_a$ or $j_d$ also appears, then transform those $j_c$'s into $j_b$'s, remove them from $\lambda_3$,  and put them in a separate partition $\mu_2$.

\medskip
We obtain the same final partition as in \cite{DousseCapaPrimc}, only the intermediate steps are different.

All the steps in this bijection preserve the weight, the number of parts, the sizes of the parts, and the number of $a$-parts and $d$-parts. This proves Theorem \ref{th:bij} in the case $n=2$. 

In the remainder of this section, we generalise this bijection for all $n$.

\subsection{Preliminary observations}
\label{subsec:prelim}
Before giving the bijection which proves the generalisation of Capparelli's identity, we start with a few observations to better understand the combinatorial structure of the difference conditions $\Delta$.

Let us start by rewriting $\Delta$ in a more explicit form depending on whether it is applied to bound or free colours.
\begin{proposition}
Let $n$ be a positive integer. For all $i,j,k,\ell\in \{0,\ldots,n-1\}$, we have the following expressions for $\Delta(a_i b_j, a_{k} b_{\ell})$.
\label{prop:reformulation_delta}
\begin{itemize}
\item If $\Delta$ is applied to two free colours,
\begin{equation}
\label{eq:ff}
 \Delta(a_i b_i, a_{k} b_{k})=\chi(i\neq k).
\end{equation}
\item If $\Delta$ is applied to a free and a bound colour such that  $i<j$, then 
\begin{align}
\Delta(a_i b_j, a_{k} b_{k}) &= 1-\chi(i< k \leq j)\label{eq:upf}\\
\Delta(a_k b_k, a_{i} b_{j}) &= 1+\chi(i< k \leq j)\label{eq:fup}.
\end{align}
\item If $\Delta$ is applied to a free and a bound colour such that $i>j$, then 
\begin{align}
\Delta(a_i b_j, a_{k} b_{k}) &= 1+\chi(i \geq  k > j)\label{eq:lowf}\\
\Delta(a_k b_k, a_{i} b_{j}) &= 1-\chi(i \geq  k > j)\label{eq:flow}.
\end{align}
\item If $\Delta$ is applied to two free colours, i.e. $i\neq j$ and $k\neq \ell$, then
\begin{equation}
\label{eq:bb}
\Delta(a_i b_j, a_{k} b_{\ell}) = \chi(i\geq k)+\chi(j\leq \ell). 
\end{equation}
In particular, if $i<j$ and $k>\ell$, we have 
\begin{equation}
\label{eq:uplow}
\Delta(a_i b_j, a_{k} b_{\ell}) = 1-\chi(i<k)\chi(j>\ell).
\end{equation}
\end{itemize}
\end{proposition}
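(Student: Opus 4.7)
The plan is to prove Proposition \ref{prop:reformulation_delta} by a straightforward case analysis, starting from the definition
$$\Delta(a_x b_y, a_{x'} b_{y'}) = \chi(x \geq x') - \chi(x=y=x')+\chi(y \leq y') - \chi(y=x'=y'),$$
and simplifying in each of the four cases listed in the statement. In each case, the point is to observe which of the two subtracted indicator terms $\chi(x=y=x')$ and $\chi(y=x'=y')$ automatically vanish because of the ``free'' or ``bound'' hypothesis on the colours.

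For \eqref{eq:ff}, setting $x=y=i$ and $x'=y'=k$, the two subtracted terms coincide with $\chi(i=k)$, and a short subcase split on whether $i=k$ or $i\neq k$ collapses $\chi(i\geq k)-\chi(i=k)+\chi(i\leq k)-\chi(i=k)$ to $\chi(i\neq k)$. For the mixed free/bound cases \eqref{eq:upf}--\eqref{eq:flow}, the hypothesis $i\neq j$ kills $\chi(x=y=x')$, so we are reduced to simplifying $\chi(i\geq k)+\chi(j\leq k)-\chi(j=k)=\chi(i\geq k)+\chi(j<k)$, and symmetrically $\chi(k>i)+\chi(k\leq j)$ for the reversed order. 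I would then split on the three subcases $k\leq \min\{i,j\}$, $\min<k\leq\max$, $k>\max$ (where $\min,\max$ are $\min\{i,j\},\max\{i,j\}$), verify the formula in each, and note that the two inner formulas $\chi(i<k\leq j)$ and $\chi(i\geq k>j)$ exactly single out the ``middle'' subcase with opposite signs depending on whether $i<j$ or $i>j$.

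For the fully bound case \eqref{eq:bb}, both $\chi(x=y=x')$ and $\chi(y=x'=y')$ vanish by $i\neq j$ and $k\neq\ell$, which immediately yields $\Delta(a_ib_j,a_kb_\ell)=\chi(i\geq k)+\chi(j\leq \ell)$. The refinement \eqref{eq:uplow} in the subcase $i<j$, $k>\ell$ then follows by ruling out the simultaneous occurrence of $i\geq k$ and $j\leq \ell$: indeed $i\geq k>\ell$ combined with $j>i$ forces $j>\ell$, so $\chi(j\leq\ell)=0$, and the remaining three subcases match $1-\chi(i<k)\chi(j>\ell)$ directly.

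There is no substantial obstacle here: each identity is an instance of the defining formula with one or two of the degenerate indicator terms forced to zero by the hypothesis, followed by a finite subcase check. The only thing to be careful about is tracking the direction of the inequality (free versus bound on the left, $i<j$ versus $i>j$) so that the ``middle interval'' is written with the correct endpoints, i.e.\ $(i,j]$ in \eqref{eq:upf}--\eqref{eq:fup} and $(j,i]$ in \eqref{eq:lowf}--\eqref{eq:flow}. Since the proposition is stated with the verification ``left to the reader'' style (as the authors indicate in the remark preceding Property \ref{property:aibiaibi}), the write-up can be kept to one or two lines per case.
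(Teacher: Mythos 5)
Your proposal is correct and follows essentially the same route as the paper: substitute into the definition \eqref{eq:Delta}, observe that the hypotheses ($i\neq j$ and/or $k\neq\ell$) kill the degenerate indicator terms, and finish with a short subcase check on the position of $k$ (the paper only writes out the details for \eqref{eq:lowf}, obtaining $\chi(i\geq k)+\chi(j<k)$ exactly as you do). All of your individual simplifications, including the exclusion argument for \eqref{eq:uplow}, check out.
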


\begin{proof}
These formulas are straightforward reformulations of the definition \eqref{eq:Delta} of $\Delta$ in some particular cases.

We only give details for \eqref{eq:lowf} as an example.
We have $i>j$, so by definition $\chi(i=j=k)=0$. Thus by \eqref{eq:Delta}, we have
\begin{align*}
\Delta(a_i b_j, a_{k} b_{k}) &= \chi(i \geq k)+\chi(j \leq k) - \chi(j=k)\\
&= \chi(i \geq k)+\chi(j < k).
\end{align*}
Thus $\Delta(a_i b_j, a_{k} b_{k})$ is equal to $1$ when $k>i$ or $k \leq j$, and to $2$ when $i \geq  k > j$, which yields \eqref{eq:lowf}.
\end{proof}

We can deduce a simple remark about the colour $a_0b_0$, which plays a particular role in our reasoning as it is forbidden in generalised Capparelli partitions.
\begin{remark}
We have $\Delta(a_0b_0,a_0b_0)=0$, and for all $c \neq a_0b_0$,
$$\Delta(c,a_0b_0)= \Delta(a_0b_0,c)=1.$$
This means that, in generalised Primc partitions, the colour $a_0b_0$ can repeat, but if there is an integer $k$ of colour $a_0b_0,$ then $k$ cannot appear in any other colour. This is the only restriction involving $a_0b_0$.
\end{remark}

Our bijection will rely on the insertion of parts with free colours inside sequences of parts of the same size, so we need to understand the combinatorics of these sequences. The first step towards this is understanding pairs of colours $(c,c')$ such that $\Delta(c,c')=0$. This is done in the following proposition, which is a straightforward consequence of Proposition \ref{prop:reformulation_delta}. 

\begin{proposition}
\label{prop:Delta=0}
A pair of colours $(c,c')$ satisfies $\Delta(c,c')=0$ if and only if it satisfies one of the following four conditions:
\begin{enumerate}
\item $c=c'$ and $c$ is a free colour,
\item $c=a_ib_j$ is a bound colour (i.e. $i \neq j$), $c'=a_kb_k$ is a free colour , and $i < k \leq j$,
\item $c=a_kb_k$ is a free colour, $c'=a_ib_{j}$ is a bound colour (i.e. $i \neq j$), and $j < k \leq i$,
\item $c=a_ib_j$ and $c'=a_kb_{\ell}$ are both bound colours (i.e. $i \neq j$ and $k \neq \ell$), and $i < k$ and $j > \ell$.
\end{enumerate}
\end{proposition}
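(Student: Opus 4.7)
The plan is to verify the equivalence by a direct case analysis on whether $c$ and $c'$ are free or bound, reading off the vanishing locus from Proposition \ref{prop:reformulation_delta}. Since each of the four formulas \eqref{eq:ff}--\eqref{eq:uplow} is already a concrete numerical expression, the proposition is essentially a bookkeeping exercise. The only mild care needed is to split the mixed free/bound cases according to whether $i<j$ or $i>j$, because these are governed by different identities.

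First I would handle the free-free case. By \eqref{eq:ff}, $\Delta(a_ib_i,a_kb_k)=\chi(i\neq k)$ vanishes exactly when $i=k$, that is, when $c=c'$, which is condition (1).

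Next, for $c=a_ib_j$ bound and $c'=a_kb_k$ free, I would split on the sign of $j-i$. If $i<j$, then \eqref{eq:upf} gives $\Delta=1-\chi(i<k\leq j)$, which vanishes iff $i<k\leq j$; this forces $i<j$ automatically and yields condition (2). If $i>j$, then \eqref{eq:lowf} gives $\Delta=1+\chi(i\geq k>j)\geq 1$, so no zeros occur. The symmetric case $c$ free and $c'$ bound is treated identically using \eqref{eq:fup} and \eqref{eq:flow}: now $i<j$ makes $\Delta\geq 1$, while $i>j$ gives $\Delta=1-\chi(j<k\leq i)$, recovering condition (3).

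Finally, when both colours are bound, \eqref{eq:bb} reads $\Delta(a_ib_j,a_kb_\ell)=\chi(i\geq k)+\chi(j\leq \ell)$, which is a sum of two non-negative indicators and hence vanishes iff both vanish, i.e.\ $i<k$ and $j>\ell$; this is condition (4). Conversely, each of (1)--(4) plugged into the appropriate formula visibly returns $0$, so the equivalence is complete. There is no real obstacle here: the only thing to watch is not to double-count the subcases $i<j$ versus $i>j$ in the mixed cases, and to remember that (2) and (3) implicitly force $i\neq j$ via the strict chain of inequalities on $k$.
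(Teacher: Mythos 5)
Your proof is correct and follows exactly the route the paper intends: the paper gives no separate argument for this proposition, stating only that it is a straightforward consequence of Proposition \ref{prop:reformulation_delta}, and your case analysis on free/bound colours (with the split on $i<j$ versus $i>j$ in the mixed cases) is precisely that verification, carried out accurately.
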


%\begin{proof}
%(1) This follows easily from Properties \ref{property:aibiaibi} and \ref{property:aibiakbk}.
%
%\noindent
%(2) By the definition \eqref{eq:Delta} of $\Delta$, we have
%$$\Delta(a_ib_i,a_kb_{\ell}) = \chi(i \geq k)-\chi(i=k)+\chi(i\leq \ell).$$
%If $i=k$, then $\Delta(a_ib_i,a_kb_{\ell})=0$ if and only if $i >\ell$. If $i \neq k$, then $\Delta(a_ib_i,a_kb_{\ell})=0$ if and only if $\ell < i <k$. Both cases can be summed up as $\ell < i \leq k$.
%
%\noindent
%(3) Again by the definition of $\Delta$, we have
%$$\Delta(a_ib_j,a_kb_k) = \chi(i \geq k)+\chi(j\leq k)-\chi(j=k).$$
%If $j=k$, then $\Delta(a_ib_j,a_kb_k)=0$ if and only if $i <k$. If $j \neq k$, then $\Delta(a_ib_j,a_kb_k)=0$ if and only if $i <k <j$. Both cases can be summed up as $i <k \leq j$.
%
%\noindent
%(4) Finally,
%$$\Delta(a_ib_j,a_kb_{\ell}) = \chi(i \geq k)+\chi(j\leq \ell).$$
%This is zero if and only if $i < k$ and $j > \ell$.
%\end{proof}

The principle of our bijection is to insert/remove parts with free colours in/from generalised Primc partitions. Therefore we want to know where we can insert these parts without violating the difference conditions $\Delta$. 
When we want to insert a part of size $k$ in a generalised Primc partition which does not have any part of size $k$, we can simply insert this part with colour $a_0b_0$ without disrupting any other difference, and this is what we will do in our bijection. So it remains to study how one can insert a part of size $k$ in a generalised Primc partition which already contains parts of size $k$, without disrupting the differences.
To do so exhaustively, we have to consider how such an insertion interacts with parts of the same size $k$ and with parts with size $k+1$ and $k-1$.

We start by studying interactions with parts of the same size.
Proposition \ref{prop:Delta=0} allows us to understand exactly the shape of the colour sequences of sub-partitions where all the parts have the same value.

\begin{proposition}
\label{prop:seq0000}
Let $C=c_1 \cdots c_s$ be a sequence of colours such that for all $i \in \{1 , \dots, s-1\},$ $\Delta(c_i,c_{i+1})=0.$ Then, writing for all $i$, $c_i=a_{k_i}b_{\ell_i}$, the sequence $C$ satisfies one of the following:

\smallskip
\noindent
\textbf{Case 1} : There is exactly one free colour $c_i$ in $C$ (which may repeat an arbitrary number $j$ of times). In this case, the inequalities between then $k_i$'s and $\ell_i$'s can be summarised as follows, where the numbers below indicate which case of Proposition \ref{prop:Delta=0} each pair of inequalities correspond to.
\begin{equation}
\label{tab:onefreecolour}
\arraycolsep=2pt\def\arraystretch{1.2}
\begin{array}{c|ccccccccccccccccccc}
&c_1&&c_2& &\cdots &&c_{i-1}&&c_i&&\dots&&c_{i+j-1}&&c_{i+j}&&\cdots&&c_s\\
\hline
index(a)&k_1&<&k_2&<&\cdots&<&k_{i-1}&<&k_i&=&\cdots&=&k_i&\leq&k_{i+j}&<&\cdots&<&k_s\\
&&&&&&&\wedge&&&&&&&&\vee\\
index(b)&\ell_1&>&\ell_2&>&\cdots&>&\ell_{i-1}&\geq&k_i&=&\cdots&=&k_i&>&\ell_{i+j}&>&\cdots&>&\ell_s\\
\text{Case in Prop. \ref{prop:Delta=0}}&&(4)&&(4)&\cdots&(4)&&(2)&&(1)&\cdots&(1)&&(3)&&(4)&\cdots&(4)
\end{array}
\end{equation}
There are three possible sub-cases:

\noindent
Case 1a: the free colour is on the left end ($i=1$). 

\noindent
Case 1b: the free colour is on the right end ($i+j-1=s$).

\noindent
Case 1c: there are bound colours on both sides of the free colour ($i \neq 1, s+1-j$).

\smallskip
\noindent
\textbf{Case 2} : There is no free colour in $C$. In this case, the inequalities between then $k_i$'s and $\ell_i$'s can be summarised as follows, where all the inequalities come from Case (4) of Proposition \ref{prop:Delta=0}.
\begin{equation}
\label{tab:nofreecolour}
\arraycolsep=2pt\def\arraystretch{1.2}
\begin{array}{c|ccccccccccccccccccc}
&c_1&&c_2& &\cdots &&c_i&&c_{i+1}&&\cdots&&c_s\\
\hline
index(a)&k_1&<&k_2&<&\cdots&<&k_i&<&k_{i+1}&<&\cdots&<&k_s\\
index(b)&\ell_1&>&\ell_2&>&\cdots&>&\ell_i&>&\ell_{i+1}&>&\cdots&>&\ell_s
\end{array}
\end{equation}
There are three possible sub-cases:

\noindent
Case 2a: for all $i\in\{1,\dots,s\}, k_i >\ell_i$.

\noindent
Case 2b: for all $i\in\{1,\dots,s\}, k_i <\ell_i$.

\noindent
Case 2c: there is exactly one $i\in\{1,\dots,s\}$ such that $k_i <\ell_i$ and $k_{i+1} >\ell_{i+1}$.
\end{proposition}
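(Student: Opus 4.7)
My plan is to deduce the structure of $C$ directly from the four cases listed in Proposition~\ref{prop:Delta=0}, which enumerate all pairs $(c,c')$ with $\Delta(c,c')=0$. The key observation is that each consecutive pair $(c_i,c_{i+1})$ in $C$ must fall into exactly one of those four cases, and the constraints these cases put on the indices propagate through the sequence.

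First I would show that the free colours occurring in $C$ form a single consecutive block, all equal to the same free colour $a_mb_m$. If $c_i = a_m b_m$ is free, then case~(1) of Proposition~\ref{prop:Delta=0} forces any free colour adjacent to $c_i$ to also equal $a_mb_m$, so a maximal run of free colours inside $C$ consists of copies of a single $a_mb_m$. To show there is at most one such run, I suppose $c_i$ is free and $c_{i+1} = a_{k_{i+1}}b_{\ell_{i+1}}$ is bound: case~(3) forces $\ell_{i+1} < m \leq k_{i+1}$, so $c_{i+1}$ is ``low'' (index $a$ strictly greater than index $b$). Now I check that this status propagates forward: if $c_{i+2}$ were free, case~(2) applied to $(c_{i+1},c_{i+2})$ would require $k_{i+1} < k_{i+2} \leq \ell_{i+1}$, contradicting $\ell_{i+1} < k_{i+1}$. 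Hence once a bound colour follows a free one, no later colour of $C$ can be free, which rules out a second block of free colours.

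Next I treat \textbf{Case 2} (no free colour). Then every consecutive pair satisfies case~(4), giving the strict chains $k_1 < k_2 < \cdots < k_s$ and $\ell_1 > \ell_2 > \cdots > \ell_s$ of \eqref{tab:nofreecolour}. Since each $c_i$ is bound we have $k_i \neq \ell_i$, and the sign of $k_i - \ell_i$ can switch at most once along $i$: indeed, once $k_i > \ell_i$, the inequalities $k_{i+1} > k_i > \ell_i > \ell_{i+1}$ guarantee $k_{i+1} > \ell_{i+1}$. The three subcases 2a, 2b, 2c correspond to no switch with all $k_i > \ell_i$, no switch with all $k_i < \ell_i$, and exactly one switch, respectively.

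Finally I address \textbf{Case 1} (exactly one block of free colours $c_i = \cdots = c_{i+j-1} = a_mb_m$). To the left of the block (if $i>1$), case~(2) applied to $(c_{i-1},c_i)$ forces $k_{i-1} < m \leq \ell_{i-1}$, so $c_{i-1}$ is ``up''; then consecutive pairs among $c_1,\ldots,c_{i-1}$ are all of type~(4), giving strict chains and, as in Case~2, the up-property is preserved backward (since $k_{j} < k_{j+1} < \ell_{j+1} < \ell_j$). Similarly to the right of the block (if $i+j-1<s$), case~(3) makes $c_{i+j}$ ``low'' and the low-property propagates forward. At the block itself, the equalities $k_i = \cdots = k_{i+j-1} = \ell_i = \cdots = \ell_{i+j-1} = m$ hold by definition, and the joining inequalities at the two ends are exactly those of cases (2) and (3). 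Assembling these pieces yields precisely the table \eqref{tab:onefreecolour}, and the three subcases 1a, 1b, 1c correspond to the block being at the left end, right end, or strictly interior. The propagation argument is the only genuinely substantive step, and it reduces to the short index comparison carried out above; the rest is bookkeeping of the cases of Proposition~\ref{prop:Delta=0}.
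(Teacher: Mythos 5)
Your proof is correct, and its overall shape --- reducing everything to the four cases of Proposition \ref{prop:Delta=0} and propagating the resulting index inequalities along the sequence --- is the same as the paper's. The one step you handle genuinely differently is the claim that $C$ contains at most one free colour: the paper disposes of this in one line via the triangle inequality (Property \ref{property:triangle}) combined with $\Delta(a_ib_i,a_kb_k)=1$ for $i\neq k$ (Property \ref{property:aibiakbk}), since two distinct free colours occurring in $C$ would force $1\leq 0$. You instead run a propagation argument: a bound colour immediately following a free one must be ``low'' ($k>\ell$) by case~(3), lowness persists along case~(4) chains because $k_{i+1}>k_i>\ell_i>\ell_{i+1}$, and a low bound colour can never be followed by a free colour by case~(2). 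Your route is longer, but it buys something the triangle-inequality argument does not immediately give, namely that the occurrences of the single free colour are \emph{consecutive}; the paper leaves that point buried in its ``straightforward application of Proposition \ref{prop:Delta=0}.'' Your argument for the exhaustiveness of Cases 2a--2c (the sign of $k_i-\ell_i$ can switch at most once, by the same chain of inequalities) is equivalent to the paper's contradiction from two switches. I see no gaps.
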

\begin{proof}
The fact that there is at most one free colour in $C$ follows from the triangle inequality satisfied by $\Delta$. Assume that there are two different free colours $c_i$ and $c_{i+j}$ in $C$, then by the triangle inequality, we have $1=\Delta(c_i,c_j) \leq \Delta(c_i,c_i+1) + \cdots + \Delta(c_{j-1},c_j)$, contradicting the fact that each term in this sum is $0$.

The inequalities presented in the tables above follow from a straightforward application of Proposition \ref{prop:Delta=0}.
The last thing to check is that Cases 2a, 2b, and 2c are exhaustive. Assume for the purpose of contradiction that there are two indices $i < j$ such that $k_i <\ell_i$, $k_{i+1} >\ell_{i+1}$, $k_j <\ell_j$, and $k_{j+1} >\ell_{j+1}$. First, $j$ is bigger than $i+1$, otherwise we would have both $k_{i+1} >\ell_{i+1}$ and $k_{i+1} <\ell_{i+1}$. Now by \eqref{tab:nofreecolour}, we have
$$k_j>k_{i+1}>\ell_{i+1}>\ell_j>k_j,$$
which is a contradiction.
\end{proof}

Proposition \ref{prop:seq0000} allows us to characterise the insertions of free colours that can be performed in the colour sequences in Case $2$ without disrupting the other differences in the partition.
\begin{proposition}
\label{prop:insertion0000}
Let $C=c_1 \cdots c_s$ be a sequence of bound colours such that for all $i \in \{1 , \dots, s-1\},$ $\Delta(c_i,c_{i+1})=0.$ Then, writing for all $i$, $c_i=a_{k_i}b_{\ell_i}$, the insertions of free colours that we can perform in $C$ are exactly the following.

\begin{itemize}
\item If $C$ is in Case 2a, then we can insert the free colour $a_kb_k$ to the left of $c_1$, where $k$ is such that
$$\ell_1<k\leq k_1.$$
The sequence we obtain is in Case 1a.
\item If $C$ is in Case 2b, then we can insert the free colour $a_kb_k$ to the right of $c_s$, where $k$ is such that
$$k_s<k\leq \ell_s.$$
The sequence we obtain is in Case 1b.
\item If $C$ is in Case 2c, where $k_i <\ell_i$ and $k_{i+1} >\ell_{i+1}$, then we can insert the free colour $a_kb_k$ between $c_i$ and $c_{i+1}$, where $k$ is such that
$$k_i<k\leq k_{i+1} \quad \text{ and} \quad \ell_{i+1}<k\leq \ell_i.$$
The sequence we obtain is in Case 1c.
\end{itemize}
\end{proposition}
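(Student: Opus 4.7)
The plan is to directly apply Proposition~\ref{prop:Delta=0} at each potential insertion point, using the characterisations of the colour indices in Cases 2a, 2b, and 2c from Proposition~\ref{prop:seq0000}. Inserting a free colour $a_kb_k$ between $c_i=a_{k_i}b_{\ell_i}$ and $c_{i+1}=a_{k_{i+1}}b_{\ell_{i+1}}$ (with the convention $c_0=c_{s+1}$ absent) preserves the property that all consecutive $\Delta$'s vanish precisely when $\Delta(c_i,a_kb_k)=0$ and $\Delta(a_kb_k,c_{i+1})=0$. By cases (2) and (3) of Proposition~\ref{prop:Delta=0}, these translate respectively into $k_i<k\leq \ell_i$ and $\ell_{i+1}<k\leq k_{i+1}$, with the corresponding condition dropped at the two ends of the sequence.

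First I would treat Case 2a, where $k_j>\ell_j$ for every $j$. At the left end only the condition $\ell_1<k\leq k_1$ applies, and it is satisfiable because $k_1>\ell_1$. At the right end the condition $k_s<k\leq \ell_s$ is empty since $k_s>\ell_s$, and between $c_i$ and $c_{i+1}$ the left constraint $k_i<k\leq\ell_i$ is empty. So the only valid insertion is to the left of $c_1$ with $\ell_1<k\leq k_1$. The symmetric argument for Case 2b (now $k_j<\ell_j$ for all $j$) rules out the left end and all middle positions, leaving only $k_s<k\leq \ell_s$ on the right.

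Next I would handle Case 2c, where there is a unique index $i$ with $k_i<\ell_i$ and $k_{i+1}>\ell_{i+1}$. From the monotonicities in \eqref{tab:nofreecolour}, for $j<i$ one has $k_j<k_i<\ell_i<\ell_j$, so $k_j<\ell_j$, while for $j>i+1$ one has $k_j>k_{i+1}>\ell_{i+1}>\ell_j$, so $k_j>\ell_j$. Using this, both endpoint insertions fail exactly as in Cases 2b and 2a respectively. For a middle insertion between $c_j$ and $c_{j+1}$, the left condition $k_j<k\leq\ell_j$ fails for $j\geq i+1$ and the right condition $\ell_{j+1}<k\leq k_{j+1}$ fails for $j\leq i-1$; only $j=i$ survives, yielding the simultaneous constraints $k_i<k\leq k_{i+1}$ and $\ell_{i+1}<k\leq\ell_i$, which are satisfiable since $k_i<\ell_i$ and $\ell_{i+1}<k_{i+1}$.

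Finally, in each case the resulting sequence of colours still has all consecutive differences equal to $0$ and now contains exactly one free colour, so by Proposition~\ref{prop:seq0000} it falls into Case 1; the position of the inserted free colour (leftmost, rightmost, or strictly interior) determines whether it is sub-case 1a, 1b, or 1c. The only step that requires care is the book-keeping in Case 2c, where one must combine the monotonicity of the $k_j$ and $\ell_j$ with the defining inequalities to rule out middle insertions at positions $j\neq i$; the other verifications are immediate from Proposition~\ref{prop:Delta=0}.
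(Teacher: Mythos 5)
Your proof is correct, and it is essentially the argument the paper intends: the paper states this proposition without a written proof, presenting it as an immediate consequence of Propositions \ref{prop:Delta=0} and \ref{prop:seq0000}, and your verification via cases (2) and (3) of Proposition \ref{prop:Delta=0} together with the monotonicity of the $k_j$ and $\ell_j$ from \eqref{tab:nofreecolour} is exactly that intended reasoning, including the correct identification of the combined constraint $\max\{k_i,\ell_{i+1}\}<k\leq\min\{\ell_i,k_{i+1}\}$ in Case 2c.
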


\begin{remark}
In Case 2c, we have 
$$\max\{k_i,\ell_{i+1}\} < k \leq \min\{\ell_i,k_{i+1}\}.$$
\end{remark}

\medskip
Now we study how we can insert a part with free colour between two parts with bound colours whose sizes differ by $1$. We distinguish three cases.

\begin{proposition}
\label{prop:prop0.1}
Let $n$ be a positive integer, and let $c_1=a_{k_1}b_{\ell_1}$ and $c_2=a_{k_2}b_{\ell_2}$  with $k_1<\ell_1$ and $k_2>\ell_2$. For every positive integer $p$, and every $i$ and $j$ such that $k_1<i\leq \ell_1$ and $\ell_2<j\leq k_2$, we can insert $(p+1)_{a_ib_i}$ and $p_{a_jb_j}$ between $(p+1)_{c_1}$ and $p_{c_2}$ without disrupting the other differences in the partition. In other words,
$$(p+1)_{c_1}+\underbrace{(p+1)_{a_ib_i}+\cdots+(p+1)_{a_ib_i}}_{\text{possibily empty}}+\underbrace{p_{a_jb_j}+\cdots+p_{a_jb_j}}_{\text{possibily empty}}+p_{c_2} \in \mathcal{P}_n.$$
\end{proposition}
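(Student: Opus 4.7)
The plan is to verify the proposition by directly checking that every pair of consecutive parts in the inserted block satisfies the minimal difference condition given by $\Delta$, using the case-by-case reformulation of $\Delta$ provided in Proposition \ref{prop:reformulation_delta}. Since the phrase ``without disrupting the other differences'' only concerns pairs of consecutive parts that involve at least one newly inserted part (the pairs strictly outside the block are unchanged), the argument reduces to a finite list of inequality checks. The parts in the block have only two sizes, $p+1$ and $p$, so the actual gaps between consecutive parts are either $0$ (within a block of equal-sized parts) or $1$ (at the unique jump from $p+1$ to $p$).

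First, I would handle the gaps of size $0$. The pair $(p+1)_{c_1},(p+1)_{a_ib_i}$ at the left boundary requires $\Delta(c_1,a_ib_i)=0$: since $c_1=a_{k_1}b_{\ell_1}$ with $k_1<\ell_1$, formula \eqref{eq:upf} gives $\Delta(a_{k_1}b_{\ell_1},a_ib_i)=1-\chi(k_1<i\le \ell_1)$, which vanishes by the hypothesis $k_1<i\le\ell_1$. Symmetrically, at the right boundary the pair $p_{a_jb_j},p_{c_2}$ requires $\Delta(a_jb_j,c_2)=0$; since $c_2=a_{k_2}b_{\ell_2}$ with $k_2>\ell_2$, formula \eqref{eq:flow} gives $\Delta(a_jb_j,a_{k_2}b_{\ell_2})=1-\chi(k_2\ge j>\ell_2)=0$ by the hypothesis $\ell_2<j\le k_2$. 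Finally, the repeated pairs $(a_ib_i,a_ib_i)$ and $(a_jb_j,a_jb_j)$ inside each block satisfy $\Delta=0$ by Property \ref{property:aibiaibi}.

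Next, I would check the gap of size $1$ at the unique jump from sizes $p+1$ to $p$; which pair of colours sits across this jump depends on which of the two blocks are empty, so four sub-cases must be examined: $(a_ib_i,a_jb_j)$, $(a_ib_i,c_2)$, $(c_1,a_jb_j)$, and $(c_1,c_2)$. The first is bounded by \eqref{eq:ff} as $\chi(i\ne j)\le 1$; the second and third are of the form $1-\chi(\cdots)$ by \eqref{eq:flow} and \eqref{eq:upf}, hence $\le 1$; and the last falls under \eqref{eq:uplow}, giving $\Delta(c_1,c_2)=1-\chi(k_1<k_2)\chi(\ell_1>\ell_2)\le 1$. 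In every case the required difference $\Delta\le 1$ holds, completing the verification. There is no substantive obstacle here: the proof is a short mechanical case analysis built entirely on the formulas of Proposition \ref{prop:reformulation_delta} and Property \ref{property:aibiaibi}.
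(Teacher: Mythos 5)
Your proposal is correct and follows essentially the same route as the paper's proof: a direct verification of the relevant values of $\Delta$ using the case formulas \eqref{eq:upf}, \eqref{eq:ff}, \eqref{eq:flow}, and \eqref{eq:uplow} of Proposition \ref{prop:reformulation_delta}. The only cosmetic difference is that you check the empty-block configurations by enumerating the four possible colour pairs across the $p+1\to p$ jump, whereas the paper verifies the fully populated sequence and then disposes of the empty cases via the triangle inequality (Property \ref{property:triangle}); both are valid and equally short.
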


\begin{proof}
We know that $(p+1)_{c_1}+p_{c_2} \in \mathcal{P}_n$ by \eqref{eq:uplow}.

We then check that $(p+1)_{c_1}+(p+1)_{a_ib_i}+p_{a_jb_j}+p_{c_2} \in \mathcal{P}_n$ by using \eqref{eq:upf},  \eqref{eq:ff}, and \eqref{eq:flow}. The empty cases follow from the triangle inequality, and we have shown before that repetition of free colours do not modify any other differences.
\end{proof}

Let us turn to the second case.
\begin{proposition}
\label{prop:prop0.2}
Let $n$ be a positive integer, and let $c_1=a_{k_1}b_{\ell_1}$ and $c_2=a_{k_2}b_{\ell_2}$ with $k_1>\ell_1$ and $k_2>\ell_2$. For every positive integers $p$ and $i$, we have
$$(p+1)_{c_1}+\underbrace{p_{a_ib_i}+\cdots+p_{a_ib_i}}_{\text{possibily empty}}+p_{c_2} \in \mathcal{P}_n$$
if and only if ($k_1<k_2$ or $\ell_1>\ell_2$) and $i\in \{\ell_2+1,\ldots,k_2\}\setminus\{\ell_1+1,\ldots,k_1\}$. 
\end{proposition}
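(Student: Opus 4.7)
The plan is to reduce the admissibility of the sub-partition to the three consecutive differences one can read off its skeleton, then evaluate each of them with the explicit formulas collected in Proposition~\ref{prop:reformulation_delta}. Since $\Delta(a_ib_i,a_ib_i)=0$ by Property~\ref{property:aibiaibi}, inserting any number of copies of $p_{a_ib_i}$ contributes only trivial internal differences, so the sub-partition lies in $\mathcal{P}_n$ precisely when the two boundary transitions $(p+1)_{c_1}\to p_{a_ib_i}$ and $p_{a_ib_i}\to p_{c_2}$ satisfy $\Delta(c_1,a_ib_i)\leq 1$ and $\Delta(a_ib_i,c_2)\leq 0$ (in the case of at least one inserted free part), or $\Delta(c_1,c_2)\leq 1$ (in the empty case).

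Next I would compute each of these quantities. Using the assumption $k_1>\ell_1$, Equation~\eqref{eq:lowf} gives $\Delta(c_1,a_ib_i)=1+\chi(\ell_1<i\leq k_1)$, so the first inequality is equivalent to $i\notin\{\ell_1+1,\dots,k_1\}$. Using the assumption $k_2>\ell_2$, Equation~\eqref{eq:flow} gives $\Delta(a_ib_i,c_2)=1-\chi(\ell_2<i\leq k_2)$, so the second (which must be an equality to $0$) is equivalent to $i\in\{\ell_2+1,\dots,k_2\}$. Finally, the definition~\eqref{eq:Delta}, whose two correction terms vanish under $k_1>\ell_1$ and $k_2>\ell_2$, yields $\Delta(c_1,c_2)=\chi(k_1\geq k_2)+\chi(\ell_1\leq \ell_2)$, which is at most $1$ iff $k_1<k_2$ or $\ell_1>\ell_2$.

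Combining the two boundary conditions produces exactly the membership $i\in\{\ell_2+1,\dots,k_2\}\setminus\{\ell_1+1,\dots,k_1\}$ asserted in the statement. To close the iff uniformly in the number of repetitions, I would observe that any such $i$ must satisfy either $i>k_1$, which forces $k_1<i\leq k_2$ and hence $k_1<k_2$, or $i\leq\ell_1$, which forces $\ell_2<i\leq\ell_1$ and hence $\ell_1>\ell_2$; in both cases the alternative $(k_1<k_2\text{ or }\ell_1>\ell_2)$ holds, so it is automatic once the $i$-condition is imposed, and it also covers the zero-repetition case. No substantive obstacle is expected: the proof is a bookkeeping exercise with the indicator-function identities of Proposition~\ref{prop:reformulation_delta}, the only mild subtlety being the joint handling of the empty and non-empty insertion cases, which the argument above unifies.
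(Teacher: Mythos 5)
Your proposal is correct and follows essentially the same route as the paper: both reduce the claim to the three relevant minimal differences $\Delta(c_1,a_ib_i)$, $\Delta(a_ib_i,c_2)$, and $\Delta(c_1,c_2)$, and evaluate them via \eqref{eq:lowf}, \eqref{eq:flow}, and \eqref{eq:bb}. Your closing observation that the $i$-condition already forces $k_1<k_2$ or $\ell_1>\ell_2$ is just the explicit form of the triangle-inequality remark the paper uses for the same purpose.
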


\begin{proof} 
Note that $(p+1)_{c_1}+p_{c_2} \in \mathcal{P}_n$ if and only if  
$\Delta(c_1,c_2)\neq 2,$ i.e. if and only if $k_1<k_2$ or $\ell_1>\ell_2.$

By the triangle inequality, we also need  $k_1<k_2$ or $\ell_1>\ell_2$ in order to have $(p+1)_{c_1}+p_{a_ib_i}+p_{c_2} \in \mathcal{P}_n.$
Moreover when $k_1<k_2$ or $\ell_1>\ell_2$, we have $(p+1)_{c_1}+p_{a_ib_i}+p_{c_2} \in \mathcal{P}_n$ if and only if $i\notin \{\ell_1+1,\ldots,k_1\}$ ( by \eqref{eq:lowf}) and $i\in \{\ell_2+1,\ldots,k_2\}$ (by \eqref{eq:flow}).
\end{proof}

We finish with the last possible case.
\begin{proposition}
\label{prop:prop0.3}
Let $n$ be a positive integer, and let $c_1=a_{k_1}b_{\ell_1}$ and $c_2=a_{k_2}b_{\ell_2}$ with $k_1<\ell_1$ and $k_2<\ell_2$. For every positive integers $p$ and $i$, we have
$$(p+1)_{c_1}+\underbrace{(p+1)_{a_ib_i}+\cdots+(p+1)_{a_ib_i}}_{\text{possibily empty}}+p_{c_2} \in \mathcal{P}_n$$
if and only if ($k_1<k_2$ or $\ell_1>\ell_2$) and $i\in \{k_1+1,\ldots,\ell_1\}\setminus\{k_2+1,\ldots,\ell_2\}$. 
\end{proposition}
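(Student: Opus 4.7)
The plan is to mirror the argument of Proposition~\ref{prop:prop0.2}, but using the cases of Proposition~\ref{prop:reformulation_delta} that govern the ``upper'' bound colours (those with $k<\ell$) rather than the ``lower'' ones. First I would observe that because repetition of a free colour costs nothing (Property~\ref{property:aibiaibi}) and, by Property~\ref{property:triangle}, does not disturb the remaining differences, it suffices to analyse the two regimes: no insertion at all, and exactly one insertion of $(p+1)_{a_ib_i}$.

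In the non-insertion regime, the pair $(p+1)_{c_1}+p_{c_2}$ lies in $\mathcal{P}_n$ if and only if $\Delta(c_1,c_2)\leq 1$, which by \eqref{eq:bb} translates to $\chi(k_1\geq k_2)+\chi(\ell_1\leq \ell_2)\leq 1$, i.e.\ $k_1<k_2$ or $\ell_1>\ell_2$. The triangle inequality (Property~\ref{property:triangle}) forces this same condition to hold in the single-insertion regime as well, so it is always necessary.

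In the single-insertion regime, the two new adjacencies $(p+1)_{c_1}+(p+1)_{a_ib_i}$ and $(p+1)_{a_ib_i}+p_{c_2}$ require $\Delta(c_1,a_ib_i)=0$ and $\Delta(a_ib_i,c_2)\leq 1$ respectively. Since $k_1<\ell_1$, I would invoke \eqref{eq:upf} to rewrite the first condition as $1-\chi(k_1<i\leq \ell_1)=0$, i.e.\ $i\in\{k_1+1,\ldots,\ell_1\}$. Since $k_2<\ell_2$, I would invoke \eqref{eq:fup} to rewrite the second as $1+\chi(k_2<i\leq \ell_2)\leq 1$, i.e.\ $i\notin \{k_2+1,\ldots,\ell_2\}$. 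Intersecting these two conditions yields exactly the claimed admissible set.

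For the converse, once $k_1<k_2$ or $\ell_1>\ell_2$ holds together with $i\in\{k_1+1,\ldots,\ell_1\}\setminus\{k_2+1,\ldots,\ell_2\}$, the three relevant differences of the proposed partition are all bounded by the required $\Delta$-values, and Property~\ref{property:triangle} ensures that no further constraint coming from non-consecutive pairs can be violated; repetitions of $a_ib_i$ are handled by Property~\ref{property:aibiaibi}. I do not expect any real obstacle: the only subtlety is keeping straight the ``upper vs.\ lower'' dichotomy for $c_1$ and $c_2$ and picking the matching formula from Proposition~\ref{prop:reformulation_delta}, which is precisely the dual situation to Proposition~\ref{prop:prop0.2}.
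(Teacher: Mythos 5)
Your proposal is correct and follows essentially the same route as the paper: the no-insertion condition comes from $\Delta(c_1,c_2)\neq 2$ via \eqref{eq:bb}, the necessity of that condition in the insertion regime from the triangle inequality, and the constraints on $i$ from \eqref{eq:upf} and \eqref{eq:fup}, with repetitions of the free colour handled by $\Delta(a_ib_i,a_ib_i)=0$. No gaps.
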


\begin{proof} 
Again, $(p+1)_{c_1}+p_{c_2} \in \mathcal{P}_n$ if and only if  
$\Delta(c_1,c_2)\neq 2,$ i.e. if and only if $k_1<k_2$ or $\ell_1>\ell_2,$
and we also need  $k_1<k_2$ or $\ell_1>\ell_2$ to have $(p+1)_{c_1}+(p+1)_{a_ib_i}+p_{c_2} \in \mathcal{P}_n.$
Moreover when $k_1<k_2$ or $\ell_1>\ell_2$, we have $(p+1)_{c_1}+(p+1)_{a_ib_i}+p_{c_2} \in \mathcal{P}_n$ if and only if $i\notin \{k_2+1,\ldots,\ell_2\}$ ( by \eqref{eq:fup}) and $i\in \{k_1+1,\ldots,\ell_1\}$ (by \eqref{eq:upf}).
\end{proof}

\medskip

The idea behind using forbidden patterns to define our generalised Capparelli partitions $\mathcal{C}_n(\delta,\gamma)$ (Definition \ref{def:Capp_conditions}) is the following. When a part $p$ could be inserted in several free colours (given by the propositions above) beside a bound part of the same size, our forbidden patterns give a rule to distinguish one of these possible insertions. Such a distinguished insertion will be forbidden in generalised Capparelli partitions, while it is allowed in generalised Primc partitions. This leads us to construct a relatively simple bijection in the next section.

\subsection{The bijection}
Now that we have understood how free colours can be inserted in a generalised Primc partition to still respect the difference conditions and understood where the forbidden patterns come from, we can finally give explicitly the bijection $\Phi$ from Theorem \ref{th:bij}.

In this whole section, $n$ is a fixed positive integer and $\delta$ and $\gamma$ are fixed functions satisfying Conditions 1 and 2 of Definitions \ref{def:cond1} and \ref{def:cond2}, respectively.
We now give our bijection $\Phi$ between the set $\mathcal{P}_n$ of generalised Primc partitions and the product set $\mathcal{C}_n(\delta,\gamma)\times \mathcal{P}^0$, where $\mathcal{C}_n(\delta,\gamma)$ is the set of generalised Capparelli partitions related to $\delta$ and $\gamma$, and $\mathcal{P}^0$ is the set of classical partitions where all parts are coloured $a_0b_0$.

\subsubsection{The bijection $\Phi$ from $\mathcal{P}_n$ to $\mathcal{C}_n(\delta,\gamma)\times \mathcal{P}^0$}
We start with the transformation $\Phi$ from $\mathcal{P}_n$ to $\mathcal{C}_n(\delta,\gamma)\times \mathcal{P}^0$, which can be explained very easily thanks to the forbidden patterns. 

Let us consider a partition $\lambda\in \mathcal P_n$. The steps of the bijection will be illustrated on the following example, corresponding to Meurman--Primc's $8$-coloured identity, i.e. $n=3$, $\delta=\delta_1$, $\gamma=\gamma_1$:
$$\lambda = 8_{a_1b_1}+6_{a_0b_2}+6_{a_2b_2}+5_{a_0b_1}
+5_{a_1b_0}+4_{a_0b_0}+4_{a_0b_0}
+3_{a_0b_2}+3_{a_1b_1}+3_{a_1b_1}+3_{a_1b_0}+2_{a_2b_2}
+2_{a_2b_2}+2_{a_2b_2}+2_{a_2b_0}+1_{a_0b_0}.$$

First, recall that $\Delta(c,a_0b_0) = \Delta(a_0b_0,c) = \chi(c \neq a_0b_0)$ for every color $c$. Thus if there is a part of size $p$ and colour $a_0b_0$ in $\lambda$, it can repeat still with colour $a_0b_0$, but the integer $p$ cannot appear in any other colour.

To build $\Phi(\lambda)=(\mu,\nu)\in\mathcal{C}_n(\delta,\gamma)\times \mathcal{P}^0$, we proceed as follows.

\medskip
\textbf{Step 1:} Remove all the parts coloured $a_0b_0$ from $\lambda$ and put them in a separate partition called $\nu_1$. Let us denote by $\mu_1$ what remains of the partitions $\lambda$ after doing this.

The partition $\mu_1$ is a partition from $\mathcal{P}_n$ having no part coloured $a_0b_0$, $\nu_1$ belongs to $\mathcal{P}^0$, and $\mu_1$ has no part of the same size as a part of $\nu_1$.

In our example, we obtain:
\begin{align*}
\mu_1 &= 8_{a_1b_1}+6_{a_0b_2}+6_{a_2b_2}+5_{a_0b_1}
+5_{a_1b_0}
+3_{a_0b_2}+3_{a_1b_1}+3_{a_1b_1}+3_{a_1b_0}+2_{a_2b_2}
+2_{a_2b_2}+2_{a_2b_2}+2_{a_2b_0},\\
\nu_1 &=4_{a_0b_0}+4_{a_0b_0}+1_{a_0b_0}.
\end{align*}

\textbf{Step 2:} If, in $\mu_1$, there is a part $p_{a_ib_i}$ with $i>0$ which repeats, then transform all but one of these $p_{a_ib_i}$'s into $p_{a_0b_0}$'s and insert them in the partition $\nu_1$, forming a partition $\nu_2$. Note that the new parts that were inserted were all different from the parts in $\nu_1$. We denote by $\mu_2$ the partition formed by the remaining parts of $\mu_1$ after this process.

The partition $\mu_2$ is a partition from $\mathcal{P}_n$ with no part coloured $a_0b_0$, such that free colours cannot repeat. The partition $\nu_2$ belongs to $\mathcal{P}^0$, and $\mu_2$ and $\nu_2$ can now have parts of the same size.

In our example, we obtain:
\begin{align*}
\mu_2 &= 8_{a_1b_1}+6_{a_0b_2}+6_{a_2b_2}+5_{a_0b_1}
+5_{a_1b_0}
+3_{a_0b_2}+3_{a_1b_1}+3_{a_1b_0}+2_{a_2b_2}
+2_{a_2b_0},\\
\nu_2 &= 4_{a_0b_0}+4_{a_0b_0}+3_{a_0b_0}+2_{a_0b_0}+2_{a_0b_0}+1_{a_0b_0}.
\end{align*}

\textbf{Step 3:} If in $\mu_2$, there is a pattern with central part $p_{a_ib_i}$ which is forbidden in $\mathcal{C}_n(\delta,\gamma)$, then transform this $p_{a_ib_i}$ into $p_{a_0b_0}$ and insert it in the partition $\nu_2$, forming a partition $\nu$. Note that some parts $p_{a_0b_0}$ may have already been moved to $\nu_2$ at Step $2$. Finally, denote by $\mu$ the partition formed by the remaining parts of $\mu_2$ after this process.

Now the partition $\mu$ is a partition from $\mathcal{P}_n$ with no part coloured $a_0b_0$, such that free colours cannot repeat, and avoiding all forbidden patterns. This is equivalent to saying that $\mu$ belongs to $\mathcal{C}_n(\delta,\gamma).$ Moreover, $\nu$ is simply an unrestricted partition coloured $a_0b_0$.

In our example, $\mu_2$ contains the forbidden patterns $3_{a_1b_0}+2_{a_2b_2}+2_{a_2b_0}$, $3_{a_0b_2}+3_{a_1b_1}+3_{a_1b_0}$, and $6_{a_0b_2}+6_{a_2b_2}+5_{a_0b_1}$, so we obtain:
\begin{align*}
\mu &= 8_{a_1b_1}+6_{a_0b_2}+5_{a_0b_1}
+5_{a_1b_0}+3_{a_0b_2}+3_{a_1b_0}+2_{a_2b_0},\\
\nu &=6_{a_0b_0}+4_{a_0b_0}+4_{a_0b_0}
+3_{a_0b_0}+3_{a_0b_0}+2_{a_0b_0}+2_{a_0b_0}+2_{a_0b_0}+1_{a_0b_0}.
\end{align*}

\subsubsection{The inverse bijection $\Phi^{-1}$ from  $\mathcal{C}_n(\delta,\gamma)\times \mathcal{P}^0$ to $\mathcal{P}_n$}
We now build the inverse bijection $\Phi^{-1}$. Consider a partition pair $(\mu,\nu) \in \mathcal{C}_n(\delta,\gamma)\times\mathcal{P}^0$. To obtain a partition $\lambda \in \mathcal{P}_n$, we insert each part $p_{a_0b_0}$ of $\nu$ in the partition $\mu$ as follows.

We illustrate $\Phi^{-1}$ on the same example as before, starting from
\begin{align*}
\mu &= 8_{a_1b_1}+6_{a_0b_2}+5_{a_0b_1}
+5_{a_1b_0}+3_{a_0b_2}+3_{a_1b_0}+2_{a_2b_0},\\
\nu &=6_{a_0b_0}+4_{a_0b_0}+4_{a_0b_0}
+3_{a_0b_0}+3_{a_0b_0}+2_{a_0b_0}+2_{a_0b_0}+2_{a_0b_0}+1_{a_0b_0}.
\end{align*}

\textbf{Inverse of Step 3:}
 If there are parts $p_{a_0b_0}$ in $\nu$, and if in $\mu$ there is a part $p_c$ for some bound colour $c=a_kb_\ell$ ($k \neq \ell$) but $p$ does not appear in any free colour, we proceed as follows. Note that the order in which we perform the insertions does not matter. Indeed, the only case in which two insertions interact with each other is the case of Proposition \ref{prop:prop0.1}, and then whether we insert $(p+1)_{a_ib_i}$ or $p_{a_jb_j}$ first does not change anything to the final result.
\begin{itemize}
\item If there exists a pair of bound colors $(a_{k_1}b_{\ell_1},a_{k_2}b_{\ell_2})$ such that $k_1<\ell_1$ and $k_2>\ell_2$, and the pattern $p_{a_{k_1}b_{\ell_1}}+p_{a_{k_2}b_{\ell_2}}$ appears in $\mu$, then by Proposition \ref{prop:seq0000}, we have
$$\max\{k_1,\ell_2\}<\min\{k_2,\ell_1\}.$$
In this case, transform $\nu$'s first $p_{a_0b_0}$ into $p_{a_ib_i}$ with $i=\gamma(a_{k_1}b_{\ell_1},a_{k_2}b_{\ell_2})$, and insert it between $p_{a_{k_1}b_{\ell_1}}$ and $p_{a_{k_2}b_{\ell_2}}$ in $\mu$, creating the pattern 
$$p_{a_{k_1}b_{\ell_1}}+p_{a_ib_i}+p_{a_{k_2}b_{\ell_2}}$$
which is forbidden in $\mathcal{C}_n(\delta,\gamma)$, but is allowed in $\mathcal{P}_n$ by Proposition \ref{prop:insertion0000}.

In our example, the pattern $3_{a_0b_2}+3_{a_1b_0}$ appears in $\mu.$ Thus, we transform $\nu$'s first $3_{a_0b_0}$ into $3_{a_1b_1}$ because $\gamma(a_{0}b_{2},a_{1}b_{0})=1$, and insert it into $\mu$. We obtain
\begin{align*}
\mu' &= 8_{a_1b_1}+6_{a_0b_2}+5_{a_0b_1}
+5_{a_1b_0}+3_{a_0b_2}+3_{a_1b_1}+3_{a_1b_0}+2_{a_2b_0},\\
\nu' &=6_{a_0b_0}+4_{a_0b_0}+4_{a_0b_0}
+3_{a_0b_0}+2_{a_0b_0}+2_{a_0b_0}+2_{a_0b_0}+1_{a_0b_0}.
\end{align*}

\item If all the parts of size $p$ in $\mu$ have colours $a_kb_\ell$ with $k>\ell$, let us consider the first of these parts, denoted by $p_{a_{k_2}b_{\ell_2}}$.
This is the case for parts of size $2$ in our example.

By Case 2a of Proposition \ref{prop:seq0000}, we have $k>k_2>\ell_2>\ell$ for all the other colours $a_kb_\ell$ in which $p$ appears. We distinguish several cases for our insertions.

\begin{enumerate}
\item If the part to the left of $p_{a_{k_2}b_{\ell_2}}$ has size $p+u$ with $u \geq 2$ (with the convention that $u=\infty$ when $p_{a_{k_2}b_{\ell_2}}$ is the first part of the partition), transform $\nu$'s first $p_{a_0b_0}$ into $p_{a_ib_i}$ with $i=\delta(a_{k_2}b_{\ell_2})$, and  insert it to the left of $p_{a_{k_2}b_{\ell_2}}$ in $\mu$, creating the pattern
$$(p+u)_{a_{k_1}b_{\ell_1}}+p_{a_ib_i}+p_{a_{k_2}b_{\ell_2}}$$
for some $k_1,\ell_1$. This pattern is forbidden in $\mathcal{C}_n(\delta,\gamma)$, but allowed in $\mathcal{P}_n$ by the definition of $\Delta$.

This case does not occur in our example.

\item If the part to the left of $p_{a_{k_2}b_{\ell_2}}$ is equal to $(p+1)_{a_{k_1}b_{\ell_1}}$ with $k_1\leq \ell_1$, transform $\nu$'s first $p_{a_0b_0}$ into $p_{a_ib_i}$ with $i=\delta(a_{k_2}b_{\ell_2})$, and insert it to the left of $p_{a_{k_2}b_{\ell_2}}$ in $\mu$, creating the pattern
$$(p+1)_{a_{k_1}b_{\ell_1}}+p_{a_ib_i}+p_{a_{k_2}b_{\ell_2}},$$
which is forbidden in $\mathcal{C}_n(\delta,\gamma)$, but allowed in $\mathcal{P}_n$ by Proposition \ref{prop:prop0.1}.

This case does not occur in our example either.

\item If the part to the left of $p_{a_{k_2}b_{\ell_2}}$ is equal to $(p+1)_{a_{k_1}b_{\ell_1}}$ with $k_1> \ell_1$, 
we necessarily have that $k_1<k_2$ or $\ell_1>\ell_2$, i.e. $\{\ell_2+1,\ldots,k_2\}\setminus \{\ell_1+1,\ldots,k_1\}\neq \emptyset$. In that case, transform $\nu$'s first $p_{a_0b_0}$  into $p_{a_ib_i}$ with $i=\gamma(a_{k_1}b_{\ell_1},a_{k_2}b_{\ell_2})$, and insert it to the left of $p_{a_{k_2}b_{\ell_2}}$ in $\mu$, creating the pattern
$$(p+1)_{a_{k_1}b_{\ell_1}}+p_{a_ib_i}+p_{a_{k_2}b_{\ell_2}},$$
which is forbidden in $\mathcal{C}_n(\delta,\gamma)$, but allowed in $\mathcal{P}_n$ by Proposition \ref{prop:prop0.2}.

In our example, assuming we already did the insertion leading to $\mu'$ and $\nu'$ above, we transform the first $2_{a_0b_0}$ of $\nu'$ into $2_{a_2b_2}$ because $\gamma_1(a_{1}b_{0},a_{2}b_{0})=2$, and insert it to the left of $2_{a_2b_0}$ in $\mu'$. We obtain the partitions
\begin{align*}
\mu'' &= 8_{a_1b_1}+6_{a_0b_2}+5_{a_0b_1}
+5_{a_1b_0}+3_{a_0b_2}+3_{a_1b_1}+3_{a_1b_0}+2_{a_2b_2}+2_{a_2b_0},\\
\nu'' &=6_{a_0b_0}+4_{a_0b_0}+4_{a_0b_0}
+3_{a_0b_0}+2_{a_0b_0}+2_{a_0b_0}+1_{a_0b_0}.
\end{align*}
\end{enumerate}

\item If all the parts of size $p$ in $\mu$ have colours $a_kb_\ell$ with $k<\ell$, let us consider the last of these parts, denoted by $p_{a_{k_1}b_{\ell_1}}$.
This is the case for parts of size $6$ in our example.

By Case 2b of Proposition \ref{prop:seq0000}, we have $k<k_1<\ell_1<\ell$ for all the other colours $a_kb_\ell$ in which $p$ appears. As before, we distinguish several cases for our insertions.

\begin{enumerate}
\item If the part to the right of $p_{a_{k_1}b_{\ell_1}}$ has size at $p-u$ with $u \geq 2$ (with the convention that $u=\infty$ when $p_{a_{k_1}b_{\ell_1}}$ is the last part of the partition), transform $\nu$'s first $p_{a_0b_0}$ into $p_{a_ib_i}$ with $i=\delta(a_{k_1}b_{\ell_1})$, and  insert it to the right of $p_{a_{k_1}b_{\ell_1}}$ in $\mu$, creating the pattern
$$p_{a_{k_1}b_{\ell_1}}+p_{a_ib_i}+(p-u)_{a_{k_2}b_{\ell_2}}$$
for some $k_2,\ell_2$. This pattern is forbidden in $\mathcal{C}_n(\delta,\gamma)$, but allowed in $\mathcal{P}_n$ by the definition of $\Delta$.

This case does not occur in our example.

\item If the part to the right of $p_{a_{k_1}b_{\ell_1}}$ is equal to $(p-1)_{a_{k_2}b_{\ell_2}}$ with $k_2\geq \ell_2$, transform $\nu$'s first $p_{a_0b_0}$  into $p_{a_ib_i}$ with $i=\delta(a_{k_1}b_{\ell_1})$, and insert it to the right of $p_{a_{k_1}b_{\ell_1}}$ in $\mu$, creating the pattern
$$p_{a_{k_1}b_{\ell_1}}+p_{a_ib_i}+(p-1)_{a_{k_2}b_{\ell_2}}$$
which is forbidden in $\mathcal{C}_n(\delta,\gamma)$, but allowed in $\mathcal{P}_n$ by Proposition \ref{prop:prop0.1}.

This case does not occur in our example either.

\item If the part to the right of $p_{a_{k_1}b_{\ell_1}}$ is equal to $(p-1)_{a_{k_2}b_{\ell_2}}$ with $k_2 < \ell_2$, we necessarily have $k_1<k_2$ or $\ell_1>\ell_2$, i.e. $\{k_1+1,\ldots,\ell_1\}\setminus\{k_2+1,\ldots,\ell_2\}\neq \emptyset$.
In that case, transform $\nu$'s first $p_{a_0b_0}$  into $p_{a_ib_i}$ with $i=\gamma(a_{k_1}b_{\ell_1},a_{k_2}b_{\ell_2})$, and insert it to the right of $p_{a_{k_1}b_{\ell_1}}$ in $\mu$, creating the pattern
$$(p)_{a_{k_1}b_{\ell_1}}+p_{a_ib_i}+(p-1)_{a_{k_2}b_{\ell_2}},$$
which is forbidden in $\mathcal{C}_n(\delta,\gamma)$, but allowed in $\mathcal{P}_n$ by Proposition \ref{prop:prop0.3}.

In our example, assuming we already did the insertions leading to $\mu''$ and $\nu''$ above, we transform the $6_{a_0b_0}$ of $\nu''$ into $6_{a_2b_2}$ because $\gamma_1(a_{0}b_{2},a_{0}b_{1})=2$, and insert it to the right of $6_{a_0b_2}$ in $\mu''$. We obtain the partitions
\begin{align*}
\mu_2 &= 8_{a_1b_1}+6_{a_0b_2}+6_{a_2b_2}+5_{a_0b_1}
+5_{a_1b_0}+3_{a_0b_2}+3_{a_1b_1}+3_{a_1b_0}+2_{a_2b_2}+2_{a_2b_0},\\
\nu_2 &=4_{a_0b_0}+4_{a_0b_0}
+3_{a_0b_0}+2_{a_0b_0}+2_{a_0b_0}+1_{a_0b_0}.
\end{align*}

\end{enumerate}
\end{itemize}
Denote $\mu_2$ and $\nu_2$ the resulting partitions when all the insertions described in this step have been performed.
The partition $\mu_2$ is a partition from $\mathcal{P}_n$ with no part coloured $a_0b_0$, such that free colours cannot repeat. It can now contain all the patterns that were forbidden in $\mu.$ The partition $\nu_2$ belongs to $\mathcal{P}^0$.

\medskip
\textbf{Inverse of Step 2:} 
If there are some parts $p_{a_0b_0}$ in $\nu_2$ that also appear in $\mu_2$ in a free colour $a_ib_i$ ($i>0$), then transform them all into $p_{a_ib_i}$ and insert them in $\mu_2$. Let us call $\mu_1$ and $\nu_1$ the resulting partitions. 
The partition $\mu_1$ is a partition from $\mathcal{P}_n$ having no part coloured $a_0b_0$. The partition $\nu_1$ belongs to $\mathcal{P}^0$, and $\mu_1$ has no part of the same size as a part of $\nu_1$.

In our example, we obtain again
\begin{align*}
\mu_1 &= 8_{a_1b_1}+6_{a_0b_2}+6_{a_2b_2}+5_{a_0b_1}
+5_{a_1b_0}+3_{a_0b_2}+3_{a_1b_1}+3_{a_1b_1}+3_{a_1b_0}
+2_{a_2b_2}+2_{a_2b_2}+2_{a_2b_2}+2_{a_2b_0},\\
\nu_1 &=4_{a_0b_0}+4_{a_0b_0}+1_{a_0b_0}.
\end{align*}

\medskip
\textbf{Inverse of Step 1:} 
The remaining parts in $\nu_1$ do not appear in $\mu_1$, so we simply insert them, creating a partition $\lambda$ which belongs to $\mathcal P_n$.

In our example, we recover
$$\lambda = 8_{a_1b_1}+6_{a_0b_2}+6_{a_2b_2}+5_{a_0b_1}
+5_{a_1b_0}+4_{a_0b_0}+4_{a_0b_0}
+3_{a_0b_2}+3_{a_1b_1}+3_{a_1b_1}+3_{a_1b_0}+2_{a_2b_2}
+2_{a_2b_2}+2_{a_2b_2}+2_{a_2b_0}+1_{a_0b_0},$$
as desired.

\medskip
All the steps in this bijection only consist of colour modifications on free colours and moving some parts, so the bijection preserves the weight, the number of parts, the sizes of the parts, and the number of appearances of each bound colour.

\section{Proof of Proposition \ref{prop:main'}} 
\label{sec:prop1}
In this last section, we give a proof of Proposition \ref{prop:main'}. Let $S= c_1, \dots , c_s$ be a reduced colour sequence of length $s$, having $t$ maximal primary subsequences. We use the same notation as in Section \ref{subsec:gfkernelDelta}.
In addition, we define for all $u \in \{1, \dots, t\}$, $j_{2u-1}$ (resp. $j_{2u}$) to be the index of the free colour which can be inserted to the left (resp. right) of $S_u$. Thus we have $\mathcal{T}_0^u = \{j_{2u-1},j_{2u}\} \cap \mathcal{T}_0$ and $\mathcal{T}_1^u = \{j_{2u-1},j_{2u}\} \cap \mathcal{T}_1.$

For brevity, from now on, we denote the set of all integers between $i$ and $j$ by $\llbracket i;j \rrbracket .$

Our starting point is the equality
\begin{equation}
\label{eq:6starting}
G_{S,m}(q):=\sum_{\substack{C \text{colour sequence of length } s+m\\ \text{such that } \mathrm{red}(C)=S}} q^{|\min_{\Delta}(C)|} = \sum_{\substack{n_1, \dots, n_{s+t}:\\n_1+\cdots+n_{s+t}=m}} q^{|\mathrm{min}_{\Delta}(S(n_1, \dots, n_{s+t}))|},
\end{equation}
which simply follows from the definition of reduced colour sequences.

Proposition \ref{prop:minpartition} gives us an expression for $|\mathrm{min}_{\Delta}(S(n_1, \dots, n_{s+t}))|$, which we will use to derive  Proposition \ref{prop:main'}.
Let us start with a lemma which evaluates a sum appearing in the formula for $|\mathrm{min}_{\Delta}(S(n_1, \dots, n_{s+t}))|.$ 

\begin{lemma}
\label{lem:6S1}
Let 
$$\Sigma_1:=\sum_{j \in \mathcal{S}_1} \left( P(j)+ \#\left(\llbracket j;s+t\rrbracket \cap (\mathcal{N}\sqcup\mathcal{T}_0 \sqcup \mathcal{S}_1) \right) \right),$$
where $P(j)$ is the number of colours of $S$ that are to the left of $f_j$.
We have
$$\Sigma_1 = \sum_{u=1}^t \left( \left( |\mathcal{N}|+u-1  +\sum_{v=u}^t \left(|\mathcal{T}_0^{v}| +|\mathcal{S}_1^v|\right) \right) |\mathcal{S}_1^u| + \sum_{j\in \mathcal{S}_1^u} \# \{j'<j: j' \in \overline{\mathcal{S}_1^u} \}\right),$$
where $\overline{\mathcal{S}_1^u} := \mathcal{T}_1^u \setminus \mathcal{S}_1^u$ is the set of indices $j$ of $\mathcal{T}_1^u$ such that the free colour $f_j$ is not inserted.
\end{lemma}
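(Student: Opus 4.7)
The plan is to split $\mathcal{S}_1$ along its decomposition $\mathcal{S}_1=\bigsqcup_{u=1}^t \mathcal{S}_1^u$, then for each $u$ analyze the at most two indices $j\in\mathcal{S}_1^u\subseteq\{j_{2u-1},j_{2u}\}$ separately. For each such $j$, I will evaluate $P(j)$ directly from the positions of insertions in $S=T_1S_1T_2S_2\cdots T_tS_tT_{t+1}$, and decompose $\#(\llbracket j;s+t\rrbracket\cap(\mathcal{N}\sqcup\mathcal{T}_0\sqcup\mathcal{S}_1))$ as a sum of the contributions coming from $\mathcal{N}$, $\mathcal{T}_0$ and $\mathcal{S}_1$. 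The objective is to establish the pointwise identity
\[
P(j)+\#\bigl(\llbracket j;s+t\rrbracket\cap(\mathcal{N}\sqcup\mathcal{T}_0\sqcup\mathcal{S}_1)\bigr)
=|\mathcal{N}|+u-1+\sum_{v=u}^t(|\mathcal{T}_0^v|+|\mathcal{S}_1^v|)+\#\{j'<j:j'\in\overline{\mathcal{S}_1^u}\},
\]
so that the lemma follows by summing over $j\in\mathcal{S}_1^u$ and then over $u$.

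For the positional bookkeeping, I will first record the indexation $j_1<j_2<\cdots<j_{2t}$: the indices $j_{2u-1},j_{2u}$ pin down the two secondary-pair positions flanking $S_u$, the $|S_u|-1$ indices strictly between them correspond to primary-pair (hence $\mathcal{N}$) insertions inside $S_u$, and the $|T_{u+1}|$ indices in $(j_{2u},j_{2u+1})$ are $\mathcal{N}$ insertions inside the free-colour block $T_{u+1}$. From this one reads off:
\[
P(j_{2u-1})=i_{2u-1}-1,\qquad P(j_{2u})=i_{2u},
\]
and the count of $\mathcal{N}$-indices $\geq j$ is
$\sum_{v\geq u}(|S_v|-1)+\sum_{v\geq u+1}|T_v|$ when $j=j_{2u-1}$, and this minus $|S_u|-1$ when $j=j_{2u}$. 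Combining with $i_{2u-1}=1+\sum_{v\leq u}|T_v|+\sum_{v<u}|S_v|$ and $\sum_{v}|T_v|+\sum_v|S_v|=s$ reduces the identity to checking $|\mathcal{N}|=s-t$, which follows from $|\mathcal{T}_0|+|\mathcal{T}_1|=2t$.

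The contributions from $\mathcal{T}_0\cup\mathcal{S}_1$ separate naturally into $\sum_{v>u}(|\mathcal{T}_0^v|+|\mathcal{S}_1^v|)$ (everything strictly past $S_u$) plus the local contribution from $\{j_{2u-1},j_{2u}\}$. The only subtlety lies in this local term when $j=j_{2u}$: here one must express the local count as $|\mathcal{T}_0^u|+|\mathcal{S}_1^u|-\mathbb{1}[j_{2u-1}\in\mathcal{T}_0^u\cup\mathcal{S}_1^u]$ and then use the identity $|\mathcal{T}_0^u|+|\mathcal{T}_1^u|=2$ to rewrite the missing indicator as $\mathbb{1}[j_{2u-1}\in\overline{\mathcal{S}_1^u}]$ (valid because $j_{2u}\in\mathcal{S}_1^u$ forces $\overline{\mathcal{S}_1^u}\subseteq\{j_{2u-1}\}$). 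This is precisely $\#\{j'<j_{2u}:j'\in\overline{\mathcal{S}_1^u}\}$, matching the error term in the claimed identity.

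The main obstacle is this last bookkeeping step: keeping straight which of the two slots $j_{2u-1},j_{2u}$ belongs to $\mathcal{T}_0^u$, to $\mathcal{T}_1^u\cap\mathcal{S}_1^u$, or to $\overline{\mathcal{S}_1^u}$, and checking that the inserted indicator $\mathbb{1}[j_{2u-1}\in\overline{\mathcal{S}_1^u}]$ is exactly what is needed to convert an $\mathcal{T}_0\cup\mathcal{S}_1$-count into an $\overline{\mathcal{S}_1^u}$-count. Once this is in place, the rest is elementary algebra with the position identities listed above, and summing over $j\in\mathcal{S}_1^u$ then over $u$ yields the stated formula for $\Sigma_1$.
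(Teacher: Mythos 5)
Your proof is correct and follows essentially the same route as the paper's: both reduce the lemma to the pointwise identity for each $j\in\mathcal{S}_1^u$ and verify it by partitioning the index set $\llbracket 1;s+t\rrbracket$ into its $\mathcal{N}$, $\mathcal{T}_0$, $\mathcal{S}_1$, $\overline{\mathcal{S}_1^u}$ pieces on either side of $j$, using $|\mathcal{T}_0^u|+|\mathcal{T}_1^u|=2$ to produce the $\overline{\mathcal{S}_1^u}$ error term. The paper packages the count uniformly via $P(j)=j-u=(j_{2u-1}-u)+(j-j_{2u-1})$ rather than by your case split on $j\in\{j_{2u-1},j_{2u}\}$, but the bookkeeping is the same (just make sure your sums $\sum_{v\geq u+1}|T_v|$ include the trailing block $T_{t+1}$, as your identity $\sum_v|T_v|+\sum_v|S_v|=s$ requires).
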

\begin{proof}
First, writing $\mathcal{S}_1 = \bigsqcup\limits_{u=1}^{t} \mathcal{S}_1^u,$ we have
$$\Sigma_1= \sum_{u=1}^t \sum_{j \in \mathcal{S}_1^u} \left( P(j)+ \#\left(\llbracket j;s+t\rrbracket \cap (\mathcal{N}\sqcup\mathcal{T}_0 \sqcup \mathcal{S}_1) \right) \right).$$
Now, noticing that for $j \in \mathcal{S}_1^u$, $P(j)=j-u$, we can write
\begin{equation}
\label{eq:Sigma1}
\Sigma_1= \sum_{u=1}^t \sum_{j \in \mathcal{S}_1^u} \left(j_{2u-1}-u+j-j_{2u-1}+ \#\left(\llbracket j;s+t\rrbracket \cap (\mathcal{N}\sqcup\mathcal{T}_0 \sqcup \mathcal{S}_1) \right) \right).
\end{equation}
We first note that
$$
\begin{array}{lll}
&j_{2u-1}-u = 1-u +j_{2u-1} -1& \\
&\quad = 1-u + \#(\llbracket 1;j_{2u-1} -1\rrbracket \cap \mathcal{N}) + \#(\llbracket 1;j_{2u-1} -1\rrbracket \cap (\mathcal{T}_0 \sqcup \mathcal{T}_1)) & \text{ because }  \llbracket 1;s+t\rrbracket = \mathcal{N} \sqcup \mathcal{T}_0 \sqcup \mathcal{T}_1 \\
&\quad = 1-u + \#(\llbracket 1;j_{2u-1} -1\rrbracket \cap \mathcal{N}) + 2u-2 & \text{ by definition of } j_{2u-1}\\
&\quad =\#(\llbracket 1;j_{2u-1} -1\rrbracket \cap \mathcal{N}) + u-1.
\end{array}
$$
We also rewrite $j-j_{2u-1}$ as 
$$j-j_{2u-1} = \#(\llbracket j_{2u-1};j-1\rrbracket \cap \mathcal{T}_0^u)+ \#(\llbracket j_{2u-1};j-1\rrbracket \cap \mathcal{S}_1^u)+ \#(\llbracket j_{2u-1};j-1\rrbracket \cap \overline{\mathcal{S}_1^u})+ \#(\llbracket j_{2u-1};j-1\rrbracket \cap \mathcal{N}).$$ 
Finally, we have
\begin{align*}
 \#(\llbracket j;s+t\rrbracket \cap (\mathcal{N} \sqcup \mathcal{T}_0 \sqcup \mathcal{S}_1)) &= \#(\llbracket j;s+t\rrbracket \cap \mathcal{N}) + \#(\llbracket j;j_{2u}\rrbracket \cap (\mathcal{T}_0^u \sqcup \mathcal{S}_1^u))+\#(\llbracket j_{2u}+1;s+t\rrbracket \cap (\mathcal{T}_0 \sqcup \mathcal{S}_1))\\
 &=\#(\llbracket j;s+t\rrbracket \cap \mathcal{N}) + \#(\llbracket j;j_{2u}\rrbracket \cap (\mathcal{T}_0^u \sqcup \mathcal{S}_1^u)) + \sum_{v=u+1}^t \left( |\mathcal{T}_0^v|+| \mathcal{S}_1^v|\right).
\end{align*}
Combining the three observations above, \eqref{eq:Sigma1} becomes
$$
\Sigma_1 = \sum_{u=1}^t \sum_{j \in \mathcal{S}_1^u} \left( |\mathcal{N}|+u-1+\sum_{v=u}^t \left( |\mathcal{T}_0^v|+| \mathcal{S}_1^v|\right) +\#(\llbracket j_{2u-1};j-1\rrbracket \cap \overline{\mathcal{S}_1^u}) \right).
$$
Noticing that $|\mathcal{N}|+u-1+\sum_{v=u}^t \left( |\mathcal{T}_0^v|+| \mathcal{S}_1^v|\right)$ does not depend on $j$, and that $\#(\llbracket j_{2u-1};j-1\rrbracket \cap \overline{\mathcal{S}_1^u})=\# \{j'<j: j' \in \overline{\mathcal{S}_1^u} \}$ yields the desired formula.
\end{proof}

We can now give a formula for the generating function for minimal partitions $\mathrm{min}_{\Delta}(S(n_1, \dots, n_{s+t}))$ for a fixed set $\mathcal{S}_1$. The desired generating function $G_{S,m}(q)$ of \eqref{eq:6starting} will then be obtained by summing over all possible sets $\mathcal{S}_1$.

\begin{lemma}
\label{lem:6HS1}
Let $\mathcal{S}_1$ be fixed. Define
$$H_{S,\mathcal{S}_1}(q):= \sum_{\substack{n_1, \dots, n_{s+t}:\\n_1+\cdots+n_{s+t}=m,\\ \{j \in \mathcal{T}_1 :n_j>0\}=\mathcal{S}_1}} q^{|\mathrm{min}_{\Delta}(S(n_1, \dots, n_{s+t}))|}.$$
We have
\begin{equation}
\label{eq:6HS1}
H_{S,\mathcal{S}_1}(q)= q^{|\mathrm{min}_{\Delta}(S)| +\Sigma_1 +m - |\mathcal{S}_1|} {m-1+|\mathcal{N}| + |\mathcal{T}_0| \brack m-|\mathcal{S}_1|}_q.
\end{equation}

\end{lemma}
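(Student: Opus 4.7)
The plan is to start from Proposition \ref{prop:minpartition}, which gives an explicit formula for $|\mathrm{min}_\Delta(S(n_1, \ldots, n_{s+t}))|$, and change variables by setting $n_j = N_j + 1$ for $j \in \mathcal{S}_1$ (where insertion is mandatory by definition of $\mathcal{S}_1$) and $n_j = N_j$ for $j \in \mathcal{N} \sqcup \mathcal{T}_0$. Note also that $n_j = 0$ for $j \in \mathcal{T}_1 \setminus \mathcal{S}_1$. The constraint $n_1 + \cdots + n_{s+t} = m$ becomes $\sum_{j \in \mathcal{S}_1 \sqcup \mathcal{N} \sqcup \mathcal{T}_0} N_j = m - |\mathcal{S}_1|$ with all $N_j \geq 0$.

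Writing $w(j) := \#(\llbracket j;s+t\rrbracket \cap (\mathcal{N} \sqcup \mathcal{T}_0 \sqcup \mathcal{S}_1))$ for brevity, the formula in Proposition \ref{prop:minpartition} rewrites, after the substitution, as
$$|\mathrm{min}_\Delta(S)| + \sum_{j \in \mathcal{S}_1}\bigl(P(j) + w(j)\bigr) + \sum_{j \in \mathcal{S}_1 \sqcup \mathcal{N} \sqcup \mathcal{T}_0} N_j\, w(j),$$
so that Lemma \ref{lem:6S1} immediately pulls out the factor $q^{|\mathrm{min}_\Delta(S)| + \Sigma_1}$, leaving
$$H_{S,\mathcal{S}_1}(q) = q^{|\mathrm{min}_\Delta(S)| + \Sigma_1} \sum_{\substack{(N_j)_{j \in \mathcal{S}_1 \sqcup \mathcal{N} \sqcup \mathcal{T}_0} \geq 0 \\ \sum_j N_j = m - |\mathcal{S}_1|}} q^{\sum_j N_j\, w(j)}.$$

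The key observation is that $w$ restricted to $\mathcal{S}_1 \sqcup \mathcal{N} \sqcup \mathcal{T}_0$ is a strictly decreasing bijection onto $\{1, 2, \ldots, K\}$, where $K := |\mathcal{S}_1| + |\mathcal{N}| + |\mathcal{T}_0|$: indeed, if $j_1 > j_2 > \cdots > j_K$ denote the elements of this set in decreasing order, then $w(j_i) = i$ directly from the definition. Relabelling $s_i := N_{j_i}$ reduces the inner sum to the classical $q$-series sum
$$\sum_{\substack{s_1, \ldots, s_K \geq 0 \\ s_1 + \cdots + s_K = m - |\mathcal{S}_1|}} q^{s_1 + 2 s_2 + \cdots + K s_K},$$
and it remains to apply the standard identity (obtained by extracting the $x^N$-coefficient in $\prod_{i=1}^{K}(1 - xq^i)^{-1}$)
$$\sum_{\substack{s_1, \ldots, s_K \geq 0 \\ \sum_i s_i = N}} q^{\sum_i i\, s_i} = q^{N} {N + K - 1 \brack N}_q.$$
Taking $N = m - |\mathcal{S}_1|$ and $K = |\mathcal{N}| + |\mathcal{T}_0| + |\mathcal{S}_1|$ yields the $q$-binomial in \eqref{eq:6HS1} exactly, since $N + K - 1 = m - 1 + |\mathcal{N}| + |\mathcal{T}_0|$.

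The only real subtlety is purely bookkeeping: one must check carefully that the partition of $\llbracket 1; s+t \rrbracket$ into $\mathcal{N}, \mathcal{T}_0, \mathcal{S}_1$ and $\mathcal{T}_1 \setminus \mathcal{S}_1$ interacts correctly with the constraints (so that fixing $\mathcal{S}_1$ truly forces $n_j = 0$ on $\mathcal{T}_1 \setminus \mathcal{S}_1$), and that $w$ is indeed bijective to $\{1,\ldots,K\}$. Once these verifications are in place, the computation reduces to the one-line application of the $q$-series identity above.
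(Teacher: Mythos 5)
Your proposal is correct and follows essentially the same route as the paper: shift the variables on $\mathcal{S}_1$ by one, pull out $q^{|\min_{\Delta}(S)|+\Sigma_1}$ via Lemma \ref{lem:6S1}, and recognise the remaining sum over $(N_j)$ weighted by $w(j)$ as the generating function $q^{N}{N+K-1 \brack N}_q$ with $N=m-|\mathcal{S}_1|$ and $K=|\mathcal{N}|+|\mathcal{T}_0|+|\mathcal{S}_1|$. The paper phrases this last step as counting partitions into exactly $m-|\mathcal{S}_1|$ parts each of size at most $K$ (with $n'_j$ the multiplicity of the part $w(j)$), which is the same identity you invoke in $q$-series form.
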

\begin{proof}
By Proposition \ref{prop:minpartition} and Lemma \ref{lem:6S1}, we have
$$
H_{S,\mathcal{S}_1}(q)= \sum_{\substack{n_1, \dots, n_{s+t}:\\n_1+\cdots+n_{s+t}=m,\\ \{j \in \mathcal{T}_1 :n_j>0\}=\mathcal{S}_1}} q^{|\mathrm{min}_{\Delta}(S)| +\Sigma_1+ \sum_{j \in \mathcal{S}_1} (n_j-1) \#\left(\llbracket j;s+t\rrbracket \cap (\mathcal{N}\sqcup\mathcal{T}_0 \sqcup \mathcal{S}_1) \right)  + \sum_{j \in \mathcal{N}\cup\mathcal{T}_0}  n_j\#\left(\llbracket j;s+t\rrbracket \cap (\mathcal{N}\sqcup\mathcal{T}_0 \sqcup \mathcal{S}_1) \right)}.
$$
Thus by the changes of variables
$$
n'_j=
\begin{cases}
n_j \text{ if } j \in \mathcal{N}\sqcup\mathcal{T}_0\\
n_j-1 \text{ if } j \in \mathcal{S}_1
\end{cases},
$$
and noticing that $|\mathrm{min}_{\Delta}(S)|$ and $\Sigma_1$ do not depend on the $n_j$'s,
we obtain
\begin{equation}
\label{eq:6HS1'}
H_{S,\mathcal{S}_1}(q)=q^{|\mathrm{min}_{\Delta}(S)| +\Sigma_1}\sum_{\substack{(n'_j)_{j \in \mathcal{N}\sqcup\mathcal{T}_0 \sqcup \mathcal{S}_1}:\\ \sum_j n'_j=m-|\mathcal{S}_1|}} q^{ \sum_{j \in \mathcal{N}\sqcup\mathcal{T}_0 \sqcup \mathcal{S}_1}  n'_j\#\left(\llbracket j;s+t\rrbracket \cap (\mathcal{N}\sqcup\mathcal{T}_0 \sqcup \mathcal{S}_1) \right)}.
\end{equation}

Moreover, we can interpret the sum above as the generating function  for partitions into exactly $m - |\mathcal{S}_1|$ parts, each part being at most $|\mathcal{N}|+|\mathcal{T}_0|+|\mathcal{S}_1|.$ Indeed, for all  $j \in \mathcal{N}\sqcup\mathcal{T}_0 \sqcup \mathcal{S}_1,$ $n'_j$ can be interpreted as the number of parts of size $\#\left(\llbracket j;s+t\rrbracket \cap (\mathcal{N}\sqcup\mathcal{T}_0 \sqcup \mathcal{S}_1) \right)$ (see Figure \ref{fig:42} below).

\begin{figure}[H]
\includegraphics[width=0.6\textwidth]{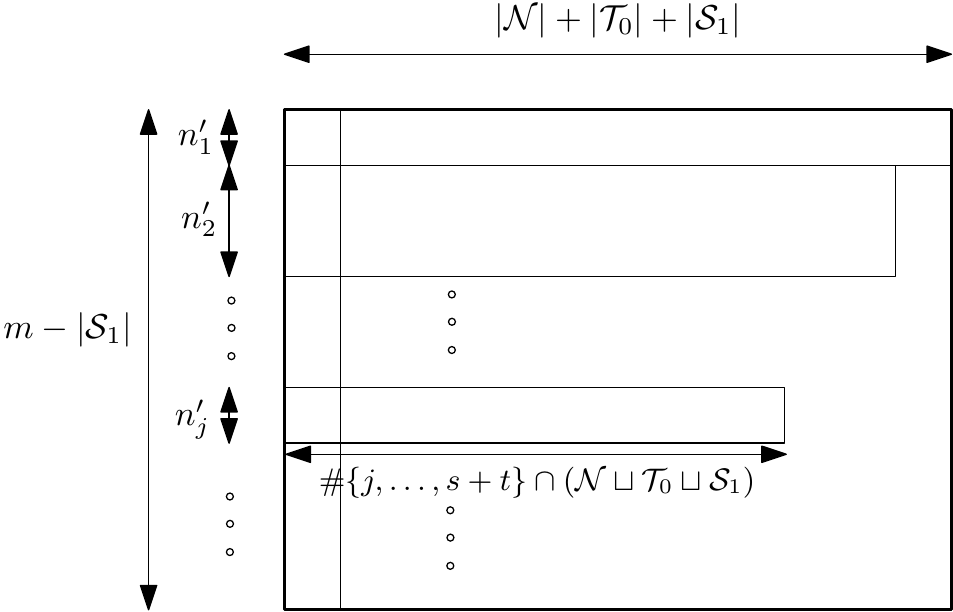}
\caption{Decomposition of the Ferrers board.}
\label{fig:42}
\end{figure}
The generating function for such partitions is given by $q^{m - |\mathcal{S}_1|} {m-1+|\mathcal{N}| + |\mathcal{T}_0| \brack m-|\mathcal{S}_1|}_q$, which yields the desired formula \eqref{eq:6HS1} for $H_{S,\mathcal{S}_1}(q).$
\end{proof}

Before we compute $G_{S,m}(q)$, we still need one more lemma about $q$-binomial coefficients.
\begin{lemma}
\label{lem:6qbinpath}
Let $a$ and $b$ be non-negative integers. We have 
$$\sum_{\substack{ A \subseteq \llbracket 1;a+b \rrbracket \\ |A|=a}} q^{\sum_{j \in A} \#\{j'<j:j'\in \llbracket 1;a+b \rrbracket \setminus A\}} = {a+b \brack a}_q.$$
\end{lemma}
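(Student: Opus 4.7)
The identity is a standard combinatorial interpretation of the $q$-binomial coefficient, and the cleanest route is induction on $a+b$ using the $q$-Pascal recurrence
\[
{a+b \brack a}_q = {a+b-1 \brack a}_q + q^b{a+b-1 \brack a-1}_q.
\]
The base cases $a=0$ or $b=0$ are immediate: the sum on the left has exactly one term (the empty set, respectively $A = \llbracket 1;a\rrbracket$), with weight $0$, and the right-hand side equals $1$.

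For the inductive step, I would split the sum according to whether the maximal element $a+b$ belongs to $A$ or not. If $a+b \notin A$, then $A \subseteq \llbracket 1;a+b-1\rrbracket$ with $|A|=a$, and the statistic $\sum_{j \in A}\#\{j'<j : j' \in \llbracket 1;a+b\rrbracket\setminus A\}$ is unchanged when we replace the ambient set $\llbracket 1;a+b\rrbracket$ by $\llbracket 1;a+b-1\rrbracket$, since each $j \in A$ satisfies $j < a+b$. By the induction hypothesis, this contribution equals ${a+b-1 \brack a}_q$.

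If $a+b \in A$, write $A = A' \sqcup \{a+b\}$ with $A' \subseteq \llbracket 1;a+b-1\rrbracket$, $|A'|=a-1$. The key computation is that the new element $a+b$ contributes $\#\{j' < a+b : j' \notin A\} = (a+b-1)-(a-1) = b$ to the statistic, while the contribution of the remaining $j \in A'$ equals $\sum_{j \in A'}\#\{j'<j : j' \in \llbracket 1;a+b-1\rrbracket\setminus A'\}$. Factoring out $q^b$ and applying the induction hypothesis gives $q^b{a+b-1\brack a-1}_q$. Adding the two contributions and invoking the $q$-Pascal identity concludes the proof.

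No step is a real obstacle here; the only thing to be careful about is the bookkeeping showing that the element $a+b$, when it lies in $A$, contributes exactly $b$ to the exponent (which is just counting the non-chosen elements below it), and that removing $a+b$ from the ambient set does not alter the statistic for the remaining elements of $A$.
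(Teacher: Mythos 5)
Your proof is correct, but it takes a different route from the paper. The paper proves the identity bijectively: it identifies the left-hand side with the generating function for partitions whose Ferrers diagram fits inside an $a\times b$ box, by encoding such a partition as a lattice path from $(0,0)$ to $(b,a)$; the set $A$ records the positions of the up steps, and the statistic $\#\{j'<j:j'\in \llbracket 1;a+b \rrbracket \setminus A\}$ is precisely the size of the part sitting above the $j$-th step. Your argument instead proceeds by induction on $a+b$ via the $q$-Pascal recurrence ${a+b \brack a}_q = {a+b-1 \brack a}_q + q^b{a+b-1 \brack a-1}_q$, splitting according to whether $a+b\in A$; the bookkeeping you describe (the element $a+b$ contributes exactly $b$ to the exponent, and deleting it does not perturb the statistic of the remaining elements) is exactly right, and the base cases are handled correctly. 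The paper's bijection is more transparent about \emph{why} the identity holds and fits the style of the surrounding lemmas (which all argue via Ferrers boards), whereas your induction is more mechanical but entirely self-contained, needing nothing beyond the recurrence. Either proof is acceptable.
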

\begin{proof}
Partitions whose Ferrers diagram fits inside an $a \times b$ box, generated by ${a+b \brack a}_q$, are in bijection with walks on the plane going from $(0,0)$ to $(b,a)$, having $b$ right steps and $a$ up steps. The partition can be seen on top of the path, as shown in Figure \ref{fig:41}.
\begin{figure}[H]
\includegraphics[width=0.5\textwidth]{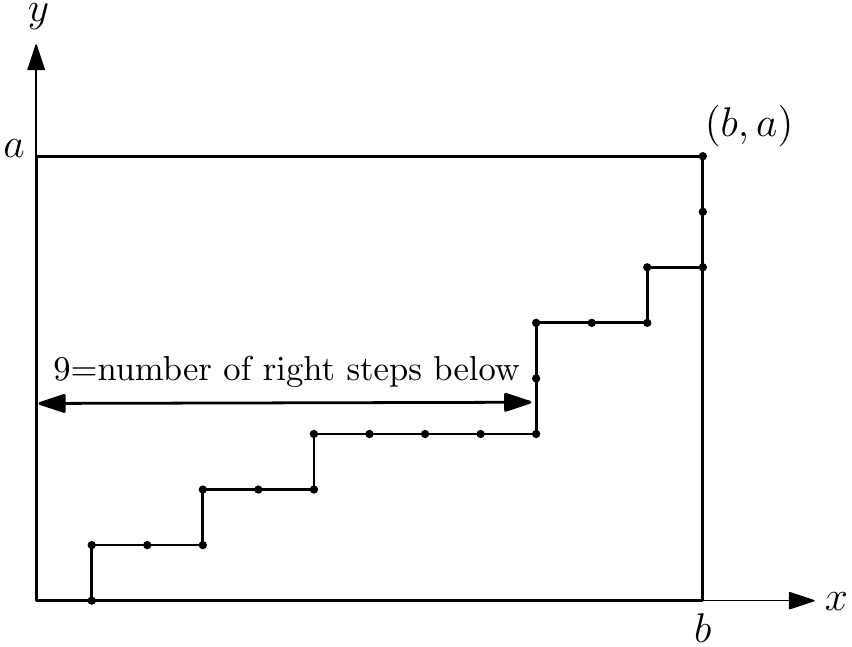}
\caption{A partition as a path.}
\label{fig:41}
\end{figure}

If $A \subseteq \llbracket 1;a+b \rrbracket ,|A|=a$ is the set of up steps, then for each position $j \in A$, the part of the partition corresponding to this up step has its size equal to the number of right steps that have been done before, i.e. $\#\{j'<j:j'\in \llbracket 1;a+b \rrbracket \setminus A\}$.
\end{proof}

We are now ready to sum $H_{S,\mathcal{S}_1}(q)$ over all possible sets $\mathcal{S}_1$ to obtain a formula for $G_{S,m}(q).$

\begin{proposition}
\label{prop:6GS}
Let $S$ be a reduced colour sequence, and $m$ a non-negative integer.
We have
\begin{align*}
G_{S,m}(q)&= \sum_{\substack{k_1, \dots, k_t: \\ k_u \leq |\mathcal{T}_1^u|}} q^{|\mathrm{min}_{\Delta}(S)| + \sum_{u=1}^t k_u(|\mathcal{N}|+u-1+\sum_{v=u}^t (|\mathcal{T}_0^v|+k_v))} q^{m- \sum_{u=1}^t k_u} {m-1+|\mathcal{N}| + |\mathcal{T}_0| \brack m-\sum_{u=1}^t k_u}_q \prod_{u=1}^t {|\mathcal{T}_1^u| \brack k_u}_q.
\end{align*}
\end{proposition}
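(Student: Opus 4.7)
The starting point is equation \eqref{eq:6starting}, which I will rewrite by partitioning the colour sequences according to the set $\mathcal{S}_1 \subseteq \mathcal{T}_1$ of indices where type $1$ insertions are actually performed. Since $\mathcal{T}_1 = \bigsqcup_{u=1}^{t} \mathcal{T}_1^u$ and the choices $\mathcal{S}_1^u := \mathcal{S}_1 \cap \mathcal{T}_1^u$ are independent across $u$, this gives
\begin{equation*}
G_{S,m}(q) = \sum_{\mathcal{S}_1 \subseteq \mathcal{T}_1} H_{S,\mathcal{S}_1}(q),
\end{equation*}
where $H_{S,\mathcal{S}_1}(q)$ is evaluated by Lemma \ref{lem:6HS1}. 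I will then regroup the sum by the tuple $(k_1,\dots,k_t)$ where $k_u := |\mathcal{S}_1^u|$, noting that $|\mathcal{S}_1| = \sum_u k_u$ and that $k_u \leq |\mathcal{T}_1^u|$.

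The key step is to split the exponent $\Sigma_1$ from Lemma \ref{lem:6S1} into a piece depending only on $(k_1,\dots,k_t)$ and a piece depending on the specific choice of each $\mathcal{S}_1^u$ inside $\mathcal{T}_1^u$. Substituting $|\mathcal{S}_1^v| = k_v$ in the first summand of the formula for $\Sigma_1$ yields exactly the ``global'' contribution
\begin{equation*}
\sum_{u=1}^{t} k_u \left(|\mathcal{N}| + u - 1 + \sum_{v=u}^{t} (|\mathcal{T}_0^v| + k_v)\right),
\end{equation*}
which is independent of the specific subset chosen inside each $\mathcal{T}_1^u$. The second summand $\sum_{u=1}^{t} \sum_{j \in \mathcal{S}_1^u} \#\{j' < j : j' \in \overline{\mathcal{S}_1^u}\}$ decomposes as a product over $u$ (each term only involves indices within $\mathcal{T}_1^u$), which is the setup of Lemma \ref{lem:6qbinpath}.

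It remains to collect the $\mathcal{S}_1$-independent factors $q^{|\min_\Delta(S)| + m - |\mathcal{S}_1|} {m-1+|\mathcal{N}|+|\mathcal{T}_0| \brack m - |\mathcal{S}_1|}_q$ from $H_{S,\mathcal{S}_1}(q)$ in front and apply Lemma \ref{lem:6qbinpath} to each $u$ separately, with $a = k_u$ and $b = |\mathcal{T}_1^u| - k_u$, obtaining the factor $\prod_{u=1}^{t} {|\mathcal{T}_1^u| \brack k_u}_q$. Assembling the three contributions gives the claimed formula. The computation is essentially a bookkeeping exercise; the only subtle point, which I would verify carefully, is that the decomposition of $\Sigma_1$ into global and per-block parts is legitimate, i.e.\ that the index range $\llbracket j_{2u-1}; j-1 \rrbracket$ in the definition of $\#\{j' < j : j' \in \overline{\mathcal{S}_1^u}\}$ really only sees positions of $\overline{\mathcal{S}_1^u}$ within $\mathcal{T}_1^u$, so that Lemma \ref{lem:6qbinpath} applies cleanly block by block.
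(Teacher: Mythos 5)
Your proposal is correct and follows essentially the same route as the paper's proof: group the sum over $\mathcal{S}_1$ by the tuple $(k_1,\dots,k_t)$ with $k_u=|\mathcal{S}_1^u|$, substitute the formula of Lemma \ref{lem:6HS1}, split $\Sigma_1$ from Lemma \ref{lem:6S1} into the $(k_1,\dots,k_t)$-dependent part and the per-block part, and apply Lemma \ref{lem:6qbinpath} block by block with $a=k_u$, $b=|\mathcal{T}_1^u|-k_u$. The subtlety you flag is harmless since $\overline{\mathcal{S}_1^u}\subseteq\mathcal{T}_1^u$ by definition, so the count $\#\{j'<j:j'\in\overline{\mathcal{S}_1^u}\}$ indeed only sees positions within the block and the per-block factorisation is legitimate.
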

\begin{proof}
By Lemma \ref{lem:6HS1}, we have:
$$G_{S,m}(q)= \sum_{\substack{k_1, \dots, k_t: \\ k_u \leq |\mathcal{T}_1^u|}} \sum_{\substack{\mathcal{S}_1:\\ \forall u, \mathcal{S}_1^u \subseteq \mathcal{T}_1^u \\ \text{and } |\mathcal{S}_1^u|= k_u}} H_{S,\mathcal{S}_1}(q) = \sum_{\substack{k_1, \dots, k_t: \\ k_u \leq |\mathcal{T}_1^u|}} \sum_{\substack{\mathcal{S}_1:\\ \forall u, \mathcal{S}_1^u \subseteq \mathcal{T}_1^u \\ \text{and } |\mathcal{S}_1^u|= k_u}} q^{|\mathrm{min}_{\Delta}(S)| +\Sigma_1 +m - |\mathcal{S}_1|} {m-1+|\mathcal{N}| + |\mathcal{T}_0| \brack m-|\mathcal{S}_1|}_q.
$$ 
By Lemma \ref{lem:6S1}, this becomes
\begin{align*}
G_{S,m}(q)&= \sum_{\substack{k_1, \dots, k_t: \\ k_u \leq |\mathcal{T}_1^u|}} q^{|\mathrm{min}_{\Delta}(S)| + \sum_{u=1}^t k_u(|\mathcal{N}|+u-1+\sum_{v=u}^t (|\mathcal{T}_0^v|+k_v))} q^{m- \sum_{u=1}^t k_u} {m-1+|\mathcal{N}| + |\mathcal{T}_0| \brack m-\sum_{u=1}^t k_u}_q
\\& \times \sum_{\substack{\mathcal{S}_1:\\ \forall u, \mathcal{S}_1^u \subseteq \mathcal{T}_1^u \\ \text{and } |\mathcal{S}_1^u|= k_u}} \prod_{u=1}^t q^{ \sum_{j\in \mathcal{S}_1^u} \# \{j'<j: j' \in \overline{\mathcal{S}_1^u} \}}.
\end{align*}
Exchanging the final sum and product, and then using Lemma \ref{lem:6qbinpath} for each $u \in \{1, \dots, t\}$ with $a=k_u$  and $b= |\mathcal{T}_1^u|-k_u$ gives the desired formula.
\end{proof}

What remains to do is show that the expression for $G_{S,m}(q)$ in Proposition \ref{prop:6GS} is actually the same as  \eqref{eq:prop1'}.

First, let us give yet another lemma about $q$-binomial coefficients.
\begin{lemma}
\label{lem:6lemmaY}
Let $m, \ell_1, \dots, \ell_t$ be non-negative integers. We have
$$ q^m {m+\ell_1 + \cdots + \ell_t-1 \brack m}_q= q^m \sum_{0=x_0 \leq x_1 \leq \cdots \leq x_t=m} \prod_{r=1}^t q^{\ell_r x_{r-1}} {x_r-x_{r-1}+\ell_r-1 \brack x_r-x_{r-1}}_q.$$
In the above, we use the convention that ${-1 \brack 0}=1$, corresponding to the case where a certain $\ell_r$ is equal to $0$.
\end{lemma}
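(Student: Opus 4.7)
The factor $q^m$ cancels from both sides of the claimed identity, so it suffices to establish
\begin{equation*}
{m+L-1 \brack m}_q = \sum_{0=x_0 \leq x_1 \leq \cdots \leq x_t=m} \prod_{r=1}^t q^{\ell_r x_{r-1}} {x_r-x_{r-1}+\ell_r-1 \brack x_r-x_{r-1}}_q,
\end{equation*}
where $L = \ell_1+\cdots+\ell_t$. The plan is to recognise both sides as the coefficient of $z^m$ in the single series $1/(z;q)_L$, by applying the $q$-binomial theorem
\begin{equation*}
\frac{1}{(w;q)_k} = \sum_{n \geq 0} {n+k-1 \brack n}_q w^n
\end{equation*}
in two different ways. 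On the left, taking $k=L$ and $w=z$ directly gives $[z^m]\,1/(z;q)_L = {m+L-1 \brack m}_q$.

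For the right-hand side, I would first factor the Pochhammer symbol block by block,
\begin{equation*}
(z;q)_L = \prod_{r=1}^{t} (zq^{\ell_{r+1}+\cdots+\ell_t};q)_{\ell_r},
\end{equation*}
obtained by grouping the $L$ factors $(1-zq^j)$ into consecutive blocks of sizes $\ell_t,\ell_{t-1},\ldots,\ell_1$ starting from $j=0$. Applying the $q$-binomial theorem to each factor with $w=zq^{S_r}$, where $S_r := \ell_{r+1}+\cdots+\ell_t$, and reindexing via $n_r = x_r - x_{r-1}$, yields
\begin{equation*}
[z^m]\frac{1}{(z;q)_L} = \sum_{0=x_0\leq \cdots\leq x_t=m}\prod_{r=1}^{t} q^{S_r(x_r-x_{r-1})}{x_r-x_{r-1}+\ell_r-1 \brack x_r-x_{r-1}}_q.
\end{equation*}

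The final step is a short Abel-summation rearrangement to match the exponent. Using $x_0=0$ and $S_t=0$, telescoping gives
\begin{equation*}
\sum_{r=1}^t S_r(x_r-x_{r-1}) = \sum_{r=1}^{t-1}(S_r-S_{r+1})x_r = \sum_{r=1}^{t-1}\ell_{r+1}\,x_r = \sum_{r=1}^t \ell_r\,x_{r-1},
\end{equation*}
which is exactly the exponent appearing in the target sum, completing the proof. The argument is essentially routine, so there is no real obstacle; the one small point to mention is the convention ${-1 \brack 0}_q=1$ when some $\ell_r=0$, which is automatically consistent here since $1/(w;q)_0=1$ contributes no factor and forces $x_r=x_{r-1}$ in the $r$-th block.
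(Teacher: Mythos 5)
Your proof is correct, and it takes a genuinely different route from the paper. The paper argues combinatorially: it reads $q^m{m+\ell_1+\cdots+\ell_t-1 \brack m}_q$ as the generating function for partitions in an $m\times(\ell_1+\cdots+\ell_t)$ box with largest part exactly $m$, and then dissects the Ferrers board along the horizontal lines $y=\ell_{r+1}+\cdots+\ell_t$ into $t$ rectangles (giving the factors $q^{\ell_r x_{r-1}}$) and $t$ smaller boxed partitions with prescribed largest part (giving the $q$-binomial factors), the $x_r$ recording where the boundary path meets each line. Your argument instead extracts $[z^m]$ from $1/(z;q)_{L}$ in two ways, using the $q$-binomial theorem once globally and once after the block factorisation $(z;q)_L=\prod_{r=1}^t(zq^{S_r};q)_{\ell_r}$ with $S_r=\ell_{r+1}+\cdots+\ell_t$; the telescoping $\sum_r S_r(x_r-x_{r-1})=\sum_r\ell_r x_{r-1}$ is verified correctly, and your handling of the $\ell_r=0$ case via $1/(w;q)_0=1$ matches the paper's convention ${-1\brack 0}_q=1$. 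The two proofs are of course closely related -- your factorisation of $1/(z;q)_L$ is the algebraic shadow of the paper's dissection of the box -- but yours is more mechanical and avoids the figure, while the paper's stays consistent with the bijective style of its neighbouring lemmas (e.g.\ Lemmas \ref{lem:lemma1qbin} and \ref{lem:6qbinpath}). Either is a complete and acceptable proof.
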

\begin{proof}
The left-hand side is the generating function for partitions fitting inside a $m \times (\ell_1 + \cdots + \ell_t)$ box, such that the largest part is equal to $m$. Take the Ferrers board of such a partition, and draw it in the plane as shown on Figure \ref{fig:44} (where the partition is above the path).
\begin{figure}[H]
\includegraphics[width=0.6\textwidth]{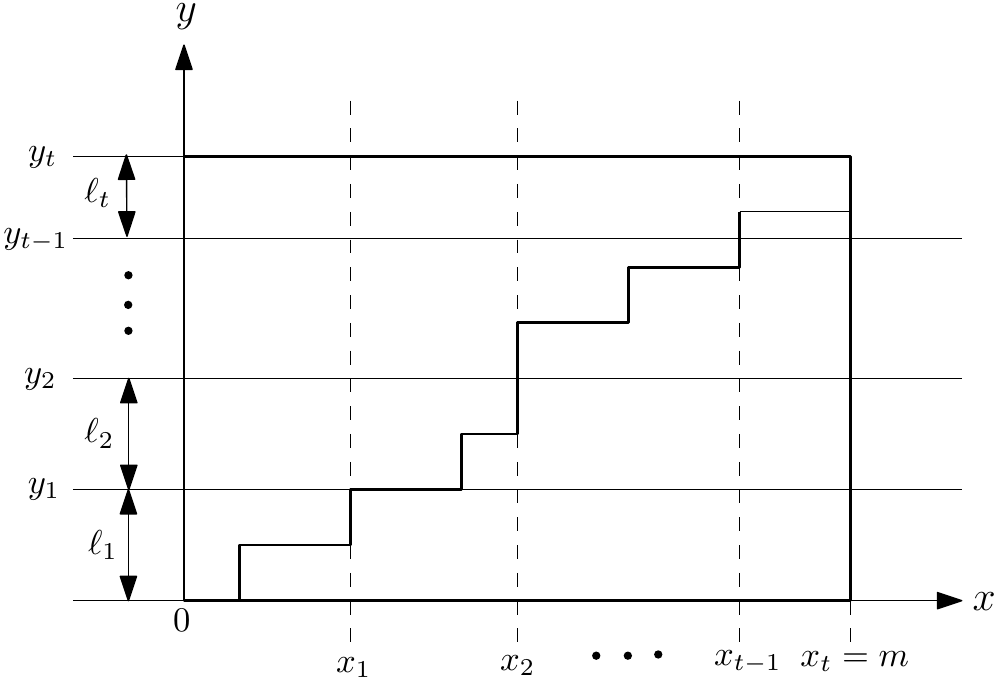}
\caption{Decomposition of the Ferrers board.}
\label{fig:44}
\end{figure}

For all $i \in \{1, \dots, t\},$ let $x_i$ be the size of the $\left(\sum_{k=i+1}^t \ell_k+1 \right)$-th part  (with $x_i=0$ if there are less than $\ell_1 + \cdots + \ell_t - y_i+1$ parts).

For all $i \in \{1, \dots, t\},$ let $y_i:= \sum_{k=1}^i \ell_k$. For fixed $0\leq x_1 \leq \cdots \leq x_t=m$, these partitions are generated by
$$\prod_{r=1}^t q^{\ell_r x_{r-1}} \times q^{x_r-x_{r-1}} {x_r-x_{r-1}+\ell_r-1 \brack x_r-x_{r-1}}_q,$$
where $q^{\ell_r x_{r-1}}$ generates the rectangle between the $y$-axis, the lines $y=y_r$ and $y=y_{r-1},$ and the line $x=x_{r-1}$, and the second term generates partitions fitting inside a $(x_r-x_{r-1}) \times \ell_r$ box, such that the largest part is equal to $x_r-x_{r-1}$.

The above is equal to
$$q^m \prod_{r=1}^t q^{\ell_r x_{r-1}} {x_r-x_{r-1}+\ell_r-1 \brack x_r-x_{r-1}}_q,$$
and summing over all possible values for $x_1, \dots, x_{t-1}$ gives the desired result.
\end{proof}

We use the lemma above to rewrite a part of the expression in Proposition \ref{prop:6GS}.
\begin{lemma}
\label{lem:6grosqbin}
We have:
\begin{align*}
&q^{m- \sum_{u=1}^t k_u} {m-1+|\mathcal{N}| + |\mathcal{T}_0| \brack m-\sum_{u=1}^t k_u}_q = q^{m- \sum_{u=1}^t k_u(1 + |\mathcal{N}|+\sum_{v=u+1}^t (k_v+|\mathcal{T}_0^v|))} 
\\&\times\sum_{0=m_0 \leq m_1 \leq \cdots \leq m_t\leq m} \left(\prod_{u=1}^t q^{(k_u +|\mathcal{T}_0^u|)m_{u-1}} {m_u-m_{u-1}+|\mathcal{T}_0^u|-1 \brack m_u-m_{u-1}-k_u}_q \right)q^{|\mathcal{N}|m_t} {m-m_t+|\mathcal{N}|-1 \brack m-m_t}_q.
\end{align*}
\end{lemma}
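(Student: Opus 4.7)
The plan is to derive Lemma \ref{lem:6grosqbin} from Lemma \ref{lem:6lemmaY} via a well-chosen substitution. Setting $K:=\sum_{u=1}^t k_u$ and $M:=m-K$, I would first rewrite the LHS as $q^M {M+L'-1\brack M}_q$ with $L'=K+|\mathcal{N}|+|\mathcal{T}_0|$, which puts it in the exact form on the LHS of Lemma \ref{lem:6lemmaY}. I would then apply that lemma with $m\to M$, with $t+1$ summation variables $0=x_0\le x_1\le\cdots\le x_{t+1}=M$, and with the augmented parameters $\ell_u:=k_u+|\mathcal{T}_0^u|$ for $u\le t$ and $\ell_{t+1}:=|\mathcal{N}|$ (so that $\sum_r\ell_r=L'$). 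The key observation motivating the choice $\ell_u=k_u+|\mathcal{T}_0^u|$, rather than the naive $\ell_u=|\mathcal{T}_0^u|$, is that the target binomials ${m_u-m_{u-1}+|\mathcal{T}_0^u|-1\brack m_u-m_{u-1}-k_u}_q$ have top larger by $k_u$ than what the naive application would produce.

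Next I would perform the change of variables $m_u:=x_u+\sum_{v=1}^u k_v$ for $u=0,\ldots,t$, so that $m_0=0$, $m_t\le m$, and $x_u-x_{u-1}=m_u-m_{u-1}-k_u$. Under this substitution the source binomial ${x_u-x_{u-1}+\ell_u-1\brack x_u-x_{u-1}}_q$ for $u\le t$ becomes exactly ${m_u-m_{u-1}+|\mathcal{T}_0^u|-1\brack m_u-m_{u-1}-k_u}_q$, and the $r=t+1$ term becomes ${m-m_t+|\mathcal{N}|-1\brack m-m_t}_q$, matching the target. The constraint $x_u\ge x_{u-1}$ translates to $m_u\ge m_{u-1}+k_u$, which is more restrictive than the target range $0\le m_1\le\cdots\le m_t\le m$; but this discrepancy is harmless because the target binomial vanishes whenever $m_u-m_{u-1}<k_u$ (its bottom becoming negative), so the extra terms on the target side contribute nothing.

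Finally, I would compare the two $q$-exponents. The exponent produced by Lemma \ref{lem:6lemmaY} is $M+\sum_{r=1}^{t+1}\ell_r x_{r-1}$, and after substituting the change of variables and separating the $m_u$-dependent parts from the constants, it reads $m-K(1+|\mathcal{N}|)+\sum_u(k_u+|\mathcal{T}_0^u|)m_{u-1}+|\mathcal{N}|m_t-\sum_u(k_u+|\mathcal{T}_0^u|)\sum_{v<u}k_v$. The target exponent rearranges to the same expression but with $\sum_u k_u\sum_{v>u}(k_v+|\mathcal{T}_0^v|)$ in place of $\sum_u(k_u+|\mathcal{T}_0^u|)\sum_{v<u}k_v$, so the equality of the two exponents reduces to the Fubini-type identity $\sum_u(k_u+|\mathcal{T}_0^u|)\sum_{v<u}k_v=\sum_u k_u\sum_{v>u}(k_v+|\mathcal{T}_0^v|)$, which is immediate by swapping the names of the indices $u$ and $v$. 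The main obstacle will be purely notational: the expressions involve several nested sums and products over $k_u$, $|\mathcal{T}_0^u|$ and $|\mathcal{N}|$, so one must keep careful track of what each index refers to; conceptually, however, the proof reduces to one application of Lemma \ref{lem:6lemmaY} with the augmented parameters $\ell_u=k_u+|\mathcal{T}_0^u|$, followed by the change of variables $m_u=x_u+\sum_{v\le u}k_v$.
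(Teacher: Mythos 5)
Your proposal is correct and follows essentially the same route as the paper: the authors also apply Lemma \ref{lem:6lemmaY} with $t+1$ parameters $\ell_u = k_u+|\mathcal{T}_0^u|$ ($u\le t$), $\ell_{t+1}=|\mathcal{N}|$ to $q^{m-\sum k_u}{m-1+|\mathcal{N}|+|\mathcal{T}_0| \brack m-\sum k_u}_q$, then substitute $x_u = m_u - \sum_{v\le u}k_v$ and finish with the same index-swapping identity $\sum_{u}(k_u+|\mathcal{T}_0^u|)\sum_{v<u}k_v=\sum_{v}k_v\sum_{u>v}(k_u+|\mathcal{T}_0^u|)$. Your explicit remark that the relaxed summation range $0\le m_1\le\cdots\le m_t\le m$ is harmless because the binomial vanishes when $m_u-m_{u-1}<k_u$ is a small point the paper leaves implicit.
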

\begin{proof}
Let us start  by applying Lemma \ref{lem:6lemmaY} with $t=t+1$, $m= m- \sum_{u=1}^t k_u$, $\ell_u = k_u+|\mathcal{T}_0^u|$ for all $u \in \{1, \dots , t\},$ and $\ell_{t+1}= |\mathcal{N}|.$
We have
\begin{align*}
X &:= q^{m- \sum_{u=1}^t k_u} {m+|\mathcal{T}_0|+|\mathcal{N}|-1 \brack m- \sum_{u=1}^t k_u}_q
\\&= q^{m- \sum_{u=1}^t k_u} \sum_{0=x_0 \leq x_1 \leq \cdots \leq x_{t+1}=m- \sum_{u=1}^t k_u} \left(\prod_{u=1}^t q^{(k_u + |\mathcal{T}_0^u|) x_{u-1}} {x_u-x_{u-1}+k_u+|\mathcal{T}_0^u|-1 \brack x_u-x_{u-1}}_q\right) 
\\ &\qquad \qquad  \qquad \times q^{|\mathcal{N}|x_t} {m- \sum_{u=1}^t k_u-x_{t}+|\mathcal{N}|-1 \brack m- \sum_{u=1}^t k_u-x_{t}}_q.
\end{align*}
By the changes of variables $x_u = m_u - \sum_{v=1}^u k_v$, we obtain
\begin{align*}
X &= q^{m- \sum_{u=1}^t k_u} \sum_{0=m_0 \leq m_1 \leq \cdots \leq m_{t+1}=m} \left(\prod_{u=1}^t q^{(k_u + |\mathcal{T}_0^u|) (m_{u-1} - \sum_{v=1}^{u-1} k_v)} {m_u-m_{u-1}+|\mathcal{T}_0^u|-1 \brack m_u-m_{u-1}-k_u}_q\right) 
\\ &\qquad \qquad  \qquad \times q^{|\mathcal{N}|(m_t - \sum_{v=1}^t k_v)} {m- m_t+|\mathcal{N}|-1 \brack m- m_t}_q
\\&=q^{m- \sum_{u=1}^t k_u (1 +|\mathcal{N}|) - \sum_{u=1}^t (k_u+ |\mathcal{T}_0^u|) \sum_{v=1}^{u-1} k_v} 
\\&\times \sum_{0=m_0 \leq m_1 \leq \cdots \leq m_{t} \leq m} \left(\prod_{u=1}^t q^{(k_u + |\mathcal{T}_0^u|) m_{u-1}} {m_u-m_{u-1}+|\mathcal{T}_0^u|-1 \brack m_u-m_{u-1}-k_u}_q\right) 
 q^{|\mathcal{N}|m_t} {m- m_t+|\mathcal{N}|-1 \brack m- m_t}_q .
\end{align*}
We deduce the final formula by using that
$$\sum_{u=1}^t (k_u+ |\mathcal{T}_0^u|) \sum_{v=1}^{u-1} k_v = \sum_{v=1}^{t} k_v \sum_{u=v+1}^t (k_u+ |\mathcal{T}_0^u|).$$
\end{proof}

Substituting Lemma \ref{lem:6grosqbin} in Proposition \ref{prop:6GS} leads to
\begin{align*}
G_{S,m}(q)&= q^{|\mathrm{min}_{\Delta}(S)| +m} \sum_{\substack{k_1, \dots, k_t: \\ k_u \leq |\mathcal{T}_1^u|}} \prod_{u=1}^t q^{k_u(u-2+k_u+|\mathcal{T}_0^u|)} {|\mathcal{T}_1^u| \brack k_u}_q
\\&\times\sum_{0=m_0 \leq m_1 \leq \cdots \leq m_{t}\leq m} \left(\prod_{u=1}^t q^{(k_u +|\mathcal{T}_0^u|)m_{u-1}} {m_u-m_{u-1}+|\mathcal{T}_0^u|-1 \brack m_u-m_{u-1}-k_u}_q \right)q^{|\mathcal{N}|m_t} {m-m_t+|\mathcal{N}|-1 \brack m-m_t}_q.
\end{align*}
Exchanging the summations, we obtain:
\begin{align}
\label{eq:6intermediaire}
\begin{aligned}
G_{S,m}(q)&= q^{|\mathrm{min}_{\Delta}(S)| +m} \sum_{0=m_0 \leq m_1 \leq \cdots \leq m_{t} \leq m} \left( \sum_{\substack{k_1, \dots, k_t: \\ k_u \leq |\mathcal{T}_1^u|}} \prod_{u=1}^t q^{k_u(u-2+k_u+|\mathcal{T}_0^u|)+(k_u +|\mathcal{T}_0^u|)m_{u-1}} \right.
\\& \left. \vphantom{\sum_{\substack{k_1, \dots, k_t: \\ k_u \leq |\mathcal{T}_1^u|}}} \times  {|\mathcal{T}_1^u| \brack k_u}_q {m_u-m_{u-1}+|\mathcal{T}_0^u|-1 \brack m_u-m_{u-1}-k_u}_q \right) q^{|\mathcal{N}|m_t} {m-m_t+|\mathcal{N}|-1 \brack m-m_t}_q.
\end{aligned}
\end{align}

We need one last lemma to complete our proof of Proposition \ref{prop:main'}.
\begin{lemma}
\label{lem:6LemmaX}
We have
\begin{align*}
&\sum_{0=m_0 \leq m_1 \leq \cdots \leq m_{t}} \sum_{\substack{k_1, \dots, k_t: \\ k_u \leq |\mathcal{T}_1^u|}} \prod_{u=1}^t q^{k_u(u-2+k_u+|\mathcal{T}_0^u|)+(k_u +|\mathcal{T}_0^u|)m_{u-1}} {|\mathcal{T}_1^u| \brack k_u}_q {m_u-m_{u-1}+|\mathcal{T}_0^u|-1 \brack m_u-m_{u-1}-k_u}_q 
\\&= \sum_{v=0}^t g_{v,t}(q;|\mathcal{T}_0^1|,\ldots,|\mathcal{T}_0^{t}|) {m_t+t-1 \brack m_t-v}_q,
\end{align*}
where $g_{v,t}$ was defined in Proposition \ref{prop:main'}.
\end{lemma}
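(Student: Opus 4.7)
The plan is induction on $t$. The base case $t=0$ is immediate, as both sides equal $1$: on the LHS the empty product is $1$ (with $m_0=0$), and on the RHS we have $g_{0,0}(q)\cdot{-1 \brack 0}_q=1$. Writing $\ell_u=|\mathcal{T}_0^u|$ and $n_u=|\mathcal{T}_1^u|$ (so $\ell_u+n_u=2$), a key auxiliary fact is the recursion
\[
g_{v,t}(q;\ell_1,\ldots,\ell_t) = q^{v\ell_t}\,g_{v,t-1}(q;\ell_1,\ldots,\ell_{t-1}) + q^{v+t-1+\ell_t(v-1)}\,g_{v-1,t-1}(q;\ell_1,\ldots,\ell_{t-1}),
\]
with the conventions $g_{-1,t-1}=g_{t,t-1}=0$. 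This follows directly from the definition of $g_{v,t}$ by splitting the sum over $(\epsilon_1,\ldots,\epsilon_t)$ on $\epsilon_t\in\{0,1\}$ and a short calculation of the resulting exponent shifts.

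For the inductive step, denote the LHS of the lemma by $T_t(m_t)$. Peeling off the $u=t$ factor and substituting the inductive hypothesis $T_{t-1}(m_{t-1})=\sum_{v'}g_{v',t-1}{m_{t-1}+t-2 \brack m_{t-1}-v'}_q$, then matching the coefficient of each $g_{v',t-1}$ on the two sides using the $g$-recursion above, the lemma reduces, for each fixed $v'\in\{0,\ldots,t-1\}$, to the identity
\begin{align*}
&\sum_{k_t=0}^{n_t} q^{k_t(t-2+k_t+\ell_t)}{n_t \brack k_t}_q \sum_{m_{t-1}} q^{(k_t+\ell_t)m_{t-1}}{m_{t-1}+t-2 \brack m_{t-1}-v'}_q {m_t-m_{t-1}+\ell_t-1 \brack m_t-m_{t-1}-k_t}_q \\
&\qquad = q^{v'\ell_t}{m_t+t-1 \brack m_t-v'}_q + q^{v'+t+\ell_t v'}{m_t+t-1 \brack m_t-v'-1}_q.
\end{align*}

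This $q$-hypergeometric identity is the main obstacle. The plan is to prove it by passing to generating functions: multiply both sides by $z^{m_t}$ and sum over $m_t\ge 0$. Using the standard identity $\sum_{m\ge 0}z^m {m+A \brack m-V}_q = z^V/(z;q)_{A+V+1}$ twice, together with the telescoping $(1-q^{v'+t}z)+q^{v'+t}z=1$, the RHS collapses to $z^{v'}q^{v'\ell_t}/(z;q)_{v'+t+1}$. On the LHS, after substituting $j=m_{t-1}-v'$ and $M=m_t-m_{t-1}-k_t$, the inner convolution factors as a product of two geometric series, each summing to a reciprocal $q$-Pochhammer symbol. The claim then reduces, on writing $a=v'+t+\ell_t-2$, to
\[
\sum_{k=0}^{n_t} q^{k(a+k)}{n_t \brack k}_q z^k(q^{a+1+k}z;q)_{n_t-k} = 1,
\]
which can be proved by a short induction on $n_t$ using $q$-Pascal and $(q^{a+1}z;q)_{n_t+1}=(1-q^{a+1}z)(q^{a+2}z;q)_{n_t}$, or alternatively by expanding each Pochhammer via the finite $q$-binomial theorem $(x;q)_n=\sum_j(-1)^j q^{\binom{j}{2}}{n \brack j}_q x^j$: the coefficient of $z^n$ with $n\ge 1$ then becomes a multiple of $(q^{-(n-1)};q)_n$, which vanishes since this Pochhammer contains the factor $1-q^0=0$. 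Comparing coefficients of $z^{m_t}$ closes the induction.
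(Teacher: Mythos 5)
Your proposal is correct, and it shares its overall skeleton with the paper's proof: both argue by induction on $t$, both peel off the $u=t$ factor and substitute the inductive hypothesis, and both rely on the recursion $g_{v,t}=q^{v\ell_t}g_{v,t-1}+q^{(v-1)\ell_t+v+t-1}g_{v-1,t-1}$ obtained by splitting the defining sum on $\epsilon_t$ (this is exactly the paper's final computation leading to its equation \eqref{eq:finale}). Where you genuinely diverge is in the evaluation of the inner double sum over $m_{t-1}$ and $k_t$. The paper handles it with two finite $q$-binomial tools: its combinatorial Lemma \ref{lem:6lemmaY} (applied with $t=2$) to collapse the sum over $m_{v-1}$, then the $q$-Vandermonde identity \eqref{eq:6dernierqbin} to collapse the sum over $k_v$, followed by $q$-Pascal. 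You instead pass to a generating function in an auxiliary variable $z$, use $\sum_m z^m{m+A \brack m-V}_q=z^V/(z;q)_{A+V+1}$ on both sides, and reduce everything to the single polynomial identity $\sum_{k}q^{k(a+k)}{n \brack k}_q z^k(q^{a+1+k}z;q)_{n-k}=1$, which your $q$-binomial-theorem argument does establish (the coefficient of $z^N$ for $N\geq 1$ is a multiple of $(q^{-(N-1)};q)_N=0$). Both routes are valid; yours packages the two $q$-binomial summations into one clean identity at the cost of an extra formal variable, while the paper's stays entirely within finite $q$-binomial manipulations and reuses Lemma \ref{lem:6lemmaY}, which it needs elsewhere in the same section anyway. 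One cosmetic remark: since $|\mathcal{T}_1^u|\leq 2$ here, your general identity is only ever invoked for $n_t\in\{0,1,2\}$, so even a direct verification of those three cases would have sufficed.
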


Indeed, once Lemma \ref{lem:6LemmaX} is proved, we can write
\begin{align*}
&G_{S,m}(q)= q^{|\mathrm{min}_{\Delta}(S)| +m}  \sum_{v=0}^t g_{v,t}(q;|\mathcal{T}_0^1|,\ldots,|\mathcal{T}_0^{t}|) \sum_{0 \leq m_t \leq m} q^{|\mathcal{N}|m_t} {m-m_t+|\mathcal{N}|-1 \brack m-m_t}_q  {m_t+t-1 \brack m_t-v}_q
\\&=  q^{|\mathrm{min}_{\Delta}(S)| +m}  \sum_{v=0}^t g_{v,t}(q;|\mathcal{T}_0^1|,\ldots,|\mathcal{T}_0^{t}|) \sum_{0 \leq m'_t \leq m-v} q^{|\mathcal{N}|(m'_t+v)} {m-m'_t-v+|\mathcal{N}|-1 \brack m-m'_t-v}_q  {m'_t+v+t-1 \brack m'_t}_q,
\end{align*}
where the second equality follows from the change of variable $m'_t=m_t-v.$
Using Lemma \ref{lem:6lemmaY} with $t=2$, $m=m-v$, $\ell_1=v+t$, and $\ell_2= |\mathcal{N}|,$ this becomes
$$G_{S,m}(q)=  q^{|\mathrm{min}_{\Delta}(S)| +m}  \sum_{v=0}^t q^{v|\mathcal{N}|} g_{v,t}(q;|\mathcal{T}_0^1|,\ldots,|\mathcal{T}_0^{t}|) {m+t+|\mathcal{N}|-1 \brack m-v}_q.$$
Observing that $|\mathcal{N}|=s-t$ concludes the proof of 
Proposition \ref{prop:main'}. \qed

We conclude this section by the proof of Lemma \ref{lem:6LemmaX}.
\begin{proof}[Proof of Lemma \ref{lem:6LemmaX}]
Let us define $G_0(q;m)= \chi(m=0),$ and for $v\geq 1$,
\begin{align*}
&G_v(q;x_1,\dots,x_v;m):= 
\\&\sum_{0=m_0 \leq m_1 \leq \cdots \leq m_{v}=m} \sum_{\substack{k_1, \dots, k_v: \\ k_u \in \llbracket 0; 2-x_u \rrbracket }} \prod_{u=1}^v q^{k_u(u-2+k_u+x_u)+(k_u +x_u)m_{u-1}} {2-x_u \brack k_u}_q {m_u-m_{u-1}+x_u-1 \brack m_u-m_{u-1}-k_u}_q,
\end{align*}
so that the function in Lemma \ref{lem:6LemmaX} is $G_t(q;|\mathcal{T}_0^1|,\dots,|\mathcal{T}_0^t|;m_t).$

We show by induction on $v$ that 
\begin{equation}
\label{eq:6rec}
G_v(q;x_1,\dots,x_v;m)= \sum_{u=0}^v g_{u,v}(q;x_1,\dots,x_v) {m+v-1 \brack m-u}_q.
\end{equation}

Recall from \cite[p. 37, (3.3.10)]{Abook} that
\begin{equation}
\label{eq:6dernierqbin}
{a+b \brack c}_q = \sum_{a' \geq 0} {a \brack a'}_q {b \brack c-a'}_q q^{a'(b-c+a')}.
\end{equation}

By \eqref{eq:6dernierqbin} with $a=2-x_1$, $b=m+x_1-1$, and $c=m$, we have
\begin{align*}
G_1(q;x_1;m)&= {m+1 \brack m}_q
\\&= {m \brack m}_q + q{m \brack m-1}_q
\\&= g_{0,1}(q;x_1) {m \brack m}_q +g_{1,1}(q;x_1) {m \brack m-1}_q.
\end{align*}
So \eqref{eq:6rec} is true for $v=1$.

Now assume that it is true for $v-1 \geq 1$ and prove it for $v$.
We have
\begin{align*}
&G_v(q;x_1,\dots,x_v;m)= 
\\&\sum_{0=m_0 \leq m_1 \leq \cdots \leq m_{v}=m}  \prod_{u=1}^v \left( \sum_{ k_u =0}^{2-x_u} q^{k_u(u-2+k_u+x_u)+(k_u +x_u)m_{u-1}} {2-x_u \brack k_u}_q {m_u-m_{u-1}+x_u-1 \brack m_u-m_{u-1}-k_u}_q \right)
\\&= \sum_{m_{v-1}=0}^m \left( \sum_{0=m_0 \leq m_1 \leq \cdots \leq m_{v-1}}  \prod_{u=1}^{v-1} \left( \sum_{ k_u =0}^{2-x_u} q^{k_u(u-2+k_u+x_u)+(k_u +x_u)m_{u-1}} {2-x_u \brack k_u}_q {m_u-m_{u-1}+x_u-1 \brack m_u-m_{u-1}-k_u}_q \right) \right)
\\ & \qquad \times \sum_{ k_v =0}^{2-x_v} q^{k_v(v-2+k_v+x_v)+(k_v +x_v)m_{v-1}} {2-x_v \brack k_v}_q {m-m_{v-1}+x_v-1 \brack m-m_{v-1}-k_v}_q 
\\&= \sum_{m_{v-1}=0}^m G_{v-1}(q;x_1,\dots,x_{v-1};m_{v-1})
\sum_{ k_v =0}^{2-x_v} q^{k_v(v-2+k_v+x_v)+(k_v +x_v)m_{v-1}} {2-x_v \brack k_v}_q {m-m_{v-1}+x_v-1 \brack m-m_{v-1}-k_v}_q 
\\&= \sum_{m_{v-1}=0}^m \sum_{u=0}^{v-1} g_{u,v-1}(q;x_1,\dots,x_{v-1}) {m_{v-1}+v-2 \brack m_{v-1}-u}_q
\\&\qquad \times
\sum_{ k_v =0}^{2-x_v} q^{k_v(v-2+k_v+x_v)+(k_v +x_v)m_{v-1}} {2-x_v \brack k_v}_q {m-m_{v-1}+x_v-1 \brack m-m_{v-1}-k_v}_q,
\end{align*}
where we used the induction hypothesis in the last equality.

\noindent
Rearranging the order of summation leads to
\begin{align*}
G_v(q;x_1,\dots,x_v;m) &= \sum_{u=0}^{v-1} q^{ux_v} g_{u,v-1}(q;x_1,\dots,x_{v-1}) \sum_{ k_v =0}^{2-x_v} q^{k_v(v-2+u+k_v+x_v)} {2-x_v \brack k_v}_q
\\& \times\sum_{m_{v-1}=0}^m  q^{(k_v +x_v)(m_{v-1}-u)} {m_{v-1}+v-2 \brack m_{v-1}-u}_q
 {m-m_{v-1}+x_v-1 \brack m-m_{v-1}-k_v}_q.
\end{align*}

Using Lemma \ref{lem:6lemmaY} with $t=2$, $m=m-u-k_v$, $\ell_1= v-1+u$, and $\ell_2=k_v+x_v$, and the change of variable $x_1=m_{v-1}-u$, this yields:
\begin{align*}
G_v(q;x_1,\dots,x_v;m) &= \sum_{u=0}^{v-1} q^{ux_v} g_{u,v-1}(q;x_1,\dots,x_{v-1}) \sum_{ k_v =0}^{2-x_v} q^{k_v(v-2+u+k_v+x_v)} {2-x_v \brack k_v}_q
\\& \times 
 {m+v+x_v-2 \brack m-u-k_v}_q.
\end{align*}
Using \eqref{eq:6dernierqbin} again with $a=2-x_v$, $b=m+v+x_v-2$, $c=m-u$, and $a'=k_v$, we obtain
$$G_v(q;x_1,\dots,x_v;m) = \sum_{u=0}^{v-1} q^{ux_v} g_{u,v-1}(q;x_1,\dots,x_{v-1}) {m+v \brack m-u}_q.$$
By the $q$-analogue of Pascal's triangle, this becomes
\begin{align}
&G_v(q;x_1,\dots,x_v;m) \nonumber
\\&= \sum_{u=0}^{v-1} q^{ux_v} g_{u,v-1}(q;x_1,\dots,x_{v-1}) {m+v-1 \brack m-u}_q
+ \sum_{u=0}^{v-1} q^{ux_v+u+v} g_{u,v-1}(q;x_1,\dots,x_{v-1}) {m+v-1 \brack m-u-1}_q \nonumber\\
&= \sum_{u=0}^{v-1} \left( q^{ux_v} g_{u,v-1}(q;x_1,\dots,x_{v-1}) +q^{(u-1)x_v+u+v-1} g_{u-1,v-1}(q;x_1,\dots,x_{v-1}) \right) {m+v-1 \brack m-u}_q . \label{eq:finale}
\end{align}
Recall that
$$g_{u,v}(q;x_1, \dots, x_v) = \sum_{\substack{\epsilon_1,\dots , \epsilon_v \in \{0,1\}:\\ \epsilon_1 + \cdots + \epsilon_v = u}} q^{uv+ {u \choose 2}} \prod_{k=1}^v q^{(x_k -1) \sum_{i=1}^{k-1} \epsilon_i}.$$
So, separating the case where $\epsilon_v=0$ from the case where $\epsilon_v=1$, we have
\begin{align*}
g_{u,v}(q;x_1, \dots, x_v) &= \sum_{\substack{\epsilon_1,\dots , \epsilon_{v-1} \in \{0,1\}:\\ \epsilon_1 + \cdots + \epsilon_{v-1} = u}} q^{uv+ {u \choose 2}} \left(\prod_{k=1}^{v-1} q^{(x_k -1) \sum_{i=1}^{k-1}\sum_{i=1}^{k-1} \epsilon_i} \right) q^{(x_v-1)u}
\\&+\sum_{\substack{\epsilon_1,\dots , \epsilon_{v-1} \in \{0,1\}:\\ \epsilon_1 + \cdots + \epsilon_{v-1} = u-1}} q^{uv+ {u \choose 2}} \left(\prod_{k=1}^{v-1} q^{(x_k -1) \sum_{i=1}^{k-1}\sum_{i=1}^{k-1} \epsilon_i} \right) q^{(x_v-1)(u-1)}.
\end{align*}
After simplification, this is exactly \eqref{eq:finale}. The lemma is proved.

\end{proof}

\section*{Acknowledgements}
The research of the first author was supported by the project IMPULSION of IdexLyon. Part of this research was conducted while the second author was visiting Lyon, funded by that project.

We thank Leonard Hardiman and Jeremy Lovejoy for their helpful comments on earlier versions of this paper.

\bibliographystyle{alpha}
\bibliography{biblio}

\end{document}